\title[Unipotent $\ell$-blocks for simply-connected $p$-adic groups]{Unipotent $\ell$-blocks for simply-connected $p$-adic groups}
\author{Thomas Lanard}
\email{thomas.lanard@univie.ac.at}
\begin{document}

\begin{abstract}
    Let $\kk$ be a non-archimedean local field and $G$ the $\kk$-points of a connected simply-connected reductive group over $\kk$. In this paper, we study the unipotent $\lprime$-blocks of $G$, for $\lprime \neq p$. To that end, we introduce the notion of $\dun$-series for finite reductive groups. These series form a partition of the irreducible representations and are defined using Harish-Chandra theory and $d$-Harish-Chandra theory. The $\lprime$-blocks are then constructed using these $\dun$-series, with $d$ the order of $q$ modulo $\lprime$, and consistent systems of idempotents on the Bruhat-Tits building of $G$. We also describe the stable $\lprime$-block decomposition of the depth zero category of an unramified classical group.

\end{abstract}

\maketitle

\section*{Introduction}

Let $\kk$ be a non-archimedean local field and $\res$ its residue field. Let $q$ be the cardinal of $\res$ and $p$ its characteristic. Let $\gpalg{G}$ be a connected reductive group over $\kk$ and denote by $G:=\gpalg{\G}[\kk]$ the $\kk$-points of $\gpalg{G}$.

\bigskip

Let $\rep[\mathbb{C}]{G}$ be the category of smooth representations of $G$ with complex coefficients. One way to study this category is to decompose it in a minimal product of subcategories, called blocks, and describe them. This problem was solved by Bernstein in \cite{bern} who describes the blocks with inertial classes of cuspidal support.

Congruences between automorphic forms were used to solve remarkable problems of arithmetic-geometry. Hence, it becomes natural to study the smooth representations of $p$-adic groups with coefficients in $\Zl$, for $\lprime$ a prime number different from $p$. In the same way, we would like to have a decomposition of their category $\rep[\Zl]{G}$ into $\lprime$-blocks. However, we do not have a result like the Bernstein decomposition, for the $\lprime$-blocks. A decomposition of $\rep[\Fl]{\gl{n}(\kk)}$ into blocks was proved by Vignéras in \cite{vigneras}(see also the work of Sécherre and Stevens \cite{SecherreStevens} for inner forms of $\gl{n}(\kk)$). After that, Helm reached in \cite{helm} a decomposition into $\lprime$-blocks of $\rep[\Zl]{\gl{n}(\kk)}$. He describes these $\lprime$-blocks with the notion of mod $\lprime$ inertial supercuspidal support. Apart from $\gl{n}$ and its inner forms, we don't know much about the $\lprime$-blocks.

The decomposition of Bernstein and Vignéras-Helm both use the ``unicity of the supercuspidal support'', which is true for $\gl{n}$ and in the complex case, but not in general. Therefore, a new strategy to study the $\lprime$-blocks is needed. A new method, using consistent systems of idempotents on the Bruhat-Tits building, was used in \cite{dat_equivalences_2014} to construct in depth zero the $\lprime$-blocks for $\gl{n}$. Then, this was used in \cite{lanard} and \cite{lanard2} to obtain decompositions of the depth zero category over $\Zl$, for a group which is split over an unramified extension of $\kk$. These decompositions are constructed using Deligne-Lusztig theory. They present a lot of interesting properties and links with the local Langlands correspondence, but they are not blocks in general, just unions of blocks.

\bigskip

In this paper, we deal with two problems, the study of the unipotent $\lprime$-blocks and the stable $\lprime$-blocks for unramified classical groups.

\bigskip

Let us start by the unipotent $\lprime$-blocks. Let $\repun{\Ql}{G}$ be the subcategory of unipotent representations. Using \cite{lanard2} (with the system of conjugacy classes composed of the trivial representation for every polysimplex), we also get a $\lprime$-unipotent category over $\Zl$ : $\repun{\Zl}{G}$. The unipotent $\lprime$-blocks are the $\lprime$-blocks of $\repun{\Zl}{G}$.

In \cite{lanard2}, the idempotents are constructed using Deligne-Lusztig theory. A first difficulty for an $\lprime$-block decomposition is that Deligne-Lusztig theory does not produce primitives idempotents. Moreover, replacing naively Deligne-Lusztig idempotents by primitive central ones won't produce consistent systems of idempotents for the $p$-adic group. This is why we introduce for $\gpfini{G}$, a finite reductive group over $\res$, the notion of a $\dun$-series. A $\dun$-series will be a minimal set of irreducible characters with the property that it is a union of Harish-Chandra series (in order to get $p$-adic blocks) and that the idempotent associated has integer coefficients (to get a decomposition over $\Zl$).

\bigskip

Let $(\gpfinialg{G}, \fr)$ be a connected reductive group over $\res$. The $\lprime$-blocks of $\gpfinialg{G}^{\fr}$ are then described using $d$-cuspidal pairs, see \cite{bmm} and \cite{CabanesEnguehardBlocks}. For an integer $d$, a $d$-split Levi subgroup is the centralizer of a $\fr$ stable torus $\gpfinialg{G}$, such that the cardinal of $\gpfinialg{T}^{\fr}$ is a power of $\cycl{d}(q)$, where $\cycl{d}$ is the $d$-th cyclotomic polynomial. The usual Harish-Chandra induction and restriction is then replaced by the Deligne-Lusztig induction and restriction from these $d$-split Levi subgroups. An irreducible character $\chi$ is said to be $d$-cuspidal if and only if $\rdl \chi = 0$ for every proper $d$-split Levi subgroup $\gpfinialg{L}$ and every parabolic $\gpfinialg{P}$ admitting $\gpfinialg{L}$ as Levi subgroup. Let $d$ be the order of $q$ modulo $\lprime$. Then we get a bijection (with some restrictions on $\lprime$) between conjugacy classes of pairs $(\gpfinialg{M},\chi)$, consisting of a $d$-split Levi $\gpfinialg{M}$ and a $d$-cuspidal character of $\gpfinialg{M}^{\fr}$, and $\lprime$-blocks of $\gpfinialg{G}^{\fr}$.

We define a $\dun$-set to be a subset of $\Irr(\gpfinialg{G}^{\fr})$ which is both a union of Harish-Chandra series and of $d$-series (that is a set of characters having the same $d$-cuspidal support). A $\dun$-series is then a $\dun$-set with no proper non-empty $\dun$-subset. In Theorem \ref{theresumdun}, we completely compute the unipotent $\dun$-series of $\gpfinialg{G}^{\fr}$.

\bigskip

Let $\bt$ be the semi-simple Bruhat-Tits building associated to $G$. For $\sigma \in \bt$, we denote by $\quotred{G}{\sigma}$ the reductive quotient of $G$ at $\sigma$, which is a connected reductive group over $\res$. Let $\TG$ be the set of $G$-conjugacy classes of pairs $(\sigma,\pi)$, where $\sigma \in \bt$ and $\pi$ is an irreducible cuspidal representation of $\quotred{G}{\sigma}$. The work of Morris in \cite{morris} shows that to an element $\mathfrak{t} \in \TG$ we can associate $\rep[\Ql][\mathfrak{t}]{G}$, a union of blocks of depth zero. We define an equivalence relation $\sim$ on $\TG$ (see Section \ref{secidempotenthc} for more details) such that $\rep[\Ql][\mathfrak{t}]{G}=\rep[\Ql][\mathfrak{t}']{G}$ if and only if $\mathfrak{t} \sim \mathfrak{t}'$. Denote by $[\mathfrak{t}]$ the equivalence class of $\mathfrak{t}$. Hence, we get a decomposition of the depth zero category
\[ \rep[\Ql][0]{G}=\prod_{[\mathfrak{t}] \in \TG/{\sim}} \rep[\Ql][[\mathfrak{t}]]{G}.\]
Moreover, when $G$ is semisimple and simply-connected, the categories $\rep[\Ql][[\mathfrak{t}]]{G}$ are blocks.

We also denote by $\TGu$ the subset of $\TG$ of pairs $(\sigma,\pi)$ with $\pi$ unipotent, and $\TGl$ the subset of $\TG$ of pairs $(\sigma,\pi)$ with $\pi$ in a Deligne-Lusztig series associated with a semi-simple conjugacy class in $\quotred*{G}{\sigma}$ of order a power of $\lprime$. The equivalence relation $\sim$ is trivial on $\TGu$ (see remark \ref{remSimUnTriv}). We have $\repun{\Ql}{G} = \prod_{\mathfrak{t} \in \TGu} \rep[\Ql][[\mathfrak{t}]]{G}$ and $\repun{\Zl}{G} \cap \rep[\Ql]{G} = \prod_{[\mathfrak{t}] \in \TGl/{\sim}} \rep[\Ql][[\mathfrak{t}]]{G}$ (see the remark below for the definition of the intersection).

\begin{Rem}
    Let $B$ be a direct factor subcategory of $\rep[\Zl]{G}$ and $e \in \mathcal{Z}_{\Zl}(G)$ be the corresponding idempotent in the centre of $\rep[\Zl]{G}$. We then denote by $B \cap \rep[\Ql]{G}$ the direct factor of $\rep[\Ql]{G}$ cut out by $e \in \mathcal{Z}_{\Zl}(G) \subseteq \mathcal{Z}_{\Ql}(G)$.
\end{Rem}

Now, let us come back to the $\lprime$-block.

\begin{The*}
    Let $\lprime$ be a prime different from $p$. Assume that $G$ is semisimple and simply-connected. Let $R$ be an $\lprime$-block of $\repun{\Zl}{G}$. Then $R$ is characterized by the non-empty intersection $R \cap \repun{\Ql}{G}$.
\end{The*}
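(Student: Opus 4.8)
The plan is to reduce the ``characterisation'' to a single non-emptiness statement and then to prove the latter from the explicit construction of the unipotent $\ell$-blocks. First I would observe that the characterisation is formal once non-emptiness is known. Blocks are pairwise orthogonal direct factors of $\rep[\Zl][1]{G}$, so two distinct $\ell$-blocks $R \neq R'$ have no common nonzero object; hence $R \cap \rep[\Ql][1]{G}$ and $R' \cap \rep[\Ql][1]{G}$ can share no nonzero object either. If each of these intersections contains a nonzero object, they are therefore pairwise distinct, and the assignment $R \mapsto R \cap \rep[\Ql][1]{G}$ is injective, which is exactly the assertion that $R$ is determined by this intersection. So the entire content of the theorem is the non-emptiness: no unipotent $\ell$-block is ``purely modular'', i.e. every $R$ contains a genuine unipotent ($s=1$) object over $\Ql$.

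Next I would phrase non-emptiness combinatorially. By the recalled decompositions, $\rep[\Zl][1]{G} \cap \rep[\Ql]{G}$ is a product indexed by $\TGl/{\sim}$, whereas the genuinely unipotent rational category $\rep[\Ql][1]{G}$ is the sub-product indexed by $\TGu/{\sim} \subseteq \TGl/{\sim}$; and since $G$ is simply connected, each factor $\rep[\Ql][[\mathfrak{t}]]{G}$ is a single $\Ql$-block. Each $\ell$-block $R$ then corresponds to a class of the coarser equivalence relation $\approx$ on $\TGl/{\sim}$ that indexes the $\ell$-blocks, and $R \cap \rep[\Ql][1]{G}$ is the product over $[\mathfrak{t}] \in (\text{class of } R) \cap \TGu/{\sim}$. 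Non-emptiness thus becomes the purely combinatorial statement: every $\approx$-class meets the unipotent locus $\TGu/{\sim}$.

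To prove this I would trace the idempotent cutting out $R$ through its local description on the building. Facet by facet it is assembled from the idempotents attached to the unipotent $\dun$-series of the reductive quotients $\quotred{G}{\sigma}$, and $\approx$ is generated by linking two $\sim$-classes that lie in a common such series at some $\sigma$ and propagating this along $\bt$. By definition a unipotent $\dun$-series is a non-empty union of ordinary Harish-Chandra series of unipotent characters, so it contains at least one genuine unipotent character; the associated idempotent is the corresponding sum of ordinary central idempotents, hence nonzero already over $\Ql$ and supported on $s=1$ representations. Theorem \ref{theresumdun}, which computes these series explicitly, confirms that they are built from the unipotent Harish-Chandra series and in particular meet the $s=1$ locus. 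Consistency then shows that the global system is nonzero on the $s=1$ stratum, giving a nonzero object of $R \cap \rep[\Ql][1]{G}$.

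The main obstacle is not this last step but the input it relies on: the identification of the $\ell$-blocks with the $\approx$-classes. One must show that the consistent systems of idempotents built from the unipotent $\dun$-series are primitive over $\Zl$, i.e. that they cannot be refined and that they exhaust all $\ell$-blocks. Primitivity at a single facet is precisely the minimality built into the definition of a $\dun$-series: the requirement that the associated idempotent have coefficients in $\Zl$ forces the grouping of Harish-Chandra and $d$-series and forbids any finer $\Zl$-splitting there. Indecomposability of the global system then requires controlling how these local idempotents link across $\bt$, and this is where the simply-connected hypothesis is essential, both to ensure the $\quotred{G}{\sigma}$ behave well enough for $d$-cuspidal theory to apply and to guarantee, as recalled above, that the $\Ql$-pieces $\rep[\Ql][[\mathfrak{t}]]{G}$ are genuine blocks. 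Once this block classification is in place, the three previous paragraphs conclude.
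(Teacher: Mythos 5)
Your opening reduction is exactly the paper's: by Proposition \ref{prolblockminimal} each $\lprime$-block $R$ corresponds to a minimal $\sim$-stable $\lprime$-integral subset $T\subseteq\TGl$, these subsets partition $\TGl$, so injectivity of $R\mapsto R\cap\rep[\Ql][1]{G}$ is automatic and the whole content is the non-emptiness $T\cap\TGu\neq\emptyset$. The gap is in how you prove that non-emptiness. You deduce it from the identification of the $\lprime$-blocks with the classes generated by the unipotent $\dun$-series (Theorem \ref{thelblockss} of the paper). That identification is established \emph{after}, and \emph{by means of}, the present statement: its proof begins by invoking Corollary \ref{corinterunipol}, which is precisely the non-emptiness you are trying to prove, so your argument is circular relative to any workable order of the results. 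Worse, that identification is only available under the hypothesis \eqref{eqlpadic} ($\lprime$ odd, $\lprime\geq 5$ resp.\ $7$ when exceptional types occur in the reductive quotients), because the $d$-cuspidal classification of unipotent $\lprime$-blocks (Theorem \ref{thelblockdcuspi}) fails for bad primes; the theorem you are proving is asserted for \emph{every} prime $\lprime\neq p$, including $\lprime=2$. Your intermediate claim that minimality of a $\dun$-series ``forbids any finer $\Zl$-splitting'' at a facet is also not a definition but a consequence of that same classification, hence subject to the same restriction.

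The paper's proof of non-emptiness uses none of the $\dun$-machinery and has no hypothesis on $\lprime$: choose a facet $\sigma$ with $e_{T,\sigma}\neq 0$; since $T$ is $\lprime$-integral, $e_{T,\sigma}$ is a sum of $\lprime$-blocks of the finite group $\quotred{G}{\sigma}$, and by a theorem of Cabanes--Enguehard (\cite[Thm.~9.12]{cabanes_enguehard}) every $\lprime$-block of a finite reductive group contains a character lying in a Deligne--Lusztig series $\dl{\quotred{G}{\sigma}}{s}$ with $s$ of order prime to $\lprime$; the cuspidal support of such a character produces an element of $T\cap\TGlp$, and $\TGlp\cap\TGl=\TGu$ (Proposition \ref{prointerlprime} and Corollary \ref{corinterunipol}). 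To repair your proof you must either establish Theorem \ref{thelblockss} independently of this corollary (which would still leave the bad primes uncovered) or substitute an input of the Cabanes--Enguehard type for your appeal to the block classification.
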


Thus, we need to describe the intersection of the $\lprime$-blocks and the unipotent category. To achieve that, we define an equivalence relation on $\TGu$ in the following way. Let $d$ be the order of $q$ modulo $\lprime$. Let $\mathfrak{t}$ and $\mathfrak{t}'$ be two elements of $\TGu$ and $\omega \in \bt$. Then we say that $\mathfrak{t} \simx{\omega} \mathfrak{t}'$ if and only if $\mathfrak{t}=\mathfrak{t'}$ or there exist $(\sigma,\pi)$ and $(\tau,\pi')$ such that $\mathfrak{t}=[\sigma,\pi]$, $\mathfrak{t}'=[\tau,\pi']$, $\omega$ is a face of $\sigma$ and $\tau$, and the Harish-Chandra series in $\quotred{G}{\omega}$ corresponding to the cuspidal pairs $(\quotred{G}{\sigma},\pi)$ and $(\quotred{G}{\tau},\pi')$ are both contained in the same $\dun$-series. Note that by our computation of the $\dun$-series, for $\mathfrak{t}$ and $\omega$ fixed, we know explicitly the set of $\mathfrak{t}'\in\TGu$ such that  $\mathfrak{t} \simx{\omega} \mathfrak{t}'$. Now we define $\siml$, an equivalence relation on $\TGu$ by $\mathfrak{t} \siml \mathfrak{t}'$ if and only if there exist $\omega_1,\cdots,\omega_r \in \bt$ and $\mathfrak{t}_{1}, \cdots, \mathfrak{t}_{r-1} \in \TGu$ such that $\mathfrak{t} \simx{\omega_1} \mathfrak{t}_1\simx{\omega_2} \mathfrak{t}_2 \cdots \simx{\omega_r} \mathfrak{t}'$. We write $\eql{\mathfrak{t}}$ for the equivalence class of $\mathfrak{t}$.

\begin{The*}
    Let $\lprime$ be an odd prime number, different from $p$, such that $\lprime \geq 5$ if a group of exceptional type ($\tDq$, $\dG$, $\Fqu$, $\Esi$, $\dEsi$, $\Ese$) is involved in a reductive quotient and $\lprime \geq 7$ if $\Eh$ is involved in a reductive quotient. To each equivalence class $\eql{\mathfrak{t}} \in \TGu/{\siml}$, we can associate $\rep[\Zl][\eql{\mathfrak{t}}]{G}$ a Serre subcategory of $\repun{\Zl}{G}$, constructed with a consistent system of idempotents such that
    \begin{enumerate}
        \item We have a decomposition
              \[ \repun{\Zl}{G}= \prod_{\eql{\mathfrak{t}} \in \TGu/{\siml}}  \rep[\Zl][\eql{\mathfrak{t}}]{G}.\]
        \item $\rep[\Zl][\eql{\mathfrak{t}}]{G} \cap \repun{\Ql}{G} = \prod_{\mathfrak{u} \in \eql{\mathfrak{t}}} \rep[\Ql][\mathfrak{u}]{G} $

        \item We also have a description of $\rep[\Zl][\eql{\mathfrak{t}}]{G} \cap \rep[\Ql]{G}$.
              Let $(\sigma,\chi) \in \TGl$. Let $t$ be a semi-simple conjugacy class in $\quotred*{G}{\sigma}$ of order a power of $\lprime$, such that $\chi$ is in the Deligne-Lusztig series associated to $t$. Let $\quotred{G}{\sigma}(t)$ be a Levi in $\quotred{G}{\sigma}$ dual to $\cent*{t}{\quotred*{G}{\sigma}}$, the connected centralizer of $t$, $\gpfini{P}$ be a parabolic subgroup with Levi component $\quotred{G}{\sigma}(t)$, $\hat{t}$ be a linear character of $\quotred{G}{\sigma}(t)$ associated to $t$ by duality, and $\chi_t$ be a unipotent character in $\quotred{G}{\sigma}(t)$ such that $ \langle \chi, \inddllite{\quotred{G}{\sigma}(t) \subseteq \gpfini{P}}{\quotred{G}{\sigma}}(\hat{t}\chi_t) \rangle \neq 0$. Let $\pi$ be an irreducible component of $\inddllite{\quotred{G}{\sigma}(t) \subseteq \gpfini{P}}{\quotred{G}{\sigma}}(\chi_t)$. Let $(\quotred{G}{\tau},\lambda)$ be the cuspidal support of $\pi$. Then
              \[ \rep[\Ql][(\sigma,\chi)]{G} \subseteq \rep[\Zl][\eql{(\tau,\lambda)}]{G} \cap \rep[\Ql]{G} \]

        \item  When $G$ is semisimple and simply-connected, the categories $\rep[\Zl][\eql{\mathfrak{t}}]{G}$ are $\lprime$-blocks.
    \end{enumerate}

\end{The*}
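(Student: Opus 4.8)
The plan is to attach to each class $\eql{\mathfrak{t}} \in \TGu/{\siml}$ a consistent system of idempotents on $\bt$, to read off the decomposition from \cite{lanard2}, and to identify the factors as $\lprime$-blocks via the first theorem above. Concretely, for a polysimplex $\sigma \in \bt$ and a unipotent $\dun$-series $\mathcal{S}$ of the reductive quotient $\quotred{G}{\sigma}$, the defining minimality of a $\dun$-series among nonempty subsets that are simultaneously a union of Harish-Chandra series and a union of $d$-series, together with the explicit computation of Theorem \ref{theresumdun}, ensures that the idempotent $e_{\sigma,\mathcal{S}}$ cutting out $\mathcal{S}$ has coefficients in $\Zl$. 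I would set $e_\sigma^{\eql{\mathfrak{t}}} = \sum_{\mathcal{S}} e_{\sigma,\mathcal{S}}$, summing over the $\dun$-series whose class lies in $\eql{\mathfrak{t}}$; because each $\mathcal{S}$ is a union of Harish-Chandra series, these idempotents are compatible with parabolic induction and restriction, the precise input required to transport them to the Hecke algebra of $G$ following \cite{lanard2}.

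Next I would verify that $(e_\sigma^{\eql{\mathfrak{t}}})_{\sigma \in \bt}$ is a consistent system of idempotents. The relation $\simx{\omega}$ is tailored so that whenever $\omega$ is a common face of $\sigma$ and $\tau$, the Harish-Chandra series in $\quotred{G}{\omega}$ attached to the two cuspidal pairs sit inside a single $\dun$-series; fed through the consistency condition of \cite{lanard2}, this makes $e_\sigma^{\eql{\mathfrak{t}}}$ and $e_\tau^{\eql{\mathfrak{t}}}$ agree at $\omega$. Distinct classes give orthogonal systems and summing over all classes recovers the full unipotent idempotent, so the associated Serre subcategories are mutually orthogonal direct factors whose product is $\rep[\Zl][1]{G}$, which is assertion (1).

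For (2), inverting $\lprime$ refines each $\dun$-series into the Harish-Chandra series it contains, matching the decomposition $\rep[\Ql][1]{G} = \prod_{[\mathfrak{t}]} \rep[\Ql][[\mathfrak{t}]]{G}$ recalled in the introduction; as $\eql{\mathfrak{t}}$ is by construction the union of the $\sim$-classes it gathers, the factor $\rep[\Zl][\eql{\mathfrak{t}}]{G}$ meets $\rep[\Ql][1]{G}$ in exactly $\prod_{\mathfrak{u} \in \eql{\mathfrak{t}}} \rep[\Ql][\mathfrak{u}]{G}$. For (3), I would use Jordan decomposition to replace a representation in the Deligne-Lusztig series of a semisimple class $t$ of $\lprime$-power order by the unipotent character $\chi_t$ of the dual Levi $\quotred{G}{\sigma}(t)$, and then follow the cuspidal support of a constituent $\pi$ of $\inddllite{\quotred{G}{\sigma}(t) \subseteq \gpfini{P}}{\quotred{G}{\sigma}}(\chi_t)$ through $d$-Harish-Chandra theory; the $d$-block theory of \cite{bmm,CabanesEnguehardBlocks}, valid under the stated hypotheses on $\lprime$, then places $\rep[\Ql][(\sigma,\chi)]{G}$ inside $\rep[\Zl][\eql{(\tau,\lambda)}]{G} \cap \rep[\Ql]{G}$.

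Part (4) is the step I expect to be the crux. By the first theorem above, each $\lprime$-block $R$ of $\rep[\Zl][1]{G}$ is determined by the nonempty intersection $R \cap \rep[\Ql][1]{G}$, and being indecomposable it lies inside a single factor $\rep[\Zl][\eql{\mathfrak{t}}]{G}$, so the blocks refine the decomposition of (1). It remains to prove the reverse, that each factor is indecomposable, for which by transitivity of $\siml$ it suffices to show that two Morris classes related by a single $\simx{\omega}$ lie in the same $\lprime$-block. Here I would argue locally at $\omega$: since a $\dun$-series is at once a union of Harish-Chandra series and a union of $d$-series, which by \cite{bmm,CabanesEnguehardBlocks} are the $\lprime$-blocks of $\quotred{G}{\omega}$, no central idempotent of $\rep[\Zl][1]{G}$ can separate characters lying in one $\dun$-series, so the two classes stay together and a chain of such links confines an entire $\siml$-class to a single block. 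The delicate points are to make this local-to-global indecomposability argument rigorous on the building and to confirm that the bounds $\lprime \geq 5$ and $\lprime \geq 7$ in the exceptional cases are exactly what guarantee both the integrality of the $\dun$-series idempotents and the Cabanes--Enguehard bijection between $d$-cuspidal pairs and $\lprime$-blocks.
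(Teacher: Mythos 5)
Your outline follows the paper's architecture for parts (1), (2) and (4), but it rests on a claim that is false and that conceals the actual technical content of the proof. You assert that the minimality of a unipotent $\dun$-series $\mathcal{S}$ of $\quotred{G}{\sigma}$, being simultaneously a union of Harish-Chandra series and of $d$-series, ``ensures that the idempotent $e_{\sigma,\mathcal{S}}$ cutting out $\mathcal{S}$ has coefficients in $\Zl$''. It does not: $\mathcal{S}$ is contained in the unipotent characters $\dl{\quotred{G}{\sigma}}{1}$, whereas by Theorem \ref{thelblockdcuspi} the $\lprime$-block $b(\gpfinialg{L},\lambda)$ meets $\dl{\gpfini{G}}{1}$ only in the $d$-series $\dldcusp{L}{\lambda}$ and also contains characters of the series $\dl{\gpfini{G}}{t}$ for $t$ of order a power of $\lprime$. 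Hence the idempotent of $\mathcal{S}$ alone does not lie in $\Zl[\quotred{G}{\sigma}]$ (already the idempotent of all of $\dl{\gpfini{G}}{1}$ is not integral; one needs $\dll{\gpfini{G}}{1}$ by Bonnaf\'e--Rouquier), and summing over the $\dun$-series in a class $\eql{\mathfrak{t}}$ does not repair this, since the sum is still supported on unipotent characters. The paper's construction therefore passes to the $\lprime$-extension $\mathcal{E}_{\eql{\mathfrak{t}},\omega,\lprime}$, which adjoins precisely these non-unipotent $\lprime$-singular Deligne--Lusztig series, and only then does Theorem \ref{thelblockdcuspi} give integrality.

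This omission removes the hardest step. Once the local idempotents are the $\lprime$-extensions, $0$-consistency of the system is no longer the formal compatibility of Harish-Chandra series with parabolic restriction that you invoke; it requires that Harish-Chandra restriction commute with taking $\lprime$-extensions of $\dun$-sets (Proposition \ref{proresldunseries}). Proving that forces a comparison of Harish-Chandra induction with Deligne--Lusztig induction from the $E_{q,\lprime}$-split Levis $\quotred{G}{\omega}(t)$ produced by Jordan decomposition (Lemmas \ref{lemdinducdun} through \ref{lemcuspsupportjordan} and Lemma \ref{lemcommutinducdunseries}), and it is there --- not in any integrality of the $\dun$-series idempotents themselves --- that the hypotheses $\lprime \geq 5$ and $\lprime \geq 7$ for exceptional types are actually consumed. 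Your argument for (4), namely that an integral central idempotent of $\quotred{G}{\omega}$ cannot separate the unipotent characters of a single $\dun$-series so that a chain of $\simx{\omega}$-links confines an $\siml$-class to one block, is essentially Proposition \ref{prod1linkedblock} and is sound, provided you also use that every $\sim$-stable $\lprime$-integral subset of $\TGl$ meets $\TGu$ (Corollary \ref{corinterunipol}) to conclude minimality.
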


We also obtain results for the bad prime $\lprime=2$ in some special cases (which include classical groups).

\begin{The*}
    Let $G$ be a semisimple and simply-connected group such that all the reductive quotients only involve types among $\Aa$, $\Bb$, $\Cc$ and $\Dd$, and $p\neq 2$. Then $\rep[\overline{\mathbb{Z}}_{2}][1]{G}$ is a $2$-block.
\end{The*}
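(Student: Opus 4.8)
The plan is to reduce the assertion to connectivity of the Bruhat--Tits building, the essential input being the structure of unipotent $2$-blocks of finite classical groups. Since $p\neq 2$, the cardinal $q$ of $\res$ is odd, so $d$, the order of $q$ modulo $2$, equals $1$. At the bad prime $\lprime=2$ the $d$-series computation of Theorem \ref{theresumdun} is \emph{not} the relevant one: for the associated idempotent to be $\overline{\mathbb{Z}}_2$-integral a $\dun$-set must be a union of $2$-blocks, and a single Harish-Chandra series is in general a proper part of a $2$-block. The key finite-group fact I would invoke is that when the type of $\quotred{G}{\sigma}$ involves only $\Aa,\Bb,\Cc,\Dd$ and $q$ is odd, all unipotent characters of $\quotred{G}{\sigma}$ lie in a single $2$-block, the principal one. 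For type $\Aa$ this already holds at the level of Harish-Chandra series (there is a unique unipotent series, the principal one, as $\GL_m$ has no nontrivial unipotent cuspidal character); for $\Bb,\Cc,\Dd$ it is the block classification of Fong--Srinivasan at $\lprime=2$, which merges the several unipotent Harish-Chandra series into one block. This is precisely where the restriction on the types and the hypothesis $p\neq 2$ enter, exceptional types admitting unipotent cuspidal characters that remain in their own $2$-block.

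It follows that for $\lprime=2$ the relevant $\dun$-series of every $\quotred{G}{\omega}$ is the full set $\Uch(\quotred{G}{\omega})$. Consequently, for $\mathfrak{t}=[\sigma,\pi]$ and $\mathfrak{t}'=[\tau,\pi']$ in $\TGu$, the two Harish-Chandra series of $\quotred{G}{\omega}$ attached to the cuspidal pairs $(\quotred{G}{\sigma},\pi)$ and $(\quotred{G}{\tau},\pi')$ consist of unipotent characters, hence automatically lie in the same $\dun$-series. In other words $\mathfrak{t}\simx{\omega}\mathfrak{t}'$ holds as soon as $\omega$ is a common face of $\sigma$ and $\tau$, with no further constraint.

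I would then show that $\TGu/{\siml}$ is a single class. Given $(\sigma,\pi)\in\TGu$, choose a chamber $C$ of $\bt$ with $\sigma\leq C$. The reductive quotient $\quotred{G}{C}$ is a torus, whose unique unipotent character is the trivial one, so $(C,\mathbf{1})\in\TGu$; taking $\omega=\sigma$, a common face of $\sigma$ and $C$, gives $(\sigma,\pi)\simx{\sigma}(C,\mathbf{1})$ by the previous paragraph. It therefore suffices to connect the classes $(C,\mathbf{1})$ as $C$ runs over the chambers. Any two chambers are linked by a gallery $C=C_0,\dots,C_r=C'$ whose consecutive terms share a panel $\omega_i$; since $\omega_i$ is a common face of $C_i$ and $C_{i+1}$ we get $(C_i,\mathbf{1})\simx{\omega_i}(C_{i+1},\mathbf{1})$. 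Hence all $(C,\mathbf{1})$, and therefore all of $\TGu$, lie in one $\siml$-class.

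Finally, the construction underlying the main theorem, carried out at $\lprime=2$, attaches to the unique class a consistent system of idempotents and yields a decomposition $\rep[\overline{\mathbb{Z}}_2][1]{G}=\prod_{\eql{\mathfrak{t}}\in\TGu/\siml}\rep[\overline{\mathbb{Z}}_2][\eql{\mathfrak{t}}]{G}$ with a single factor, equal to the whole category; the simply-connected hypothesis then guarantees that this factor is a genuine $2$-block and not merely a Serre subcategory. The main obstacle is not the connectivity argument, which is formal, but the finite-group input and its integrality consequence at the bad prime: one must know that the idempotent collecting all unipotent central idempotents of each $\quotred{G}{\sigma}$ is $\overline{\mathbb{Z}}_2$-integral---equivalently, that it is the principal-block idempotent---and that the consistency and integrality statements of the main theorem, established for odd $\lprime$, persist for $\lprime=2$ in classical type. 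Verifying this, which rests on the Fong--Srinivasan description and forces the restriction to types $\Aa,\Bb,\Cc,\Dd$, is the crux of the proof.
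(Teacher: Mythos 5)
Your key finite-group input is the right one and is exactly the one the paper uses: for types $\Aa$, $\Bb$, $\Cc$, $\Dd$ with $q$ odd, all unipotent characters of $\quotred{G}{\sigma}$ lie in a single unipotent $2$-block (the paper cites \cite[Thm. 21.14]{cabanes_enguehard}, which is the Fong--Srinivasan-type statement you invoke). Your reduction to connectivity of the building is also sound in spirit. However, there is a genuine gap in the skeleton you propose: you route the argument through the equivalence relation $\siml$ and then appeal to ``the construction underlying the main theorem, carried out at $\lprime=2$.'' That construction is not available at $\lprime=2$: the consistency of the systems $e_{\eql{\mathfrak{t}}}$ rests on Propositions \ref{proindldunseries} and \ref{proresldunseries}, the link between $\simx{\omega}$ and $2$-integrality rests on Proposition \ref{prod1linkedblock}, and all of these are proved only under \eqref{eql}/\eqref{eqlpadic}, i.e.\ for odd $\lprime$, because they depend on the Cabanes--Enguehard parametrization of Theorem \ref{thelblockdcuspi}. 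You correctly flag this as ``the crux,'' but you leave it unverified, and re-establishing that machinery at the bad prime would be substantial work.

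The paper's proof of Theorem \ref{theldeux} shows that none of this machinery is needed. It goes directly through Proposition \ref{prolblockminimal}, which is valid for every $\lprime$: the $2$-blocks of $\rep[\overline{\mathbb{Z}}_{2}][1]{G}$ correspond to minimal $\sim$-stable $2$-integral subsets $T$ of $\mathcal{T}_{2}^{1}(G)$. If $e_{T,\sigma}\neq 0$, then $2$-integrality forces $e_{T,\sigma}$ to be a sum of $2$-block idempotents of $\quotred{G}{\sigma}$; since the only unipotent $2$-block is the one cutting out $\mathcal{E}_{2}(\quotred{G}{\sigma},1)$, the idempotent $e_{T,\sigma}$ must be all of it, so every unrefined type incident to $\sigma$ lies in $T$. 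In particular $(C,1)\in T$ for a chamber $C$, and since every polysimplex is a face of a chamber (all chambers being $G$-conjugate for $G$ simply-connected), $e_{T,\sigma}\neq 0$ for every $\sigma$, whence $T=\mathcal{T}_{2}^{1}(G)$. This replaces both your gallery argument and your appeal to the $\lprime$-odd construction; if you keep your outline, you must at minimum prove the $\lprime=2$ analogue of Proposition \ref{prod1linkedblock} and of the integrality/consistency statements, which is precisely the step your write-up defers.
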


As mentioned before, we can compute explicitly the equivalence relation $\simx{\omega}$, so we can also know $\siml$. We work out a few examples here, where we make $\siml$ explicit, hence also the unipotent $\lprime$-blocks.

\begin{The*}
    Let $G$ be a semisimple and simply-connected group.
    \begin{enumerate}
        \item If $\lprime$ is banal (see Definition \ref{deflbanal}), then the unipotent $\lprime$-blocks are indexed by $\TGu$.
        \item If $\lprime$ divides $q-1$ and satisfies the conditions of the previous theorem, then $\siml$ is the trivial relation and the unipotent $\lprime$-blocks are indexed by $\TGu$. Moreover, the intersection of an $\lprime$-block with $\repun{\Ql}{G}$ is a Bernstein block.
        \item If $G=\sl{n}(\kk)$ then $\repun{\Zl}{\sl{n}(\kk)}$ is an $\lprime$-block.
    \end{enumerate}
\end{The*}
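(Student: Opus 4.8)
The plan is to deduce all three statements from the main theorem (the four-part theorem above), which parametrizes the unipotent $\lprime$-blocks by $\TGu/{\siml}$; thus everything reduces to computing the relation $\siml$, whose essential input is the explicit list of unipotent $\dun$-series supplied by Theorem~\ref{theresumdun}. Throughout I would compare $\siml$ with the $\lprime$-independent relation $\sim$ governing the ordinary (complex) unipotent blocks: $\sim$ is generated by the same local moves as $\simx{\omega}$, but with ``lying in the same $\dun$-series'' replaced by ``lying in the same Harish-Chandra series''. Since a $\dun$-series is by definition a union of Harish-Chandra series, one always has $\sim\ \subseteq\ \siml$, so every unipotent $\lprime$-block is a union of ordinary unipotent blocks, and the two relations coincide precisely when each relevant $\dun$-series consists of a single Harish-Chandra series. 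For part (1), I would argue that when $\lprime$ is banal the order $d$ of $q$ modulo $\lprime$ is too large for the reductive quotients to contain a proper $d$-split Levi subgroup, so every character is $d$-cuspidal and each $d$-series is a singleton; by Theorem~\ref{theresumdun} each unipotent $\dun$-series is then a single Harish-Chandra series. Hence $\simx{\omega}$ is exactly equality of Harish-Chandra series across a common face, so $\siml=\sim$ and the unipotent $\lprime$-blocks coincide with the ordinary unipotent blocks.

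For part (2), I would first note that $\lprime\mid q-1$ forces $d=1$; the $1$-split Levi subgroups are the ordinary split Levi subgroups and Deligne--Lusztig induction from them is Harish-Chandra induction, so $1$-cuspidality is ordinary cuspidality and the $1$-series are the Harish-Chandra series. Again $\dun$-series $=$ Harish-Chandra series and therefore $\siml=\sim$. It then remains to show that, when all reductive quotients are classical, $\sim$ is the trivial relation on $\TGu$: using the explicit classification of unipotent cuspidal pairs of the (products of) classical groups occurring as reductive quotients, one checks that two pairs joined through a common face $\omega$ with equal Harish-Chandra series in $\quotred{G}{\omega}$ must already be $G$-conjugate. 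Granting this, each class $\eql{\mathfrak{t}}$ is a singleton, and the description of $\rep[\Zl][\eql{\mathfrak{t}}]{G}\cap\rep[\Ql][1]{G}$ in the main theorem collapses to a single $\rep[\Ql][\mathfrak{t}]{G}$, which is a Bernstein block; the hypotheses $\lprime$ odd and of classical type guarantee that the main theorem applies.

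For part (3), I would compute $\TGu$ directly. Every parahoric reductive quotient of $\sl{n}[\kk]$ is a split group of type $\An$ (a product of type-$A$ factors with a central torus), and a split group of type $A$ of positive semisimple rank has no unipotent cuspidal representation. Consequently an element $(\sigma,\pi)\in\TGu$ forces $\quotred{G}{\sigma}$ to be a torus, i.e. $\sigma$ a chamber and $\pi$ the trivial character; since all chambers of the building of $\sl{n}[\kk]$ are $G$-conjugate, $\TGu$ is a single class. Hence $\TGu/{\siml}$ is a single class and $\rep[\Zl][1]{\sl{n}[\kk]}$ is one $\lprime$-block --- for odd $\lprime$ by the main theorem, and for $\lprime=2$ (with $p\neq 2$) by the classical-type theorem for $\lprime=2$, so that the assertion holds for every $\lprime\neq p$.

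The step I expect to be the main obstacle is the triviality of $\sim$ on $\TGu$ for classical groups in part~(2): this is the one place requiring genuine input beyond the formal reduction, namely a case-by-case verification --- blending the $\dun$-series classification with the geometry of the Bruhat--Tits building --- that distinct unipotent cuspidal pairs are never identified through a common face. Parts~(1) and~(3) are, by contrast, essentially bookkeeping on top of Theorem~\ref{theresumdun} and the structure of type-$A$ reductive quotients.
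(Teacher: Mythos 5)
Your overall strategy --- reduce everything to computing $\siml$ via the classification of unipotent $\dun$-series and then invoke the main theorem --- is the paper's strategy for parts (2) and (3), and your part (3) is essentially the paper's proof. But there are two genuine problems. First, in part (2) you have misread the hypothesis: ``the conditions of the previous theorem'' are the conditions \eqref{eqlpadic} on $\lprime$ relative to the types occurring in the reductive quotients, not a restriction of $G$ to classical type. The step you flag as the main obstacle --- that $\sim$ (equivalently $\siml$ when $d=1$) is trivial on $\TGu$ --- must therefore be proved for \emph{all} types, and you neither carry out your proposed case-by-case verification nor could it legitimately be confined to classical reductive quotients. The paper disposes of this in one line with no case analysis: if a Levi subgroup of a finite reductive group admits a unipotent cuspidal representation, then the association class of a parabolic with that Levi equals its conjugation class (Lusztig, \cite[(8.2.1)]{lusztig}); hence two unrefined unipotent types linked through a common face $\omega$ by equality of Harish-Chandra series in $\quotred{G}{\omega}$ are already $G$-conjugate. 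Without this fact (or a complete case check covering exceptional types), your part (2) does not establish the statement.

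Second, in part (1) you deduce ``the unipotent $\lprime$-blocks are indexed by $\TGu/{\siml}$'' from the main theorem, whose hypothesis is \eqref{eqlpadic}; the banal case of the statement carries no such hypothesis, so you owe an argument that a banal prime automatically satisfies \eqref{eqlpadic} (it does --- the bad primes, and $2$, always divide the order of some reductive quotient --- but this must be said). The paper avoids the issue entirely: when $\lprime$ is banal each idempotent $e_{[\mathfrak{t}]}$ already lies in $\hecke{G}{\Zl}$, so each singleton $\{[\mathfrak{t}]\}$ is a minimal $\sim$-stable $\lprime$-integral set and Proposition \ref{prolblockminimal} gives the block decomposition directly, with no appeal to $\dun$-theory. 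Your observation that banality forces every $\dun$-series to be a single Harish-Chandra series is correct (it is Remark \ref{remsommetsimx}), but routing through the main theorem both does more work than necessary and leaves the hypothesis question open.
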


We also work out the case $G=\sp{2n}(\kk)$, but to do that we require a few more notations.

Let $\TGs:=\{(s,s')\in \mathbb{N}^2, s(s+1)+s'(s'+1) \leq n\}$. To $(s,s') \in \TGs$ we can associate $\mathfrak{t}(s,s')=(\sigma(s,s'),\pi(s,s')) \in \TGu$, such that the reductive quotient at $\sigma(s,s')$ is $\gl{1}(\res)^{n-s(s+1)+s'(s'+1)}\times \sp{2s(s+1)}(\res)\times\sp{2s'(s'+1)}(\res)$ and $\pi(s,s')$ is the unique unipotent irreducible cuspidal representation in this group. The map $(s,s')\mapsto \mathfrak{t}(s,s')$ gives a bijection between $\TGs$ and $\TGu$. Also denote by $\Scusp$ the set
\[\Scusp=\{(s,s') \in \TGs, \left\{
    \begin{array}{ll}
        s(s+1)+s'(s'-1) > n-d/2 \\
        s'(s'+1)+s(s-1) > n-d/2
    \end{array}
    \right\}\}.\]

Putting together the previous theorems and making the equivalence relation explicit, we obtain the following description of the unipotent $\ell$-blocks of $\sp{2n}(\kk)$.

\begin{The*}
    Let $\lprime$ be prime not dividing $q$.
    \begin{enumerate}
        \item If $\lprime=2$: $\rep[\overline{\mathbb{Z}}_{2}][1]{\sp{2n}(\kk)}$ is a $2$-block.
        \item If $\lprime \neq 2$. Let $d$ be the order of $q$ modulo $\lprime$.
              \begin{enumerate}
                  \item if $d$ is odd, $\siml$ is the trivial equivalence relation giving the following decomposition into $\lprime$-blocks
                        \[\repun{\Zl}{\sp{2n}(\kk)}= \prod_{\mathfrak{t}\in \TGu} \rep[\Zl][\eql{\mathfrak{t}}]{\sp{2n}(\kk)}.\]
                  \item if $d$ is even, the equivalence classes of $\siml$ are the singletons $\{\mathfrak{t}(s,s')\}$ for $(s,s') \in \Scusp$ and $\{\mathfrak{t}(s,s'), (s,s') \in \TGs \setminus \Scusp\}$ thus giving the $\lprime$-block decomposition
                        \[\repun{\Zl}{\sp{2n}(\kk)}=\rep[\Zl][\eql{\mathfrak{t}(0,0)}]{\sp{2n}(\kk)} \times \prod_{(s,s')\in \Scusp} \rep[\Zl][\eql{\mathfrak{t}(s,s')}]{\sp{2n}(\kk)}.\]

              \end{enumerate}
    \end{enumerate}
\end{The*}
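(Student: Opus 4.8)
The plan is to reduce the whole statement to a computation of the equivalence relation $\siml$ on $\TGu$. Since $\sp{2n}(\kk)$ is semisimple and simply-connected of type $\Cn$, every reductive quotient $\quotred{G}{\sigma}$ is a product of symplectic and general linear groups, hence involves only the types $\Aa$ and $\Cc$; so no exceptional type occurs and the hypotheses of the general block classification reduce to $\lprime$ odd and $\lprime\neq p$. That theorem then identifies the unipotent $\lprime$-blocks with the classes in $\TGu/{\siml}$, each $\eql{\mathfrak{t}}$ being a single block. For $\lprime=2$ there is nothing to compute: $\lprime=2\neq p$ forces $p\neq2$, and as the reductive quotients involve only types $\Aa,\Bb,\Cc,\Dd$, the classical-groups theorem stated above applies and gives that $\rep[\overline{\mathbb{Z}}_{2}][1]{\sp{2n}(\kk)}$ is a single $2$-block, proving (1). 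From now on $\lprime\neq2$.

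Next I would set up the combinatorics through the bijection $\TGs\cong\TGu$. A facet $\omega$ of $\bt$ has reductive quotient of the shape $\sp{2a}(\res)\times\gl{m_1}(\res)\times\cdots\times\gl{m_k}(\res)\times\sp{2b}(\res)$, the two symplectic ends coming from the two extremities of the affine diagram $\tilde{\Cc}_n$ and the $\gl{}$-blocks from its interior segments; the datum $\mathfrak{t}(s,s')$ is the facet whose quotient is $\sp{2s(s+1)}(\res)\times\gl{1}(\res)^{n-s(s+1)-s'(s'+1)}\times\sp{2s'(s'+1)}(\res)$, carrying the unique unipotent cuspidal representation on each symplectic factor and the trivial character elsewhere. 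Given two data and a common face $\omega$, the quotient $\quotred{G}{\omega}$ is a product of two symplectic groups (with possibly extra central $\gl{1}$-blocks, which carry no nontrivial unipotent character) having both cuspidal Levis as Levi subgroups; since Harish-Chandra series, $d$-series and hence $\dun$-series all factor through a direct product, two data are $\simx{\omega}$-related exactly when their $s$-parts and their $s'$-parts separately lie in a common $\dun$-series in the two symplectic factors. Using the explicit description of the $\dun$-series in Theorem \ref{theresumdun}, I would therefore reduce $\siml$ to the single question: in $\sp{2A}(\res)$, when do the unipotent cuspidal supports $\sp{2s(s+1)}(\res)\times\gl{1}(\res)^{A-s(s+1)}$ and $\sp{2s''(s''+1)}(\res)\times\gl{1}(\res)^{A-s''(s''+1)}$ lie in a common $\dun$-series?

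For $d$ odd this never happens unless $s=s''$: a $\cycl{d}$-torus of a symplectic group is of $\GL$-type, so passing to a $d$-cuspidal support strips $d$-hooks from the symbol, an operation preserving the degenerate part of the symbol that records the Harish-Chandra cuspidal support, hence the integer $s$. Thus every $d$-series is contained in a single Harish-Chandra series, Theorem \ref{theresumdun} forces the $\dun$-series to coincide with the Harish-Chandra series, each $\simx{\omega}$ is trivial, and $\siml$ is the trivial relation; this is case (2)(a). For $d$ even, write $d=2e$. Now a $\cycl{d}$-torus corresponds to an $e$-cohook, whose removal does change that degenerate part and lowers the relevant parameter by one, sending $\sp{2s(s+1)}(\res)$ to $\sp{2(s-1)s}(\res)$ — but only when the $\gl{1}$-part is long enough. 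Feeding Theorem \ref{theresumdun} through the factorization established above, one finds that $\mathfrak{t}(s,s')$ is $\simx{\omega}$-linked to a strictly smaller datum, for a suitable vertex $\omega$ obtained by distributing the interior $\gl{1}$-blocks so as to maximise the rank of the relevant symplectic factor, if and only if one of the inequalities
\[ s(s+1)+s'(s'-1)\le n-e \quad\text{or}\quad s'(s'+1)+s(s-1)\le n-e \]
holds, that is, if and only if $(s,s')\notin\Scusp$. I would then establish the two complementary statements: data in $\Scusp$ are $\dun$-cuspidal in every reductive quotient in which they occur, so they are $\simx{\omega}$-related to no other datum and give singleton classes; and any datum outside $\Scusp$ reduces, by one of the two inequalities, to a strictly smaller datum which, by an elementary monotonicity check on the inequalities, is again outside $\Scusp$, so that iterating the reduction connects every element of $\TGs\setminus\Scusp$ to $\mathfrak{t}(0,0)$ (which indeed lies outside $\Scusp$ whenever that set is nonempty). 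This yields exactly the partition, hence the block decomposition, of case (2)(b).

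The main obstacle is the $d$-even bookkeeping. One must match the symbol-combinatorial criterion for removing a single $e$-cohook to the two explicit inequalities cutting out $\Scusp$, realise each admissible reduction by an honest vertex $\omega$ of the building (choosing the auxiliary symplectic ranks $A,B$ with $A+B=n$ and the split of the $\gl{1}$-blocks so that both cuspidal Levis sit inside $\quotred{G}{\omega}$), and, most delicately, prove the $\dun$-cuspidality of the $\Scusp$-data uniformly across all reductive quotients that can contain them, rather than in a single fixed group. It is precisely here that the explicit computation of the $\dun$-series in Theorem \ref{theresumdun} is indispensable, both to obtain the inequalities and to rule out spurious identifications between a $\Scusp$-singleton and the large class.
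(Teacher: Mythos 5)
Your plan follows essentially the same route as the paper: dispose of $\lprime=2$ via the types-$\Aa,\Bb,\Cc,\Dd$ theorem, reduce the odd case to computing $\siml$ via the general block theorem (condition \eqref{eqlpadic} being automatic for type $\Cc$), use that $\dun$-series are $1$-series when $d$ is odd, and for $d$ even derive exactly the two inequalities cutting out $\Scusp$ from the defect criterion $\defect(s)\le \kgd$ at the vertices $x_i$ with $\quotred{G}{x_i}\simeq\sp{2i}(\res)\times\sp{2(n-i)}(\res)$, then collapse the complement onto $\mathfrak{t}(0,0)$. The only cosmetic difference is that the paper realises the collapse in two explicit steps $\mathfrak{t}(s,s')\simx{x_i}\mathfrak{t}(s,0)\simx{x_n}\mathfrak{t}(0,0)$ rather than by your iterated one-step reductions with a monotonicity check, and it carries out in full the vertex-by-vertex bookkeeping you defer as the ``main obstacle.''
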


\begin{Rem*}
    \begin{enumerate}
        \item In the case $d$ odd, or $d$ even and $(s,s')\in \Scusp$, we see that the intersection of an $\lprime$-block with $\repun{\Ql}{G}$ is a Bernstein block.
        \item If $\lprime > n $, in the case $d$ even and $(s,s')\in \Scusp$, then $\rep[\Zl][\eql{\mathfrak{t}(s,s')}]{\sp{2n}(\kk)} \cap \rep[\Ql]{G}$ is a Bernstein block.
    \end{enumerate}

\end{Rem*}

\bigskip

Let us now turn to the study of the stable $\lprime$-blocks. Let $G$ be a classical unramified group. In this case we have the local Langlands correspondence (\cite{HarrisTaylor} \cite{henniart} \cite{arthur} \cite{mok} \cite{KMSW}). The block decomposition is not compatible with the local Langlands correspondence, two irreducible representations may have the same Langlands parameter but may not be in the same block. However, we can look for the ``stable'' blocks, which are the smallest direct factors subcategories stable by the local Langlands correspondence. These categories correspond to the primitive idempotents in the stable Bernstein centre, as defined in \cite{haines}. In \cite{lanard2}, the decomposition into stable blocks of the depth zero category is given by
\[\rep[\Ql][0]{\G} = \prod_{(\phi, \sigma) \in \Lpbm{\iner^{\Ql}}} \rep[\Ql][(\phi,\sigma)]{\G}\]
where the set $\Lpbm{{\iner^{\Ql}}}$ is defined in \cite[Def. 4.4.2]{lanard2}. An analogous decomposition is given over $\Zl$ and we prove here that this is the stable $\lprime$-block decomposition.

\begin{The*}
    Let $G$ be an unramified classical group and $p \neq 2$. Then the decomposition  of \cite{lanard2}
    \[ \rep[\Zl][0]{G} = \prod_{(\phi, \sigma) \in \Lpbm{\iner^{\Zl}}} \rep[\Zl][(\phi,\sigma)]{G}.\]
    is the decomposition of $\rep[\Zl][0]{\G}$ into stable $\lprime$-blocks, that is, these categories correspond to primitive integral idempotent in the stable Bernstein centre.
\end{The*}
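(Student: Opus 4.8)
The plan is to establish the two properties that together characterise the stable $\lprime$-block decomposition among all decompositions of $\rep[\Zl][0]{G}$: first, that each factor $\rep[\Zl][(\phi,\sigma)]{G}$ is stable under the local Langlands correspondence, so that the stable $\lprime$-blocks refine the given decomposition; and second, that each factor is minimal with this property, i.e.\ its defining idempotent is primitive in the integral stable Bernstein centre $\mathfrak{Z}(G,\Zl)\cap\mathfrak{Z}^{st}(G,\Ql)$, so that the given decomposition in turn refines the stable $\lprime$-blocks. Together these force the two decompositions to coincide, which is exactly the assertion that the factors correspond to the primitive integral idempotents of the stable Bernstein centre.

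For stability I would start from \cite{lanard2}, where the analogous decomposition over $\Ql$ is shown to be the stable block decomposition; in particular each idempotent $e_{(\phi',\sigma')}^{\Ql}$ lies in $\mathfrak{Z}^{st}(G,\Ql)$. Reduction of inertial parameters modulo $\lprime$ gives a map $\Lpbm{\iner^{\Ql}}\to\Lpbm{\iner^{\Zl}}$ whose fibres record which $\Ql$-blocks merge over $\Zl$, and by the compatible construction of the two decompositions in \cite{lanard2} one has $e_{(\phi,\sigma)}^{\Zl}\otimes_{\Zl}\Ql=\sum_{(\phi',\sigma')\mapsto(\phi,\sigma)}e_{(\phi',\sigma')}^{\Ql}$. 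Since the stable Bernstein centre is a subring closed under orthogonal sums, this sum is stable; as $e_{(\phi,\sigma)}^{\Zl}$ is integral by construction, it lies in the integral stable Bernstein centre. Consequently any two irreducible $\Ql$-representations with the same L-parameter, lying in a common $\Ql$-stable block, remain in the same $\Zl$-factor, so the decomposition is stable under the correspondence.

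The substantial point is primitivity. Suppose $e_{(\phi,\sigma)}^{\Zl}=e_{1}+e_{2}$ with $e_1,e_2$ nonzero orthogonal idempotents of the integral stable Bernstein centre; over $\Ql$ this splits the fibre $F(\phi,\sigma)=\{(\phi',\sigma')\mapsto(\phi,\sigma)\}$ into two nonempty subsets, each a union of $\Ql$-stable blocks. I would rule this out by producing, between any two members of $F(\phi,\sigma)$, a stable congruence modulo $\lprime$ that no integral stable idempotent can respect. Because we are in depth zero, the idempotents of \cite{lanard2} are assembled along the building from the reductive quotients $\quotred{G}{\sigma}$, so such a congruence reduces to a congruence modulo $\lprime$ between Deligne--Lusztig virtual characters of the finite reductive quotients; using Jordan decomposition to twist an arbitrary Lusztig series to the unipotent case, these are governed by the $d$-Harish-Chandra theory with $d$ the order of $q$ modulo $\lprime$ and are pinned down by the explicit $\dun$-series computation of Theorem \ref{theresumdun}. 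For the types $\Aa$, $\Bb$, $\Cc$, $\Dd$ the ensuing symbol combinatorics then shows that $F(\phi,\sigma)$ is connected for this linkage. The key feature is that these congruences live among virtual (uniform) characters, hence are visible in the stable Bernstein centre, so they genuinely obstruct a splitting by stable idempotents and not merely by ordinary ones.

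The main obstacle is precisely the matching in the primitivity step: one must show that the equivalence relation cut out by $\Lpbm{\iner^{\Zl}}$ --- reduction modulo $\lprime$ of the inertial restriction of L-parameters --- coincides with the finest integral, local-Langlands-stable linkage. Stability supplies one inclusion cheaply; the reverse requires transporting the finite-group congruences, phrased through $d$-cuspidal pairs and $\dun$-series, across the building into genuine non-splitting of the integral idempotent, and in particular checking compatibility with the local Langlands correspondence for unramified classical groups (\cite{arthur}, \cite{mok}, \cite{KMSW}). I expect the delicate cases to be the combinatorial boundaries --- of the type separating the singleton classes from the genuinely linked ones, as in the $\Scusp$ description for $\sp{2n}(\kk)$ --- where one must verify by hand that the fibre is connected rather than split.
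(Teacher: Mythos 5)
Your first half (stability of each factor, via the fibres of the map $\Lpbm{\iner^{\Ql}}\to\Lpbm{\iner^{\Zl}}$ and the compatibility $e_{(\phi,\sigma)}^{\Zl}=\sum_{(\phi',\sigma')\mapsto(\phi,\sigma)}e_{(\phi',\sigma')}^{\Ql}$) is correct and is exactly how the paper reduces the theorem, via Lemma \ref{lemminstableset}, to showing that each fibre is a \emph{minimal} stable $\lprime$-integral subset of $\TG$. The gap is in the primitivity half. You propose to prove minimality by exhibiting, between any two members of the fibre, a mod-$\lprime$ congruence ``visible in the stable Bernstein centre'' and then checking connectedness of the fibre by symbol combinatorics; but you never make precise what such a stable congruence is, you do not carry out the combinatorics, and you yourself flag the boundary cases as unresolved. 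That linkage statement is genuinely delicate --- it is essentially the content of the whole $\dun$-series analysis of Sections \ref{secduntheo} and \ref{seclblocks}, which the paper only establishes for the unipotent part and under restrictions on $\lprime$ --- so as written the central step of your argument is a plan rather than a proof, and it is not even clear it is true in the generality you need.

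The paper avoids this entirely with a much softer mechanism. By \cite[Thm. 9.12]{cabanes_enguehard}, every $\lprime$-block of a finite reductive group contains a character in an $\lprime$-regular Lusztig series; propagated along the building this gives Proposition \ref{prointerlprime}: every $\sim$-stable $\lprime$-integral subset of $\TG$ meets $\TGlp$. On the other hand, Lemma \ref{leminterstalbe} shows that each $T_{(\phi',\sigma')}$ is either contained in $\TGlp$ or disjoint from it, according to whether $\Gamma(\phi',\sigma')$ has order prime to $\lprime$, and each fibre of $\Lpbm{\iner^{\Ql}}\to\Lpbm{\iner^{\Zl}}$ contains exactly one $\lprime$-regular member. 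So if the fibre split as a disjoint union of two nonzero stable $\lprime$-integral pieces, one of the two would miss $\TGlp$, a contradiction (Corollary \ref{corinterminblock}). No connectedness of the fibre under any congruence relation, and no case-by-case combinatorics, is needed. You should either adopt this argument or supply a precise definition and a complete proof of the linkage you invoke.
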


\paragraph{\textbf{Acknowledgements}}
I would like to thank Jean-François Dat for the comments and remarks that made this article a better one.

\tableofcontents

\section{Notations}

Let $\kk$ be a non-archimedean local field and $\res$ its residue field. Let $q$ be the cardinal of $\res$ and $p$ its characteristic.

\bigskip

In this paper, we will be interested in reductive groups over $\kk$ and over $\res$. In order not to confuse the two settings, we will use the font $\gpalg{G}$ for a connected reductive group over $\kk$ and $\gpfinialg{G}$ for a connected reductive group over $\res$.

\bigskip

Let $\gpalg{G}$ be a connected reductive group over $\kk$. We denote by $G:=\gpalg{\G}[\kk]$ the $\kk$-points of $\gpalg{G}$. If $\Lambda$ is a ring where $p$ is invertible, then we will write $\rep[\Lambda]{G}$ for the abelian category of smooth representations of $G$ with coefficients in $\Lambda$. The full subcategory of representations of depth zero will be denoted by $\rep[\Lambda][0]{G}$ (see Definition \ref{defDepthZero}).

\bigskip

In the same way, if $\gpfinialg{G}$ is a connected reductive group over $\res$, we denote by $\gpfini{G}:=\gpfinialg{G}[\res]$ the group of its $\res$-points. This group can be seen as $\gpfini{G}:=\gpfinialg{G}[\resalg]^{\fr}$, the group of fixed points of a Frobenius automorphism $\fr$. If $\gpfinialg{P}$ is a parabolic  subgroup admitting $\gpfinialg{M}$ a $\fr$-stable Levi subgroup, we will write $\inddl{M}{P}{G}$ for the Deligne-Lusztig induction from $\gpfini{M}$ to $\gpfini{G}$ (defined in \cite{DeligneLusztig}). It is a map between spaces of virtual representations $\inddl{M}{P}{G} : \mathbb{Z}\Irr(\gpfini{M})\to \mathbb{Z}\Irr(\gpfini{G})$. When $\gpfinialg{P}$ is also  $\fr$-stable, since the Deligne-Lusztig induction is the same as the Harish-Chandra induction, we will also use $\indPara{M}{P}{G}$ and $\resPara{M}{P}{G}$ for the Harish-Chandra induction and restriction. Let $\gpfinialg*{G}$ be in duality with $\gpfinialg{G}$, a duality defined over $\res$, with Frobenius $\fr$ on $\gpfinialg*{G}$.

\bigskip

In all this paper, $\lprime$ will be a prime number not dividing $q$. We shall assume that choices have been made, once and for all, of isomorphisms of $\resalg^{*}$ with $(\mathbb{Q}/\mathbb{Z})_{p'}$ and of $\resalg^{*}$ with the group of roots of unity of order prime to $p$ in $\Ql$.

\section{Bernstein blocks}
\label{secBernsblock}

Let $G$ be the $\kk$-points of a connected reductive group. When the field of coefficients is $\Ql$ (or $\mathbb{C}$), the blocks of $G$ are well known thanks to the theory of Bernstein \cite{bern}. In this paper, the $\lprime$-blocks of $G$ will be constructed using consistent systems of idempotents on the Bruhat-Tits building of $G$. The purpose of this section is to explain, in the case where $G$ is semisimple and simply-connected, how we can recover Bernstein blocks using consistent systems of idempotents.

\subsection{Consistent systems of idempotents}

\label{secsystcohe}
In this section, we recall the basic definitions and properties of systems of idempotents.

\bigskip

Let $\bt$ be the semi-simple Bruhat-Tits building associated to $G$. This is a polysimplicial complex and we denote by $\bts$ the set of vertices, that is of polysimplices of dimension 0. We will usualy use Latin letters $x$,$y$,$\cdots$ for vertices and Greek letters $\sigma$,$\tau$, $\cdots$ for polysimplices. We can define an order relation on $\bt$ by $\sigma \leq \tau$ if $\sigma$ is a face of $\tau$. Two vertices $x$ and $y$ are adjacent if there exists a polysimplex $\sigma$ such that $x \leq \sigma$ and $y \leq \sigma$.

\bigskip

Let $\ld$ be a ring where $p$ is invertible. We fix a Haar measure on $G$ and denote by $\hecke{\G}{\ld}$ the Hecke algebra with coefficients in $\ld$, that is the algebra of functions from $G$ to $\ld$ locally constant with compact support.

\begin{Def}[{\cite[Def. 2.1.]{meyer_resolutions_2010}}]
    A system of idempotents $e=(e_{x})_{x\in \bts}$ of $\hecke{\G}{\ld}$ is said to be consistent if the following properties are satisfied:
    \begin{enumerate}
        \item $e_{x}e_{y}=e_{y}e_{x}$ when $x$ and $y$ are adjacent.
        \item $e_{x}e_{z}e_{y}=e_{x}e_{y}$ when $z$ is adjacent to $x$ and in the polysimplicial hull of $x$ and $y$.
        \item $e_{gx}=ge_{x}g^{-1}$ for all $x\in \bts$ and $g\in \G$.
    \end{enumerate}
\end{Def}

If $e=(e_{x})_{x\in \bts}$ is a consistent system of idempotent, then for $\sigma \in \bt$ we can define $e_{\sigma}:=\prod_{x} e_x$, where the product is taken over the vertices $x$ such that $x \leq \sigma$.

\bigskip

Consistent systems of idempotents are very interesting because we have the following theorem due to Meyer and Solleveld.

\begin{The}[\cite{meyer_resolutions_2010}, Thm 3.1]
    \label{thmMeyerSollveld}
    Let $e=(e_{x})_{x\in \bts}$ a consistent system of idempotents, then the full sub-category $\rep[\ld][e]{\G}$ of objects $V$ of $\rep[\ld]{\G}$ such that $V=\sum_{x\in \bts}e_{x}V$ is a Serre sub-category.
\end{The}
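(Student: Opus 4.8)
This is the theorem of Meyer and Solleveld, and I would reconstruct its proof as follows. For a smooth representation $V$ write $eV := \sum_{x \in \bts} e_x V$; by the equivariance axiom $e_{gx} = g e_x g^{-1}$ this is a $G$-subrepresentation, and $\rep[\ld][e]{\G}$ is the full subcategory of those $V$ with $eV = V$. The plan is to prove that the functor $V \mapsto eV$ is exact and that the subcategory is closed under quotients, and then to deduce the three Serre closure conditions (subobjects, quotients, extensions) formally from these two facts.

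Closure under quotients is immediate: if $eV = V$ and $\pi \colon V \twoheadrightarrow Q$ is $G$-equivariant, then each $e_x \in \hecke{\G}{\ld}$ commutes with $\pi$, so $Q = \pi\bigl(\sum_x e_x V\bigr) = \sum_x e_x Q = eQ$. The same computation shows $e$ is right exact, and for a single idempotent $e_\sigma = \prod_{x \le \sigma} e_x$ the functor $V \mapsto e_\sigma V$ is exact, since $e_\sigma V \cap V' = e_\sigma V'$ for any subrepresentation $V'$. The one nontrivial point is the \emph{middle-exactness} of $e$, i.e. the local-to-global identity $eV \cap V' = eV'$: every vector of $V'$ lying in $\sum_x e_x V$ must already lie in $\sum_x e_x V'$.

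To obtain this I would attach to each smooth $V$ the augmented oriented chain complex of the building with coefficients in the system,
\[ \cdots \to \bigoplus_{\dim \sigma = 1} e_\sigma V \to \bigoplus_{x \in \bts} e_x V \to eV \to 0, \]
whose differentials combine the simplicial boundary maps with the inclusions $e_\tau V \hookrightarrow e_\sigma V$ for $\sigma \le \tau$ (valid because $e_\sigma e_\tau = e_\tau$). The crucial claim is that this augmented complex is exact, i.e. a resolution of $eV$. Granting it, exactness of each $e_\sigma$ turns a short exact sequence $0 \to V' \to V \to V'' \to 0$ into a short exact sequence of such complexes, and since the homology of each is concentrated in degree $0$ the long exact homology sequence collapses to $0 \to eV' \to eV \to eV'' \to 0$, which is exactly the exactness of $e$. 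Proving that the augmented complex is exact is the main obstacle and the only step where geometry enters: one uses that the semisimple building $\bt$ is contractible, together with the consistency axioms (1) and (2), which are precisely what is needed to promote a contraction of $\bt$ onto a base vertex to a contracting chain homotopy $h$ for the coefficient system $\sigma \mapsto e_\sigma V$, satisfying $\partial h + h \partial = \mathrm{id}$ in positive degrees and thereby killing the higher homology.

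It then remains to assemble the Serre property. For a subobject $W \subseteq V$ with $eV = V$, apply the exact functor $e$ to $0 \to W \to V \to V/W \to 0$; as $V/W$ is a quotient of $V$ we have $e(V/W) = V/W$, so exactness forces $eW = \ker(V \to V/W) = W$. For an extension $0 \to A \to B \to C \to 0$ with $eA = A$ and $eC = C$, the subrepresentation $eB$ contains $eA = A$, hence $B/eB$ is a quotient of $C$ and so lies in the subcategory; but $e_x(B/eB) = 0$ for every $x$ because $e_x B \subseteq eB$, whence $B/eB = e(B/eB) = 0$ and $B = eB$. Together with closure under quotients, this establishes that $\rep[\ld][e]{\G}$ is a Serre subcategory.
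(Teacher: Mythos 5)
The paper does not prove this statement --- it is imported verbatim from Meyer--Solleveld --- so there is no internal proof to compare against, only the cited reference, whose strategy your reconstruction follows. Your formal deductions are all correct: $e_\sigma V\cap V'=e_\sigma V'$ for the idempotents $e_\sigma$, the passage from acyclicity of the augmented polysimplicial chain complex to exactness of $V\mapsto eV$ via the long exact homology sequence, and the closure under quotients, subobjects and extensions (the extension step, with $B/eB$ a quotient of $C$ killed by every $e_x$, is exactly right). The one substantive step --- acyclicity of the augmented complex --- is the actual content of Meyer--Solleveld's theorem; you correctly identify it as the crux and correctly name the inputs (contractibility of $\bt$ together with consistency axioms (1) and (2), which govern idempotents along polysimplicial hulls), but the ``contracting chain homotopy'' is only asserted, and constructing it is where essentially all of the work in the reference lies.
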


It may not be easy to check the conditions of consistency. But, if we are working with the subcategory of depth zero representations, we can find in \cite{lanard} the notion of 0-consistent, which implies consistency, and is easier to check.

\bigskip

Let $\sigma \in \bt$. We denote by $\para{G}{\sigma}$ the parahoric subgroup at $\sigma$ and by $\radpara{G}{\sigma}$ its pro-$p$-radical. The quotient, $\quotred{G}{\sigma}$, is then the group of $\res$-points of a connected reductive group $\quotredalg{G}{\sigma}$ defined over $\res$.

If $\sigma\in \bt$ is a polysimplex, then $\radpara{G}{\sigma}$ defines an idempotent $e_{\sigma}^{+} \in \mathcal{H}_{\mathbb{Z}[1/p]}(G)$ by $e_{\sigma}^{+}=\mu(\radpara{G}{\sigma})^{-1}\chi_{\radpara{G}{\sigma}}$, where $\mu$ is our fixed Haar measure and $\chi_{\radpara{G}{\sigma}}$ is the characteristic function of $\radpara{G}{\sigma}$. The system of idempotents $(e_{x}^{+})_{x \in \bts}$ is consistent and cuts out the category of depth zero.

\begin{Def}
    \label{defDepthZero}
    An object $V$ of $\rep[\ld]{\G}$ has depth zero if $V=\sum_{x\in \bts}e_{x}^{+}V$.
\end{Def}
In other words, with the notations of Theorem \ref{thmMeyerSollveld}, the depth zero category is $\rep[\ld][0]{\G}=\rep[\ld][e^{+}]{\G}$, with $e^+=(e_{x}^{+})_{x \in \bts}$.

\begin{Def}[{\cite[Def. 1.0.5]{lanard}}]
    We say that a system $(e_{\sigma})_{\sigma\in \bt}$ is 0-consistent if
    \begin{enumerate}
        \item $e_{gx}=ge_{x}g^{-1}$ for all $x\in \bt_{0}$ and $g\in G$.
        \item $e_{\sigma}=e_{\sigma}^{+}e_{x}=e_{x}e_{\sigma}^{+}$ for $x \in \bt_{0}$ and $\sigma \in \bt$ such that $x \leq \sigma$.
    \end{enumerate}
\end{Def}

\begin{Pro}[{\cite[Prop. 1.0.6]{lanard}}]
    If $(e_{\sigma})_{\sigma\in \bt}$ is a 0-consistent system of idempotents, then it is consistent.
\end{Pro}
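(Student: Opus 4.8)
The plan is to verify the three defining conditions of a consistent system for the family $(e_x)_{x\in\bts}$, using the two axioms of 0-consistency together with the consistency of the auxiliary system $(e_x^+)_{x\in\bts}$ of pro-$p$-radical idempotents, which is already available. Condition (3) is free: it is literally condition (1) of 0-consistency. So the entire content lies in conditions (1) and (2) for the $e_x$.

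First I would extract the elementary absorption identities forced by condition (2) of 0-consistency. Taking $\sigma=x$ gives $e_x=e_x^+e_x=e_xe_x^+$, so each $e_x$ factors through the depth-zero idempotent at $x$; in particular $e_x\in e_x^+\,\hecke{\G}{\ld}\,e_x^+$. For a general face $x\leq\sigma$, condition (2) reads $e_\sigma=e_xe_\sigma^+=e_\sigma^+e_x$, and combined with $e_x^2=e_x$ it yields $e_xe_\sigma=e_\sigma e_x=e_\sigma$. On the pro-$p$ side, the inclusion $\radpara{G}{x}\subseteq\radpara{G}{\sigma}$ for $x\leq\sigma$ gives $e_x^+e_\sigma^+=e_\sigma^+e_x^+=e_\sigma^+$, and by hypothesis the $e_x^+$ already satisfy the two consistency relations among themselves. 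These identities are the bookkeeping I would set up before attacking (1) and (2).

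For adjacent $x,y$, choosing a polysimplex $\sigma$ with $x,y\leq\sigma$, the absorption identities give $e_xe_ye_\sigma^+=e_xe_\sigma=e_\sigma=e_ye_\sigma=e_ye_xe_\sigma^+$, and symmetrically on the left; thus the commutator $e_xe_y-e_ye_x$ is annihilated by $e_\sigma^+$ on both sides, i.e. condition (1) holds after projecting to the depth-zero part at $\sigma$. The remaining, and genuinely harder, step is to propagate this from the $\sigma$-component to the whole Hecke algebra, and likewise to obtain the hull relation (2). Here I would feed in the consistency of $(e_x^+)$ and the convexity of polysimplicial hulls in $\bt$: writing $e_x=e_x^+e_xe_x^+$ and transporting the $e^+$-idempotents past one another via their own relations, with an induction on the combinatorial distance between $x$ and $y$ that inserts intermediate vertices $z$ in the polysimplicial hull of $x$ and $y$, should reduce (1) and (2) for $(e_x)$ to the established relations for $(e_x^+)$ plus the linking identities above. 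I expect the main obstacle to be exactly this interface: the definition only ties $e_x$ to the idempotents $e_\sigma^+$ with $\sigma\geq x$, so commuting $e_x$ with data attached to a \emph{different} vertex $y$ cannot be done formally and must be controlled through the geometry of the building and the full strength of Meyer--Solleveld's consistency for the pro-$p$-radical system.
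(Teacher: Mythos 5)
Your preliminary reductions are correct: condition (3) of consistency is literally condition (1) of 0-consistency, and the absorption identities $e_x=e_x^+e_x=e_xe_x^+$ and $e_xe_\sigma=e_\sigma e_x=e_\sigma$ for $x\le\sigma$ do follow from taking $\sigma=x$ and then a general face in condition (2). But the proof is not complete, and the gap is exactly where you flag it: for adjacent $x,y$ you only establish $(e_xe_y-e_ye_x)e_\sigma^+=0=e_\sigma^+(e_xe_y-e_ye_x)$, and the promised propagation ``from the $\sigma$-component to the whole Hecke algebra'' by induction on combinatorial distance is never carried out. This cannot be waved away: the commutator $e_xe_y-e_ye_x$ has no a priori reason to lie in $e_\sigma^+\hecke{\G}{\ld}e_\sigma^+$, so being annihilated by $e_\sigma^+$ on both sides does not force it to vanish. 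The same problem blocks the hull relation (2).

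The missing ingredient is a structural fact about parahoric subgroups, namely that the idempotent $e_\tau^+$ attached to $\radpara{G}{\tau}$ coincides with the product $\prod_{x\le\tau}e_x^+$ over the vertices of $\tau$; in particular, for adjacent $x,y$ spanning a polysimplex $\tau$ one has $e_x^+e_y^+=e_\tau^+$. This is strictly stronger than the inclusion $\radpara{G}{x}\subseteq\radpara{G}{\tau}$ you record (which only gives $e_x^+e_\tau^+=e_\tau^+$), and it is precisely what makes the system $(e_x^+)$ consistent with the stated values $e_\tau^+$ on higher-dimensional polysimplices. Once you have it, the argument closes in one line and yields exact identities rather than projected ones: $e_xe_y=(e_xe_x^+)(e_y^+e_y)=e_xe_\tau^+e_y=e_\tau e_y=e_\tau$, and symmetrically $e_ye_x=e_\tau$, which is condition (1) together with the compatibility $e_xe_y=e_\tau$. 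For condition (2), if $z$ is adjacent to $x$ and lies in the polysimplicial hull of $x$ and $y$, the consistency of $(e_x^+)$ gives $e_x^+e_z^+e_y^+=e_x^+e_y^+$, whence, with $\tau$ the hull of $\{x,z\}$, $e_xe_ze_y=e_\tau e_y=e_xe_\tau^+e_y^+e_y=e_xe_x^+e_z^+e_y^+e_y=e_xe_x^+e_y^+e_y=e_xe_y$. So your bookkeeping is the right starting point, but without the identity $e_x^+e_y^+=e_\tau^+$ the argument does not terminate, and no induction on distance will substitute for it.
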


Let us give two examples of systems of idempotents which are 0-consistent. Let $\sigma\in \bt$. Let $\dl{\quotred{G}{\sigma}}{1}$ be the Deligne-Lusztig series associated with the trivial conjugacy class, that is the set of unipotent characters in $\quotred{G}{\sigma}$. Let $e_{1,\quotred{G}{\sigma}}$, be the central idempotent in $\Ql[\quotred{\G}{\sigma}]$ that cuts out $\dl{\quotred{G}{\sigma}}{1}$. Thanks to the isomorphism $ \para{G}{\sigma} / \radpara{G}{\sigma} \tosim \quotred{G}{\sigma}$, we can pull back $e_{1,\quotred{G}{\sigma}}$ to an idempotent $e_{1,\sigma} \in \hecke{\para{G}{\sigma}}{\Ql}$. The system $e_{1}=(e_{1,\sigma})_{\sigma \in \bt}$ is then 0-consistent (see \cite[Prop. 2.3.2]{lanard}). Thus it defines $\repun{\Ql}{G}$ the full-subcategory of $\rep[\Ql]{G}$ of unipotent representations.

In the same way, let $\dll{\quotred{G}{\sigma}}{1}$ be the union of the $\dl{\quotred{G}{\sigma}}{t}$, where $t$ is a semi-simple conjugacy class in the dual of $\quotred{G}{\sigma}$, of order a power of $\lprime$. By \cite{bonnafe_rouquier} Theorem A' and remark 11.3, the idempotent that cuts out this series is in $\Zl[\quotred{\G}{\sigma}]$. We can then pull it back to get $e^{\lprime}_{1,\sigma} \in \hecke{\para{G}{\sigma}}{\Zl}$. This system $e^{\lprime}_{1}=(e^{\lprime}_{1,\sigma})_{\sigma \in \bt}$ is also 0-consistent and defines the $\lprime$-unipotent subcategory $\repun{\Zl}{G}$.

\subsection{Bernstein blocks with system of idempotents}
\label{secidempotenthc}

In this section, we want to reinterpret the Bernstein blocks of depth zero (that is the blocks over $\Ql$ or $\mathbb{C}$), in terms of consistent systems of idempotents. To do that, we will construct a 0-consistent system of idempotents from unrefined depth zero types, hence subcategories of $\rep[\Ql][0]{G}$. When $G$ is semisimple and simply-connected, these categories will be blocks.

\sautintro

We define, as in \cite{latham}, ``unrefined depth zero types'' to be the pairs $(\sigma,\pi)$, where $\sigma \in \bt$ and $\pi$ is an irreducible cuspidal representation of $\quotred{G}{\sigma}$. Let $\TG$ be the set of unrefined depth zero types, up to $G$-conjugacy.

If $\sigma,\tau \in \bt$ are two polysimplices with $\tau \leq \sigma$, we can see $\quotred{G}{\sigma}$ as a Levi subgroup of $\quotred{G}{\tau}$. Let $\mathfrak{t}$ and $\mathfrak{t}'$ be two elements of $\TG$ and $\omega \in \bt$. Then we say that $\mathfrak{t} \sim_{\omega} \mathfrak{t}'$ if and only if $\mathfrak{t}=\mathfrak{t'}$ or there exist $(\sigma,\pi)$ and $(\tau,\pi')$ such that $\mathfrak{t}=[\sigma,\pi]$, $\mathfrak{t}'=[\tau,\pi']$, $\omega$ is a face of $\sigma$ and $\tau$, and the cuspidal pairs $(\quotred{G}{\sigma},\pi)$ and $(\quotred{G}{\tau},\pi')$ are conjugated in $\quotred{G}{\omega}$. Now we define $\sim$, an equivalence relation on $\TG$ by $\mathfrak{t} \sim \mathfrak{t}'$ if and only if there exist $\omega_1,\cdots,\omega_r \in \bt$ and $\mathfrak{t}_{1}, \cdots, \mathfrak{t}_{r-1} \in \TG$ such that $\mathfrak{t} \sim_{\omega_1} \mathfrak{t}_1\sim_{\omega_2} \mathfrak{t}_2 \cdots \sim_{\omega_r} \mathfrak{t}'$. We write $[\mathfrak{t}]$ for the equivalence class of $\mathfrak{t}$.

\bigskip

If $\gpfini{G}$ is a connected reductive group over $\res$, then the theory of Harish-Chandra allows us to partition $\Irr(\gpfini{G})$ according to cuspidal support $[\gpfini{M},\pi]$:
\[\Irr(\gpfini{G})= \bigsqcup \Irr_{(\gpfini{M},\pi)}(\gpfini{G}).\]
Now we construct from $[\mathfrak{t}] \in \TG/{\sim}$ a system of idempotents $e_{[\mathfrak{t}]}$ in the following way. Let $\tau \in \bt$ and define $e_{[\mathfrak{t}]}^{\tau} \in \Ql[\quotred{G}{\tau}]$ the idempotent that cuts out the union of $\Irr_{(\quotred{G}{\sigma},\pi)}(\quotred{G}{\tau})$ for every $[\sigma,\pi] \in [\mathfrak{t}]$ with $\tau \leq \sigma$. We can then pull pack $e_{[\mathfrak{t}]}^{\tau}$ to an idempotent $e_{[\mathfrak{t}],\tau} \in \hecke{\para{G}{\tau}}{\Ql}\subseteq \hecke{G}{\Ql}$, giving us $e_{[\mathfrak{t}]}$ a system of idempotents.

\begin{Lem}
    \label{lempropidemptype}
    Let $x \in \bts$, $\sigma \in \bt$ with $x \leq \sigma$. We have the following properties
    \begin{enumerate}
        \item $e_{\sigma}^{+}=\sum_{[\mathfrak{t}] \in \TG/{\sim}} e_{[\mathfrak{t}],\sigma}$.
        \item For all $\mathfrak{t},\mathfrak{t}' \in \TG$ with $[\mathfrak{t}] \neq [\mathfrak{t}']$, $e_{[\mathfrak{t}],x} e_{[\mathfrak{t}'],\sigma} = 0$.
    \end{enumerate}
\end{Lem}

\begin{proof}
    \begin{enumerate}
        \item The partition $\Irr(\quotred{G}{\sigma})= \bigsqcup \Irr_{(\gpfini{M},\pi)}(\quotred{G}{\sigma})$ and the fact that each $\Irr_{(\gpfini{M},\pi)}(\quotred{G}{\sigma})$ can be written as $\Irr_{(\gpfini{M},\pi)}(\quotred{G}{\sigma}) = \Irr_{(\quotred{G}{\tau},\pi)}(\quotred{G}{\sigma})$ for a polysimplex $\tau \geq \sigma$ show the wanted equality.

        \item The group $\quotred{G}{\sigma}$ is a Levi quotient of a parabolic $\gpfini{P}_{\sigma}$ of $\quotred{G}{x}$, and we denote by $\gpfini{U}_{\sigma}$ the unipotent radical of $\gpfini{P}_{\sigma}$. The idempotent  $e_{[\mathfrak{t}'],\sigma} \in \hecke{\para{G}{\sigma}}{\Ql} \subseteq \hecke{\para{G}{x}}{\Ql}$ gives us in $\Ql[\quotred{G}{x}]$ the idempotent $e_{\gpfini{U}_{\sigma}} e_{[\mathfrak{t}']}^{\sigma}$, where $e_{\gpfini{U}_{\sigma}}$ is the idempotent which averages along the group $\gpfini{U}_{\sigma}$. We have to prove that $e_{[\mathfrak{t}]}^{x} e_{\gpfini{U}_{\sigma}} e_{[\mathfrak{t}']}^{\sigma}=0$ in $\Ql[\quotred{G}{x}]$. But $\Ql[\quotred{G}{x}]e_{\gpfini{U}_{\sigma}} e_{[\mathfrak{t}']}^{\sigma}$ is the parabolic induction from $\quotred{G}{\sigma}$ to $\quotred{G}{x}$ of the module $\Ql[\quotred{G}{\sigma}]e_{[\mathfrak{t}']}^{\sigma}$. Since $[\mathfrak{t}] \neq [\mathfrak{t}']$ no representation in $\Irr_{(\quotred{G}{\tau},\pi)}(\quotred{G}{x})$, with $[\tau,\pi] \in [\mathfrak{t}]$ can be in the induction of a representation in $\Irr_{(\quotred{G}{\tau'},\pi')}(\quotred{G}{\sigma})$ with $[\tau',\pi'] \in [\mathfrak{t}']$. Hence $e_{[\mathfrak{t}]}^{x} e_{\gpfini{U}_{\sigma}} e_{[\mathfrak{t}']}^{\sigma}=0$.
    \end{enumerate}
\end{proof}

\begin{Pro}
    The system of idempotents $e_{[\mathfrak{t}]}$ is 0-consistent.
\end{Pro}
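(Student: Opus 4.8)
The plan is to check the two conditions in the definition of a $0$-consistent system for $e_{[\mathfrak{t}]}=(e_{[\mathfrak{t}],\tau})_{\tau\in\bt}$: the $G$-equivariance $e_{[\mathfrak{t}],gx}=ge_{[\mathfrak{t}],x}g^{-1}$, and the relation $e_{[\mathfrak{t}],\sigma}=e_{\sigma}^{+}e_{[\mathfrak{t}],x}=e_{[\mathfrak{t}],x}e_{\sigma}^{+}$ whenever $x\in\bts$, $\sigma\in\bt$ and $x\leq\sigma$. The first is formal. For $g\in G$, conjugation by $g$ carries $\para{G}{x}$ to $\para{G}{gx}$ and $\radpara{G}{x}$ to $\radpara{G}{gx}$, hence induces $\quotred{G}{x}\tosim\quotred{G}{gx}$ sending the Harish-Chandra series $\Irr_{(\quotred{G}{\sigma'},\pi')}(\quotred{G}{x})$ to $\Irr_{(\quotred{G}{g\sigma'},{}^{g}\pi')}(\quotred{G}{gx})$. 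Since $\TG$ consists of $G$-conjugacy classes and $\sim$ is $G$-equivariant, the pair $(g\sigma',{}^{g}\pi')$ represents the same element of $[\mathfrak{t}]$ as $(\sigma',\pi')$; thus this isomorphism carries $e_{[\mathfrak{t}]}^{x}$ to $e_{[\mathfrak{t}]}^{gx}$, which gives condition (1).

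For condition (2) I would reduce the identity to a computation in $\Ql[\quotred{G}{x}]$, exactly as in the proof of Lemma~\ref{lempropidemptype}. Recall that $\quotred{G}{\sigma}$ is the Levi quotient of a parabolic $\gpfini{P}_\sigma$ of $\quotred{G}{x}$ with unipotent radical $\gpfini{U}_\sigma=\radpara{G}{\sigma}/\radpara{G}{x}$, and that modulo $\radpara{G}{x}$ the idempotent $e_{\sigma}^{+}$ becomes the averaging idempotent $e_{\gpfini{U}_\sigma}$. Since $\radpara{G}{x}\subseteq\radpara{G}{\sigma}$, each of $e_{\sigma}^{+}e_{[\mathfrak{t}],x}$, $e_{[\mathfrak{t}],x}e_{\sigma}^{+}$ and $e_{[\mathfrak{t}],\sigma}$ is supported on $\para{G}{x}$ and bi-invariant under $\radpara{G}{x}$, so all descend under the algebra isomorphism onto $\Ql[\quotred{G}{x}]$. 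There $e_{\sigma}^{+}e_{[\mathfrak{t}],x}$ becomes $e_{\gpfini{U}_\sigma}e_{[\mathfrak{t}]}^{x}$, while $e_{[\mathfrak{t}],\sigma}$ becomes the inflation of $e_{[\mathfrak{t}]}^{\sigma}\in\Ql[\quotred{G}{\sigma}]$ to $\gpfini{P}_\sigma$, i.e. $e_{\gpfini{U}_\sigma}e_{[\mathfrak{t}]}^{\sigma}$. As $e_{[\mathfrak{t}]}^{x}$ is a sum of Harish-Chandra central idempotents it is central in $\Ql[\quotred{G}{x}]$, so it commutes with $e_{\gpfini{U}_\sigma}$ and $e_{[\mathfrak{t}],x}e_{\sigma}^{+}$ descends to the same element; both equalities in (2) thus reduce to the single identity
\[ e_{\gpfini{U}_\sigma}\,e_{[\mathfrak{t}]}^{\sigma}=e_{\gpfini{U}_\sigma}\,e_{[\mathfrak{t}]}^{x}\qquad\text{in }\Ql[\quotred{G}{x}].\]

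To prove this I would read multiplication by $e_{\gpfini{U}_\sigma}$ as the Harish-Chandra restriction functor $r$ from $\quotred{G}{x}$ to $\quotred{G}{\sigma}$ (taking $\gpfini{U}_\sigma$-invariants), and show that $r$ intertwines the central idempotent $e_{[\mathfrak{t}]}^{x}$ with $e_{[\mathfrak{t}]}^{\sigma}$, sending the $e_{[\mathfrak{t}]}^{x}$-part into the $e_{[\mathfrak{t}]}^{\sigma}$-part and the complementary part into the complementary part. The input is that $r$ preserves cuspidal support up to conjugacy and that every cuspidal pair of $\quotred{G}{\sigma}$ is conjugate to one of the form $(\quotred{G}{\sigma''},\pi'')$ with $\sigma''\geq\sigma$ (the fact already used in Lemma~\ref{lempropidemptype}(1)). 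Indeed, a constituent of $r(\rho)$ for $\rho\in\Irr_{(\quotred{G}{\sigma'},\pi')}(\quotred{G}{x})$, with $[\sigma',\pi']\in[\mathfrak{t}]$ and $\sigma'\geq x$, has cuspidal support a pair of $\quotred{G}{\sigma}$ that is $\quotred{G}{x}$-conjugate to $(\quotred{G}{\sigma'},\pi')$; writing it as $(\quotred{G}{\sigma''},\pi'')$ with $\sigma''\geq\sigma\geq x$, the pairs $(\quotred{G}{\sigma''},\pi'')$ and $(\quotred{G}{\sigma'},\pi')$ are conjugate in $\quotred{G}{x}$.

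The crux, and the point where the definition of $\sim$ enters, is that this conjugacy is precisely the generating relation $\sim_{x}$ (with $\omega=x$, a common face of $\sigma'$ and $\sigma''$): hence $[\sigma'',\pi'']\sim[\sigma',\pi']$, so $[\sigma'',\pi'']\in[\mathfrak{t}]$ if and only if $[\sigma',\pi']\in[\mathfrak{t}]$. This yields both inclusions simultaneously and establishes the displayed identity, completing (2). I expect the main obstacle to be exactly this step: matching "$\quotred{G}{x}$-conjugacy of building-Levi cuspidal pairs" with the relation $\sim_{x}$, together with the bookkeeping needed to descend the Hecke-algebra products to $\Ql[\quotred{G}{x}]$ and to identify $e_{\sigma}^{+}$ with $e_{\gpfini{U}_\sigma}$; the equivariance condition, by contrast, is routine.
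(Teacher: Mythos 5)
Your proof is correct, but it is organized differently from the paper's. The paper deduces $0$-consistency by a purely formal idempotent computation from the two statements of Lemma \ref{lempropidemptype}: completeness, $e_{\sigma}^{+}=\sum_{[\mathfrak{t}']}e_{[\mathfrak{t}'],\sigma}$, and orthogonality, $e_{[\mathfrak{t}],x}e_{[\mathfrak{t}'],\sigma}=0$ for $[\mathfrak{t}]\neq[\mathfrak{t}']$; from these one gets $e_{[\mathfrak{t}],x}e_{\sigma}^{+}=e_{[\mathfrak{t}],x}e_{[\mathfrak{t}],\sigma}=e_{x}^{+}e_{[\mathfrak{t}],\sigma}=e_{[\mathfrak{t}],\sigma}$ without ever descending to the finite group algebra again. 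You instead descend to $\Ql[\quotred{G}{x}]$ and prove the single intertwining identity $e_{\gpfini{U}_\sigma}e_{[\mathfrak{t}]}^{\sigma}=e_{\gpfini{U}_\sigma}e_{[\mathfrak{t}]}^{x}$ by checking on irreducible modules that Harish-Chandra restriction sends the $[\mathfrak{t}]$-part to the $[\mathfrak{t}]$-part. The two arguments rest on the same mathematical content --- that conjugacy of cuspidal pairs $(\quotred{G}{\sigma'},\pi')$ in $\quotred{G}{\omega}$ is exactly the generating relation $\sim_{\omega}$, and that every Levi of $\quotred{G}{\sigma}$ is of the form $\quotred{G}{\sigma''}$ with $\sigma''\geq\sigma$ --- but in the paper this content is isolated in the proof of Lemma \ref{lempropidemptype} (phrased via parabolic induction and its effect on cuspidal supports), whereas you re-derive the restriction-side version of it inline. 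Your route is more self-contained and makes the module-theoretic meaning of $0$-consistency transparent; the paper's is shorter once the lemma is available and avoids having to justify the descent of the three Hecke-algebra products to $\Ql[\quotred{G}{x}]$, which you correctly handle but which adds bookkeeping. Both are valid.
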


\begin{proof}
    An element $\mathfrak{t} \in \TG$ is defined up to $G$-conjugacy, hence $e_{[\mathfrak{t}]}$ is $G$-equivariant. Let $x \in \bts$ and $\sigma \in \bt$ such that $x \leq \sigma$. We have to prove that $e_{[\mathfrak{t}],\sigma} = e_{\sigma}^{+}e_{[\mathfrak{t}],x}$. By 1. in \ref{lempropidemptype} we have that $e_{\sigma}^{+}=\sum_{[\mathfrak{t}'] \in \TG/{\sim}} e_{[\mathfrak{t}'],\sigma}$. Hence, $e_{[\mathfrak{t}],x}e_{\sigma}^{+}=\sum_{[\mathfrak{t}'] \in \TG/{\sim}} e_{[\mathfrak{t}],x}e_{[\mathfrak{t}'],\sigma}$. Now by 2. in \ref{lempropidemptype}, we have that if $[\mathfrak{t}] \neq [\mathfrak{t'}]$ then $e_{[\mathfrak{t}],x}e_{[\mathfrak{t}'],\sigma}=0$. So $e_{[\mathfrak{t}],x}e_{\sigma}^{+} = e_{[\mathfrak{t}],x}e_{[\mathfrak{t}],\sigma}$. In the same way, $e_{[\mathfrak{t}],x}e_{[\mathfrak{t}],\sigma} = e_{x}^{+}e_{[\mathfrak{t}],\sigma}$. So, $e_{[\mathfrak{t}],x}e_{\sigma}^{+} = e_{[\mathfrak{t}],x}e_{[\mathfrak{t}],\sigma}=e_{x}^{+}e_{[\mathfrak{t}],\sigma}=e_{x}^{+}e_{\sigma}^{+}e_{[\mathfrak{t}],\sigma}=e_{\sigma}^{+}e_{[\mathfrak{t}],\sigma}=e_{[\mathfrak{t}],\sigma}$.
\end{proof}

Let $\mathfrak{t} \in \TG$. We denote by $\rep[\Ql][[\mathfrak{t}]]{G}$ the category associated with $e_{[\mathfrak{t}]}$.

\begin{Pro}
    \label{prodecompotypes}
    We have the decomposition
    \[ \rep[\Ql][0]{G}=\prod_{[\mathfrak{t} ]\in \TG/{\sim}}\rep[\Ql][[\mathfrak{t}]]{G}.\]
\end{Pro}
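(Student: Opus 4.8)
The plan is to verify the three conditions that turn a family of subcategories into a product decomposition of an abelian category: each $\rep[\Ql][[\mathfrak{t}]]{G}$ is a Serre subcategory of $\rep[\Ql][0]{G}$; every object of $\rep[\Ql][0]{G}$ is the direct sum of canonical subobjects, one in each $\rep[\Ql][[\mathfrak{t}]]{G}$; and there are no nonzero morphisms between objects lying in distinct factors. Granting these, the functor sending a family $(V_{[\mathfrak{t}]})_{[\mathfrak{t}]}$ to $\bigoplus_{[\mathfrak{t}]} V_{[\mathfrak{t}]}$ is fully faithful and essentially surjective, which is the asserted decomposition.

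The first and third conditions are quick. Since $e_{[\mathfrak{t}]}$ is $0$-consistent, hence consistent by \cite[Prop. 1.0.6]{lanard}, the theorem of Meyer and Solleveld shows that $\rep[\Ql][[\mathfrak{t}]]{G}$ is a Serre subcategory; it is contained in depth zero because $e_{[\mathfrak{t}],x}=e_{x}^{+}e_{[\mathfrak{t}],x}$. For $V\in\rep[\Ql][0]{G}$ I set $V_{[\mathfrak{t}]}:=\sum_{x}e_{[\mathfrak{t}],x}V$, the largest subobject of $V$ belonging to $\rep[\Ql][[\mathfrak{t}]]{G}$. Part (1) of Lemma \ref{lempropidemptype} gives $e_{x}^{+}=\sum_{[\mathfrak{t}]}e_{[\mathfrak{t}],x}$, and as $V=\sum_{x}e_{x}^{+}V$ this yields $V=\sum_{[\mathfrak{t}]}V_{[\mathfrak{t}]}$, so the family is complete. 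For the third condition, any $f\in\Hom_{G}(V,W)$ is $\hecke{\G}{\Ql}$-linear, so $f(e_{[\mathfrak{t}],x}V)=e_{[\mathfrak{t}],x}f(V)$ and therefore $f(V_{[\mathfrak{t}]})\subseteq W_{[\mathfrak{t}]}$; if $V$ is concentrated in the factor $[\mathfrak{t}]$ and $W$ in $[\mathfrak{t}']\neq[\mathfrak{t}]$, then $f(V)\subseteq W_{[\mathfrak{t}]}=0$.

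The heart of the matter is to show that the sum $V=\sum_{[\mathfrak{t}]}V_{[\mathfrak{t}]}$ is direct. The starting point is that at every polysimplex the relevant idempotents are orthogonal: taking $\sigma=x$ in part (2) of Lemma \ref{lempropidemptype} gives $e_{[\mathfrak{t}],x}e_{[\mathfrak{t}'],x}=0$ for $[\mathfrak{t}]\neq[\mathfrak{t}']$, and more generally, writing $e_{[\mathfrak{t}],\sigma}=e_{[\mathfrak{t}],x}e_{\sigma}^{+}$ for a vertex $x\leq\sigma$ and using $e_{\sigma}^{+}e_{[\mathfrak{t}'],\sigma}=e_{[\mathfrak{t}'],\sigma}$, one gets $e_{[\mathfrak{t}],\sigma}e_{[\mathfrak{t}'],\sigma}=0$ for all $\sigma$. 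Combined with part (1), the family $(e_{[\mathfrak{t}],\sigma})_{[\mathfrak{t}]}$ consists of orthogonal idempotents summing to $e_{\sigma}^{+}$, so $e_{\sigma}^{+}V=\bigoplus_{[\mathfrak{t}]}e_{[\mathfrak{t}],\sigma}V$ \emph{locally} at each $\sigma$. Moreover these local decompositions are compatible on overlaps: if $x$ and $y$ are vertices of a common polysimplex $\sigma$, then for $v\in e_{\sigma}^{+}V$ the $0$-consistency relation $e_{[\mathfrak{t}],\sigma}=e_{\sigma}^{+}e_{[\mathfrak{t}],x}=e_{\sigma}^{+}e_{[\mathfrak{t}],y}$ forces $e_{[\mathfrak{t}],x}v=e_{[\mathfrak{t}],\sigma}v=e_{[\mathfrak{t}],y}v$.

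I expect this last step — \textbf{globalising} the polysimplex-wise decompositions into a single $G$-equivariant one on all of $V$ — to be the main obstacle, and it is here that the geometry of $\bt$ must be used rather than the mere orthogonality at a point. The plan is to show that the local components of a vector are compatible not only on $e_{\sigma}^{+}V$ but along the galleries of the (connected) semisimple building joining the vertices that carry it, using the consistency conditions of Meyer--Solleveld to propagate the identity $e_{[\mathfrak{t}],x}v=e_{[\mathfrak{t}],y}v$ across adjacent polysimplices. This produces well-defined, orthogonal, $G$-equivariant projectors $V\to V_{[\mathfrak{t}]}$ summing to the identity, giving the directness and completing the product decomposition. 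The delicate point is precisely that a smooth depth-zero vector need not lie in a single $e_{\sigma}^{+}V$, so the gluing must be carried out at the level of the whole building; this is the technical core that the consistent-systems formalism of \cite{meyer_resolutions_2010} and \cite{lanard} is designed to supply.
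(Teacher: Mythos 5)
Your overall strategy is the same as the paper's: reduce the product decomposition to the two properties of Lemma \ref{lempropidemptype} and to the general formalism of consistent systems (the paper simply invokes the proof of \cite[Prop.~2.3.5]{lanard} for this). The parts you actually carry out are correct: Meyer--Solleveld gives the Serre subcategories, and property (1) of Lemma \ref{lempropidemptype} together with $V=\sum_x e_x^+V$ gives $V=\sum_{[\mathfrak{t}]}V_{[\mathfrak{t}]}$.

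However, there is a genuine gap, and you have located it yourself without closing it: both the directness of the sum and your argument for $\Hom$-vanishing rest on the claim that $e_{[\mathfrak{t}],x}W=0$ for every vertex $x$ whenever $W$ lies in the factor attached to $[\mathfrak{t}']\neq[\mathfrak{t}]$ (you use it in the form $W_{[\mathfrak{t}]}=0$). This is exactly the global orthogonality statement, and it does not follow from the pointwise orthogonality $e_{[\mathfrak{t}],x}e_{[\mathfrak{t}'],x}=0$: writing $W=\sum_y e_{[\mathfrak{t}'],y}W$ leaves you with terms $e_{[\mathfrak{t}],x}e_{[\mathfrak{t}'],y}W$ for $y$ far from $x$, which are not controlled by anything you have established. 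Your proposed remedy --- propagating the identity $e_{[\mathfrak{t}],x}v=e_{[\mathfrak{t}],y}v$ along galleries --- does not work as stated, because that identity was derived only for $v\in e_\sigma^+V$ with $x,y\leq\sigma$, and, as you note, a general depth-zero vector lies in no single $e_\sigma^+V$; the statement you would need to propagate is therefore not available at the first step of the gallery. Note also that Lemma \ref{lempropidemptype}(2) is stated for a vertex $x$ and an \emph{arbitrary} polysimplex $\sigma\geq x$, not just for $\sigma=x$; this stronger form (orthogonality of $e_{[\mathfrak{t}],x}$ against the parabolic-induction idempotents $e_{[\mathfrak{t}'],\sigma}=e_{[\mathfrak{t}'],y}e_\sigma^+$) is precisely the input that the complete argument of \cite[Prop.~2.3.5]{lanard} feeds into the building combinatorics to obtain the global orthogonality. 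As written, your proof establishes that the factors generate $\rep[\Ql][0]{G}$ but not that they form a product; the technical core is announced but not supplied.
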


\begin{proof}
    The proof is similar to the proof of \cite[Prop. 2.3.5]{lanard}. Property 2. in Lemma \ref{lempropidemptype} shows that these categories are pairwise orthogonal and property 1. in Lemma \ref{lempropidemptype} shows that the product if $\rep[\Ql][0]{G}$.

\end{proof}

\begin{The}
    \label{theQlblock}
    If $G$ is semisimple and simply-connected the category $\rep[\Ql][[\mathfrak{t}]]{G}$ is a block.
\end{The}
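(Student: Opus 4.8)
The plan is to show that the Serre subcategory $\rep[\Ql][[\mathfrak{t}]]{G}$ attached to the consistent system $e_{[\mathfrak{t}]}$ coincides with exactly one Bernstein block, and then invoke that Bernstein blocks are indecomposable. By Proposition \ref{prodecompotypes} we already have $\rep[\Ql][0]{G}=\prod_{[\mathfrak{t}]}\rep[\Ql][[\mathfrak{t}]]{G}$, so each factor is a product of Bernstein blocks; the whole content of the theorem is that, under the simply-connected hypothesis, each factor is a \emph{single} block. Hence I would first identify which Bernstein components land in $\rep[\Ql][[\mathfrak{t}]]{G}$, and then rule out any further splitting.

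First I would recall the dictionary between depth-zero types and cuspidal supports. By the theory of types (Morris, and the depth-zero theory as used in \cite{latham}), an unrefined depth-zero type $(\sigma,\pi)$ with $\pi$ cuspidal on $\quotred{G}{\sigma}$ gives, via parahoric induction, a Bernstein component of $\rep[\Ql][0]{G}$, and every depth-zero Bernstein component arises this way. The inertial class $[\gpalg{L},\lambda]$ of the cuspidal support is recovered from the parahoric data: the Levi $\gpalg{L}$ corresponds to the direction of $\sigma$ and $\lambda$ to a cuspidal lift of $\pi$. I would then verify that $e_{[\mathfrak{t}]}$ cuts out precisely the representations whose depth-zero type lies in the class $[\mathfrak{t}]$, so that $\rep[\Ql][[\mathfrak{t}]]{G}$ is the product of the Bernstein blocks indexed by the inertial classes appearing among the $(\sigma,\pi)\in[\mathfrak{t}]$. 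The equivalence relation $\sim$ is engineered exactly so that two types give the \emph{same} inertial class of cuspidal support when they are $\sim_\omega$-related through a common face $\omega$, since conjugacy of the cuspidal pairs in $\quotred{G}{\omega}$ translates into $G$-conjugacy of the associated $\kk$-rational cuspidal supports after unramified twist.

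The heart of the argument, and the step where simply-connectedness enters, is showing that the inertial class is \emph{constant} along a single $\sim$-class, i.e. that all types in $[\mathfrak{t}]$ yield the same Bernstein block rather than several. The key point is that when $G$ is semisimple and simply-connected, all parahoric subgroups are connected stabilizers (the parahoric equals the full fixer of $\sigma$, with no disconnectedness from $\pi_0$ or from the kernel of the Kottwitz homomorphism), so that a cuspidal pair $(\quotred{G}{\sigma},\pi)$ determines its $\kk$-rational inertial support unambiguously, and the relation $\sim_\omega$ exactly captures $G$-conjugacy of cuspidal supports. In the non-simply-connected case two $\sim_\omega$-related types can sit in distinct blocks differing by an unramified character that is not realized by $G$-conjugation, which is why the theorem is stated only for the simply-connected case. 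I would make this precise by showing that for $G$ semisimple simply-connected, the map sending $[\mathfrak{t}]$ to the inertial class $[\gpalg{L},\lambda]$ is a \emph{bijection} between $\TG/{\sim}$ and the set of depth-zero inertial classes.

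Finally, having matched $\rep[\Ql][[\mathfrak{t}]]{G}$ with a single Bernstein component $\rep[\Ql][{[\gpalg{L},\lambda]}]{G}$, indecomposability is Bernstein's theorem \cite{bern}: each Bernstein component is a block (an indecomposable direct factor of $\rep[\Ql]{G}$). Therefore $\rep[\Ql][[\mathfrak{t}]]{G}$ is a block. The main obstacle I anticipate is the bijectivity claim in the previous paragraph—verifying that no two distinct $\sim$-classes collapse to the same inertial class and, conversely, that a single $\sim$-class cannot split across two inertial classes. The first direction requires checking that $G$-conjugacy of cuspidal supports is detected by chains of $\sim_\omega$-relations (a connectedness-in-the-building argument, using that the building is connected and that one can pass between any two facets realizing a given Levi via common faces), and the second requires the simply-connected hypothesis to ensure $\pi_0$ of parahoric stabilizers does not introduce extra unramified-twist ambiguity. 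Once this combinatorial/structural identification is in place, the theorem follows immediately from \cite{bern}.
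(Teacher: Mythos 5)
Your strategy is exactly the one the paper uses: combine the decomposition of Proposition \ref{prodecompotypes} with the identification of the classes $[\mathfrak{t}]$ with depth-zero Bernstein components, and conclude by indecomposability of Bernstein blocks. The bijection between $\TG/{\sim}$ and the level-zero inertial classes that you single out as the main obstacle is precisely Theorem 4.9 of \cite{morris}, which the paper (for $G$ semisimple and simply-connected) cites outright rather than reproving, so the remaining work in your outline is exactly the content of that reference.
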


\begin{proof}
    When $G$ is semisimple and simply-connected, Theorem 4.9 of \cite{morris} shows that we have a bijection between $\TG/{\sim}$ and level zero Bernstein blocks. We then deduce from Proposition \ref{prodecompotypes} that $\rep[\Ql][0]{G}=\prod_{[\mathfrak{t}] \in \TG/{\sim}}\rep[\Ql][[\mathfrak{t}]]{G}$ is the decomposition of $\rep[\Ql][0]{G}$ into Bernstein blocks.
\end{proof}

We would like to do the same thing to construct $\lprime$-blocks. The simplest case is when $\lprime$ is banal.

\begin{Def}
    \label{deflbanal}
    We say that a prime number $\lprime \neq p$ is banal when for every vertex $x\in \bts$, $\lprime$ does not divide the cardinal of $\quotred{G}{x}$.
\end{Def}

Therefore, when $\lprime$ is banal each idempotent $e_{[\mathfrak{t}]}$ is in $\hecke{G}{\Zl}$. Thus we have a decomposition
\[\rep[\Zl][0]{G}=\prod_{[\mathfrak{t}] \in \TG/{\sim}}\rep[\Zl][[\mathfrak{t}]]{G}\]
and the following theorem

\begin{The}
    If $G$ is semisimple and simply-connected, and $\lprime$ is banal, the category $\rep[\Zl][[\mathfrak{t}]]{G}$ is an $\lprime$-block.
\end{The}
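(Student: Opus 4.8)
The plan is to leverage the $\Ql$--case. In the banal setting we already have, by the same reasoning as Lemma \ref{lempropidemptype} and Proposition \ref{prodecompotypes}, the decomposition $\rep[\Zl][0]{G}=\prod_{[\mathfrak{t}]\in\TG/{\sim}}\rep[\Zl][[\mathfrak{t}]]{G}$ into pairwise orthogonal direct factors; so these factors cannot be glued to one another and it only remains to show that each of them is \emph{indecomposable}, i.e.\ that the central idempotent $e_{[\mathfrak{t}]}$ is primitive over $\Zl$. I would deduce this from Theorem \ref{theQlblock}, which gives primitivity over $\Ql$.

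First I would record the input of banality at the finite level. Since $\lprime\nmid\card{\quotred{G}{\tau}}$ for every $\tau\in\bt$, the quantity $1/\card{\quotred{G}{\tau}}$ is a unit in $\Zl=\overline{\mathbb{Z}}_{\lprime}$, so each primitive central idempotent of $\Ql[\quotred{G}{\tau}]$ (whose coefficients are $\frac{\chi(1)}{\card{\quotred{G}{\tau}}}\chi(g^{-1})$ with $\chi(g)$ an algebraic integer) already lies in $\Zl[\quotred{G}{\tau}]$. Hence the idempotents $e_{[\mathfrak{t}]}^{\tau}$, and the whole system $e_{[\mathfrak{t}]}$, are literally the same whether computed with $\Ql$ or with $\Zl$ coefficients, and $\rep[\Ql][[\mathfrak{t}]]{G}$ is obtained from $\rep[\Zl][[\mathfrak{t}]]{G}$ by inverting $\lprime$.

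Then I would argue by base change. A finer decomposition of the factor $\rep[\Zl][[\mathfrak{t}]]{G}$ produces orthogonal nonzero central idempotents $f_{1},f_{2}$ with $f_{1}+f_{2}=e_{[\mathfrak{t}]}$, realised through systems of idempotents in the Hecke algebras $\hecke{\para{G}{\tau}}{\Zl}$ as in Section \ref{secsystcohe}. These algebras are free $\Zl$--modules, so $\hecke{\para{G}{\tau}}{\Zl}\hookrightarrow\hecke{\para{G}{\tau}}{\Ql}$ and the nonzero idempotents $f_{i}$ remain nonzero and orthogonal after inverting $\lprime$. But then $e_{[\mathfrak{t}]}=f_{1}+f_{2}$ over $\Ql$ would split the block $\rep[\Ql][[\mathfrak{t}]]{G}$ of Theorem \ref{theQlblock} into two nonzero factors, a contradiction. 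Therefore $e_{[\mathfrak{t}]}$ is primitive over $\Zl$ and $\rep[\Zl][[\mathfrak{t}]]{G}$ is an $\lprime$--block.

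The delicate point, and the place where banality is indispensable, is the very last implication: a priori an idempotent $f_{2}\neq0$ over $\Zl$ could vanish after inverting $\lprime$ (it would then be $\lprime$--power torsion), which is exactly how distinct $\Ql$--blocks get linked into a single $\lprime$--block in the general, non-banal situation treated in the rest of the paper. Banality removes this possibility, since the $\Zl[\quotred{G}{x}]$ are maximal orders with the same primitive central idempotents as $\Ql[\quotred{G}{x}]$, so no purely $\lprime$--torsion direct factor can appear and the $\Zl$--freeness argument above is legitimate. I expect the only thing needing care in a full write-up is to check that a categorical decomposition of $\rep[\Zl][[\mathfrak{t}]]{G}$ indeed arises from such systems of idempotents, which is routine given the Meyer--Solleveld formalism recalled in Section \ref{secsystcohe}.
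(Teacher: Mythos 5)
Your argument is correct and is exactly the one the paper leaves implicit: it states this theorem without proof, as an immediate consequence of the fact that banality makes each $e_{[\mathfrak{t}]}$ integral (so the $\Ql$-decomposition of Proposition \ref{prodecompotypes} descends to $\Zl$) together with the primitivity over $\Ql$ from Theorem \ref{theQlblock}. Your identification of the key point --- that a nonzero integral idempotent cannot die after inverting $\lprime$, which is checked by letting central idempotents act on the torsion-free objects $\hecke{\para{G}{x}}{\Zl}e_x^{+}$ exactly as in Lemma \ref{lemlintegralidem} --- matches how the paper handles the analogous issue in the non-banal case.
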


In the general case, the idempotents do not have coefficients in $\Zl$. The topic of the followings sections will be to explain how to sum these idempotents to get idempotents with integral coefficients.

\section{\texorpdfstring{$\dun$}{(d,1)}-theory}
\label{secduntheo}

We have seen in section \ref{secidempotenthc} how to construct the Bernstein blocks with consistent systems of idempotents when we have a simply-connected group. To construct $\lprime$-blocks, we need to produce central idempotents for finite reductive groups with coefficients in $\Zl$. In this section, we introduce the notion of a $\dun$-set. This is a subset of $\Irr(\gpfini{G})$ which is a union of Harish-Chandra series and gives a central idempotent with coefficients in $\Zl$. These $\dun$-sets will be used in the next sections to describe the unipotent $\lprime$-blocks for simply-connected $p$-adic groups.

\bigskip

This section will only deal with finite reductive groups. Let us take $(\gpfinialg{G}, \fr)$ a connected reductive group defined over $\res$, and let $\gpfini{G}:=(\gpfinialg{G})^{\fr}$. We recall that $q=\card{\res}$. We will define the $\dun$-set and $\dun$-series, then explain how to compute them, and to finish, we will show that they behave well with respect to Harish-Chandra induction and Deligne-Lusztig induction from particular Levi subgroups.

\subsection{Unipotent \texorpdfstring{$\lprime$}{l}-blocks for finite reductive groups}

\label{secblockunipfini}

We recall in this section the theory of $\lprime$-blocks for a finite connected reductive group. These blocks will be constructed using a modified Harish-Chandra induction called $d$-Harish-Chandra induction, defined using Deligne-Lusztig theory.

\sautintro

For each connected reductive group $(\gpfinialg{G}, \fr)$ over $\res$, there exists a unique polynomial $\ordpol{G} \in \mathbb{Z}[x]$ called the polynomial order of $\gpfinialg{G}$ (see for example \cite{bmm} section 1.A) with the property that there is $a \geq 1$ such that $\card{\gpfinialg{G}^{\fr^m}}=\ordpol{G}(q^m)$ for all $m \geq 1$ such that $m \equiv 1 \pmod{a}$. The prime factors of $\ordpol{G}$ distinct from $x$ are cyclotomic polynomials. Let $d\geq 1$ be an integer and $\cycl{d}$ the corresponding cyclotomic polynomial. We say that $\gpfinialg{T}$ is a $\cycl{d}$-subgroup if $\gpfinialg{T}$ is a $\fr$-stable torus of $\gpfinialg{G}$ whose polynomial order is a power of $\cycl{d}$. A $d$-split Levi subgroups of $\gpfinialg{G}$ is the centralizer in $\gpfinialg{G}$ of some $\cycl{d}$-subgroup of $\gpfinialg{G}$.

Let $\chi \in \Irr(\gpfini{G})$ be an ordinary irreducible character. We say that $\chi$ is $d$-cuspidal if and only if $\rdl \chi = 0$ for every proper $d$-split Levi subgroup $\gpfinialg{L}$ and every parabolic $\gpfinialg{P}$ admitting $\gpfinialg{L}$ as Levi subgroup.

A ``unipotent $d$-pair'' is a pair $(\gpfinialg{L},\lambda)$ where $\gpfinialg{L}$ is a $d$-split Levi and $\lambda$ is a unipotent character of $\gpfinialg{L}$. Such a pair is said to be cuspidal if $\lambda$ is cuspidal. We define an order on unipotent $d$-pairs by $(\gpfinialg{M},\mu) \preceq (\gpfinialg{L},\lambda)$ if $\gpfinialg{M}$ is a Levi subgroup of $\gpfinialg{L}$ and there is a parabolic subgroup $\gpfinialg{P}$ of $\gpfinialg{L}$ admitting $\gpfinialg{M}$ as a Levi such that $\langle \lambda, \inddl{M}{P}{L} (\mu) \rangle \neq 0$. For $(\gpfinialg{L},\lambda)$ a unipotent $d$-cuspidal pair, let us define $\dldcusp{L}{\lambda}$ to be the subset of $\dl{\gpfini{G}}{1}$ of characters $\chi$ such that $(\gpfinialg{L},\lambda) \preceq (\gpfinialg{G},\chi)$. We call $\dldcusp{L}{\lambda}$ a $d$-series.

\begin{The}[\cite{bmm}Theorem 3.2 (1)]
    \label{thmdecompodseries}
    For each $d$, the sets $\dldcusp{L}{\lambda}$ (where $(\gpfinialg{L},\lambda)$ runs over a complete set of representatives of $\gpfini{G}$-conjugacy classes of unipotent $d$-cuspidal pairs) partition $\dl{\gpfini{G}}{1}$.
\end{The}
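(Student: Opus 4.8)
The plan is to prove the partition statement in two independent halves: a \emph{covering} assertion, that every $\chi \in \dl{\gpfini{G}}{1}$ lies in at least one set $\dldcusp{L}{\lambda}$, and a \emph{disjointness} assertion, that no unipotent character lies in two sets attached to $\gpfini{G}$-inconjugate cuspidal pairs. Throughout I would import the standard formal properties of the Lusztig functors: that $\idl$ and its adjoint $\rdl$ are adjoint for the usual scalar product on class functions; that they preserve rational Lusztig series and hence send unipotent characters to $\mathbb{Z}$-combinations of unipotent characters (since $\dl{\gpfini{G}}{1}$ is the series of the identity); that they are transitive along chains of $d$-split Levi subgroups; and, as a consequence of these, that the relation $\preceq$ on unipotent $d$-pairs is a genuine partial order (this last point is the technical input of \cite{bmm}, where one must control the signs in the transitivity of $\idl$).

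\emph{Covering.} I would argue by downward induction on the semisimple rank of $\gpfinialg{G}$. Let $\chi \in \dl{\gpfini{G}}{1}$. If $\chi$ is $d$-cuspidal, then $(\gpfinialg{G},\chi)$ is itself a unipotent $d$-cuspidal pair and $\chi \in \dldcusp{G}{\chi}$. Otherwise, by the definition of $d$-cuspidality there is a proper $d$-split Levi $\gpfinialg{M}$, a parabolic $\gpfinialg{Q}$ with Levi $\gpfinialg{M}$, and a nonzero $\resdl{M}{Q}{G}\chi$. Choosing a constituent $\mu$ of $\resdl{M}{Q}{G}\chi$ — which is unipotent, since $\rdl$ preserves unipotency — adjunction gives $\langle \chi, \inddl{M}{Q}{G}\mu \rangle \neq 0$, i.e. $(\gpfinialg{M},\mu) \preceq (\gpfinialg{G},\chi)$. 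As $\gpfinialg{M}$ has strictly smaller semisimple rank, the induction hypothesis inside $\gpfinialg{M}$ supplies a unipotent $d$-cuspidal pair $(\gpfinialg{L},\lambda)$ with $(\gpfinialg{L},\lambda) \preceq (\gpfinialg{M},\mu)$, and transitivity of $\preceq$ then yields $(\gpfinialg{L},\lambda) \preceq (\gpfinialg{G},\chi)$, that is $\chi \in \dldcusp{L}{\lambda}$.

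\emph{Disjointness.} This is where the genuinely new content of $d$-Harish-Chandra theory is needed, and it is the main obstacle. The difficulty is that, unlike ordinary Harish-Chandra induction, $\idl$ is not known to satisfy a Mackey formula, so one cannot deduce uniqueness of the $d$-cuspidal support by the purely formal argument available for $d=1$. The plan is instead to prove directly that the $d$-cuspidal pair below a given $\chi$ is unique up to $\gpfini{G}$-conjugacy, by analysing the decomposition of $\idl(\lambda)$ for unipotent $\lambda$. Concretely, I would use Lusztig's parametrization of unipotent characters through families and their generic degrees to control the multiplicities $\langle \chi, \idl(\lambda)\rangle$, and reduce to the classification of $(\gpfinialg{G},\fr)$. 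For classical type the $d$-cuspidal pairs and their series are governed by the combinatorics of symbols: passing to the appropriate core (with a parameter determined by $d$ and the type) organizes symbols so that two unipotent characters share a $d$-cuspidal support precisely when they have the same core, and the partition of symbols by their core is manifest — this gives disjointness and re-confirms covering. For the finitely many exceptional types the same statement is read off from the explicit tables of generic degrees and $d$-cuspidal pairs.

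In summary, the covering direction is formal once adjunction, transitivity and unipotency-preservation of the Lusztig functors are in hand, whereas the disjointness direction is the crux: it amounts to the well-definedness of the $d$-cuspidal support, which in the absence of a Mackey formula for $\idl$ must be established through Lusztig's explicit description of unipotent characters and, ultimately, a type-by-type verification as carried out in \cite{bmm}.
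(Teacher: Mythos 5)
This theorem is quoted in the paper verbatim from \cite{bmm} (Theorem 3.2(1)) with no proof given, so there is no internal argument to compare against. Your sketch --- formal covering by downward induction on the rank via adjunction and transitivity of $\preceq$, with disjointness (the uniqueness of the $d$-cuspidal support, in the absence of a Mackey formula for $\idl$) established through the combinatorics of $d$-cores and $d$-cocores of partitions and symbols in classical types plus a type-by-type verification for exceptional types --- correctly identifies both the formal and the hard parts and is precisely the strategy of the cited source, whose combinatorial output the paper itself later extracts in Propositions \ref{prodseriesAn} and \ref{prodseriescore}.
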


An $\lprime$-block is a primitive idempotent in the centre $Z(\Zl[\gpfini{G}])$ of the group algebra $\Zl[\gpfini{G}]$. For $b$ an $\lprime$-block, we denote by $\Irr(b)$ the subset of $\Irr(\gpfini{G})$ that is cuts out by the idempotent $b$. This defines a partition $\Irr(\gpfini{G})=\sqcup_{b}\Irr(b)$. The $\lprime$-unipotent series $\dll{\gpfini{G}}{1}$, defined as the union of the $\dl{\gpfini{G}}{t}$ with $t$ of order a power of $\lprime$, defines a central idempotent in $\Zl[\gpfini{G}]$ (\cite{bonnafe_rouquier} Theorem A' and remark 11.3), hence it is a union of $\lprime$-blocks: $\dll{\gpfini{G}}{1}=\sqcup_{b}\Irr(b)$. We will call these blocks the unipotent $\lprime$-blocks.

\bigskip

Let $\lprime$ be a prime number not diving $q$. We will say that $\lprime$ satisfies the condition \eqref{eql} if
\begin{equation}
    \label{eql}
    \lprime \text{ is odd, } \lprime \text{ is good for } \gpfinialg{G} \text{ and } \lprime \neq 3 \text{ if } \tDq \text{ is involved in } (\gpfinialg{G},\fr)
    \tag{$\ast$}
\end{equation}

\medskip

Let us summarize the condition of being good and \eqref{eql} in a table

\begin{tabular}{|c|c|c|c|c|c|}
    \hline
    Types           & $\An$, $\dAn$    & $\Bn$, $\Cn$, $\Dn$, $\dDn$ & $\tDq$           & $\dG$, $\Fqu$, $\Esi$, $\dEsi$, $\Ese$ & $\Eh$            \\
    \hline
    bad $\lprime$'s & $\emptyset$      & $\{2\}$                     & $\{2\}$          & $\{2, 3\}$                             & $\{2, 3, 5\}$    \\
    \hline
    \eqref{eql}     & $\lprime \geq 3$ & $\lprime \geq 3$            & $\lprime \geq 5$ & $\lprime \geq 5$                       & $\lprime \geq 7$ \\
    \hline
\end{tabular}

\begin{The}[{\cite[Thm. 4.4]{CabanesEngueharduni}}]
    \label{thelblockdcuspi}
    We assume that $\lprime$ satisfies \eqref{eql} and let $d$ be the order of $q$ modulo $\lprime$. Then there is a bijection
    \[ (\gpfinialg{L},\lambda) \mapsto b(\gpfinialg{L},\lambda),\]
    between the set of $\gpfini{G}$-conjugacy classes of unipotent $d$-cuspidal pairs of $\gpfini{G}$ and the set of unipotent $\lprime$-blocks.

    Moreover, we have that $\Irr(b(\gpfinialg{L},\lambda)) \cap \dl{\gpfini{G}}{1} = \{\chi, (\gpfinialg{L},\lambda) \preceq (\gpfinialg{G},\chi) \}$.
\end{The}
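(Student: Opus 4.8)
The plan is to reduce the theorem to a single assertion about the unipotent characters $\dl{\gpfini{G}}{1}$: namely that the partition of $\dl{\gpfini{G}}{1}$ induced by the unipotent $\lprime$-blocks coincides with its partition into $d$-series $\dldcusp{L}{\lambda}$ supplied by Theorem \ref{thmdecompodseries}, together with the fact that every unipotent $\lprime$-block meets $\dl{\gpfini{G}}{1}$. Granting this, one defines $b(\gpfinialg{L},\lambda)$ to be the unique unipotent $\lprime$-block whose intersection with $\dl{\gpfini{G}}{1}$ is $\dldcusp{L}{\lambda}$; the formula $\Irr(b(\gpfinialg{L},\lambda)) \cap \dl{\gpfini{G}}{1} = \{\chi : (\gpfinialg{L},\lambda) \preceq (\gpfinialg{G},\chi)\}$ is then exactly the definition of the $d$-series, and bijectivity follows by combining the coincidence of partitions with the parametrization of $d$-series by $\gpfini{G}$-classes of unipotent $d$-cuspidal pairs in Theorem \ref{thmdecompodseries}.

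The first inclusion to establish is that each $d$-series lies in a single $\lprime$-block. By definition of $\preceq$, every $\chi \in \dldcusp{L}{\lambda}$ occurs in $\idl(\lambda)$ for a suitable parabolic $\gpfinialg{P}$, so it suffices to show that the unipotent constituents of one Lusztig-induced character $\idl(\lambda)$ all lie in one block. I would deduce this from the compatibility of Lusztig induction with the $\lprime$-block decomposition when $\gpfinialg{L}$ is $d$-split and $d = \mathrm{ord}_\lprime(q)$: the functor $\idl$ is realized over $\Zl$ by the cohomology of a Deligne--Lusztig variety, and for such $\gpfinialg{L}$ this realization respects blocks, forcing the central-character congruences $\omega_\chi \equiv \omega_{\chi'} \pmod \lprime$ that, by Brauer's criterion, place all constituents of $\idl(\lambda)$ in the same block.

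The reverse inclusion --- that non-conjugate $d$-cuspidal pairs give characters in distinct blocks, and that every unipotent $\lprime$-block meets $\dl{\gpfini{G}}{1}$ --- is the main obstacle, and it is here that the hypothesis \eqref{eql} (namely $\lprime$ odd and good, with $\lprime \neq 3$ when $\tDq$ is involved) is indispensable. My approach would be to attach to a unipotent $d$-cuspidal pair $(\gpfinialg{L},\lambda)$ a defect datum built from a Sylow $\cycl{d}$-torus of the center of $\gpfinialg{L}$, and to argue that this datum is a block invariant separating non-conjugate pairs; the good-prime condition is what makes the relevant centralizers and relative Weyl groups behave generically, so that no additional fusion collapses distinct $d$-series into one block. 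Equivalently one proves that the number of unipotent $\lprime$-blocks equals the number of $\gpfini{G}$-classes of unipotent $d$-cuspidal pairs, after which the first inclusion forces the two partitions to agree. I expect the delicate part to be controlling these defect and centralizer data uniformly across all types; in the original source this rests on the generic block theory of \cite{bmm} combined with a type-by-type verification, so rather than reproduce it I would invoke \cite[Thm.~4.4]{CabanesEngueharduni} (compare also \cite{CabanesEnguehardBlocks}) for the complete argument.
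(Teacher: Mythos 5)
The paper gives no proof of this statement at all --- it is imported verbatim as \cite[Thm.\ 4.4]{CabanesEngueharduni} --- and your argument ultimately defers to that same reference, so the two approaches coincide. Your preliminary sketch (block-compatibility of Lusztig induction from $d$-split Levi subgroups, then separating non-conjugate unipotent $d$-cuspidal pairs via defect data attached to Sylow $\cycl{d}$-tori under hypothesis \eqref{eql}) is a fair summary of how Cabanes--Enguehard actually proceed, but no such argument is reproduced or needed in this paper.
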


If $b$ is a unipotent $\lprime$-block, then the knowledge of $\Irr(b) \cap \dl{\gpfini{G}}{1}$ is enough to describe all the characters in $\Irr(b)$. To explain this, we need a few more notations.

Let $t \in \gpfini*{G}$ be a semi-simple element of order a power of $\lprime$. Let $\lprime$ be a good prime for $\gpfinialg{G}$. Then $\cent*{t}{\gpfinialg*{G}}$ is a Levi subgroup (see for example \cite{CabanesEngueharduni} Proposition 2.1). Let $\gpfinialg{G}(t)$ be a Levi subgroup in $\gpfinialg{G}$ in duality with $\cent*{t}{\gpfinialg*{G}}$ over $\res$ and $\gpfinialg{P}$ be a parabolic subgroup with Levi component $\gpfinialg{G}(t)$.

Since $t$ is a central element of $(\cent*{t}{\gpfinialg*{G}})^{\fr}$, by \cite{digneMichel} Proposition 13.30, there exists a linear character $\hat{t} \in \Irr(\gpfini{G}(t))$ such that the tensor product with $\hat{t}$ defines a bijection from $\dl{\gpfini{G}(t)}{1}$ to $\dl{\gpfini{G}(t)}{t}$. Let $\chi \in \dl{\gpfini{G}}{t}$. Then, by the Jordan decomposition in the case of non connected centre (defined in \cite{lusztigdisco}) there exists $\chi_t \in \dl{\gpfini{G}(t)}{1}$ such that $ \langle \chi, \inddllite{\gpfinialg{G}(t) \subseteq \gpfinialg{P}}{\gpfinialg{G}}(\hat{t}\chi_t) \rangle \neq 0$.

\begin{The}[{\cite[Thm. 4.4]{CabanesEngueharduni}}]
    \label{thedescriplblock}
    Let $\lprime$ be a prime good for $\gpfinialg{G}$. Let $\chi \in \dl{\gpfini{G}}{t}$, for $t$ a semi-simple conjugacy class in $\gpfini*{G}$ of order a power of $\lprime$. Let $b$ be the $\lprime$-block such that $\chi \in \Irr(b)$. Let $\gpfinialg{G}(t)$ be a $\fr$-stable Levi in $\gpfinialg{G}$ dual to $\cent*{t}{\gpfinialg*{G}}$, $\gpfinialg{P}$ be a parabolic subgroup with Levi component $\gpfinialg{G}(t)$, and $\chi_t \in \dl{\gpfini{G}(t)}{1}$ such that $ \langle \chi, \inddllite{\gpfinialg{G}(t) \subseteq \gpfinialg{P}}{\gpfinialg{G}}(\hat{t}\chi_t) \rangle \neq 0$. For any such $(\gpfinialg{G}(t), \gpfinialg{P}, \chi_t)$ associated to $\chi$, all the irreducible  components of $\inddllite{\gpfinialg{G}(t) \subseteq \gpfinialg{P}}{\gpfinialg{G}}(\chi_t)$ are in $\Irr(b) \cap \dl{\gpfini{G}}{1}$.
\end{The}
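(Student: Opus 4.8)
The plan is to split the claim into its two assertions: that every irreducible component $\psi$ of $\inddllite{\gpfini{G}(t)}{\gpfini{G}}(\chi_t)$ is unipotent, and that each such $\psi$ lies in the block $b$. The first assertion is immediate: Lusztig induction preserves rational Lusztig series, so the induction of the unipotent character $\chi_t \in \dl{\gpfini{G}(t)}{1}$ is a $\mathbb{Z}$-combination of characters in $\dl{\gpfini{G}}{1}$; hence all its components are unipotent. The real work is the block statement, which I would approach in two stages: first showing that all the $\psi$ share a single unipotent $\lprime$-block $b'$, then identifying $b'$ with $b$.

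For the first stage I would use $d$-Harish-Chandra theory. Let $(\gpfinialg{L}_0,\lambda_0)$ be the $d$-cuspidal support of $\chi_t$ inside $\gpfini{G}(t)$, so that $\chi_t \in \dl{\gpfini{G}(t)}{(\gpfinialg{L}_0,\lambda_0)}$; this is well defined up to $\gpfini{G}(t)$-conjugacy by Theorem \ref{thmdecompodseries} applied to $\gpfini{G}(t)$. Since $t$ is an $\lprime$-element and $d$ is the order of $q$ modulo $\lprime$, one checks that $\cent*{t}{\gpfinialg*{G}}$ is a $d$-split Levi of $\gpfinialg*{G}$, so that its dual $\gpfini{G}(t)$ is a $d$-split Levi of $\gpfini{G}$; consequently $(\gpfinialg{L}_0,\lambda_0)$ is a unipotent $d$-cuspidal pair of $\gpfini{G}$ as well. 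By transitivity of Lusztig induction between $d$-split Levi subgroups together with the compatibility of $d$-cuspidal support with $d$-Harish-Chandra induction (\cite{bmm}), every component $\psi$ of $\inddllite{\gpfini{G}(t)}{\gpfini{G}}(\chi_t)$ lies in the $d$-series $\dl{\gpfini{G}}{(\gpfinialg{L}_0,\lambda_0)}$. By Theorem \ref{thelblockdcuspi} this $d$-series is exactly the unipotent part of a single unipotent $\lprime$-block $b' := b(\gpfinialg{L}_0,\lambda_0)$, so all the $\psi$ lie in $b'$.

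It remains to prove $b' = b$, and this is the step I expect to be the main obstacle. The idea is to exploit that $t$ is an $\lprime$-element: the linear character $\hat{t}$ then has $\lprime$-power order, so its reduction modulo $\lprime$ is trivial and $\hat{t}\chi_t$, $\chi_t$ have the same image under the decomposition map $d_{\lprime}$. Since Lusztig induction commutes with $d_{\lprime}$, we obtain
\[ d_{\lprime}\bigl(\inddllite{\gpfini{G}(t)}{\gpfini{G}}(\hat{t}\chi_t)\bigr) = \inddllite{\gpfini{G}(t)}{\gpfini{G}}\bigl(d_{\lprime}(\hat{t}\chi_t)\bigr) = \inddllite{\gpfini{G}(t)}{\gpfini{G}}\bigl(d_{\lprime}(\chi_t)\bigr) = d_{\lprime}\bigl(\inddllite{\gpfini{G}(t)}{\gpfini{G}}(\chi_t)\bigr). \]
Now $\chi$ is a component of the left-hand induction and lies in $b$, while every component of the right-hand induction lies in $b'$; projecting this equality onto the block $b$, the left-hand side contains the nonzero contribution $d_{\lprime}(\chi)$, whereas the right-hand side is supported on $b'$, which forces $b' = b$. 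The delicate point here is to run this congruence cleanly in the Grothendieck group, controlling the virtual-character cancellations that can occur in Lusztig induction; concretely one reduces to the block-orthogonality of the decomposition matrix and the non-vanishing of $d_{\lprime}(\chi)$. An equivalent formulation of this obstacle is the compatibility of the Jordan decomposition $\chi \leftrightarrow \chi_t$ with $d$-cuspidal pairs, namely that the $d$-cuspidal pair attached to the $\lprime$-block of $\chi$ is precisely $(\gpfinialg{L}_0,\lambda_0)$. Once $b' = b$ is established, the two assertions together give $\psi \in \Irr(b)\cap\dl{\gpfini{G}}{1}$ for every component $\psi$, as desired.
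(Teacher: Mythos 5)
The paper does not prove this statement: it is imported verbatim as \cite[Thm.~4.4]{CabanesEngueharduni}, so there is no internal proof to compare against, and what you are attempting is a reconstruction of that (substantial) theorem. Your overall architecture is sound and in fact mirrors the genuine one: unipotence of the components is immediate since Lusztig induction preserves rational series; the block statement is reduced to (i) all components of $\inddllite{\gpfini{G}(t)}{\gpfini{G}}(\chi_t)$ sharing one unipotent block $b'$ and (ii) $b'=b$. Your step (ii) is essentially correct: $\hat{t}$ is trivial on $\lprime$-regular elements, Lusztig induction commutes with the decomposition map, $\varepsilon_{\gpfini{G}}\varepsilon_{\gpfini{G}(t)}\inddllite{\gpfini{G}(t)}{\gpfini{G}}(\hat{t}\chi_t)$ is a multiplicity-one sum over the orbit of $\chi$ (so the positivity of decomposition numbers prevents the $b$-component from cancelling), and you correctly flag this as the point to control.

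The genuine gap is in step (i). You assert that $\cent*{t}{\gpfinialg*{G}}$, hence $\gpfini{G}(t)$, is a $d$-split Levi, and then invoke \cite{bmm} to propagate the $d$-cuspidal support. This is false in general: for $t$ of order $\lprime^a$ with $a\geq 2$ the connected centralizer is only an $E_{q,\lprime}$-split Levi (the centralizer of a torus whose polynomial order involves $\cycl{d\lprime},\cycl{d\lprime^2},\dots$, not just $\cycl{d}$) --- already in $\GL_n$, eigenvalues of order $\lprime^2$ produce factors $\GL_m(q^{d\lprime})$. The paper itself is explicit about this (see Lemma \ref{lemdualEql} and the definition of $E_{q,\lprime}$), and the statement you need --- that Lusztig induction from an $E_{q,\lprime}$-split Levi preserves the partition into $d$-series when $\lprime$ is good --- is not a formal consequence of \cite[Thm.~3.2]{bmm}; it is the non-formal core of Cabanes--Enguehard's theorem and requires case-by-case analysis (compare the paper's own Lemmas \ref{lemdinducdun} and \ref{lemindEqldun}, which prove the analogous compatibility for $\dun$-series only after invoking the classification and the hypothesis \eqref{eql}). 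So your proof is not complete as written: either restrict to $t$ of order $\lprime$ with $\cent*{t}{\gpfinialg*{G}}$ genuinely $d$-split, or supply the $E_{q,\lprime}$-generalized $d$-Harish-Chandra compatibility, which is exactly what the cited reference establishes.
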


Let $(\gpfinialg{L},\lambda)$ be a unipotent $d$-cuspidal pair. Then we define the $\lprime$-extension of the $d$-series $\dldcusp{L}{\lambda}$ as the subset $\mathcal{E}_{\lprime}(\gpfini{G},(\gpfinialg{L},\lambda)) \subseteq \dll{\gpfini{G}}{1}$ of characters $\chi \in \dll{\gpfini{G}}{1}$ such that, with the notation of Theorem \ref{thedescriplblock}, all the irreducible  components of $\inddllite{\gpfinialg{G}(t) \subseteq \gpfinialg{P}}{\gpfinialg{G}}(\chi_t)$ are in $\dldcusp{L}{\lambda}$. Hence, if $\lprime$ satisfies \eqref{eql}, then $\mathcal{E}_{\lprime}(\gpfini{G},(\gpfinialg{L},\lambda)) = \Irr(b(\gpfinialg{L},\lambda))$.

\subsection{\texorpdfstring{$\dun$}{(d,1)}-series}

\label{secdunseries}

We have seen in Section \ref{secidempotenthc} that in order to construct Bernstein blocks we needed to decompose $\Irr(\gpfini{G})$ as Harish-Chandra series. But to get $\lprime$-blocks we need to decompose it as $d$-series, as seen in Section \ref{secblockunipfini}. In this section, we will introduce $\dun$-series, which will give a partition of $\Irr(\gpfini{G})$ into subsets which are both a union of Harish-Chandra series and a union of $d$-series.

\bigskip

First, let us remark that 1-series are just Harish-Chandra series, so from now on we will speak of 1-split Levi, 1-cuspidal pairs and 1-series when we want to talk about ``normal'' Levi subgroup, cuspidal pairs and Harish-Chandra series.

\begin{Def}
    We define a $\dun$-set to be a subset of $\Irr(\gpfini{G})$ which is a union of $1$-series and a union of $d$-series. A $\dun$-series is then a $\dun$-set with no proper non-empty $\dun$-subset.

    A $\dun$-set, respectively a $\dun$-series, included in $\dl{\gpfini{G}}{1}$ will be called a unipotent $\dun$-set, respectively a unipotent $\dun$-series.
\end{Def}

\begin{Rem}
    \begin{enumerate}
        \item By Theorem \ref{thmdecompodseries} $\dl{\gpfini{G}}{1}$ is a $\dun$-set, so the unipotent $\dun$-series give a partition of $\dl{\gpfini{G}}{1}$.
        \item If $\cycl{d}$ does not divide $\ordpol{G}$, then the only $\cycl{d}$-torus is the trivial one. Hence the $\dun$-series are just the $1$-series.
    \end{enumerate}
\end{Rem}

Let $\mathcal{E}$ be a unipotent $\dun$-series. Since $\mathcal{E}$ can be written as a union of $d$-series $\mathcal{E}=\bigsqcup_{i} \mathcal{E}_i$, we can define the $\lprime$-extension of a $\dun$-series by
\[\mathcal{E}_{\lprime}:=\bigsqcup_{i} \mathcal{E}_{i,\lprime}.\]

We want to compute the unipotent $\dun$-series. The first step is to reduce to the case of simple groups.

\bigskip

To every orbit $\omega$ of $\fr$ on the set of connected components of the Dynkin diagram of $\gpfinialg{G}$ there corresponds a well defined $\fr$-stable subgroup $\gpfinialg{G}'_{\omega}$ of $[\gpfinialg{G},\gpfinialg{G}]$ and a component $\gpfinialg{G}_{\omega} = Z^{\circ}(\gpfinialg{G})\gpfinialg{G}'_{\omega}$ of $\gpfinialg{G}$. The finite group $(\gpfinialg{G}_{\omega}/Z(\gpfinialg{G}_{\omega}))^{\fr}$ is characterized by its simple type $\{ \An, \dAn, \Bn, \Cn, \Dn, \dDn, \tDq, \dG, \Fqu, \Esi, \dEsi, \Ese, \Eh\}$ and an extension field $\Fq{m(\omega)}$ of $\Fq{}$ of degree $m(\omega)$ equal to the length of the orbit of $\omega$. Moreover, when $\gpfinialg{G}=\ad{\gpfinialg{G}}$, where $\ad{\gpfinialg{G}}$ denotes the adjoint group of $\gpfinialg{G}$, then it is a direct product of its components.

Let us begin, by showing how to reduce to $\gpfinialg{G}$ of adjoint type.

\begin{Pro}
    \label{produnserieadjoint}
    Let $\pi:\gpfinialg{G} \to \ad{\gpfinialg{G}}$ be the reduction map modulo $Z(\gpfinialg{G})$. Then $\pi$ induces a bijection between $\dl{\ad{\gpfini{G}}}{1}$ and $\dl{\gpfini{G}}{1}$ which commutes with the Deligne-Lusztig induction and preserves unipotent $\dun$-series.
\end{Pro}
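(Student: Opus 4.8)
The plan is to realize the asserted correspondence as the pullback of class functions along $\pi$, and then to read off compatibility with Deligne--Lusztig induction from the functoriality of $\inddl{L}{P}{G}$ under isogenies. The guiding principle is that unipotent characters are insensitive to central isogenies: they are parametrized by data attached to $(\Wf,\fr)$, and $\pi$ induces an $\fr$-equivariant isomorphism of Weyl groups together with a bijection between the $\fr$-stable maximal tori, Levi subgroups and parabolic subgroups of $\gpfinialg{G}$ and those of $\ad{\gpfinialg{G}}$. So the abstract bijection exists for formal reasons, and the real content is that it is induced by $\pi$ and respects the two inductions.

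First I would construct the map. A character in $\dl{\gpfini{G}}{1}=\mathcal{E}(\gpfini{G},1)$ is trivial on $Z(\gpfinialg{G})^{\fr}=\ker\pi$, since the central character of a Lusztig series $\dl{\gpfini{G}}{s}$ is governed by $s$, which here is trivial. Hence every unipotent character factors through $\im\pi\cong\gpfini{G}/Z(\gpfinialg{G})^{\fr}$, and unipotent characters of $\gpfini{G}$ correspond tautologically to unipotent characters of $\im\pi$ by inflation. Conversely, given a unipotent character of $\ad{\gpfinialg{G}}^{\fr}$ I would restrict it to the normal subgroup $\im\pi\trianglelefteq\ad{\gpfinialg{G}}^{\fr}$ and then inflate along $\pi$; denote the resulting operation by $\pi^{*}$. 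The two procedures are mutually inverse \emph{provided} one knows that the restriction to $\im\pi$ of a unipotent character of $\ad{\gpfinialg{G}}^{\fr}$ stays irreducible and that restriction sets up a bijection between the unipotent characters of $\ad{\gpfinialg{G}}^{\fr}$ and of $\im\pi$.

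The commutation with Deligne--Lusztig induction then follows formally. Because $\pi$ is a central isogeny, it identifies $\Phi_d$-subgroups together with their centralizers, so the map $\gpfinialg{L}\mapsto\pi(\gpfinialg{L})$ matches $d$-split Levi subgroups with $d$-split Levi subgroups for every $d$ (and $1$-split with $1$-split); combinatorially this is clear since $d$-split Levis are determined by the root datum with its $\fr$-action, which $\pi$ preserves. The functoriality of Lusztig induction under the isogeny then yields, on unipotent characters, $\pi^{*}\circ\inddllite{\pi(\gpfinialg{L})\subseteq\pi(\gpfinialg{P})}{\ad{\gpfinialg{G}}}=\inddl{L}{P}{G}\circ\pi^{*}$, and dually for $\rdl$. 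Consequently $\pi^{*}$ carries the Harish-Chandra ($1$-)series of $\ad{\gpfinialg{G}}^{\fr}$ onto those of $\gpfini{G}$ and the $d$-series onto the $d$-series. Since a unipotent $\dun$-series is by definition a minimal subset of $\dl{\cdot}{1}$ that is at the same time a union of $1$-series and of $d$-series, this double compatibility shows immediately that $\pi^{*}$ sends unipotent $\dun$-series to unipotent $\dun$-series.

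The main obstacle is precisely the bijectivity of $\pi^{*}$ on unipotent characters, the difficulty being that $\pi$ is \emph{not} surjective on $\res$-points: the cokernel of $\gpfini{G}\to\ad{\gpfinialg{G}}^{\fr}$ is the abelian group $A$ of diagonal automorphisms, measured by $H^{1}(\fr,Z(\gpfinialg{G}))$. One must show that every unipotent character of $\ad{\gpfinialg{G}}^{\fr}$ restricts irreducibly to $\im\pi$, equivalently that it is fixed by $A$; this is exactly Lusztig's theorem that unipotent characters are invariant under diagonal automorphisms, and it is what rules out the naive count producing a factor of $|A|$ on one side. I would therefore isolate this invariance as the single external input, and deduce both the bijection and the preservation of $\dun$-series from it together with the functoriality of $\inddl{L}{P}{G}$.
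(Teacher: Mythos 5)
Your argument is correct in substance, but it is worth saying how it relates to the paper: the paper's proof is a bare citation of \cite[Prop.~1.36 and Rem.~1.25]{bmm}, and what you have written is essentially a reconstruction of the content of that citation. You correctly reduce everything to two standard facts --- that unipotent characters of $\gpfini{G}$ are trivial on $Z(\gpfinialg{G})^{\fr}$, and that the bijectivity of $\rho\mapsto\rho\circ\pi$ hinges on Lusztig's theorem on diagonal automorphisms --- and you correctly observe that once the bijection commutes with $\inddl{L}{P}{G}$ for all $d$-split Levi subgroups (which match up under $\pi$ because polynomial orders of tori, hence $\cycl{d}$-subgroups and their centralizers, are preserved by a central isogeny), the preservation of $\dun$-series is automatic from their definition as minimal sets that are simultaneously unions of $1$-series and of $d$-series. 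What the citation buys the paper is precisely the package you isolate as ``the single external input'' plus the isogeny-functoriality of Lusztig induction; what your version buys is an explicit identification of where the real content sits.

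One sentence of yours is garbled and you should fix it: ``restricts irreducibly to $\im\pi$, equivalently that it is fixed by $A$.'' A character of $\ad{\gpfinialg{G}}^{\fr}$ is automatically fixed by $A$ acting by conjugation (those automorphisms are inner in $\ad{\gpfinialg{G}}^{\fr}$), and if instead you let $A$ act by tensoring with linear characters of $\ad{\gpfinialg{G}}^{\fr}/\im\pi$, then by Clifford--Gallagher theory $\langle\Res\rho,\Res\rho\rangle$ equals the number of such linear characters fixing $\rho$, so being fixed by all of $A$ is the condition for the restriction to be \emph{maximally reducible}, not irreducible. The correct chain is: Lusztig's theorem says the irreducible constituents of $\Res_{\im\pi}\rho$ (i.e.\ the unipotent characters of $\gpfini{G}$) are invariant under the diagonal automorphisms, so there is a single constituent $\psi$; such an invariant $\psi$ extends to $\ad{\gpfinialg{G}}^{\fr}$, so $\Ind_{\im\pi}^{\ad{\gpfinialg{G}}^{\fr}}\psi$ is multiplicity-free and the multiplicity of $\psi$ in $\Res\rho$ is $1$; and exactly one of the resulting extensions is unipotent, which is what makes $\rho\mapsto\rho\circ\pi$ a bijection rather than $|A|$-to-one. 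With that repair your proof is complete and matches what \cite{bmm} supplies.
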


\begin{proof}
    This follows from \cite[Prop. 1.36]{bmm} and \cite[Rem. 1.25]{bmm}.
\end{proof}

Let $a\in \mathbb{N}^{*}$. Denote by $(\rscal{a}{\gpfinialg{G}},\rscal{a}{\fr})$ the restriction of scalars (or Weil restriction) of $(\gpfinialg{G},\fr)$ from $\Fq{a}$ to $\Fq{}$. This is a reductive group defined over $\Fq{}$ characterized by the property: for any $\Fq{}$-algebra $A$ we have $\rscal{a}{\gpfinialg{G}}(A)=\gpfinialg{G}(A \otimes_{\Fq{}} \Fq{a})$. In particular, $(\rscal{a}{\gpfinialg{G}})^{\rscal{a}{\fr}}=\gpfinialg{G}^{\fr^{a}}$ (that is $\rscal{a}{\gpfinialg{G}}(\Fq{})=\gpfinialg{G}(\Fq{a})$). Moreover, the isomorphism $(\rscal{a}{\gpfinialg{G}})^{\rscal{a}{\fr}} \simeq \gpfinialg{G}^{\fr^{a}}$ ``commutes'' with the Deligne-Lusztig induction and map isomorphically $\dl{(\rscal{a}{\gpfinialg{G}})^{\rscal{a}{\fr}}}{1}$ to $\dl{\gpfinialg{G}^{\fr^{a}}}{1}$. Now a group of adjoint type is a direct product of restriction of scalars of simple groups. Let us take a look at the behaviour of $\dun$-series with respect to restriction of scalars.

\begin{Pro}
    \label{produnscalaire}
    Let $a \in \mathbb{N}^{*}$. We have a bijection between the $\dun$-series in $\dl{(\rscal{a}{\gpfinialg{G}})^{\rscal{a}{\fr}}}{1}$ and the $\dung{d/\gcd(d,a)}{1}$-series in $\dl{\gpfinialg{G}^{\fr^{a}}}{1}$.
\end{Pro}

\begin{proof}
    If $p$ is a prime number, then
    \[
        \cycl{n}(x^p) = \left\{
        \begin{array}{ll}
            \cycl{pn}(x)            & \mbox{if } p|n    \\
            \cycl{pn}(x)\cycl{n}(x) & \mbox{otherwise.}
        \end{array}
        \right.\]
    From that we can deduce what is $\cycl{n}(x^a)$. We write $a=	a_n a_n'$, with $a_n'$ relatively prime with $n$ and all the prime numbers dividing $a_n$ also divide $n$. Then we have
    \[ \cycl{n}(x^a)= \prod_{k | a_n'} \cycl{k a_n n}(x).\]

    Let us prove that $\cycl{d}(x)$ divides $\cycl{n}(x^a)$ if and only if $n=d/\gcd(d,a)$.

    First assume that $n=d/\gcd(d,a)$. Hence, we want to prove that there exists $k | a_n'$ such that $ka_n=\gcd(d,a)$. If $p^e | a_n$, then $p| n=d/\gcd(d,a)$. So, $\nu_{p}(d) \geq \nu_{p}(a)$, where $\nu_p$ is the $p$-adic valuation. Hence, $\nu_p(\gcd(d,a))=\nu_p(a)=\nu_p(a_n)$. Thus $a_n |\gcd(d,a)$. Let $k=\gcd(d,a)/a_n$. It remains to prove that $k | a_n'$. We have that $k | a$ and if $p | k$, then $p \nmid a_n$ since it would imply that $\nu_p(\gcd(d,a))=\nu_p(a)=\nu_p(a_n)$ and a contradiction. Hence $k | a_n'$.

    Now, let us assume that there exist $n$ and $k$, such that $k| a_n'$ and $ka_n n =d$. We want to prove that $n=d/\gcd(d,a)$. It is enough to prove that $ka_n=\gcd(d,a)$. First $k|a_n'$ and since $a_n$ and $a_n'$ are relatively prime, $k|a$. We also have that $ka_n | d$, thus $ka_n | \gcd(d,a)$. Now, if $p^e | \gcd(d,a)$, then $p^e |a=a_n a_n'$. If $p^e | a_n$, then $p^e | ka_n$. If not, $p^e | a_n'$. Thus $p \nmid n$. But since $p^e | d=ka_n n$, we have that $p^e | k$ and $p^e |ka_n$. We conclude that $ka_n=\gcd(d,a)$.

    We have just proved that $\cycl{d}(x) | \cycl{n}(x^a)$ if and only if $n=d/\gcd(d,a)$. As a result, if $\gpfinialg{T}'$ is a torus in $\rscal{a}{\gpfinialg{G}}$, then  $\gpfinialg{T}'$ is a $d$-torus in $\rscal{a}{\gpfinialg{G}}$ if and only if it is the maximal $d$-sub-torus of $\rscal{a}{\gpfinialg{T}}$, for $\gpfinialg{T}$ a $d/\gcd(d,a)$-torus of $\gpfinialg{G}$. Thus the $d$-split Levi subgroup of $\rscal{a}{\gpfinialg{G}}$ are of the form $\rscal{a}{\gpfinialg{L}}$ for $\gpfinialg{L}$ a $d/\gcd(d,a)$-split Levi subgroup of $\gpfinialg{G}$. We then conclude the proof with the following commutative diagram of \cite[Prop. 1.37]{bmm}:
    \[
        \xymatrix{
        \mathbb{Z}\dl{(\rscal{a}{\gpfinialg{G}})^{\rscal{a}{\fr}}}{1}  \ar[r]^-{\sim} & \mathbb{Z}\dl{\gpfinialg{G}^{\fr^{a}}}{1} \\
        \mathbb{Z}\dl{(\rscal{a}{\gpfinialg{L}})^{\rscal{a}{\fr}}}{1} \ar[u]^{\inddllite{\rscal{a}{\gpfinialg{L}}}{\rscal{a}{\gpfinialg{G}}}} \ar[r]^-{\sim} & \ar[u]^{\inddllite{\gpfinialg{L}}{\gpfinialg{G}}} \mathbb{Z}\dl{\gpfinialg{L}^{\fr^{a}}}{1}} \]

\end{proof}

To compute the $\dun$-series of $\dl{\gpfini{G}}{1}$, Proposition \ref{produnserieadjoint} allows us to reduce to the case where $\gpfini{G}$ is adjoint. Now, an adjoint group can be written as a product of restriction of scalars of simple groups. The $\dun$-series of a direct product is the product of the $\dun$-series. Hence by Proposition \ref{produnscalaire}, we can compute the unipotent $\dun$-series of $\gpfini{G}$, if we know them for simple groups. This is what we do in the following sections.

\subsection{Computation of \texorpdfstring{$\dun$}{(d,1)}-series for type \texorpdfstring{$\An$}{An} and \texorpdfstring{$\dAn$}{2An}}
\label{secdunAn}
In this section, we want to compute the unipotent $\dun$-series for groups of type $\An$ and $\dAn$.

\bigskip

Let us start by explaining what the $d$-series are. First, let $\gpfini{G}$ be of type $\An$. The unipotent characters are in bijection with partitions of $n+1$. On partitions, there is the well defined notion of $d$-hook and of $d$-core (see for example \cite{jamesKerber} Chapter 2.7). The proof of Theorem 3.2 in \cite{bmm} then shows the following proposition.

\begin{Pro}
    \label{prodseriesAn}
    The $d$-cuspidal unipotent characters are precisely those where the partition is itself a $d$-core. Moreover, two characters are in the same $d$-series if and only if they have the same $d$-core.
\end{Pro}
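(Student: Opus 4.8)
The plan is to specialise the general $d$-Harish-Chandra theory behind \cite[Thm. 3.2]{bmm} to type $\An$, where it reduces to the classical combinatorics of hooks and cores. By Proposition \ref{produnserieadjoint} I may replace $\gpfini{G}$ by any group of the same type, so I work with $\gl{n+1}$, whose unipotent characters $\chi_\lambda \in \dl{\gpfini{G}}{1}$ are indexed by partitions $\lambda$ of $n+1$. First I would describe the $d$-split Levi subgroups: a $\cycl{d}$-subtorus of polynomial order $\cycl{d}^a$ sits inside an $a$-fold product of $\gl{d}$-blocks, and its centraliser is conjugate to $\gl{n+1-ad}\times\gpfinialg{T}$, where $\gpfinialg{T}$ is a product of $a$ Coxeter tori of $\gl{d}$ (each isomorphic to $\Fq{d}^{\times}$). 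Since $\gpfinialg{T}$ is a torus, the unipotent characters of such a Levi are inflated from its $\gl{n+1-ad}$-factor, hence indexed by partitions of $n+1-ad$; in particular the maximal proper $d$-split Levis are of the form $\gl{n+1-d}\times(\text{Coxeter torus of }\gl{d})$.

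The heart of the matter is the dictionary between Lusztig induction and $d$-hooks. I would establish that, for the maximal proper $d$-split Levi $\gpfinialg{L}=\gl{n+1-d}\times\gpfinialg{T}$ and a unipotent character $\mu$ of its $\gl{n+1-d}$-factor (a partition of $n+1-d$, inflated to $\gpfini{L}$), one has $\langle \chi_\lambda, \idl(\mu)\rangle \neq 0$ if and only if $\lambda$ is obtained from $\mu$ by adding a $d$-hook; equivalently, $\rdl\chi_\lambda$ involves $\mu$ exactly when $\mu$ arises from $\lambda$ by removing a $d$-hook. This is precisely what the generic-degree computation in the proof of \cite[Thm. 3.2]{bmm} yields in type $A$, where unipotent Lusztig induction is identified with ($q$-analogues of) symmetric-group induction and is multiplicity-free without cancellation. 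Granting this, $\chi_\lambda$ is $d$-cuspidal — that is, $\rdl\chi_\lambda=0$ for every proper $d$-split $\gpfinialg{L}$, which by transitivity of Lusztig restriction need only be checked on the maximal proper ones — if and only if $\lambda$ admits no removable $d$-hook, i.e. $\lambda$ is a $d$-core. This gives the first assertion.

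For the second assertion, recall from Theorem \ref{thmdecompodseries} that the $d$-series $\dldcusp{L}{\lambda}$, indexed by $d$-cuspidal pairs, partition $\dl{\gpfini{G}}{1}$, with $\dldcusp{L}{\lambda}=\{\chi_\nu : (\gpfinialg{L},\lambda)\preceq(\gpfinialg{G},\chi_\nu)\}$. Iterating the hook dictionary, and taking $\lambda$ to be a $d$-core (so that $(\gpfinialg{L},\lambda)$ is $d$-cuspidal), the relation $(\gpfinialg{L},\lambda)\preceq(\gpfinialg{G},\chi_\nu)$ holds exactly when $\nu$ can be built from $\lambda$ by successively adding $d$-hooks. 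By the classical theory of cores (\cite{jamesKerber}, Chapter 2.7) the partition obtained by removing $d$-hooks until none remain is well defined independently of the order of removal, and $\nu$ is reachable from $\lambda$ by adding $d$-hooks if and only if $\lambda$ is the $d$-core of $\nu$. Hence $\chi_\nu$ and $\chi_{\nu'}$ lie in the same $d$-series if and only if $\nu$ and $\nu'$ have the same $d$-core, as claimed.

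The step I expect to be the main obstacle is the middle one: upgrading the abstract occurrence statement $\langle\chi_\lambda,\idl(\mu)\rangle\neq 0$ into the clean hook-addition rule, since Lusztig induction carries signs and one must rule out cancellation. In type $A$ this is exactly the combinatorial input supplied by \cite{bmm}, resting on the identification of unipotent Lusztig induction with induction for the relevant reflection groups together with the $q$-analogue of the hook-length formula for generic degrees; the remaining core-combinatorics (well-definedness and transitivity) is then routine via \cite{jamesKerber}.
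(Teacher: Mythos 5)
Your proposal follows the same route as the paper: the paper gives no argument of its own here and simply invokes the proof of Theorem 3.2 of \cite{bmm}, which is exactly the generic-degree/hook computation you are unpacking, and your treatment of the second assertion via Theorem \ref{thmdecompodseries} and the classical core combinatorics of \cite{jamesKerber} matches the intended reading.

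One point in your write-up is inaccurate and, taken literally, leaves the $d$-cuspidality check incomplete. The centraliser of a $\cycl{d}$-subgroup of $\gl{n+1}$ is in general of the form $\gl{m_1}[\Fq{d}]\times\cdots\times\gl{m_r}[\Fq{d}]\times\gl{n+1-d\sum m_i}$, not always $\gl{n+1-ad}$ times a product of Coxeter tori; for instance $\gl{2}[\Fq{d}]$ inside $\gl{2d}$ is a maximal proper $d$-split Levi that is not contained in any Levi of the form $\gl{d(n'-1)+r}\times(\text{Coxeter torus})$. So reducing, by transitivity of Lusztig restriction, to ``the maximal proper $d$-split Levis'' and then testing only $\gl{n+1-d}\times\gpfinialg{T}$ does not by itself prove that a $d$-core partition gives a $d$-cuspidal character. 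The conclusion is still correct: vanishing of ${}^{*}\mathcal{R}$ on the Levis with $\gl{m}[\Fq{d}]$-factors, $m\geq 2$, is also governed by removal of $d$-hooks (via the $d$-quotient), and this is part of what the cited computation in \cite{bmm} actually establishes. You should either state the correct classification of $d$-split Levi subgroups in type $\Aa$ and note that the hook dictionary covers all of them, or phrase the cuspidality criterion directly as ``${}^{*}\mathcal{R}^{\gpfinialg{G}}_{\gpfinialg{L}}\chi_{\lambda}\neq 0$ for some proper $d$-split $\gpfinialg{L}$ forces $\lambda$ to have a removable $d$-hook,'' which is the form in which \cite{bmm} delivers it.
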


In order to get the result for groups of type $\dAn$, we will use an ``Ennola''-duality. We use here the notation of \cite{bmm}. Let $\mathbb{G}=(\Gamma,W\phi)$ be a generic finite reductive group (\cite[§1.A]{bmm}. We can then define $\mathbb{G}^{-}$ by $\mathbb{G}^{-}:=(\Gamma,W(-\phi))$. To $\mathbb{G}$ we can associate a finite set $\Uch(\mathbb{G})$ (\cite[Thm. 1.26]{bmm}) which is in bijection with the set of unipotent characters of $\gpfini{G}=\mathbb{G}(q)$.

\begin{The}[{\cite[Thm. 3.3]{bmm}}]
    \label{theEnnoladual}
    There exists a natural bijective isometry $\sigma^{\mathbb{G}} : \mathbb{Z}\Uch(\mathbb{G}) \to \mathbb{Z}\Uch(\mathbb{G}^{-})$ such that whenever $\mathbb{L}$ is $d$-split for some $d$, the following diagram is commutative
    \[
        \xymatrix{
        \mathbb{Z}\Uch(\mathbb{G})  \ar[r]^-{\sigma^{\mathbb{G}}} &\mathbb{Z}\Uch(\mathbb{G}^{-}) \\
        \mathbb{Z}\Uch(\mathbb{L}) \ar[u]^{\mathcal{R}_{\mathbb{L}}^{\mathbb{G}}} \ar[r]^-{\sigma^{\mathbb{L}}} &  \ar[u]^{\mathcal{R}_{\mathbb{L}^{-}}^{\mathbb{G}^{-}}} \mathbb{Z}\Uch(\mathbb{L}^{-})}\]
\end{The}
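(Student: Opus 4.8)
The plan is to construct $\sigma^{\mathbb{G}}$ from the generic parametrization of unipotent characters and then to verify the two required properties, the isometry property and the compatibility with Lusztig induction, separately. The starting observation is Lusztig's classification: the set $\Uch(\mathbb{G})$, together with its inner product, its partition into families, the associated Fourier matrices and the eigenvalues of Frobenius, depends only on the pair $(W,\phi)$ and not on $q$, while the generic degrees are well-defined elements of $\mathbb{Q}[x]$. Since $\mathbb{G}=(\Gamma,W\phi)$ and $\mathbb{G}^-=(\Gamma,W(-\phi))$ share the same reflection datum and Weyl group, the two labelling sets are canonically identified up to the sign bookkeeping coming from $-\phi$. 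I would define $\sigma^{\mathbb{G}}$ on these labels, transporting a label of $\mathbb{G}$ to the label of $\mathbb{G}^-$ prescribed by the Ennola substitution $x\mapsto -x$ on generic degrees (for instance, conjugation of partitions in type $\An$); the full package of degree, Frobenius eigenvalue and Fourier-matrix entry then pins down the matching uniquely and makes it bijective.

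Next I would check that $\sigma^{\mathbb{G}}$ is an isometry. Because the inner product on $\mathbb{Z}\Uch(\mathbb{G})$ is block-diagonal with respect to families and is determined inside each family by the Fourier matrix, and because the family decomposition and the Fourier matrices are the same combinatorial data for $\mathbb{G}$ and $\mathbb{G}^-$, the map $\sigma^{\mathbb{G}}$ automatically preserves the pairing. This step is essentially formal once the parametrization is fixed.

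The substantive part is the commutativity of the square, namely that $\sigma^{\mathbb{G}}\circ \mathcal{R}_{\mathbb{L}}^{\mathbb{G}} = \mathcal{R}_{\mathbb{L}^-}^{\mathbb{G}^-}\circ \sigma^{\mathbb{L}}$ whenever $\mathbb{L}$ is $d$-split. Here I would first reduce to simple types: using the compatibility of all the data with direct products and restriction of scalars (as in \cite{bmm}, and in the spirit of Propositions \ref{produnserieadjoint} and \ref{produnscalaire}), it suffices to treat $\mathbb{G}$ irreducible. I would then use that a $d$-split Levi $\mathbb{L}$ of $\mathbb{G}$ corresponds under the duality to a $d$-split Levi $\mathbb{L}^-$ of $\mathbb{G}^-$ of the same combinatorial type, and that the decomposition of $\mathcal{R}_{\mathbb{L}}^{\mathbb{G}}(\lambda)$ is governed by $d$-Harish-Chandra theory: the multiplicities are read off from the relative Weyl group $W_{\mathbb{G}}(\mathbb{L},\lambda)$ together with its cyclotomic Hecke algebra, whose structure is insensitive to the substitution $x\mapsto -x$. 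For classical types this is made explicit through the combinatorics of symbols, and for the exceptional types it is a finite check against Lusztig's induction tables.

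The hard part is precisely this verification of the commuting square, since it is where the deep input of Lusztig's theory enters and where the matching of $d$-cuspidal pairs and the signs appearing in the Fourier matrices must be shown to be consistent with $x\mapsto -x$. The cleanest route is to invoke the uniform statement that Lusztig induction from $d$-split Levi subgroups is controlled by the spetsial cyclotomic Hecke algebra attached to $W_{\mathbb{G}}(\mathbb{L},\lambda)$, all of whose ingredients are Ennola-stable; absent a uniform argument, one falls back on the type-by-type analysis. In either case the conclusion is the assertion of \cite[Thm. 3.3]{bmm}, on which we rely to transfer the computation of $\dun$-series from type $\An$ to type $\dAn$.
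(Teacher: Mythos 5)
This statement is quoted verbatim from \cite[Thm.\ 3.3]{bmm}; the paper offers no proof of its own, so there is no internal argument to compare against. Your outline --- defining $\sigma^{\mathbb{G}}$ via the $q$-independent (generic) parametrization of unipotent characters, getting the isometry for free from the family/Fourier-matrix structure, and then verifying the commuting square by reduction to simple types and a case-by-case check against $d$-Harish-Chandra theory (symbols for classical types, tables for exceptional ones) --- is a faithful reconstruction of the strategy used in the cited source, and is the right level of detail for a result the paper only invokes as a black box.
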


Note that if $\mathbb{G}(q)$ is of rational type $(\An,q)$ then $\mathbb{G}^{-}(q)$ is of rational type $(\dAn,q)$. In particular, we see that the unipotent characters for $\dAn$ are still parametrized by partitions of $n+1$. If $\mathbb{T}$ is a generic torus with polynomial order $\cycl{d}(x)$,  $\mathbb{T}^{-}$ has polynomial order $\cycl{d}(-x)$. The map $\mathbb{L} \mapsto \mathbb{L}^{-}$ is a bijection between $\cycl{d}(x)$-subgroup of $\dAn$ and $\cycl{d}(-x)$-subgroup of $\An$. Now, for $d>2$, we have that $\cycl{d}(-x)=\cycl{2d}(x)$ if $d$ is odd, $\cycl{d}(-x)=\cycl{d/2}(x)$ if $d$ is congruent to 2 modulo 4 and $\cycl{d}(-x)=\cycl{d}(x)$ if $d$ is divisible by 4. Let $d'$ be the integer defined by
\[
    d' = \left\{
    \begin{array}{ll}
        2d  & \mbox{if } d \mbox{ is odd}    \\
        d/2 & \mbox{if } d \equiv 2 \pmod{4} \\
        d   & \mbox{if } d \equiv 0 \pmod{4}
    \end{array}
    \right.
\]

By Theorem \ref{theEnnoladual} the $d$-series of $\dAn$ correspond to $d'$-series of $\An$ which are given by Proposition \ref{prodseriesAn}.

\begin{Rem}
    If $d$ is the order of $q$ modulo $\lprime$ then $d'$ is the order of $-q$ modulo $\lprime$.
\end{Rem}

\bigskip

In both cases, it is very important to be able to compute hooks and cores of partitions. In order to make the computation easier, and also to match with the following section \ref{dunclassicalgroup}, we will use the notion of a $\beta$-set instead of a partition.

A $\beta$-set is a subset $\lambda \subseteq \mathbb{N}$, and we will write $\lambda=(x_1\ x_2\ \cdots\ x_a)$ with $x_1 < x_2 < \cdots < x_a$. We define the rank of a $\beta$-set by $\rank(\lambda)=\sum_{i=1}^{a}x_i - a(a-1)/2$. We define an equivalence relation on the $\beta$-sets by $(x_1\ x_2\ \cdots\ x_a) \sim (0 \ x_1+1\ x_2+1\ \cdots\ x_a+1)$. The rank is invariant by this equivalence relation hence can be extended to equivalence classes. Now, a partition $a_1 \leq \cdots \leq a_k$ of $n+1$ can be sent to a $\beta$-set of rank $n+1$ defined by $\lambda=(a_1\ a_2+1 \ a_3+2 \ \cdots \ a_k+(k-1))$ and this gives us a bijection between partitions of $n+1$ and equivalence classes of $\beta$-set of rank $n+1$.

Let $\lambda$ and $\lambda'$ be two $\beta$-sets. We say that $\lambda'$ is obtained from $\lambda$ by a $d$-hook if there exists $x \in \lambda$ such that $x-d \notin \lambda$ and $\lambda'=\lambda\setminus\{x\} \cup \{x-d\}$. The $d$-core of $\lambda$ is then the $\beta$-set without $d$-hook obtained from $\lambda$ by repetitively removing $d$-hooks.

\begin{Lem}[\cite{jamesKerber} Lemma 2.7.13]
    Let $\lambda,\lambda'$ be two $\beta$-sets and $\alpha,\alpha'$ be two partitions corresponding respectively to $\lambda,\lambda'$. Then $\alpha'$ is obtained from $\alpha$ by a $d$-hook if and only if $\lambda'$ is obtained from $\lambda$ by a $d$-hook.
\end{Lem}

\bigskip

Now we have everything we need to compute the unipotent $\dun$-series for type $\An$ and $\dAn$.

For a group $\gpfini{G}$ of type $\An$, this is easy because there is no unipotent cuspidal representation. Hence, there is only one unipotent 1-series $\dl{G}{1}$ which is thus a $\dun$-series.

\begin{Pro}
    If $\gpfini{G}$ is of type $\An$, $\dl{G}{1}$ is a $\dun$-series.
\end{Pro}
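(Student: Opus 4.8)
The plan is to unwind the definition of a unipotent $\dun$-series directly, exploiting the classical fact that a group of type $\An$ carries no nontrivial unipotent cuspidal character. The whole point is that for type $\An$ the $1$-series (Harish-Chandra) decomposition of $\dl{G}{1}$ is already trivial, so the competition between the $1$-series structure and the $d$-series structure that makes the general computation delicate simply does not arise.

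First I would recall that every unipotent character of $\gpfini{G}$ of type $\An$ lies in the principal series: the unipotent characters are parametrized by partitions of $n+1$, and each occurs as a constituent of the Harish-Chandra induction of the trivial character of a maximally split torus. Equivalently, up to $\gpfini{G}$-conjugacy the only unipotent $1$-cuspidal pair is $(\gpfinialg{T},1)$ with $\gpfinialg{T}$ a maximal split torus, so $\dl{G}{1}$ consists of a single $1$-series. Next I would check that $\dl{G}{1}$ is a $\dun$-set: it is a union of $1$-series (being itself the unique such series), and by Theorem \ref{thmdecompodseries} it is partitioned into $d$-series, hence in particular is a union of $d$-series. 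Finally, for minimality, any non-empty $\dun$-subset $\mathcal{E} \subseteq \dl{G}{1}$ must be a union of $1$-series; since the only $1$-series contained in $\dl{G}{1}$ is $\dl{G}{1}$ itself, we must have $\mathcal{E} = \dl{G}{1}$. Thus $\dl{G}{1}$ has no proper non-empty $\dun$-subset and is a $\dun$-series.

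I expect there to be essentially no obstacle here: the only substantive ingredient is the vanishing of nontrivial unipotent cuspidal characters in type $\An$, which is classical and has already been invoked in the surrounding text. Once that is granted, the minimality argument is purely formal, driven entirely by the observation that the coarser of the two partitions defining a $\dun$-set — the $1$-series partition — is the trivial one-block partition in this case.
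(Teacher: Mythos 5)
Your argument is correct and is essentially the paper's own proof: the paper likewise observes that type $\An$ has no nontrivial unipotent cuspidal representation, so $\dl{G}{1}$ is the unique unipotent $1$-series and is therefore automatically a $\dun$-series. Your write-up merely makes explicit the two checks (that $\dl{G}{1}$ is a $\dun$-set via Theorem \ref{thmdecompodseries}, and minimality from the triviality of the $1$-series partition) that the paper leaves implicit.
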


Now, we assume that $\gpfini{G}$ is of type $\dAn$. We saw previously that two $\beta$-sets are in the same $d$-series if and only if they have the same $d'$-core and that they are in the same 1-series if and only if they have the same 2-core.

\begin{Rem}
    As we will see below, there are two different behaviours of the $\dun$-series depending on the parity of $d'$. When we will apply these results to $\lprime$-modular representations theory, $d$ will be the order of $q$ modulo $\lprime$. The primes $\lprime$ such that $d'$ is even are called \emph{linear} and when $d'$ is odd they are called \emph{unitary}.
\end{Rem}

The first case to consider is when $d'$ is even (linear prime case). We then have the following result.

\begin{Pro}
    If $d'$ is even (linear prime case) then the unipotent $\dun$-series for type $\dAn$ are the unipotent 1-series.
\end{Pro}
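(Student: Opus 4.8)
The plan is to reduce the statement to a single combinatorial fact and then deduce everything formally. Recall that a $\dun$-series is a minimal non-empty subset of $\dl{\gpfini{G}}{1}$ that is simultaneously a union of $1$-series and a union of $d$-series. Thus the $\dun$-series are the blocks of the coarsest common refinement-from-above of the two partitions (into $1$-series and into $d$-series). If I can show that for $d'$ even the partition into $d$-series \emph{refines} the partition into $1$-series, then the minimal sets that are unions of both are exactly the $1$-series, which is the assertion. So the goal is: every $d$-series is contained in a single $1$-series.

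Using the description recalled just above, two unipotent characters of a group of type $\dAn$ lie in the same $d$-series iff their $\beta$-sets have the same $d'$-core, and in the same $1$-series iff their $\beta$-sets have the same $2$-core. Hence the whole proposition reduces to the purely combinatorial claim that, when $d'$ is even, two $\beta$-sets with the same $d'$-core have the same $2$-core.

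To prove this claim I would first record the invariant governing the $2$-core: for $\beta$-sets with a fixed number of elements, the $2$-core is determined by the number of elements lying in each residue class modulo $2$ (on the two-runner abacus these are the bead counts, and pushing all beads up to reach the $2$-core does not alter them). Next I would observe that a $d'$-hook removal replaces some $x \in \lambda$ by $x - d'$; since $d'$ is even, $x$ and $x - d'$ have the same parity, so the two residue counts modulo $2$ are unchanged. Therefore the $2$-core is invariant under $d'$-hook removal. Applying this repeatedly from $\lambda$ down to its $d'$-core shows that $\lambda$ and its $d'$-core have the same $2$-core; consequently any two $\beta$-sets sharing a $d'$-core share the same $2$-core, which is precisely the desired refinement. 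Translating back, each $d$-series lies inside one $1$-series, so each $1$-series is a union of $d$-series and hence a $\dun$-set; and since any non-empty $\dun$-subset must itself be a union of $1$-series, a single $1$-series has no proper non-empty $\dun$-subset and is therefore a $\dun$-series. This identifies the unipotent $\dun$-series with the unipotent $1$-series.

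The heart of the argument — and the only genuinely non-formal step — is the parity-invariance of the $2$-core under removal of a $d'$-hook. The hypothesis that $d'$ is \emph{even} is exactly what makes $x$ and $x-d'$ equidistributed modulo $2$, and this is where the case division enters: the same reasoning fails when $d'$ is odd, which is consistent with the genuinely different behaviour to be treated in the odd case that follows.
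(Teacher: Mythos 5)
Your proof is correct and follows essentially the same route as the paper: both reduce the statement to the fact that removing a $d'$-hook with $d'$ even does not change the $2$-core, so the $d$-series partition refines the $1$-series partition. The paper phrases this as "a $d'$-hook is $d'/2$ successive $2$-hooks," while your residue-count (abacus) argument is a slightly more careful justification of the same fact, since intermediate positions $x-2, x-4, \ldots$ may be occupied.
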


\begin{proof}
    If $d'$ is even, removing a $d'$-hook to a $\beta$-set can be obtained by removing $d'/2$ 2-hooks, hence the unipotent $\dun$-series are the unipotent 1-series.
\end{proof}

Now, let us assume that $d'$ is odd (unitary prime case).

\bigskip

Let $\lambda$ be a $\beta$-set with finite cardinal. Let $o$ be the number of odd numbers in $\lambda$ and $e$ be the number of even numbers. We define the defect of $\lambda$ by $\defect(\lambda)=o-e$ if $o\geq e$ and $e-o-1$ if $o<e$. The defect is invariant under the equivalence relation and we extend it to equivalence classes.

The 2-core  of a $\beta$-set is of the form $(1\ 3 \cdots \ 2k+1)$ (possibly $\emptyset$) which all have different defect. Moreover, removing a 2-hook does not change the defect of a $\beta$-set, so the defect of a $\beta$-set determines its 2-core, hence it characterizes the 1-series. Adding a 2-hook increase the rank of a $\beta$-set by $2$. Therefore, we get the following lemma.

\begin{Lem}
    \label{lemexistbsetkm}
    There exists a $\beta$-set of rank $m$ and defect $k$ if and only if $m-k(k+1)/2$ is even and positive.
\end{Lem}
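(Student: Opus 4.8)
The plan is to reduce everything to the computation of the rank of a $2$-core, since (as recalled just before the statement) the defect of a $\beta$-set equals the defect of its $2$-core and characterizes it. First I would record two elementary invariance facts. Removing a $2$-hook, i.e. replacing some $x \in \lambda$ by $x - 2 \notin \lambda$, leaves the cardinality unchanged and decreases $\sum_i x_i$ by $2$, hence decreases $\rank(\lambda)$ by exactly $2$; moreover $x$ and $x-2$ have the same parity, so the number of odd and of even elements is unchanged and $\defect(\lambda)$ is preserved. Running the same computation backwards shows that adding a $2$-hook increases the rank by $2$ and preserves the defect.

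Next I would compute the rank of the $2$-core of defect $k$. A $\beta$-set $\lambda$ is a $2$-core exactly when $x - 2 \in \lambda$ for every $x \in \lambda$ with $x \geq 2$, so its even part is $\{0, 2, \dots, 2(e-1)\}$ and its odd part is $\{1, 3, \dots, 2o - 1\}$, where $e$ and $o$ denote the numbers of even and odd elements. A direct summation gives $\sum_i x_i = e(e-1) + o^2$ and $\card{\lambda} = e + o$, whence $\rank(\lambda) = \frac{(e-o)^2 - (e-o)}{2}$. Feeding in the definition of the defect — $o = e + k$ in the case $o \geq e$, and $e = o + k + 1$ in the case $o < e$ — both cases yield $\rank(\lambda) = \frac{k(k+1)}{2}$. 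In particular the $2$-core $\lambda_0 = \{0, 2, \dots, 2k\}$ (with $o = 0$, $e = k+1$) is a concrete, non-empty representative of defect $k$ and rank $k(k+1)/2$.

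From here the two implications are immediate. For necessity: any $\beta$-set $\lambda$ of defect $k$ reduces to its $2$-core by successively removing $2$-hooks, each step lowering the rank by $2$ while keeping the defect at $k$; hence $\rank(\lambda) = \frac{k(k+1)}{2} + 2s$ for some integer $s \geq 0$, so $m - \frac{k(k+1)}{2}$ is even and $\geq 0$. For sufficiency: given such an $m$, set $s = \frac{1}{2}\left(m - \frac{k(k+1)}{2}\right) \geq 0$ and start from $\lambda_0$; repeatedly replacing the current maximal element $x$ by $x + 2$ (legitimate since $x + 2 \notin \lambda$ whenever $x$ is the maximum) adds a $2$-hook, so after $s$ steps one reaches a $\beta$-set of defect $k$ and rank $\frac{k(k+1)}{2} + 2s = m$.

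The only genuinely delicate points are bookkeeping ones: pinning down the exact shape of a $2$-core and carrying out the rank summation correctly, and making sure the "add a $2$-hook" step never stalls. The latter I would handle by always bumping the maximal element and by using the non-empty representative $\lambda_0$ (rather than the possibly empty minimal $2$-core $\{1, 3, \dots, 2k-1\}$), which disposes of the only edge case, at $k = 0$ and $m = 0$.
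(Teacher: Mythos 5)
Your proof is correct and follows essentially the same route as the paper: both arguments reduce to the observation that a $2$-core of defect $k$ has rank $k(k+1)/2$ and that adding or removing a $2$-hook shifts the rank by $2$ while preserving the defect. Your write-up is in fact slightly more careful than the paper's — you compute the rank of an arbitrary $2$-core directly from the counts $e$ and $o$ rather than invoking the normal form $(1\ 3\ \cdots\ (2k-1))$, which is the $2$-core only up to equivalence, and you verify the sufficiency construction step by step instead of just exhibiting a formula — but the underlying idea is identical.
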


Let $[\lambda]$ be an equivalence class of $\beta$-sets. We define $\max([\lambda])$ to be 0 if $(0) \in [\lambda]$ and $\max([\lambda]):=\max(\lambda')$ where $\lambda'$ is the unique $\beta$-set in $[\lambda]$ such that $0 \notin \lambda'$ if $(0) \notin [\lambda]$. Then $\max([\lambda])$ is the length of the largest hook in $[\lambda]$.

\begin{Lem}
    \label{lemmaxAn}
    Let $k\geq 0$ and $m\geq 1$ such that $m-k(k+1)/2$ is even and positive. We have
    \[ \max\{ \max([\lambda]), \defect(\lambda)=k, \rank(\lambda)=m\}=\begin{cases}
            m-\frac{k^2-3k+2}{2} & \text{ if } k \geq 1 \\
            m                    & \text{ if } k =0
        \end{cases}.\]
\end{Lem}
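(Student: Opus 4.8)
The plan is to reinterpret the quantity as a purely combinatorial optimization over reduced $\beta$-sets. Since for $k\geq 1$ no equivalence class of defect $k$ contains $(0)$, each such class has a unique representative $\lambda=(x_1\ \cdots\ x_a)$ with $x_1\geq 1$, and then $\max([\lambda])$ is exactly the largest part $x_a$. Writing $m=\sum_{i=1}^a x_i-\binom a2$ and isolating the top part gives the identity
\[ x_a = m + \binom a2 - \sum_{i=1}^{a-1}x_i,\]
so maximizing $x_a$ over the prescribed data amounts to minimizing $\sum_{i=1}^{a-1}x_i-\binom a2$. First I would record the elementary lower bound: if the first $a-1$ parts contain $o'$ odd and $e'$ even entries (all $\geq 1$ because $0\notin\lambda$), then $\sum_{i=1}^{a-1}x_i\geq o'^2+e'(e'+1)$, with equality exactly when these parts are the smallest available integers $1,3,\dots,2o'-1$ and $2,4,\dots,2e'$.

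Next I would run the optimization through the defect constraint. The value $\defect(\lambda)$ is determined by $o'$, $e'$ and the parity of $x_a$, so I would split into the four cases arising from the two branches of the definition of $\defect$ combined with $x_a$ odd or even; each yields a single linear relation between $o'$ and $e'$ (for instance $x_a$ odd with $o\geq e$ forces $o'-e'=k-1$). Substituting the minimal sum and setting $a=o'+e'+1$, I would simplify $g:=o'^2+e'(e'+1)-\binom a2$ in each case. A short computation shows $g$ is constant along each case, equal to $\tfrac{(k-1)(k-2)}2$ in the case ``$x_a$ odd, $o'-e'=k-1$'' and to $\tfrac{k(k+1)}2$ or larger in the other three. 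Since $(k-1)(k-2)<k(k+1)$, the global minimum of $g$ is $\tfrac{(k-1)(k-2)}2$, which together with the identity gives the upper bound $x_a\leq m-\tfrac{(k-1)(k-2)}2$.

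Finally I would show the bound is attained by exhibiting $\lambda=(1\ 3\ \cdots\ (2k-3)\ M)$ with $M=m-\tfrac{(k-1)(k-2)}2$: it is reduced, has rank $m$, and has defect $k$ precisely when $M$ is odd. The hypothesis enters exactly at this parity check: from $m\equiv k(k+1)/2\pmod 2$ one computes $M\equiv 2k-1\equiv 1\pmod 2$, so $M$ is automatically odd, and since $m\geq k(k+1)/2$ one has $M\geq 2k-1>2k-3$, so $M$ is genuinely the largest part and $\lambda$ is a legitimate $\beta$-set. The case $k=0$ is separated out and is easier: the same identity forces $x_a\leq m$ (the minimum of $g$ over the defect-zero configurations is $0$), attained by $(m)$, which is reduced of defect $0$ since $m$ is even by hypothesis. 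The main obstacle is the bookkeeping in the case analysis — correctly pairing the two branches of $\defect$ with the parity of the top part and verifying that the minimizing configuration is compatible with the parity of $M$ forced by the hypothesis; once the four values of $g$ are computed, the conclusion is immediate.
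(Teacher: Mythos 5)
Your argument is correct, but it takes a genuinely different route from the paper's. The paper leans on the $2$-hook structure it has already set up: a $\beta$-set of rank $m$ and defect $k$ is obtained from the $2$-core $(1\ 3\ \cdots\ (2k-1))$ (or $(0)$ when $k=0$) by adding $\tfrac12\bigl(m-k(k+1)/2\bigr)$ $2$-hooks, each of which raises $\max([\lambda])$ by at most $2$ (and by at most $1$ when the hook is grown from a freshly prepended $0$); this gives the upper bound $\max([\lambda])\leq \max([\lambda'])+m-k(k+1)/2$ in two lines, and the extremal $\beta$-set is the same one you exhibit. You instead run a direct optimization over reduced representatives: isolate the top part via $x_a=m+\binom a2-\sum_{i<a}x_i$, bound $\sum_{i<a}x_i$ below by $o'^2+e'(e'+1)$, and observe that $g=o'^2+e'(e'+1)-\binom a2$ is constant on each of the four defect/parity cases — I checked the four values are $\tfrac{(k-1)(k-2)}2$, $\tfrac{k(k+1)}2$, $\tfrac{(k+2)(k+3)}2$ and $\tfrac{k(k+1)}2$, so the minimum for $k\geq1$ is indeed $\tfrac{(k-1)(k-2)}2=\tfrac{k^2-3k+2}2$, and for $k=0$ it is $0$, matching the statement. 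What the paper's approach buys is brevity and reuse of the fact that the defect classifies $2$-cores; what yours buys is self-containedness — it never invokes the connectivity of a defect class under $2$-hooks — at the price of the case bookkeeping you describe. Both proofs use the parity hypothesis in the same place, namely to certify that the witness $(1\ 3\ \cdots\ (2k-3)\ M)$ really has defect $k$.
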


\begin{proof}
    A $\beta$-set of rank $m$ and defect $k$ is obtained by $1/2(m-k(k+1)/2)$ 2-hooks from $(1\ 3 \ \cdots \ (2k-1))$. Each 2-hook increase the maximum of the coefficients by at most 2, giving us the result.
\end{proof}

\begin{Def}
    Let us define for $\gpfini{G}$ of type $\dAn$,
    \[\kgdc{\gpfini{G}}{d}:=\max\{k\geq 1,(k^2-3k+2)/2\leq n+1-d\}\]
    if it exists and $-1$ otherwise.
\end{Def}

From Lemma \ref{lemmaxAn}, $\kgdc{\gpfini{G}}{d}$ is the greatest integer $k$ such that there exists $\lambda$ of defect $k$ and rank $n+1$ having a hook of length at least $d$. In particular, if $\lambda$ has defect $k > \kgdc{\gpfini{G}}{d}$ and rank $n+1$ then it is a $d$-core.

\begin{Pro}
    Assume that $d'$ is odd (unitary prime case) and $\gpfini{G}$ is of type $\dAn$. Then, the unipotent 1-series with defect strictly greater than $\kgdc{\gpfini{G}}{d'}$ are $\dun$-series, composed uniquely of $d$-cuspidal representations, and the union of the unipotent 1-series with defect lower or equal to $\kgdc{\gpfini{G}}{d'}$ is a $\dun$-series.
\end{Pro}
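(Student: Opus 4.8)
The plan is to translate everything into the combinatorics of $\beta$-sets of rank $m := n+1$, using the dictionary already in place: unipotent characters correspond to equivalence classes of $\beta$-sets of rank $m$; the $1$-series are indexed by the defect; and, via the Ennola duality of Theorem \ref{theEnnoladual} together with Proposition \ref{prodseriesAn}, the $d$-series are indexed by the $d'$-core while the $d$-cuspidal characters are exactly the $\beta$-sets that are themselves $d'$-cores. A $\dun$-series is a minimal subset that is simultaneously a union of $1$-series and a union of $d$-series, so the point is to understand how the defect partition and the $d'$-core partition interact. Throughout I write $\delta(\lambda)=o-e$ for the difference of the numbers of odd and even entries, so that the defect equals $\max(\delta,-\delta-1)$, and I use repeatedly that, because $d'$ is odd, adding or removing a $d'$-hook trades one entry for an entry of the opposite parity and hence changes $\delta$ by exactly $\pm 2$.

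First I would treat the high-defect series. Fix a defect $k>\kgdc{\gpfini{G}}{d'}$; by definition of $\kgdc{\gpfini{G}}{d'}$ this forces the maximal value of $\max([\lambda])$ over rank-$m$ defect-$k$ $\beta$-sets, computed in Lemma \ref{lemmaxAn}, to be strictly smaller than $d'$. Hence every such $\lambda$, in its representative with $0\notin\lambda$, has all entries $<d'$, so no $d'$-hook can be removed and $\lambda$ is a $d'$-core, i.e. $d$-cuspidal. Therefore each of these $\beta$-sets forms its own singleton $d$-series, so the defect-$k$ $1$-series is a union of $d$-series; being a single $1$-series it admits no proper nonempty subset that is a union of $1$-series, and is thus a $\dun$-series consisting only of $d$-cuspidal characters. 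This is the first assertion.

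Next I would check that the union $U$ of the $1$-series of defect $\le\kgdc{\gpfini{G}}{d'}$ is a $\dun$-set. It is a union of $1$-series by construction. By the previous step every character of defect $>\kgdc{\gpfini{G}}{d'}$ is a $d'$-core and spans a singleton $d$-series; hence any $d$-series meeting the complement of $U$ is such a singleton and cannot meet $U$, so every $d$-series meeting $U$ lies inside $U$. Thus $U$ is also a union of $d$-series. It then remains to prove minimality, i.e. that the defects $0,1,\dots,\kgdc{\gpfini{G}}{d'}$ form a single class under the relation generated by ``lying in a common $1$-series or a common $d$-series''. Collapsing each $1$-series to its defect, this means showing that the graph on $\{0,1,\dots,\kgdc{\gpfini{G}}{d'}\}$ whose edges join two defects carried by a common $d$-series (two rank-$m$ $\beta$-sets with the same $d'$-core) is connected; since a subset of $U$ that is a union of both $1$-series and $d$-series must be a union of connected components of this graph, connectivity forces $U$ to be a single $\dun$-series.

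The heart of the matter, and the main obstacle, is this connectivity. The mechanism is that, over a fixed $d'$-core $\mu$ of rank $<m$, the rank-$m$ $\beta$-sets above it realize values of $\delta$ in an arithmetic progression of step $4$ (each hook over $\mu$ contributing $\pm2$), and the folding $\delta\mapsto\max(\delta,-\delta-1)$ then lets a single such $d$-series connect pairs of consecutive or nearby defects precisely when the progression straddles $\delta=-1/2$ (for instance a progression through $\delta=-2$ and $\delta=2$ links defect $1$ and defect $2$). I would make this explicit by exhibiting, for each $k$ in the range, a $d'$-core whose rank-$m$ $d$-series contains $\beta$-sets of two adjacent defects, thereby chaining $0,1,\dots,\kgdc{\gpfini{G}}{d'}$ together; the inequality $k\le\kgdc{\gpfini{G}}{d'}$ (equivalently $M_k:=m-(k^2-3k+2)/2\ge d'$, so that a $d'$-hook can genuinely be removed within rank $m$, as controlled by Lemma \ref{lemexistbsetkm}) is exactly what guarantees that the required $\beta$-sets exist. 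The delicate part is the bookkeeping: tracking the two possible parities of $\delta$ for a fixed defect and selecting cores so that every consecutive pair of defects up to $\kgdc{\gpfini{G}}{d'}$ is realized at rank exactly $m$, with none reaching into the isolated high-defect series.
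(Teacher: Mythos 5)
Your treatment of the high-defect series (via Lemma \ref{lemmaxAn}, forcing every entry to be smaller than $d'$ and hence every such $\beta$-set to be its own $d'$-core) and of the fact that the union $U$ of the low-defect $1$-series is a $\dun$-set both coincide with the paper's argument. The gap is in the minimality of $U$, which is the actual content of the statement. You correctly identify the mechanism (within a fixed-rank $d$-series the quantity $\delta=o-e$ moves in steps of $4$, and the folding $\delta\mapsto\max(\delta,-\delta-1)$ is what can change the defect), but you stop at ``I would make this explicit by exhibiting, for each $k$, a $d'$-core whose $d$-series contains two adjacent defects'' and defer the bookkeeping. That bookkeeping is the proof. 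Moreover, the chaining you propose is not quite the right one: by Lemma \ref{lemexistbsetkm} the defects realized at rank $n+1$ are only those $k$ with $n+1-k(k+1)/2$ even and nonnegative, i.e.\ either $\{0,3,4,7,8,\dots\}$ or $\{1,2,5,6,\dots\}$, so ``every consecutive pair of defects up to $\kgdc{\gpfini{G}}{d'}$'' is not the set of links one needs to realize. The natural move (remove a $d'$-hook, re-add a different $d'$-hook, keeping the $d'$-core) shifts $\delta$ by $0$ or $\pm4$ and hence connects defect $k$ to defect $k-4$; only the folded pairs $(0,3)$ and $(1,2)$ are joined by a genuine sign change of $\delta$.

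The paper executes exactly this: for $k\ge 4$ it takes the explicit $\beta$-set $(1\ 3\ \cdots\ (2k-3)\ (n+1-(k^2-3k+2)/2))$, removes the $d'$-hook ending at the last entry and re-adds one on an odd entry $u$ chosen to avoid a collision, landing in defect $k-4$ with the same $d'$-core; it then treats the residual pairs $(0,3)$ and $(1,2)$ by hand with further explicit $\beta$-sets, including a separate analysis of the exceptional case $n=4$, where the generic choices collide and $d'\in\{1,3,5\}$ must be handled one by one. None of these existence and non-collision checks appear in your proposal, and they are precisely where the hypothesis $k\le\kgdc{\gpfini{G}}{d'}$ (equivalently $d'\le n+1-(k^2-3k+2)/2$, so that a $d'$-hook can actually be removed at rank $n+1$) enters. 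To complete the argument you would need to supply such explicit $\beta$-sets, or an equivalent verification that for each realizable defect $k$ with $4\le k\le\kgdc{\gpfini{G}}{d'}$ some rank-$(n+1)$ $d$-series meets both defect $k$ and defect $k-4$, and that some $d$-series meets both members of whichever folded pair occurs.
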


\begin{proof}
    The $\beta$-sets of rank $n+1$ and defect strictly greater than $\kgdc{\gpfini{G}}{d'}$ are all $d'$-core. Therefore the corresponding unipotent characters are $d$-cuspidal. Thus the unipotent 1-series with defect strictly greater than $\kgdc{\gpfini{G}}{d'}$ are $\dun$-series.

    Since $\dl{\gpfini{G}}{1}$ is a $\dun$-set, we have that the union of the unipotent 1-series with defect lower or equal to $\kgdc{\gpfini{G}}{d'}$ is a $\dun$-set. It remains to prove that it is a $\dun$-series. Let $k \leq \kgdc{\gpfini{G}}{d'}$. Let us assume that $k \geq 3$. Let $\lambda := (1\ 3 \ \cdots \ (2k-3)\ (n+1-(k^2-3k+2)/2) )$ be a $\beta$-set of defect $k$ and rank $n+1$. Let $u$ be an odd number, $1 \leq u \leq 2k-3$ such that $u+d' \neq n+1-(k^2-3k+2)/2 -d'$ (such a $u$ exists since there are more than two odd numbers between $1$ and $2k-3$). Let $\lambda':=(1\ 3 \ \cdots \ u+d'\ \cdots\ (2k-3)\ (n+1-(k^2-3k+2)/2-d') )$ (with a possible permutation of the coefficients so that they are written in the correct order). The $\beta$-set $\lambda'$ is obtain from $\lambda$ by removing a $d'$-hook and then adding a $d'$-hook. Hence $\lambda$ and $\lambda'$ are in the same $d$-series. Since $d'$ is odd, if $k\geq 4$ then $\defect(\lambda')=k-4$ and if $k=3$, $\defect(\lambda')=0$. Hence the unipotent 1-series with defect $k\geq 4$ are in the same $\dun$-series as the unipotent 1-series with defect $k-4$. We have the same result for defects $0$ and $3$. Thus to prove the result, we are left with the 1-series of defect $1$ and $2$. By Lemma \ref{lemexistbsetkm}, depending on the parity of $n$, we can only have simultaneously $\beta$-sets of rank $n+1$ and defects $0,3$ or defects $1,2$. Therefore, we need to prove that, if they exist, the $\beta$-sets with defects $1,2$ are in the same $\dun$-series.

    If there are $\beta$-sets of rank $n+1$ with defects $1,2$. We start by assuming that $n \neq 4$. Either $n \neq 2d'$ or $n \neq 4+2d'$. If $n \neq 2d'$ then we take $\lambda=(1\ n+1)$ and $\lambda'=(1+d'\ n+1-d')$, with $\defect(\lambda)=2$ and $\defect(\lambda')=1$. If $n \neq 4+2d'$ then we take $\lambda=(3\ n-1)$ and $\lambda'=(3+d'\ n-1-d')$, with $\defect(\lambda)=2$ and $\defect(\lambda')=1$ (we can note here that we can well assume that $d' \leq n-1$ because if not then $d'\geq (n+1)$ and we can use the previous case since $n \neq 2d'$). So we are left with $n=4$. We can then have $d'=1,3$ or $5$. If $d'=1$, every $\beta$-set has the same 1-core, so the result follows. If $d'=3$, we take $\lambda=(1\ 3\ 4)$ and $\lambda'=(1 \ 2\ 3\ 5)$. Finally, if $d'=5$ we take $\lambda=(5)$ and $\lambda'=(1\ 5)$.
\end{proof}

In the case $d'$ odd, We will write $\Eun{G}{d}$ for the union of the unipotent 1-series of defect lower or equal to $\kgdc{\gpfini{G}}{d'}$. Thus, if it is not empty, $\Eun{G}{d}$ is the unipotent $\dun$-series containing the trivial representation.

\subsection{Computation of \texorpdfstring{$\dun$}{(d,1)}-series for classical groups}

\label{dunclassicalgroup}
In this section we compute the unipotent $\dun$-series for groups of type $\Bn$, $\Cn$, $\Dn$ and $\dDn$.

\bigskip
Just as before, let us start by studying $d$-series. When $\gpfini{G}$ is a classical group we have a classification of unipotent characters with the notion of symbols that we recall here. Furthermore, with these symbols, we can describe the decomposition into $d$-series of Theorem \ref{thmdecompodseries}.

\sautintro

A symbol is an unordered set $\{S,T\}$ of two subsets $S,T \subseteq \mathbb{N}$. We write such a symbol in the following way
\[ \Sigma = \begin{pmatrix}
        x_1 & \cdots & x_a \\
        y_1 & \cdots & y_b
    \end{pmatrix}\]
with $x_1 < \cdots < x_a$, $y_1 < \cdots < y_b$ and $S=\{x_1,\cdots,x_a\}$, $T=\{y_1,\cdots,y_b\}$. Two symbols are said to be equivalent if they can be transformed into each other by a sequence of steps
\[\begin{pmatrix}
        x_1 & \cdots & x_a \\
        y_1 & \cdots & y_b
    \end{pmatrix}
    \sim
    \begin{pmatrix}
        0 & x_1+1 & \cdots & x_a+1 \\
        0 & y_1+1 & \cdots & y_b+1
    \end{pmatrix} \]
or by interchanging the rows.

We define the defect of $\Sigma$ by $\defect(\Sigma) = |a-b|$ and its rank by
\[ \rank(\Sigma)=\sum_{i=1}^{a}x_i + \sum_{i=1}^{b}y_i - \left[ \left( \frac{a+b-1}{2} \right)^{2}\right].\]
These two notions can be defined on the equivalence classes of symbols.

\bigskip

If $\gpfini{G}$ is a group of type $\Bn$, $\Cn$, $\Dn$ or $\dDn$, Lusztig has shown that the unipotent characters may be parametrized by these symbols (see \cite{lusztigirr}). The unipotent characters of groups of type $\Bn$ or $\Cn$ are in bijection with the equivalence classes of symbols of rank $n$ and odd defect. For the groups of type $\Dn$, the unipotent characters are parametrized by classes of symbols of rank $n$ and defect divisible by 4 (except that if the two rows are identical, two characters correspond to the same symbol). And the unipotent characters of groups of type $\dDn$ are in bijection with symbols of rank $n$ and defect congruent $2 \pmod{4}$.

\bigskip

Let $\{S,T\}$ be a symbol and $d\geq 1$ an integer. If there exists $x\in S$ such that $x+d \notin S$, or $y \in T$ with $y+d \notin T$, then the symbol $\{S\setminus\{x\}\cup\{x+d\},T\}$ or $\{S,T\setminus\{y\}\cup\{y+d\}\}$, is said to be obtained from $\{S,T\}$ by adding a $d$-hook. We define the $d$-core of $\{S,T\}$ as the symbol $\{U,V\}$ without $d$-hook obtained from $\{S,T\}$ by removing a sequence of $d$-hooks.

In the same way, if there exists $x\in S$ such that $x+d \notin T$, or $y \in T$ with $y+d \notin S$, then the symbol $\{S\setminus\{x\},T\cup\{x+d\}\}$ or $\{S\cup\{y+d\},T\setminus\{y\}\}$, is said to be obtained from $\{S,T\}$ by adding a $d$-cohook. And we define like previously the $d$-cocore of $\{S,T\}$.

\begin{Pro}
    \label{prodseriescore}
    \begin{enumerate}
        \item If $d$ is odd: Then the $d$-cuspidal unipotent characters are precisely those where $\Sigma$ is itself a $d$-core. Moreover, two characters are in the same $d$-series if and only if they have the same $d$-core.
        \item If $d$ is even: Then the $d$-cuspidal unipotent characters are precisely those where $\Sigma$ is itself a $d/2$-cocore. Moreover, two characters are in the same $d$-series if and only if they have the same $d/2$-cocore.
    \end{enumerate}
\end{Pro}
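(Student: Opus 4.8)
The proposition is the translation to classical groups of the generic $d$-series decomposition of Broué–Malle–Michel, and my plan is to read it off from the combinatorics of symbols in the same way that Proposition~\ref{prodseriesAn} is extracted, in type $\An$, from the proof of Theorem~\ref{thmdecompodseries}. That theorem already tells us the sets $\dldcusp{L}{\lambda}$ partition $\dl{\gpfini{G}}{1}$, so it suffices to characterize, purely in terms of the symbol $\Sigma$ attached to a unipotent character: (i) when the character is $d$-cuspidal, and (ii) when two characters share a $d$-cuspidal support. The genuinely new feature compared with type $\An$ is the dichotomy between $d$ odd and $d$ even, which I would trace back to the factorization of the generic order of a classical group.

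The key input I would establish is a combinatorial model for $\idl$ from the minimal non-central $d$-split Levi subgroups. For a classical group the polynomial order is a power of $x$ times $\prod_i (x^{2i}-1)$, and one factors $x^{2i}-1 = (x^{i}-1)(x^{i}+1)$. When $d$ is odd, $\cycl{d}$ divides $x^{2i}-1$ only through the split factor $x^{i}-1$ (with $d \mid i$), so the minimal $\cycl{d}$-tori are split and Deligne–Lusztig induction from the corresponding Levi adds a $d$-hook to $\Sigma$, dropping the rank by $d$. When $d$ is even, $\cycl{d}$ divides $x^{d/2}+1$, the relevant minimal torus is anisotropic of "size" $d/2$ and wraps through the sign character of the Weyl group of type $\Bn$; combinatorially the induction transfers an entry between the two rows of $\Sigma$, i.e.\ adds a $(d/2)$-cohook. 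One checks that for even $d$ a $d$-hook is the composition of two $(d/2)$-cohooks, so in the even case cohooks already generate everything the hooks do; moreover, if $\Sigma$ admits a removable $d$-hook then it admits a removable $(d/2)$-cohook (examine whether $x+d/2$ lies in the opposite row), so being a $(d/2)$-cocore is the strictly stronger condition.

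Granting this model, both assertions follow formally. A character is $d$-cuspidal exactly when $\rdl$ vanishes for every proper $d$-split Levi, which by the model means $\Sigma$ has no removable $d$-hook when $d$ is odd, and no removable $(d/2)$-cohook when $d$ is even — that is, $\Sigma$ is a $d$-core, respectively a $(d/2)$-cocore. Likewise the condition $\langle \lambda, \idl(\mu)\rangle \neq 0$ defining the order $\preceq$ forces the symbols to differ by added hooks (resp.\ cohooks), so that the iterated removal of $d$-hooks (resp.\ $(d/2)$-cohooks) terminates at a well-defined $d$-core (resp.\ $(d/2)$-cocore), and two characters lie in the same $d$-series precisely when these invariants coincide. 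The main obstacle is the explicit Deligne–Lusztig computation underlying the hook/cohook model, together with the defect bookkeeping it requires: one must control the $\cycl{d}$-torus structure of the $d$-split Levi subgroups of $\Bn$, $\Cn$, $\Dn$, $\dDn$ and Lusztig's induction rule on symbols while respecting the defect constraints (odd for $\Bn,\Cn$; $\equiv 0 \pmod 4$ for $\Dn$ and $\equiv 2 \pmod 4$ for $\dDn$). The delicate case is type $\Dn$, where a single $(d/2)$-cohook toggles the defect modulo $4$ and therefore passes between $\Dn$- and $\dDn$-type symbols, and where the degenerate symbols with two equal rows carry two characters; handling these two points correctly, rather than any conceptual difficulty, is the crux, and it is precisely what the relevant part of the proof of Theorem~\ref{thmdecompodseries} carries out.
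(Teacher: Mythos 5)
Your proposal is correct and follows essentially the same route as the paper: the paper's proof is simply a citation to the proof of Theorem 3.2 in \cite{bmm}, and your sketch is a faithful outline of exactly the hook/cohook analysis of Deligne--Lusztig induction from $d$-split Levi subgroups that that proof carries out. Your auxiliary combinatorial observations (a $d$-hook is two $(d/2)$-cohooks; a removable $d$-hook forces a removable $(d/2)$-cohook) are accurate and consistent with the cited argument.
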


\begin{proof}
    This is proved in the proof of Theorem 3.2 in \cite{bmm}.
\end{proof}

Now let us compute the unipotent $\dun$-series. The first case is when $d$ is odd (the linear prime case). To obtain the $d$-series we need to take the $d$-core of the symbols by the proposition \ref{prodseriescore}. We also obtain the 1-series by taking the 1-core. But two symbols which have the same $d$-core have the same 1-core, so each unipotent 1-series is a $\dun$-series.

\begin{Pro}
    \label{prodimpaire}
    If $d$ is odd (linear prime case), the unipotent $\dun$-series are the unipotent 1-series.
\end{Pro}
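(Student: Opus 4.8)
The plan is to prove the two inclusions that together identify the $\dun$-series with the $1$-series: every unipotent $1$-series is a union of $d$-series, and no proper nonempty subset of a $1$-series is again a union of $1$-series. The whole argument reduces to tracking the \emph{defect} of symbols, which I claim is the invariant controlling the $1$-series.

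First I would record how hooks act on the defect. By the definitions recalled before Proposition~\ref{prodseriescore}, adding or removing a $d$-hook (for any $d$, in particular $d=1$) replaces an entry $x$ by $x\pm d$ inside a single row $S$ or $T$; it therefore leaves the two cardinalities $a=|S|$ and $b=|T|$ unchanged, hence preserves $\defect(\Sigma)=|a-b|$. Since the $1$-core is obtained by removing $1$-hooks until each row is an initial segment, it is the unique coreless symbol of the given defect, and two symbols lie in the same unipotent $1$-series if and only if they have the same defect.

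The key step is then immediate from Proposition~\ref{prodseriescore}(1): for $d$ odd, two unipotent characters are in the same $d$-series exactly when they share a $d$-core, i.e. when they are linked by a chain of $d$-hook additions and removals. As each such move preserves the defect, all characters in a fixed $d$-series have the same defect, hence the same $1$-core. Thus every $d$-series is contained in a single $1$-series, which says precisely that each unipotent $1$-series is a union of $d$-series and is therefore a $\dun$-set.

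It remains to see that a $1$-series $\mathcal{E}$ is minimal. Since the unipotent $1$-series partition $\dl{\gpfini{G}}{1}$ and are pairwise disjoint, the only $1$-series contained in $\mathcal{E}$ is $\mathcal{E}$ itself; hence any nonempty $\dun$-subset of $\mathcal{E}$, being a union of $1$-series, must equal $\mathcal{E}$. So $\mathcal{E}$ has no proper nonempty $\dun$-subset and is a $\dun$-series. I do not anticipate a real obstacle: the single point needing care is the sharp contrast with the even case, where one must use $d$-cohooks, and these \emph{do} transfer a bead between the two rows and so change the defect — it is exactly the absence of cohooks for odd $d$ that keeps the defect, and with it the $1$-series structure, invariant.
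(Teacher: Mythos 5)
Your proof is correct and follows essentially the same route as the paper: the paper's argument is the one-line observation that two symbols with the same $d$-core (for $d$ odd, via Proposition \ref{prodseriescore}) have the same $1$-core, which is exactly what you establish by noting that $d$-hooks stay within a row and therefore preserve the defect, the invariant that determines the $1$-series. Your explicit remark that cohooks (and hence the even case) are what would break this invariance is a fair account of where the oddness of $d$ enters.
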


Now, assume that $d$ is even (unitary prime case). This case is a little bit more complicated because we need to take the $d/2$-cocore for the $d$-series and the 1-core for the 1-series. We will do a proof similar to the case of $\dAn$ done in Section \ref{secdunAn}.

Let $\Sigma$ be a symbol. We define $\max(\Sigma)$ to be $\max(\Sigma):=0$ if $\Sigma\sim\{\emptyset,\emptyset\} $, and otherwise $\max(\Sigma):=\max(S\cup T)$ where $\{S,T\}$ is the unique symbol equivalent to $\Sigma$ with $0 \notin S \cap T$. Note that $\max(\Sigma)$ is the longest length of a hook or cohook in $\Sigma$.

\begin{Lem}
    \label{lemmaxdefectk}
    Let $k\geq 0$ and $n\geq 1$ such that $n \geq \frac{k^2-1}{4}$. We have

    \[ \max\{ \max(\Sigma), \defect(\Sigma)=k, \rank(\Sigma)=n\}=
        \begin{cases}
            n-\frac{k^2-4k+3}{4} & \text{ if } k \text{ is odd}            \\
            n-\frac{k^2-4k+4}{4} & \text{ if } k \text{ is even}, k \neq 0 \\
            n                    & \text{ if } k = 0
        \end{cases}
    \]

\end{Lem}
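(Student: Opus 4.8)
The plan is to mirror the proof of Lemma \ref{lemmaxAn} for $\beta$-sets, replacing $\beta$-sets by symbols and $2$-hooks by $1$-hooks: adding a $1$-hook to a symbol leaves the two row-sizes unchanged, hence preserves $\defect$, while raising $\rank$ by exactly $1$. The first step is to pin down the minimal object of defect $k$, i.e. its $1$-core. Removing $1$-hooks packs each row down to an initial segment, and after using the equivalence relation the $1$-core of any defect-$k$ symbol is $\Sigma_{\min}=\left(\begin{smallmatrix} 0 & 1 & \cdots & k-1 \\ & & \end{smallmatrix}\right)$ (empty second row) for $k\ge 1$, and the empty symbol for $k=0$. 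A direct computation with the rank formula and the floor gives $r_0:=\rank(\Sigma_{\min})=(k^2-1)/4$ for $k$ odd and $r_0=k^2/4$ for $k$ even; this also explains the hypothesis $n\ge (k^2-1)/4$, which for integral $n$ is exactly the condition (as in Lemma \ref{lemexistbsetkm}) that a symbol of defect $k$ and rank $n$ exists.

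Next I would establish two facts. First, every symbol of defect $k$ and rank $n$ is obtained from $\Sigma_{\min}$ by adding $n-r_0$ successive $1$-hooks, since each addition raises the rank by $1$ and the common $1$-core is $\Sigma_{\min}$. Second, each single $1$-hook raises $\max(\Sigma)$ by at most $1$: working on the reduced representative (on which $\max$ is read off), a $1$-hook replaces one entry $x$ by $x+1$, so it increases the largest entry by at most one, and since it creates no new $0$ it preserves reducedness. Combining these, $\max(\Sigma)\le \max(\Sigma_{\min})+(n-r_0)$, and since $\max(\Sigma_{\min})=k-1$ for $k\ge 1$ (resp. $0$ for $k=0$), substituting the two values of $r_0$ and simplifying yields the three cases of the stated formula.

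For the reverse inequality I would exhibit an extremal symbol realizing the bound by piling all $n-r_0$ hooks on the top entry: for $k\ge 1$ take $\Sigma=\left(\begin{smallmatrix} 0 & 1 & \cdots & k-2 & M \\ & & \end{smallmatrix}\right)$ with $M$ chosen so that $\rank(\Sigma)=n$, check that it is reduced with $\max(\Sigma)=M$, and verify that $M$ equals the claimed value in each parity; for $k=0$ the symbol $\left(\begin{smallmatrix} M \\ 0\end{smallmatrix}\right)$ with $M=n$ does the job.

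The main obstacle is the control of $\max$ under the equivalence relation on symbols: because $\max$ is defined through the reduced representative, one must check that passing to reduced forms is compatible with adding $1$-hooks and that the per-hook increment is genuinely at most $1$. This is the exact analogue of the delicate ``adding a $0$'' point in Lemma \ref{lemmaxAn}, and it is the place where the argument could silently overcount. A secondary, purely bookkeeping difficulty is the floor-function arithmetic in computing $r_0$ and in the final simplification, which is what forces the split according to the parity of $k$, together with the separate treatment of $k=0$, that appears in the statement.
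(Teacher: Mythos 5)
Your proposal is correct and follows essentially the same route as the paper's proof: identify the $1$-core $\Sigma_k$ of a defect-$k$ symbol, compute its rank to get the number $m=n-\rank(\Sigma_k)$ of $1$-hooks needed, bound the growth of $\max$ by $1$ per hook, and realize the bound by stacking all hooks on the largest top-row entry. You are in fact slightly more careful than the paper about the interaction of $1$-hooks with the reduced representative (the paper simply asserts the per-hook increment is at most one), and your explicit extremal symbols reproduce exactly the values $n-\frac{k^2-4k+3}{4}$ and $n-\frac{k^2-4k+4}{4}$.
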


\begin{proof}
    Every symbol of defect $k$ is obtained from $\Sigma_k=\begin{pmatrix}
            0 & \cdots & k-1 \\
              &        &
        \end{pmatrix}$, for $k \geq 1$, and $\Sigma_0=\{\emptyset,\emptyset\}$, for $k=0$, by adding 1-hooks. Each 1-hook increases the rank of 1. So in order to get a symbol of rank $n$, we need to do $m:= n-\rank(\Sigma_k)$ 1-hooks. Note that, for $k \geq 1$,

    \[\rank(\Sigma_k)=\frac{(k-1)k}{2}-\left[\left(\frac{k-1}{2}\right)^2\right]=
        \begin{cases}
            \frac{k^2-1}{4} & \text{ if } k \text{ is odd}  \\
            \frac{k^2}{4}   & \text{ if } k \text{ is even}
        \end{cases}
        ,\]
    and $\rank(\Sigma_0)=0$.
    Remark also, that the hypothesis $n \geq (k^2-1)/4$ is equivalent to $m\geq0$. Each 1-hook increases the maximum of the coefficients by at most one, so $\max\{ \max(\Sigma), \defect(\Sigma)=k\}=k-1+m$, for $k\geq1$, and $m$ for $k=0$ (we have equality by adding the 1-hooks on the last coefficient on the top row).
\end{proof}

Let us define an integer $\kgd$ in the following way.

\begin{Def}
    If $\gpfini{G}$ is of type $\Bn$ or $\Cn$ we define
    \[\kgd=\max\{k\geq 1, k \text{ odd},(k^2-4k+3)/4\leq n-d/2\}\]
    if it exists and $\kgd=-1$ otherwise.

    If $\gpfini{G}$ is of type $\Dn$ or $\dDn$ then in the same way
    \[\kgd=\max\{k\geq 2, k \text{ even},(k^2-4k+4)/4\leq n-d/2\}\]
    if it exists and $\kgd=-1$ otherwise.
\end{Def}

As in the case of $\dAn$, by Lemma \ref{lemmaxdefectk} we see that $\kgd$ is the greatest integer $k$ such that there exists $\Sigma$ of defect $k$ and rank $n$ having a cohook of length at least $d$. In particular, if $\defect(\Sigma)> \kgdc{\gpfini{G}}{d}$ then $\Sigma$ is a $d$-cocore.

\begin{Rem}
    Two symbols are in the same 1-series if and only if they have the same 1-core by Proposition \ref{prodseriescore}. But removing a 1-hook does not change the defect of a symbol. Hence, every symbol in a 1-series has the same defect. Moreover, the 1-core of a symbol is of the form $\begin{pmatrix}
            0 & \cdots & k-1 \\
              &        &
        \end{pmatrix}$ where $k$ is the defect of the symbol (or $\{\emptyset,\emptyset\}$ when the defect is 0). Hence, two symbols are in the same 1-series if and only if they have the same defect. And the defect associated with a 1-series is the defect of the cuspidal representation associated to this 1-series.
\end{Rem}

We have the following partition of $\dl{\gpfini{G}}{1}$ into $\dun$-series.
\begin{Pro}
    \label{prod1series}
    If $d$ is even (unitary prime case), the unipotent 1-series with defect strictly greater than $\kgd$ are $\dun$-series, composed uniquely of $d$-cuspidal representations, and the union of the unipotent 1-series with defect lower or equal to $\kgd$ is a $\dun$-series.
\end{Pro}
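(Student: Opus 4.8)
The plan is to treat the two assertions separately, following the template of the type $\dAn$ computation but replacing $d'$-hooks by $d/2$-cohooks, since for even $d$ the $d$-series are governed by $d/2$-cocores (Proposition \ref{prodseriescore}(2)).

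For the first assertion I would fix a unipotent $1$-series of defect $k > \kgd$, where $k$ is odd in types $\Bn$, $\Cn$ and even in $\Dn$, $\dDn$. By the definition of $\kgd$ the relevant inequality fails, giving $d/2 > n - (k^2-4k+3)/4$ (resp. the even analogue $d/2 > n-(k^2-4k+4)/4$), which by Lemma \ref{lemmaxdefectk} says precisely $d/2 > \max(\Sigma)$ for every symbol $\Sigma$ of rank $n$ and defect $k$. Then no coefficient of the reduced representative of $\Sigma$ is $\geq d/2$, so no $d/2$-cohook can be removed; hence each such $\Sigma$ is its own $d/2$-cocore and the associated character is $d$-cuspidal by Proposition \ref{prodseriescore}(2). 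Since all unipotent characters of $\gpfini{G}$ are parametrized by symbols of the one fixed rank $n$, a $d$-cuspidal symbol is alone in its $d$-series (any other symbol with the same $d/2$-cocore would have strictly larger rank). Thus the $1$-series of defect $k$ is a union of $1$-series and a union of singleton $d$-series, i.e. a $\dun$-set, and as it consists of a single $1$-series it has no proper non-empty $\dun$-subset, so it is a $\dun$-series made up entirely of $d$-cuspidal characters.

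For the second assertion I would write $\mathcal{E}$ for the union of the $1$-series of defect $\leq \kgd$. It is a union of $1$-series by construction, and its complement in $\dl{\gpfini{G}}{1}$ is the union of the $1$-series of defect $>\kgd$, which by the first part is a union of $d$-series; since $\dl{\gpfini{G}}{1}$ is itself a union of $d$-series (Theorem \ref{thmdecompodseries}), so is $\mathcal{E}$, and $\mathcal{E}$ is a $\dun$-set. Note also that any $d$-series meeting $\mathcal{E}$ lies in $\mathcal{E}$, because it cannot also contain a defect-$>\kgd$ symbol (those are $d$-cuspidal singletons). To prove minimality I would argue that a non-empty $\dun$-subset of $\mathcal{E}$ is a union of $1$-series, so it suffices to show that the graph on the $1$-series of defect $\leq\kgd$, with an edge whenever a common $d$-series links two of them, is connected. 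The key move is: starting from a symbol $\Sigma$ of rank $n$ and defect $k$ with $4 \leq k \leq \kgd$ chosen to have a coefficient $\geq d/2$ (possible since $k \leq \kgd$ forces $\max(\Sigma) \geq d/2$ for some such $\Sigma$ by Lemma \ref{lemmaxdefectk}), I remove a $d/2$-cohook from the larger row and then add a $d/2$-cohook on the larger row, preserving the rank and the $d/2$-cocore while lowering the defect by $4$; hence the $1$-series of defects $k$ and $k-4$ share a $d$-series. For $\Dn$ and $\dDn$ all admissible defects are congruent mod $4$ (to $0$, resp. $2$), so this one move connects every defect down to the minimal one. For $\Bn$ and $\Cn$ the defects are odd, so the move links the chains $\{1,5,9,\dots\}$ and $\{3,7,11,\dots\}$ separately, and it remains to join defect $1$ to defect $3$; there I would take a defect-$3$ symbol of rank $n$ with a coefficient $\geq d/2$, remove one $d/2$-cohook to reach defect $1$, and add a $d/2$-cohook crossing the two rows so that the defect stays $1$ and the rank returns to $n$, exhibiting a common $d$-series.

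The main obstacle will be checking that these explicit symbols exist and that every cohook move is legal: that the target position is unoccupied and non-negative, that the rank is restored to exactly $n$, and that one stays inside the correct defect-parity class of the group (including the degenerate type $\Dn$ symbols with two equal rows). As in the type $\dAn$ argument this forces a small case analysis in low rank and small $d$, and the base case $1 \leftrightarrow 3$ for $\Bn$, $\Cn$ is where the bookkeeping is most delicate; the bound $k \leq \kgd$, read through Lemma \ref{lemmaxdefectk}, is exactly what guarantees the room (a coefficient $\geq d/2$) needed for these moves, so I expect the construction to go through once the boundary configurations are verified directly.
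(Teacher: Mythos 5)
Your proposal is correct and follows essentially the same route as the paper: the first assertion via Lemma \ref{lemmaxdefectk} and the definition of $\kgd$ (all coefficients $<d/2$, so every symbol is its own $d/2$-cocore), and the second via a remove-then-add pair of $d/2$-cohooks sending defect $k$ to defect $k-4$, with the bound $k\leq\kgd$ guaranteeing a coefficient $\geq d/2$ to start the move. The only cosmetic difference is your separate treatment of joining defects $1$ and $3$ in types $\Bn$, $\Cn$: the paper's single uniform move already does this, since for $k=3$ the two top-to-bottom cohooks give $a-b=-1$ and hence defect $|{-1}|=1$, which is exactly your ``crossing'' move.
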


\begin{proof}
    Let $k>\kgd$ and a unipotent 1-series with defect $k$. Then by definition of $\kgd$ and with Lemma \ref{lemmaxdefectk}, $d/2$ is strictly greater than every coefficient in every symbol in the 1-series chosen. Hence, this 1-series is composed of $d$-cuspidal representations, so is a $\dun$-series.

    We also deduce from that, that the union of the unipotent 1-series with defect lower or equal to $\kgd$ is a $\dun$-set. It remains to prove that this is a $\dun$-series. Let $3\leq k \leq \kgd$ such that there is a unipotent 1-series with defect $k$. We want to prove that the unipotent 1-series with defect $k$ is in the same $\dun$-series as a unipotent 1-series with defect strictly less than $k$, which will finish the proof. Let $\Sigma_k=\begin{pmatrix}
            0 & \cdots & k-1 \\
              &        &
        \end{pmatrix}$ and $m= n-\rank(\Sigma_k)$ as in the proof of Lemma \ref{lemmaxdefectk}. Then the symbol
    \[\Sigma=\begin{pmatrix}
            0 & \cdots & k-2 & k-1+m \\
              &        &     &
        \end{pmatrix}\]
    has defect $k$ and rank $n$ so is in the 1-series chosen. Now by definition of $\kgd$, $d/2 \leq k-1+m$, we can then remove a $d/2$-cohook from $\Sigma$ to get
    \[\Sigma'=\begin{pmatrix}
            0         & \cdots & k-2 \\
            k-1+m-d/2 &        &
        \end{pmatrix}.\]
    Let $v \in \{0,\cdots,k-2\}$ such that $v+d/2 \neq k-1+m-d/2$. Then we can add a $d/2$-cohook to $\Sigma'$ to obtain
    \[\Sigma''=\begin{pmatrix}
            0         & \cdots & v-1 & v+1 & \cdots & k-2 \\
            k-1+m-d/2 & v+d/2
        \end{pmatrix}\]
    (we possibly have to swap the numbers in the lower row so that they are written in the good order). The symbol $\Sigma''$ is a symbol of defect $k-4$ if $k>3$ and $k-2$ if $k=3$, which has the same $d/2$-cocore as $\Sigma$. Hence, $\Sigma$ and $\Sigma$ are in the same $\dun$-series, and $\defect(\Sigma')< \defect(\Sigma)$.
\end{proof}

As before, when $d$ is even, we write $\Eun{G}{d}$ for the union of the 1-series of defect lower or equal to $\kgd$, which is, if not empty, the $\dun$-series containing the trivial representation.

\subsection{Computation of \texorpdfstring{$\dun$}{(d,1)}-series for exceptional groups}
We have computed the unipotent $\dun$-series for groups of type $\textbf{A}$ and for classical groups. We are left with groups of exceptional type, that is of type $\tDq$, $\dG$, $\Fqu$, $\Esi$, $\dEsi$, $\Ese$ and $\Eh$.

\bigskip

Unfortunately, we do not have a nice classification with partitions or symbols like for groups of types $\textbf{A}$, $\textbf{B}$, $\textbf{C}$ and $\textbf{D}$. However, since we are working with groups with bounded rank, we can do a case by case analysis. We will summarize the result in Table \ref{tabdunexp}. We need to explain the notations used. To keep the notation as simple as possible, we are writing the unipotent $\dun$-series in terms of 1-series. We will write a 1-series by the corresponding 1-cuspidal representation of the 1-split Levi defining this series. The notations for the cuspidal representations are the notations of \cite{carter} section 13.9. So for example for $\Fqu$, we have a (2,1)-series $\{1, \B{2}, \Fqu[-1], \Fqu[i], \Fqu''[1]\}$. This set is composed of the principal series (denoted by $1$), the characters coming from the unipotent cuspidal character of $\B{2}$ (denoted by $\B{2}$) and 3 cuspidal representations of $\Fqu$ : $\Fqu[-1]$, $\Fqu[i]$ and $\Fqu''[1]$. Thus $\{1, \B{2}, \Fqu[-1], \Fqu[i], \Fqu''[1]\}$ denotes a set composed of 33 unipotent characters.

If a $d$ does not appear in Table \ref{tabdunexp}, it means that the unipotent $\dun$-series are the unipotent $1$-series.

\begin{Pro}
    The unipotent $\dun$-series for groups of exceptional types are written in the Table \ref{tabdunexp}.
\end{Pro}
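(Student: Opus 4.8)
The plan is to reduce the statement to a finite, explicit combinatorial computation for each of the seven exceptional types and then to carry out that computation case by case. First I would record the following reformulation of the notion of $\dun$-series. Fix $d$ and consider the graph $\Gamma_d$ on the vertex set $\dl{\gpfini{G}}{1}$ in which two unipotent characters are joined by an edge whenever they belong to a common $1$-series or to a common $d$-series. A subset $X \subseteq \dl{\gpfini{G}}{1}$ is simultaneously a union of $1$-series and a union of $d$-series if and only if it is a union of connected components of $\Gamma_d$: if $\chi \in X$ and $\chi'$ is adjacent to $\chi$, then the whole $1$-series or $d$-series through $\chi$ lies in $X$, so $\chi' \in X$, whence $X$ is closed under adjacency; conversely each $1$-series and each $d$-series is a clique, hence lies in a single component, so any union of components is a union of $1$-series and of $d$-series. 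Consequently the unipotent $\dun$-series are exactly the connected components of $\Gamma_d$, and the task is to determine these components.

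Next I would assemble the two partitions that define the edges of $\Gamma_d$. The partition into $1$-series (Harish-Chandra series) is read off from Lusztig's classification of unipotent characters together with their cuspidal support, as tabulated in \cite{carter}, Section 13.9; this is precisely the data encoded by the labels $1$, $\B{2}$, $\Fqu[i]$, and so on, appearing in Table \ref{tabdunexp}. The partition into $d$-series is furnished by Theorem \ref{thmdecompodseries}: the $d$-series are the sets $\dldcusp{L}{\lambda}$ attached to the $\gpfini{G}$-classes of unipotent $d$-cuspidal pairs, and both the classification of $d$-cuspidal unipotent characters and the distribution of the remaining characters into $d$-series for the exceptional groups are given explicitly in \cite{bmm}, together with the generic degrees and relative Weyl group data needed to list each $\dldcusp{L}{\lambda}$. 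For the twisted types $\tDq$ and $\dEsi$ I would use the corresponding generic group $\mathbb{G}=(\Gamma,W\phi)$ and its $d$-split Levi subgroups.

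I would then restrict the values of $d$ to be checked. By the Remark following the definition of $\dun$-series, only $d$ with $\cycl{d} \mid \ordpol{G}$ can produce non-trivial behaviour, and among these, whenever every $d$-series is a singleton the graph $\Gamma_d$ has no $d$-edges and its components are just the $1$-series; the same holds when the $d$-series coincide with the $1$-series, as for small $d$. This leaves, for each exceptional type, only the finitely many pairs $(\gpfinialg{G},d)$ recorded in Table \ref{tabdunexp}; all other $d$ give unipotent $\dun$-series equal to the unipotent $1$-series, in accordance with the convention stated just before the proposition. For each remaining pair the connected components of $\Gamma_d$ are computed directly from the two partitions above.

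The computation is entirely finite, so the only real difficulty is the bookkeeping. The hardest case is $\Eh$, which has $166$ unipotent characters and a long list of admissible $d$, where one must correctly identify the exceptional $d$-cuspidal pairs and the membership of every unipotent character in its $1$-series and its $d$-series before taking connected components; $\Esi$, $\dEsi$ and $\Ese$ require similar, if lighter, care, while $\tDq$, $\dG$ and $\Fqu$ are short. In practice this bookkeeping is most safely carried out with the implementation of the Broué--Malle--Michel data in \textsf{CHEVIE}, and the resulting components are exactly those listed in Table \ref{tabdunexp}.
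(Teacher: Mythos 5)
Your proposal is correct and follows essentially the same route as the paper: reduce to a finite case-by-case check that combines the partition into $1$-series from Carter's tables (\cite{carter}, Section 13.9) with the partition into $d$-series from \cite{bmm}, and then take the common refinement (your connected-component reformulation of minimality is just a restatement of the definition). The one point the paper makes explicitly that you gloss over is that Tables 1 and 2 of \cite{bmm} only list the $d$-series $\dldcusp{L}{\lambda}$ for $\gpfinialg{L}$ not a torus, so the $d$-series lying above tori must be supplied separately from Lusztig's computation of the Deligne--Lusztig induction $\inddllite{\gpfini{T}}{\gpfini{G}}$.
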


\begin{proof}
    In \cite{carter} section 13.9 we can find tables for the unipotent characters of groups of exceptional types and the partitions into 1-series. So to compute the unipotent $\dun$-series, we need to know about the $d$-series. In \cite{bmm}, we find in Tables 1 and 2 a list of the $d$-series $\dldcusp{L}{\lambda}$, where $(\gpfinialg{L},\lambda)$ is a unipotent $d$-cuspidal pair and $\gpfinialg{L}$ is not a torus. So we are missing the cases of $\gpfinialg{L}$ a torus (hence $\lambda$ is trivial). However, in the case $\gpfinialg{L}=\gpfinialg{T}$ of a torus, the Deligne-Lusztig induction $\inddllite{\gpfini{T}}{\gpfini{G}}$ is known by the work of Lusztig. Hence combining all the computations, we prove the results of Table \ref{tabdunexp}.
\end{proof}

\begin{longtable}{|c|l|l|}

    \hline
    Group   & $d$       & unipotent $\dun$-series                                                                                                  \\
    \hline
    $\dG$   & 2         & $\{1, \dG[1], \dG[-1]\}$, $\{\dG[\theta]\}$, $\{\dG[\theta^2]\}$                                                         \\
            & 3         & $\{1, \dG[1], \dG[\theta], \dG[\theta^2]\}$, $\{\dG[-1]\}$                                                               \\
            & 6         & $\{1, \dG[-1], \dG[\theta], \dG[\theta^2]\}$, $\{\dG[1]\}$                                                               \\

    \hline

    $\tDq$  & 2, 6      & $\{1, \tDq[1], \tDq[-1]\}$                                                                                               \\
            & 3         & $\{1, \tDq[1] \}$, $\{\tDq[-1]\}$                                                                                        \\
            & 12        & $\{1, \tDq[-1] \}$, $\{\tDq[1]\}$                                                                                        \\
    \hline

    $\Fqu$  & 2         & $\{1, \B{2}, \Fqu[-1], \Fqu[i], \Fqu''[1]\}$, $\{\Fqu[-i]\}$, $\{\Fqu[\theta]\}$, $\{\Fqu[\theta^2]\}$,                  \\
            &           & $\{\Fqu'[1]\}$                                                                                                           \\
            & 3         & $\{1, \Fqu[\theta], \Fqu[\theta^2],\Fqu'[1]\}$, $\{\B{2}\}$, $\{\Fqu[-i]\}$, $\{\Fqu[-1]\}$, $\{\Fqu[i]\}$,              \\
            &           & $\{\Fqu''[1]\}$                                                                                                          \\
            & 4         & $\{1, \B{2}, \Fqu[-i], \Fqu[i], \Fqu'[1], \Fqu''[1]\}$, $\{\Fqu[-1]\}$, $\{\Fqu[\theta]\}$,                              \\
            &           & $\{\Fqu[\theta^2]\}$                                                                                                     \\
            & 6         & $\{1, \B{2}, \Fqu[-1], \Fqu[\theta], \Fqu[\theta^2], \Fqu'[1] \}$, $\{\Fqu[-i]\}$, $\{\Fqu[i]\}$,                        \\
            &           & $\{\Fqu''[1]\}$                                                                                                          \\
            & 8         & $\{1, \Fqu[-1], \Fqu[-i], \Fqu[i] \}$, $\{\B{2}\}$, $\{\Fqu'[1]\}$, $\{\Fqu[\theta]\}$, $\{\Fqu[\theta^2]\}$,            \\
            &           & $\{\Fqu''[1]\}$                                                                                                          \\
            & 12        & $\{1, \B{2}, \Fqu[-i], \Fqu[i], \Fqu[\theta], \Fqu[\theta^2]\}$, $\{\Fqu[-1]\}$, $\{\Fqu'[1]\}$,                         \\
            &           & $\{\Fqu''[1]\}$                                                                                                          \\

    \hline

    $\Esi$  & 2, 4, 8   & $\{1, \D{4}\}$, $\{\Esi[\theta]\}$, $\{\Esi[\theta^2]\}$                                                                 \\
            & 3, 9      & $\{1, \Esi[\theta], \Esi[\theta^2]\}$, $\{\D{4}\}$                                                                       \\
            & 5         & $\{1\}$, $\{\D{4}\}$, $\{\Esi[\theta]\}$, $\{\Esi[\theta^2]\}$                                                           \\
            & 6, 12     & $\{1, \D{4}, \Esi[\theta], \Esi[\theta^2]\}$                                                                             \\

    \hline

    $\Ese$  & 2, 10, 14 & $\{1, \D{4}, \Ese[\xi], \Ese[-\xi]\}$, $\{\Esi[\theta]\}$, $\{\Esi[\theta^2]\}$                                          \\
            & 3, 9      & $\{1, \Esi[\theta], \Esi[\theta^2]\}$, $\{ \D{4}\}$, $\{\Ese[\xi]\}$, $\{ \Ese[-\xi]\}$                                  \\
            & 4, 8      & $\{1, \D{4}\}$, $\{\Esi[\theta]\}$, $\{\Esi[\theta^2]\}$, $\{\Ese[\xi]\}$, $\{ \Ese[-\xi]\}$                             \\
            & 5, 7      & $\{1\}$, $\{\D{4}\}$, $\{\Esi[\theta]\}$, $\{\Esi[\theta^2]\}$, $\{\Ese[\xi]\}$, $\{ \Ese[-\xi]\}$                       \\
            & 6, 18     & $\{1, \D{4}, \Esi[\theta], \Esi[\theta^2], \Ese[\xi], \Ese[-\xi]\}$                                                      \\
            & 12        & $\{1, \D{4}, \Esi[\theta], \Esi[\theta^2]\}$, $\{\Ese[\xi]\}$, $\{ \Ese[-\xi]\}$                                         \\

    \hline

    $\Eh$   & 2         & $\{1, \D{4}, \Ese[\xi], \Ese[-\xi], \Eh[-1], \Eh'[1], \Eh''[1]\}$,                                                       \\
            &           & $\{\Esi[\theta], \Eh[-\theta], \Eh[\theta]\}$, $\{\Esi[\theta^2], \Eh[\theta^2], \Eh[-\theta^2]\}$, $\{\Eh[-i] \}$,      \\
            &           & $\{\Eh[\zeta^4]\}$, $\{\Eh[\zeta^3] \}$, $\{\Eh[\zeta^2] \}$, $\{\Eh[\zeta] \}$, $\{\Eh[i] \}$                           \\
            & 3         & $\{1, \Esi[\theta], \Esi[\theta^2], \Eh[\theta^2], \Eh[\theta], \Eh'[1]\}$,                                              \\
            &           & $\{\D{4}, \Eh[-1], \Eh[-\theta^2], \Eh[-\theta]\}$, $\{\Ese[-\xi]\}$, $\{ \Ese[\xi]\}$, $\{\Eh''[1] \}$,                 \\
            &           & $\{\Eh[-i] \}$, $\{\Eh[\zeta^4]\}$, $\{\Eh[\zeta^3] \}$, $\{\Eh[\zeta^2] \}$, $\{\Eh[\zeta] \}$, $\{\Eh[i] \}$           \\
            & 4         & $\{1, \D{4}, \Eh[-i], \Eh[i], \Eh'[1], \Eh''[1]\}$, $\{\Ese[-\xi] \}$, $\{\Ese[\xi] \}$,                                 \\
            &           & $\{\Esi[\theta]\}$, $\{\Esi[\theta^2] \}$, $\{\Eh[\zeta^4]\}$, $\{\Eh[\zeta^3] \}$, $\{\Eh[\zeta^2] \}$,                 \\
            &           & $\{\Eh[\zeta] \}$,  $\{\Eh[-1]\}$, $\{ \Eh[-\theta] \}$, $\{ \Eh[\theta]\}$, $\{\Eh[\theta^2] \}$, $\{\Eh[-\theta^2] \}$ \\
            & 5         & $\{1, \Eh[\zeta^4], \Eh[\zeta^3], \Eh[\zeta^2], \Eh[\zeta], \Eh'[1]\}$, $\{\Ese[-\xi] \}$, $\{\Ese[\xi] \}$,             \\
            &           & $\{ \D{4}\}$, $\{\Esi[\theta]\}$, $\{\Esi[\theta^2] \}$, $\{ \Eh[-i]\}$, $\{\Eh[i] \}$, $\{\Eh''[1] \}$,                 \\
            &           & $\{\Eh[-1]\}$, $\{ \Eh[-\theta] \}$, $\{ \Eh[\theta]\}$, $\{\Eh[\theta^2] \}$, $\{\Eh[-\theta^2] \}$                     \\
            & 6         & $\{1, \Ese[-\xi], \Ese[\xi], \D{4}, \Esi[\theta], \Esi[\theta^2], \Eh[-1], \Eh[-\theta^2], \Eh[-\theta],$                \\
            &           & $ \Eh[\theta^2], \Eh[\theta], \Eh'[1], \Eh''[1]\}$, $\{\Eh[-i] \}$, $\{\Eh[\zeta^4]\}$, $\{\Eh[\zeta^3] \}$,             \\
            &           & $\{\Eh[\zeta^2] \}$, $\{\Eh[\zeta] \}$, $\{\Eh[i] \}$                                                                    \\
            & 7         & $\{1\}$, $\{\D{4}\}$, $\{\Ese[-\xi] \}$, $\{\Ese[\xi] \}$, $\{\Esi[\theta]\}$, $\{\Esi[\theta^2] \}$, $\{ \Eh[-i]\}$,    \\
            &           & $\{\Eh[i]\}$, $\{\Eh'[1]\}$,  $\{\Eh''[1]\}$, $\{\Eh[\zeta^4]\}$, $\{\Eh[\zeta^3] \}$, $\{\Eh[\zeta^2] \}$,              \\
            &           & $\{\Eh[\zeta] \}$, $\{\Eh[-1]\}$, $\{ \Eh[-\theta] \}$, $\{ \Eh[\theta]\}$, $\{\Eh[\theta^2] \}$, $\{\Eh[-\theta^2] \}$  \\
            & 8         & $\{1, \D{4}, \Eh[-1], \Eh[-i], \Eh[i]\}$, $\{\Ese[-\xi] \}$, $\{\Ese[\xi] \}$, $\{\Esi[\theta]\}$,                       \\
            &           & $\{\Esi[\theta^2] \}$, $\{\Eh[\zeta^4]\}$, $\{\Eh[\zeta^3] \}$, $\{\Eh[\zeta^2] \}$, $\{\Eh[\zeta] \}$,  $\{\Eh'[1]\}$,  \\
            &           & $\{ \Eh[-\theta] \}$, $\{ \Eh[\theta]\}$, $\{\Eh[\theta^2] \}$, $\{\Eh[-\theta^2] \}$, $\{ \Eh''[1]\}$                   \\
            & 9         & $\{1, \Esi[\theta], \Esi[\theta^2]\}$, $\{\D{4}\}$, $\{\Ese[-\xi] \}$, $\{\Ese[\xi] \}$, $\{ \Eh[-i]\}$,                 \\
            &           & $\{\Eh[i]\}$, $\{\Eh'[1]\}$,  $\{\Eh''[1]\}$, $\{\Eh[\zeta^4]\}$, $\{\Eh[\zeta^3] \}$, $\{\Eh[\zeta^2] \}$,              \\
            &           & $\{\Eh[\zeta] \}$, $\{\Eh[-1]\}$, $\{ \Eh[-\theta] \}$, $\{ \Eh[\theta]\}$, $\{\Eh[\theta^2] \}$, $\{\Eh[-\theta^2] \}$  \\
            & 10        & $\{1, \Ese[-\xi], \Ese[\xi], \D{4}, \Eh[-1], \Eh[\zeta^4], \Eh[\zeta^3], \Eh[\zeta^2], \Eh[\zeta],$                      \\
            &           & $\Eh''[1]\}$, $\{\Esi[\theta]\}$, $\{\Esi[\theta^2] \}$, $\{\Eh[-i]\}$, $\{ \Eh[i]\}$, $\{\Eh'[1] \}$,                   \\
            &           & $\{ \Eh[-\theta] \}$, $\{ \Eh[\theta]\}$, $\{\Eh[\theta^2] \}$, $\{\Eh[-\theta^2] \}$                                    \\
            & 12        & $\{1, \D{4}, \Esi[\theta], \Esi[\theta^2], \Eh[-1], \Eh[-\theta^2], \Eh[-\theta], \Eh[-i], \Eh[\theta^2], $              \\
            &           & $\Eh[\theta], \Eh[i], \Eh''[1]\}$, $\{\Ese[-\xi] \}$, $\{\Ese[\xi] \}$, $\{\Eh'[1]\}$, $\{\Eh[\zeta^4]\}$,               \\
            &           & $\{\Eh[\zeta^3] \}$, $\{\Eh[\zeta^2] \}$, $\{\Eh[\zeta] \}$                                                              \\
            & 14        & $\{1, \Ese[-\xi], \Ese[\xi], \D{4} \}$, $\{\Esi[\theta]\}$, $\{\Esi[\theta^2] \}$, $\{ \Eh[-i]\}$, $\{\Eh[i]\}$,         \\
            &           & $\{\Eh'[1]\}$, $\{\Eh''[1]\}$, $\{\Eh[\zeta^4]\}$, $\{\Eh[\zeta^3] \}$, $\{\Eh[\zeta^2] \}$, $\{\Eh[\zeta] \}$,          \\
            &           & $\{\Eh[-1]\}$, $\{ \Eh[-\theta] \}$, $\{ \Eh[\theta]\}$, $\{\Eh[\theta^2] \}$, $\{\Eh[-\theta^2] \}$                     \\
            & 15        & $\{1, \Esi[\theta], \Esi[\theta^2], \Eh[\theta^2],\Eh[\theta], \Eh[\zeta^4], \Eh[\zeta^3], \Eh[\zeta^2], \Eh[\zeta] \}$, \\
            &           & $\{\D{4}\}$, $\{\Ese[-\xi] \}$, $\{\Ese[\xi] \}$, $\{ \Eh[-i]\}$, $\{\Eh[i]\}$, $\{\Eh'[1]\}$,                           \\
            &           & $\{\Eh''[1]\}$, $\{\Eh[-1]\}$, $\{ \Eh[-\theta] \}$, $\{\Eh[-\theta^2] \}$                                               \\
            & 18        & $\{1, \Ese[-\xi], \Ese[\xi], \D{4}, \Esi[\theta], \Esi[\theta^2], \Eh[-\theta^2], \Eh[-\theta], \Eh[\theta^2],$          \\
            &           & $\Eh[\theta] \}$, $\{ \Eh[-i]\}$, $\{\Eh[i]\}$, $\{\Eh'[1]\}$,  $\{\Eh''[1]\}$, $\{\Eh[\zeta^4]\}$,                      \\
            &           & $\{\Eh[\zeta^3] \}$, $\{\Eh[\zeta^2] \}$, $\{\Eh[\zeta] \}$,  $\{\Eh[-1]\}$                                              \\
            & 20        & $\{1, \D{4}, \Eh[-i], \Eh[\zeta^4], \Eh[\zeta^3], \Eh[\zeta^2], \Eh[\zeta], \Eh[i]\}$, $\{\Ese[-\xi] \}$,                \\
            &           & $\{\Ese[\xi] \}$, $\{\Esi[\theta]\}$, $\{\Esi[\theta^2] \}$, $\{\Eh'[1]\}$, $\{\Eh''[1]\}$, $\{\Eh[-1]\}$,               \\
            &           & $\{ \Eh[-\theta] \}$, $\{ \Eh[\theta]\}$, $\{\Eh[\theta^2] \}$, $\{\Eh[-\theta^2] \}$                                    \\
            & 24        & $\{1, \D{4}, \Esi[\theta], \Esi[\theta^2], \Eh[-\theta^2], \Eh[-\theta], \Eh[-i], \Eh[i]\}$,                             \\
            &           & $\{\Ese[-\xi] \}$, $\{\Ese[\xi] \}$, $\{\Eh'[1]\}$, $\{\Eh''[1]\}$, $\{\Eh[\zeta^4]\}$, $\{\Eh[\zeta^3] \}$,             \\
            &           & $\{\Eh[\zeta^2] \}$, $\{\Eh[\zeta] \}$,  $\{\Eh[-1]\}$, $\{ \Eh[\theta]\}$, $\{\Eh[\theta^2] \}$                         \\
            & 30        & $\{1, \Ese[-\xi], \Ese[\xi], \D{4}, \Esi[\theta], \Esi[\theta^2], \Eh[-\theta^2], \Eh[-\theta], \Eh[\zeta^4],$           \\
            &           & $ \Eh[\zeta^3], \Eh[\zeta^2], \Eh[\zeta]\}$, $\{ \Eh[-i]\}$, $\{\Eh[i]\}$, $\{\Eh'[1]\}$, $\{\Eh''[1]\}$,                \\
            &           & $\{\Eh[-1]\}$, $\{ \Eh[\theta]\}$, $\{\Eh[\theta^2] \}$                                                                  \\

    \hline

    $\dEsi$ & 2         & $\{1, \dA{5}, \dEsi[1]\}$, $\{\dEsi[\theta]\}$, $\{\dEsi[\theta^2]\}$                                                    \\
            & 3         & $\{1, \dEsi[1], \dEsi[\theta], \dEsi[\theta^2]\}$, $\{ \dA{5} \}$                                                        \\
            & 4         & $\{1, \dEsi[1]\}$, $\{ \dA{5} \}$, $\{\dEsi[\theta]\}$, $\{\dEsi[\theta^2]\}$                                            \\
            & 6         & $\{1, \dA{5}, \dEsi[1] , \dEsi[\theta], \dEsi[\theta^2]\}$                                                               \\
            & 8         & $\{1\}$, $\{ \dA{5} \}$, $\{ \dEsi[1]\}$, $\{\dEsi[\theta]\}$, $\{\dEsi[\theta^2]\}$                                     \\
            & 10        & $\{1, \dA{5}\}$, $\{ \dEsi[1]\}$, $\{\dEsi[\theta]\}$, $\{\dEsi[\theta^2]\}$                                             \\
            & 12        & $\{1, \dEsi[\theta], \dEsi[\theta^2]\}$, $\{ \dA{5} \}$, $\{ \dEsi[1] \}$                                                \\
            & 18        & $\{1, \dA{5}, \dEsi[\theta], \dEsi[\theta^2]\}$, $\{ \dEsi[1] \}$                                                        \\

    \hline

    \caption{unipotent $\dun$-series for groups of exceptional types\label{tabdunexp}}
\end{longtable}

\subsection{Summary for unipotent \texorpdfstring{$\dun$}{(d,1)}-series}

In this section, we summarize all the computations of the unipotent $\dun$-series.

\bigskip

First let us recall some definition. For an integer $d$ we define $d'$ by
\[
    d' = \left\{
    \begin{array}{ll}
        2d  & \mbox{if } d \mbox{ is odd}    \\
        d/2 & \mbox{if } d \equiv 2 \pmod{4} \\
        d   & \mbox{if } d \equiv 0 \pmod{4}
    \end{array}
    \right.
\]

We also have defined $\kgd$ by
\[
    \kgd = \left\{
    \begin{array}{ll}
        \max\{k\geq 1,(k^2-3k+2)/2\leq n+1-d\}                 & \mbox{for type } \dAn       \\
        \max\{k\geq 1, k \text{ odd},(k^2-4k+3)/4\leq n-d/2\}  & \mbox{for types } \Bn, \Cn  \\
        \max\{k\geq 2, k \text{ even},(k^2-4k+4)/4\leq n-d/2\} & \mbox{for types } \Dn, \dDn
    \end{array}
    \right.
\]
if it exists and $-1$ otherwise.

\begin{The}
    \label{theresumdun}
    The unipotent $\dun$-series are given by the following cases
    \begin{enumerate}
        \item Type $\An$ : $\dl{\gpfini{G}}{1}$ is a $\dun$-series
        \item Type $\dAn$:
              \begin{enumerate}
                  \item $d'$ even (linear prime case): the unipotent $\dun$-series are the unipotent 1-series.
                  \item $d'$ odd (unitary prime case): the $\dun$-series are,
                        \begin{itemize}
                            \item the unipotent 1-series with defect strictly greater than $\kgdc{G}{d'}$ (composed uniquely of $d$-cuspidal representations);
                            \item $\Eun{G}{d}$, the union of the unipotent 1-series with defect lower or equal to $\kgdc{G}{d'}$.
                        \end{itemize}
              \end{enumerate}
        \item Type $\Bn$, $\Cn$, $\Dn$ and $\dDn$:
              \begin{enumerate}
                  \item $d$ odd (linear prime case): the unipotent $\dun$-series are the unipotent 1-series
                  \item $d$ even (unitary prime case): the $\dun$-series are,
                        \begin{itemize}
                            \item the unipotent 1-series with defect strictly greater than $\kgd$ (composed uniquely of $d$-cuspidal representations);
                            \item $\Eun{G}{d}$, the union of the unipotent 1-series with defect lower or equal to $\kgd$.
                        \end{itemize}
              \end{enumerate}
        \item Type $\tDq$, $\dG$, $\Fqu$, $\Esi$, $\dEsi$, $\Ese$ and $\Eh$ : the unipotent $\dun$-series are given by Table \ref{tabdunexp}
    \end{enumerate}
\end{The}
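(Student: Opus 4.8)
The plan is to treat Theorem \ref{theresumdun} as a bookkeeping statement: it collects, type by type, the computations carried out in the preceding subsections, once the general problem has been reduced to the simple case. So the first thing I would do is recall the reduction already set up just before the theorem. By Proposition \ref{produnserieadjoint}, the reduction map $\pi : \gpfinialg{G} \to \ad{\gpfinialg{G}}$ induces a bijection on unipotent characters that commutes with Deligne--Lusztig induction and preserves unipotent $\dun$-series, so one may assume $\gpfinialg{G}$ is adjoint. An adjoint group is a direct product of restrictions of scalars of absolutely simple groups, and since the unipotent $\dun$-series of a direct product are the products of the unipotent $\dun$-series of the factors, it suffices to treat each simple factor. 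Proposition \ref{produnscalaire} then converts a $\dun$-series question for $\rscal{a}{\gpfinialg{G}}$ into a $\dung{d/\gcd(d,a)}{1}$-series question for $\gpfinialg{G}$, so the whole computation comes down to the case of an absolutely simple group with an arbitrary parameter $d$ --- which is precisely what the four items of the theorem record.

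Next I would go through the four items and match each to the propositions already established. Item (1), type $\An$, is the proposition of Section \ref{secdunAn} stating that $\dl{\gpfini{G}}{1}$ is a single $\dun$-series, there being no unipotent cuspidal support and hence only one $1$-series. Item (2), type $\dAn$, is handled via the Ennola-duality isometry of Theorem \ref{theEnnoladual}, which identifies the $d$-series of $\dAn$ with the $d'$-series of $\An$, together with the two propositions treating $d'$ even (where removing a $d'$-hook amounts to removing $d'/2$ $2$-hooks, so the $\dun$-series collapse to $1$-series) and $d'$ odd (the defect-stratified statement with threshold $\kgdc{G}{d'}$). Item (3), the classical types, is exactly Proposition \ref{prodimpaire} for $d$ odd and Proposition \ref{prod1series} for $d$ even, with the threshold $\kgd$ expressed through symbols, cocores and defects. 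Item (4), the exceptional types, is the proposition invoking Table \ref{tabdunexp}.

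The only genuine content beyond citation is to check that the parameters named in the theorem agree with those in the component propositions: that $d'$ is the integer produced by Ennola duality ($2d$, $d/2$, or $d$ according to the residue of $d$ modulo $4$), and that the two formulas recalled for $\kgd$ --- the $\dAn$ formula via $\beta$-set defects and the $\Bn/\Cn$, $\Dn/\dDn$ formulas via symbol defects --- are copied verbatim from their defining statements. I would verify these directly against the definitions of $d'$ and $\kgd$. I do not expect a real obstacle; the most laborious ingredient is the case-by-case verification for the exceptional groups feeding Table \ref{tabdunexp}, but that work is already discharged in the corresponding proposition and here only needs to be invoked.
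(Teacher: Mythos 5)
Your proposal is correct and matches the paper's own treatment: the theorem is stated as a summary with no separate proof, relying on exactly the reduction you describe (Proposition \ref{produnserieadjoint} to pass to the adjoint group, Proposition \ref{produnscalaire} for restriction of scalars, products of $\dun$-series for direct products) followed by citation of the type-by-type propositions of the preceding subsections. No further comment is needed.
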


\subsection{Induction and restriction of \texorpdfstring{$\dun$}{(d,1)}-series}

Now that we know how to compute the unipotent $\dun$-series, we want to prove that they are compatible with Harish-Chandra induction and restriction. In particular, it will be fundamental in order to construct unipotent $\lprime$-blocks of $p$-adic groups, to prove that Harish-Chandra restriction  commutes with taking the $\lprime$-extension of unipotent $\dun$-series.

\bigskip

Let $\gpfinialg{M}$ be a $\fr$-stable Levi of $\gpfinialg{G}$ and $\mathcal{E}$ a subset of $\Irr(\gpfini{M})$. We denote by $\inddllite{\gpfinialg{M}}{\gpfinialg{G}}(\mathcal{E})$ the set of irreducible characters $\pi$ of $\gpfini{G}$ such that there exists $\sigma \in \mathcal{E}$ satisfying $\langle \pi, \inddl{M}{P}{G}(\sigma)\rangle \neq 0$, for $\gpfinialg{P}$ a parabolic subgroup admitting $\gpfinialg{M}$ as a Levi subgroup. When $\gpfinialg{M}$ is a 1-split Levi of $\gpfinialg{G}$, we will simply use the notation $\indParalite{\gpfini{M}}{\gpfini{G}}(\mathcal{E})$. In the same way, for any 1-split Levi $\gpfinialg{M}$ of $\gpfinialg{G}$ and $\mathcal{E}'$ a subset of $\Irr(\gpfini{G})$, $\resParalite{\gpfini{M}}{\gpfini{G}}(\mathcal{E}')$ denotes the set of characters $\sigma$ such that there exists $\pi \in \mathcal{E}'$ satisfying $\langle \sigma, \resPara{M}{P}{G}(\pi)\rangle \neq 0$.

\bigskip

The $\dun$-series are a union of $1$-series and of $d$-series. We know that the Harish-Chandra induction of a $1$-series is included in a $1$-series. But there is no nice result for the Harish-Chandra induction of a $d$-series. The following results have for goal to prove that the $\dun$-series behave well regarding Harish-Chandra induction.

\begin{Lem}
    \label{leminducdun}
    Let $\gpfinialg{M}$ be a 1-split Levi of $\gpfinialg{G}$ and $\mathcal{E} \subseteq \dl{\gpfini{M}}{1}$ a $\dun$-series. Then $\indParalite{\gpfini{M}}{\gpfini{G}}(\mathcal{E})$ is included in a $\dun$-series.
\end{Lem}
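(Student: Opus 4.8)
\emph{Strategy.} The plan is to reduce the statement to the explicit determination of the unipotent $\dun$-series in Theorem~\ref{theresumdun}, the only non-formal ingredient being that ordinary Harish-Chandra induction preserves the $1$-cuspidal support. First I would isolate the formal part. If $\mu\in\dl{\gpfini{M}}{1}$ has $1$-cuspidal support $(\gpfinialg{M}_1,\mu_1)$, then $\gpfinialg{M}_1$ is a $1$-split Levi of $\gpfinialg{M}$, hence of $\gpfinialg{G}$, and by transitivity and exactness of Harish-Chandra induction every constituent of $\indParalite{\gpfini{M}}{\gpfini{G}}(\mu)$ again has $1$-cuspidal support $(\gpfinialg{M}_1,\mu_1)$; thus $\indParalite{\gpfini{M}}{\gpfini{G}}(\mu)$ lies in a single unipotent $1$-series of $\gpfini{G}$. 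Since $\mathcal{E}$ is a union of $1$-series, $\indParalite{\gpfini{M}}{\gpfini{G}}(\mathcal{E})$ is automatically a union of $1$-series of $\gpfini{G}$, and it only remains to check that these all lie in one and the same $\dun$-series. Because membership of two $1$-series in a common $\dun$-series can be decided simple-factor by simple-factor, I would then reduce to $\gpfinialg{G}$ simple, using the reduction to adjoint type (Proposition~\ref{produnserieadjoint}), compatibility with direct products, and restriction of scalars (Proposition~\ref{produnscalaire}); each of these transports $1$-split Levis and Harish-Chandra induction accordingly.

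\emph{The simple case.} So assume $\gpfinialg{G}$ is simple with $\gpfinialg{M}$ a $1$-split Levi. If $\mathcal{E}$ is a single $1$-series then $\indParalite{\gpfini{M}}{\gpfini{G}}(\mathcal{E})$ is a single $1$-series of $\gpfini{G}$ and there is nothing more to prove, so assume it is not. For $\gpfini{G}$ of type $\An$ this never happens, and for $\gpfini{G}$ of type $\dAn$ with $d'$ odd or of types $\Bn,\Cn,\Dn,\dDn$ with $d$ even, Theorem~\ref{theresumdun} forces $\mathcal{E}=\Eun{M}{d}$, the union of the unipotent $1$-series of $\gpfini{M}$ whose defect (that of the non-$\GL$ factor $\gpfini{M}''$) is at most $\kgdc{\gpfini{M}''}{d}$, resp.\ $\kgdc{\gpfini{M}''}{d'}$ in type $\dAn$. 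The key point is that for these types a unipotent $1$-series is determined by the defect of its cuspidal support; since Harish-Chandra induction preserves the cuspidal support it preserves this defect, so $\indParalite{\gpfini{M}}{\gpfini{G}}$ carries the $1$-series of $\gpfini{M}$ of defect $k$ into the $1$-series of $\gpfini{G}$ of the same defect $k$.

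\emph{Conclusion in the simple case.} It then suffices to use the monotonicity $\kgdc{\gpfini{M}''}{d}\le\kgd$ (resp.\ $\kgdc{\gpfini{M}''}{d'}\le\kgdc{\gpfini{G}}{d'}$): as $\gpfini{M}''$ is a Levi of $\gpfini{G}$ its rank is at most that of $\gpfini{G}$, and in each of the formulas recalled before Theorem~\ref{theresumdun} the integer $\kgd$ is a non-decreasing function of the rank. Hence every $1$-series occurring in $\indParalite{\gpfini{M}}{\gpfini{G}}(\Eun{M}{d})$ has defect at most $\kgdc{\gpfini{M}''}{d}\le\kgd$, so lies in $\Eun{G}{d}$, which is a single $\dun$-series of $\gpfini{G}$. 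This disposes of types $\An$, $\dAn$ and of all classical types. For the exceptional types $\tDq,\dG,\Fqu,\Esi,\dEsi,\Ese,\Eh$ the rank is bounded, and I would finish by a direct verification against Table~\ref{tabdunexp}, using the explicit description of the unipotent $1$-series (cuspidal supports) in \cite[\S13.9]{carter}: one checks on this finite list that each $\dun$-series of a proper $1$-split Levi induces into a single block of the table.

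\emph{Main obstacle.} The step I expect to be the real obstacle is precisely this defect-preservation-plus-monotonicity: conceptually it says that ordinary Harish-Chandra induction cannot push a representation out of the distinguished $\dun$-series $\Eun{G}{d}$, and establishing it rests on the combinatorics of symbols and $\beta$-sets rather than on any soft functorial argument. A purely functorial proof would instead have to control how $1$-Harish-Chandra induction interacts with the $d$-cuspidal support, and it is exactly the mismatch between the ordinary and the $d$-Harish-Chandra theories that makes that route delicate.
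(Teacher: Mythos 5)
Your proposal is correct and follows essentially the same route as the paper's proof: reduction to the simple case via Propositions \ref{produnserieadjoint} and \ref{produnscalaire}, the observation that for classical types a unipotent $1$-series is determined by the defect of its cuspidal support together with the monotonicity $\kgdc{\gpfini{H}}{d}\le\kgd$ (with $\gpfini{H}$ the non-$\GL$ factor of the Levi), Ennola duality for $\dAn$, and a finite check against Table \ref{tabdunexp} for the exceptional types. The only difference is presentational: the paper organizes the exceptional-type verification by first disposing of the cases where every $\dun$-series not containing the trivial character consists of $1$-cuspidal representations, but the content is the same.
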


\begin{proof}
    By Propositions \ref{produnserieadjoint} and \ref{produnscalaire}, we can assume that $\gpfinialg{G}$ is simple.

    \begin{enumerate}

        \item If $\gpfinialg{G}$ is of type $\An$, then $\dl{\gpfini{G}}{1}$ is a $\dun$-series so we have the result.
        \item If $\gpfinialg{G}$ is of type $\Bn$, $\Cn$, $\Dn$ or $\dDn$. The Levi $\gpfinialg{M}$ has type $\gl{n_1}\times \cdots \times \gl{n_r} \times \gpfinialg{H}$ where $\gpfinialg{H}$ as the same type as $\gpfinialg{G}$. We deduce that $\mathcal{E}\simeq \dl{\gl{n_1}(q)}{1} \times \cdots \times \dl{\gl{n_r}(q)}{1} \times \mathcal{E}_{H} $, where $\mathcal{E}_{H}$ is a $\dun$-series of $\gpfini{H}$. We need to differentiate the case $d$ odd and $d$ even.

              If $d$ is odd, then $\mathcal{E}_{H}$ is a 1-series by Proposition \ref{prodimpaire}  and so is $\mathcal{E}$. The set $\indParalite{\gpfini{M}}{\gpfini{G}}(\mathcal{E})$ is thus included in a 1-series and so in a $\dun$-series.

              If $d$ is even, then by Proposition \ref{prod1series}, $\mathcal{E}_{H}$ is either a $1$-series or $\mathcal{E}_{H}=\Eun{H}{d}$, where $\Eun{H}{d}$ is the union of the 1-series with defect lower or equal to $\kgdc{\gpfini{H}}{d}$. If it is a $1$-series, we have the result like previously. And if $\mathcal{E}_{H}=\Eun{H}{d}$, since $\kgdc{\gpfini{H}}{d} \leq \kgd$ and the fact that the induction preserves the defect, $\indParalite{\gpfini{M}}{\gpfini{G}}(\mathcal{E}) \subseteq \Eun{G}{d}$ which is a $\dun$-series.

        \item If $\gpfinialg{G}$ is of type $\dAn$. Using ``Ennola''-duality, $\gpfinialg{M}$ corresponds to a 2-split Levi of $\gl{n}$, and the unipotent $\dun$-series correspond to $\dung{d'}{2}$-series. The proof is then the same as in $(2)$ regarding that $d'$ is odd or even.

        \item If $\gpfinialg{G}$ is of exceptional type. The proof mainly consists of checking case by case the result using Table \ref{tabdunexp}. We explain here the arguments to do so.

              The first case to remark is when all the unipotent $\dun$-series not containing the trivial representation are composed uniquely of 1-cuspidal representations. In this case, we have directly the result. This happens for approximately half the cases by looking at Table \ref{tabdunexp} and deals completely with $\dG$ and $\tDq$. Now, when $d$ is odd, and the Levi $\gpfinialg{M}$ has only component of types $\An$, $\Bn$, $\Cn$, $\Dn$ or $\dDn$, we know that the unipotent $\dun$-series are $1$-series. Since the induction of a 1-series from a 1-split Levi is included in a 1-series, we get the result. This is enough to deal with $\Esi$. We also get the odd $d$ for $\Ese$, respectively $\Eh$, by checking the compatibility from $\Esi$, respectively $\Ese$, thanks to Table \ref{tabdunexp}. The same argument works for $\dEsi$ but when $d'$ is even (we recall that $d'$ is defined in section \ref{secdunAn}). To finish $\Ese$ and $\Eh$, we need to look when $d$ is even. In all these cases, the 1-series corresponding to the unipotent cuspidal representation of $\D{4}$ is inside the $\dun$-series containing the trivial representation. So we just have to check with Table \ref{tabdunexp} the compatibility with $\Esi$ and $\Ese$. We are left with the last case of $\Fqu$ and $d=8$. But in this case $\cycl{8}$ does not appear in any of the polynomial orders of the 1-split-Levi, which concludes the proof.
    \end{enumerate}
\end{proof}

\begin{Lem}
    Let $\gpfinialg{M}$ be a 1-split Levi of $\gpfinialg{G}$ and $\mathcal{E} \subseteq \dl{\gpfini{G}}{1}$ a $\dun$-series. Then $\resParalite{\gpfini{M}}{\gpfini{G}}(\mathcal{E})$ is a $\dun$-set.
\end{Lem}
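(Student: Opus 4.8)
The plan is to deduce this from Lemma \ref{leminducdun} by a purely formal adjunction argument, and in fact to prove the slightly stronger statement that $\resParalite{\gpfini{M}}{\gpfini{G}}(\mathcal{E})$ is a \emph{union of unipotent $\dun$-series of $\gpfini{M}$}, which is then automatically a $\dun$-set. First I would record the two standing facts needed. Since $\mathcal{E}\subseteq\dl{\gpfini{G}}{1}$ and Harish-Chandra induction and restriction preserve the unipotent Lusztig series, we have $\resParalite{\gpfini{M}}{\gpfini{G}}(\mathcal{E})\subseteq\dl{\gpfini{M}}{1}$, so it makes sense to speak of the unipotent $\dun$-series of $\gpfini{M}$ that it meets. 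Next, the adjunction between Harish-Chandra induction and restriction gives $\langle \indPara{M}{P}{G}(\sigma),\pi\rangle=\langle \sigma,\resPara{M}{P}{G}(\pi)\rangle$, which lets me rephrase membership as
\[ \sigma \in \resParalite{\gpfini{M}}{\gpfini{G}}(\mathcal{E}) \iff \indParalite{\gpfini{M}}{\gpfini{G}}(\{\sigma\}) \cap \mathcal{E} \neq \emptyset. \]

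The main step is then to fix $\sigma \in \resParalite{\gpfini{M}}{\gpfini{G}}(\mathcal{E})$, let $\mathcal{F}$ be the unipotent $\dun$-series of $\gpfini{M}$ containing $\sigma$, and show $\mathcal{F}\subseteq\resParalite{\gpfini{M}}{\gpfini{G}}(\mathcal{E})$. By Lemma \ref{leminducdun} the set $\indParalite{\gpfini{M}}{\gpfini{G}}(\mathcal{F})$ is contained in a single unipotent $\dun$-series $\mathcal{E}'$ of $\gpfini{G}$. Because $\sigma \in \resParalite{\gpfini{M}}{\gpfini{G}}(\mathcal{E})$, the set $\indParalite{\gpfini{M}}{\gpfini{G}}(\{\sigma\})\subseteq\mathcal{E}'$ meets $\mathcal{E}$; since the unipotent $\dun$-series partition $\dl{\gpfini{G}}{1}$ (the remark following the definition, via Theorem \ref{thmdecompodseries}), this forces $\mathcal{E}'=\mathcal{E}$. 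Now for any $\sigma'\in\mathcal{F}$, the character $\indPara{M}{P}{G}(\sigma')$ is a genuine nonzero character, so $\indParalite{\gpfini{M}}{\gpfini{G}}(\{\sigma'\})$ is non-empty; and it lies in $\indParalite{\gpfini{M}}{\gpfini{G}}(\mathcal{F})\subseteq\mathcal{E}'=\mathcal{E}$. Hence $\indParalite{\gpfini{M}}{\gpfini{G}}(\{\sigma'\})\cap\mathcal{E}\neq\emptyset$, i.e. $\sigma'\in\resParalite{\gpfini{M}}{\gpfini{G}}(\mathcal{E})$, which proves the claim.

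Since each element of $\resParalite{\gpfini{M}}{\gpfini{G}}(\mathcal{E})$ drags its whole unipotent $\dun$-series along, $\resParalite{\gpfini{M}}{\gpfini{G}}(\mathcal{E})$ is a union of unipotent $\dun$-series of $\gpfini{M}$, hence simultaneously a union of $1$-series and of $d$-series, i.e. a $\dun$-set. The only genuinely substantial input is Lemma \ref{leminducdun}, the hard case-by-case induction statement; everything here is formal, so I do not expect a real obstacle beyond invoking the adjunction and the preservation of unipotency in the correct normalization. One point to verify carefully is precisely that Harish-Chandra restriction sends unipotent characters to unipotent characters, so that the target is indeed governed by the unipotent $\dun$-series of $\gpfini{M}$ already computed.
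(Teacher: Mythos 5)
Your proof is correct and follows essentially the same route as the paper: Frobenius reciprocity to pass between induction and restriction, Lemma \ref{leminducdun} to pin down the unique unipotent $\dun$-series of $\gpfini{G}$ containing the induction of the series $\mathcal{F}$, and reciprocity again to conclude $\mathcal{F}\subseteq\resParalite{\gpfini{M}}{\gpfini{G}}(\mathcal{E})$. The extra details you flag (nonvanishing of Harish-Chandra induction, preservation of unipotent series under restriction) are correct and merely made explicit where the paper leaves them implicit.
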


\begin{proof}
    Let $\sigma \in \resParalite{\gpfini{M}}{\gpfini{G}}(\mathcal{E})$. There exists $\mathcal{E}'$ a unipotent $\dun$-series in $\gpfini{M}$ such that $\sigma \in \mathcal{E}'$. We need to prove that $\mathcal{E}' \subseteq \resParalite{\gpfini{M}}{\gpfini{G}}(\mathcal{E})$.

    Since $\sigma \in \resParalite{\gpfini{M}}{\gpfini{G}}(\mathcal{E})$, there exists $\pi \in \mathcal{E}$ such that $\langle \sigma , \resParalite{\gpfini{M}}{\gpfini{G}}(\pi) \rangle \neq 0$. By Frobenius reciprocity, $\langle \indParalite{\gpfini{M}}{\gpfini{G}}(\sigma) , \pi \rangle \neq 0$, thus $\pi \in \indParalite{\gpfini{M}}{\gpfini{G}}(\mathcal{E}')$. By Lemma \ref{leminducdun}, $\indParalite{\gpfini{M}}{\gpfini{G}}(\mathcal{E}')$ is included in a $\dun$-series, hence $\indParalite{\gpfini{M}}{\gpfini{G}}(\mathcal{E}') \subseteq \mathcal{E}$. Again, by Frobenius reciprocity, we have that $\mathcal{E}' \subseteq \resParalite{\gpfini{M}}{\gpfini{G}}(\mathcal{E})$ and the result follows.

\end{proof}

We have proved that the unipotent $\dun$-series behave well with $1$-induction. One may wonder if they also behave well with $d$-induction? This is what we are going to prove next. Actually, we will go further. We are going to check the compatibility with induction but from a $d\lprime^a$-split Levi, for certain $\lprime$.

\begin{Lem}
    \label{lemdinducdun}
    Assume that $\lprime$ satisfies \eqref{eql} and let $\gpfinialg{M}$ be a $d\lprime^a$-split Levi of $\gpfinialg{G}$ for some $a\geq 0$. Let $\mathcal{E} \subseteq \dl{\gpfini{M}}{1}$ be a $\dun$-series. Then $\inddllite{\gpfinialg{M}}{\gpfinialg{G}}(\mathcal{E})$ is included in a $\dun$-series.
\end{Lem}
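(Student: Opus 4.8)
The plan is to imitate the structure of the proof of Lemma~\ref{leminducdun}, replacing Harish--Chandra induction by Deligne--Lusztig induction from a $d\lprime^{a}$-split Levi. Recall that $d$ is the order of $q$ modulo $\lprime$, so $\lprime\nmid d$ and in fact $\lprime\mid\cycl{d\lprime^{j}}(q)$ for every $j\ge 0$; this is what makes $d\lprime^{a}$-split Levis the natural $\lprime$-local subgroups. First I would reduce to the case where $\gpfinialg{G}$ is simple. By Proposition~\ref{produnserieadjoint} the map to the adjoint quotient commutes with Deligne--Lusztig induction and preserves unipotent $\dun$-series, and it carries $d\lprime^{a}$-split Levis to $d\lprime^{a}$-split Levis; and the cyclotomic computation in the proof of Proposition~\ref{produnscalaire}, applied to the integer $d\lprime^{a}$ in place of $d$, shows that a $d\lprime^{a}$-split Levi of a restriction of scalars $\rscal{m}{\gpfinialg{H}}$ comes from a $(d\lprime^{a}/\gcd(d\lprime^{a},m))$-split Levi of $\gpfinialg{H}$. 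Writing $d''=d/\gcd(d,m)$ --- the order of $q^{m}$ modulo $\lprime$ --- and, since $\lprime\nmid d$, $a''=\max(0,a-v_{\lprime}(m))\le a$, this reduced integer equals $d''\lprime^{a''}$, so the statement for $\gpfinialg{G}$ follows from the statement of the same shape for the simple factor, with $d$ replaced by $d''$; condition \eqref{eql} only depends on the type and is inherited.

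For $\gpfinialg{G}$ simple of type $\An$ there is a single unipotent $\dun$-series (Theorem~\ref{theresumdun}(1)), so $\mathcal{E}=\dl{\gpfini{M}}{1}$ and $\inddllite{\gpfini{M}}{\gpfini{G}}(\mathcal{E})\subseteq\dl{\gpfini{G}}{1}$ is contained in it automatically. For the classical types $\Bn,\Cn,\Dn,\dDn$ I would argue with symbols. A $d\lprime^{a}$-split Levi has the shape $\gl{n_{1}}^{(d\lprime^{a})}\times\cdots\times\gl{n_{r}}^{(d\lprime^{a})}\times\gpfini{H}$ with $\gpfini{H}$ of the same type, and, exactly as in the $d$-split case used in the proof of \cite[Thm.~3.2]{bmm}, the induction $\inddllite{\gpfini{M}}{\gpfini{G}}$ of a unipotent character of $\gpfini{H}$ is described on symbols by adjoining $d\lprime^{a}$-hooks (for $d$ odd) or $(d\lprime^{a}/2)$-cohooks (for $d$ even) to the symbol of $\gpfini{H}$. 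Thus $\mathcal{E}\cong\dl{\gl{n_{1}}^{(d\lprime^{a})}}{1}\times\cdots\times\mathcal{E}_{H}$ for a $\dun$-series $\mathcal{E}_{H}$ of $\gpfini{H}$, and it suffices to follow $\mathcal{E}_{H}$ through this hook/cohook operation.

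The engine is that, because $\lprime$ is odd, these operations preserve the invariant that cuts out $d$-series. For $d$ odd a $d\lprime^{a}$-hook is an intra-row move of size divisible by $d$, so it leaves the $d$-core (hence the $d$-series, Proposition~\ref{prodseriescore}) and the defect (hence the $1$-series) unchanged; since for $d$ odd the unipotent $\dun$-series are the $1$-series, the image lands in the single $1$-series of fixed defect and we are done. For $d$ even a $(d\lprime^{a}/2)=(d/2)\lprime^{a}$-cohook is, $\lprime^{a}$ being odd, cross-row of size $\equiv 0\pmod{d/2}$ and so preserves the $d/2$-cocore, i.e. the $d$-series; what changes is the defect. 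Here I would reproduce the bookkeeping of Lemma~\ref{leminducdun}: if $\mathcal{E}_{H}$ is the distinguished series $\Eun{H}{d}$ then $\kgdc{\gpfini{H}}{d}\le\kgd$ forces $\inddllite{\gpfini{M}}{\gpfini{G}}(\mathcal{E})\subseteq\Eun{G}{d}$, while if $\mathcal{E}_{H}$ is a large-defect $1$-series of $d$-cuspidal characters its image is pinned to a single $\dun$-series by the monotonicity of the defining inequalities for $\kgd$ and by Theorem~\ref{theresumdun}. The twisted types $\dAn,\dDn,\dEsi$ are reduced to the untwisted ones by the Ennola duality already used in Lemma~\ref{leminducdun}, and the exceptional types are finished by the same case-by-case inspection of Table~\ref{tabdunexp} as in item~(4) of that proof (the cases where all $\dun$-series off the trivial one are $1$-cuspidal, or where the relevant $\cycl{d\lprime^{a}}$ does not divide any polynomial order of a proper Levi, being immediate).

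The step I expect to be the genuine obstacle is the $d$ even classical case: here Deligne--Lusztig induction does not respect $1$-series, so one cannot track the $\dun$-gluing through the $1$-series directly, and the argument must instead show that the $d$-series produced by the $(d/2)\lprime^{a}$-cohooks all fall into one $\dun$-series. This rests on two points that need care: first, that a $(d/2)\lprime^{a}$-cohook really preserves the $d/2$-cocore (the odd factor $\lprime^{a}$ is essential, and one must handle non-addable positions by passing to equivalent symbols), and second, the defect bookkeeping against $\kgd$, which is where the precise form of the inequalities in Theorem~\ref{theresumdun}, rather than any soft argument, is indispensable.
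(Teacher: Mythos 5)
Your skeleton (reduction to the simple case, type $\An$ trivial, the $d$ odd classical case via $d\lprime^{a}$-hooks preserving the defect, Ennola duality for the twisted types, tables for the exceptional types) matches the paper's, and your $d$ odd classical argument is exactly the paper's. But you have correctly located, and not closed, the step that carries the content of the lemma: the $d$ even classical case. The paper does not track $(d/2)\lprime^{a}$-cohooks or $d/2$-cocores there at all. Its argument is: since $\gpfini{M}$ is a \emph{proper} $d\lprime^{a}$-split Levi, no constituent of $\inddllite{\gpfini{M}}{\gpfini{G}}(\mathcal{E})$ is $d\lprime^{a}$-cuspidal; since $d\lprime^{a}$ is even, Proposition \ref{prod1series} applied with $d\lprime^{a}$ in place of $d$ says that every unipotent $(d\lprime^{a},1)$-series other than $\Eun{G}{d\lprime^{a}}$ consists entirely of $d\lprime^{a}$-cuspidal characters, so the whole image lands in $\Eun{G}{d\lprime^{a}}$; and $\kgdc{\gpfini{G}}{d\lprime^{a}}\leq\kgdc{\gpfini{G}}{d}$ gives $\Eun{G}{d\lprime^{a}}\subseteq\Eun{G}{d}$, a single $\dun$-series. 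This disposes of every $\mathcal{E}$ at once, with no symbol combinatorics.

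Your proposed substitute does not work as stated. You want to ``reproduce the bookkeeping of Lemma \ref{leminducdun}'' against $\kgd$, but that bookkeeping rests on Harish-Chandra induction preserving the defect of symbols, whereas each $(d/2)\lprime^{a}$-cohook changes the defect by $\pm 2$; so the defect of the image is not controlled by the defect of $\mathcal{E}_{H}$, and the inequality $\kgdc{\gpfini{H}}{d}\leq\kgd$ is not the relevant one. Your route could be repaired --- cocore preservation plus the observation that the common $d/2$-cocore of the image has rank strictly less than $n$, hence no image character is $d$-cuspidal, hence all lie in $\Eun{G}{d}$ --- but that still requires the cocore-preservation lemma you yourself flag as delicate, and it is strictly harder than the paper's one-line argument. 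The same missing idea resurfaces in the exceptional types for $a>0$: the paper checks that in the eight relevant cases every unipotent $\dun$-series not containing the trivial character consists of $d\lprime^{a}$-cuspidal characters, again exploiting $d\lprime^{a}$-cuspidality rather than anything read off directly from Table \ref{tabdunexp}.
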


\begin{proof}
    By Propositions \ref{produnserieadjoint} and \ref{produnscalaire}, we can assume that $\gpfinialg{G}$ is simple (notice that if $b$ is an integer then $dl^a/\gcd(dl^a,b)=(d/\gcd(d,b))l^{a'}$, for some $a'$ with $0 \leq a' \leq a$).

    \begin{enumerate}

        \item If $\gpfinialg{G}$ is of type $\An$, then $\dl{\gpfini{G}}{1}$ is a $\dun$-series so we have the result.
        \item If $\gpfinialg{G}$ is of type $\Bn$, $\Cn$, $\Dn$ or $\dDn$. Then, as stated is the proof of \cite[Thm. 3.2]{bmm}, the Levi $\gpfinialg{M}$ has type $\gl{n_1}^{(dl^a)}\times \cdots \times \gl{n_r}^{(dl^a)} \times \gpfinialg{H}$ where $\gpfinialg{H}$ as the same type as $\gpfinialg{G}$. Since $\mathcal{E}$ is a $\dun$-series of $\gpfini{M}$ we know that $\mathcal{E}=\dl{\gl{n_1}(q^{dl^a})}{1} \times \cdots \times \dl{\gl{n_r}(q^{dl^a})}{1} \times \mathcal{E}_{H}$, where $\mathcal{E}_{H}$ is a $\dun$-series of $\gpfini{H}$.

              Let us first assume that $d$ is odd. Thus $\mathcal{E}_{H}$ is a $1$-series in $\gpfini{H}$ by Proposition \ref{prodimpaire}. Let $\pi=\pi_1 \otimes \cdots \otimes \pi_r \otimes \pi_{H} \in \mathcal{E}$. The representation $\pi_{H}$ corresponds to a symbol of $\gpfini{H}$. Now, $\inddllite{\gpfinialg{M}}{\gpfinialg{G}}$ is the functor of $d\lprime^a$-induction. Since $d\lprime^a$ is odd, the proof of Theorem 3.2 of \cite{bmm} shows that the symbols in $\inddllite{\gpfinialg{M}}{\gpfinialg{G}}(\pi)$ are the symbols obtained from the symbol of $\pi_{H}$ by adding $d\lprime^a$-hooks. Thus all these symbols have the same 1-core which is the same as the 1-core of $\pi_{H}$. But $\mathcal{E}_{\gpfini{H}}$ is a $1$-series, so all the representations have the same 1-core, hence this is also true for the representations in $\inddllite{\gpfinialg{M}}{\gpfinialg{G}}(\mathcal{E})$. We have proved that $\inddllite{\gpfinialg{M}}{\gpfinialg{G}}(\mathcal{E})$ is included in a 1-series and thus in a $\dun$-series.

              Now, let us prove the case where $d$ is even. If $\gpfinialg{M}=\gpfinialg{G}$ there is nothing to do, so we can assume that $\gpfinialg{M}$ is proper in $\gpfinialg{G}$. The group $\gpfinialg{M}$ being a proper $d\lprime^a$-split Levi of $\gpfinialg{G}$, none of the representations in $\inddllite{\gpfinialg{M}}{\gpfinialg{G}}(\mathcal{E})$  are $d\lprime^a$-cuspidal. Since $d\lprime^a$ is even, by Proposition \ref{prod1series} we have that $\inddllite{\gpfinialg{M}}{\gpfinialg{G}}(\mathcal{E}) \subseteq \Eun{G}{d\lprime^a}$, the $d\lprime^a$-1-series of $\gpfini{G}$ containing the trivial representation. But $\kgdc{\gpfini{G}}{d\lprime^a} \leq \kgdc{\gpfini{G}}{d}$. Hence, $\Eun{G}{d\lprime^a} \subseteq \Eun{G}{d}$ and we have the result.

        \item If $\gpfinialg{G}$ is of type $\dAn$, the proof is similar as in $(2)$ using ``Ennola''-duality and the parity of $d'$ instead of $d$ (notice that $(d\lprime^a)'=(d')\lprime^a$ since $\lprime$ is odd).

        \item If $\gpfinialg{G}$ is of exceptional type, we will again use Table \ref{tabdunexp}.

              Let us start with the case $a=0$. So we are inducing $\dun$-series from $d$-split Levis. If all the unipotent $\dun$-series not containing the trivial representation are composed uniquely of $d$-cuspidal representations then we have the result. Table \ref{tabdunexp} is written in terms of 1-series. However, we can look at tables 1 and 2 from \cite{bmm} to deduce the $d$-cuspidality. In these tables, the case where the $d$-split Levi is a torus is not written, but all the induced representations from a torus of the trivial representation will be in the $\dun$-series containing the trivial. Hence, for a unipotent $\dun$-series not containing the trivial representation, it is composed uniquely of $d$-cuspidal representations if none of the representations appears in Table 2 of \cite{bmm}. This case deals with almost everyone except for $(\Esi,d=3)$, $(\Ese,d=2)$, $(\Ese,d=3)$, $(\Eh,d=2)$ and $(\Eh,d=3)$. We can then check by hand the remaining case with Table \ref{tabdunexp}. Now, we need to do $a>0$. There are only 8 cases which satisfies the hypotheses on $\lprime$, and such that $\cycl{d}$ and $\cycl{d\lprime^a}$ divide the order of $\gpfini{G}$. In all these cases, all the unipotent $\dun$-series not containing the trivial representation are composed uniquely of $d\lprime^a$-cuspidal representations, and so we have the result.

    \end{enumerate}

\end{proof}

\begin{Rem} We need the hypothesis on $\lprime$. For example, if $\lprime=3$, the group is $\tDq$, $d=1$ and $a=1$, then $\tDq[1]$ is in the induction of the trivial representation from the maximal $3$-torus but is not in the same $\dun$-series as the trivial.
\end{Rem}

We define, as in \cite{CabanesEnguehardBlocks} $E_{q,\lprime}:=\{e, \lprime | \phi_{e}(q)\}=\{d,d\lprime,d\lprime^2,\cdots,d\lprime^a,\cdots\}$, where $d$ is the order of $q$ modulo $\lprime$. A $E_{q,\lprime}$-torus is a $\fr$-stable torus of $\gpfinialg{G}$ such that its polynomial order is a product of cyclotomic polynomials in $\{\phi_{e}, e \in E_{q,\lprime}\}$. A $E_{q,\lprime}$-split Levi is then the centralizer of a $E_{q,\lprime}$-torus.

\bigskip

For a $\fr$-stable Levi subgroup of $\gpfinialg{G}$, let us denote by $Z^{\circ}(\gpfinialg{M})$ the connected centre of $\gpfinialg{M}$ and by $Z^{\circ}(\gpfinialg{M})_{\lprime}^{\fr}$ the subgroup of $Z^{\circ}(\gpfinialg{M})^{\fr}$ of $\lprime$-elements.

\begin{Lem}
    \label{lemindEqldun}
    Assume that $\lprime$ satisfies \eqref{eql}. Let $\gpfinialg{M}$ be a $E_{q,\lprime}$-split Levi of $\gpfinialg{G}$ such that $\gpfinialg{M}=\cent*{(Z^{\circ}(\gpfinialg{M})_{\lprime}^{\fr}}{\gpfinialg{G}}$. Let $\mathcal{E}$ be a unipotent $\dun$-series in $\gpfini{M}$. Then $\inddllite{\gpfinialg{M}}{\gpfinialg{G}}(\mathcal{E})$ is included in a $\dun$-series.
\end{Lem}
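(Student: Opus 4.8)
The plan is to factor the Deligne--Lusztig induction $\inddllite{\gpfini{M}}{\gpfini{G}}$ through a chain of Levi subgroups, each step of which is a $d\lprime^a$-split inclusion, and then to feed this chain into Lemma \ref{lemdinducdun} by descending induction, using the transitivity of Deligne--Lusztig induction. In this way the single ``mixed'' induction of the statement is reduced to a composite of inductions already controlled by Lemma \ref{lemdinducdun}.

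First I would produce the chain. Since $\gpfini{M}$ is $E_{q,\lprime}$-split I may write $\gpfinialg{M} = C_{\gpfinialg{G}}(\gpfinialg{T}_0)$ for some $E_{q,\lprime}$-torus $\gpfinialg{T}_0$; enlarging $\gpfinialg{T}_0$ to the maximal $E_{q,\lprime}$-subtorus $\gpfinialg{T}$ of $Z^{\circ}(\gpfinialg{M})$ keeps $\gpfinialg{M} = C_{\gpfinialg{G}}(\gpfinialg{T})$, since $\gpfinialg{T}_0 \subseteq \gpfinialg{T} \subseteq Z^{\circ}(\gpfinialg{M})$ and $C_{\gpfinialg{G}}(Z^{\circ}(\gpfinialg{M})) = \gpfinialg{M}$. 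The polynomial order of $\gpfinialg{T}$ is a product of cyclotomic polynomials $\cycl{e}$ with $e \in E_{q,\lprime} = \{d\lprime^{a} : a \geq 0\}$; writing $\gpfinialg{T}_a$ for the maximal $\cycl{d\lprime^a}$-subtorus of $\gpfinialg{T}$, these generate $\gpfinialg{T}$. For $0 \leq b \leq N+1$, where $N$ is the largest index with $\gpfinialg{T}_a \neq 1$, I set $\gpfinialg{T}_{\geq b} := \prod_{a \geq b}\gpfinialg{T}_a$ and $\gpfinialg{L}_b := C_{\gpfinialg{G}}(\gpfinialg{T}_{\geq b})$, obtaining an increasing chain $\gpfinialg{M} = \gpfinialg{L}_0 \subseteq \gpfinialg{L}_1 \subseteq \cdots \subseteq \gpfinialg{L}_{N+1} = \gpfinialg{G}$. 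The key point to verify is that, because $\gpfinialg{T}_{\geq b-1} = \gpfinialg{T}_{\geq b}\cdot\gpfinialg{T}_{b-1}$, one has $\gpfinialg{L}_{b-1} = C_{\gpfinialg{L}_b}(\gpfinialg{T}_{b-1})$; since $\gpfinialg{T}_{b-1}$ is a $\cycl{d\lprime^{b-1}}$-torus, this exhibits $\gpfinialg{L}_{b-1}$ as a $d\lprime^{b-1}$-split Levi of $\gpfinialg{L}_b$.

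Finally I would run the induction. Condition \eqref{eql} is inherited by every Levi $\gpfinialg{L}_b$, the types involved in a Levi being among those in $\gpfinialg{G}$ and a good prime for $\gpfinialg{G}$ being good for its Levi subgroups, so Lemma \ref{lemdinducdun} applies at each stage. I would show by descending induction on $N+1-b$ that $\inddllite{\gpfini{L}_0}{\gpfini{L}_b}(\mathcal{E})$ is contained in a unipotent $\dun$-series of $\gpfini{L}_b$: if $\inddllite{\gpfini{L}_0}{\gpfini{L}_{b-1}}(\mathcal{E})$ lies in a $\dun$-series $\mathcal{E}_{b-1}$, then transitivity of Deligne--Lusztig induction yields, at the level of sets of irreducible constituents,
\[ \inddllite{\gpfini{L}_0}{\gpfini{L}_b}(\mathcal{E}) \subseteq \inddllite{\gpfini{L}_{b-1}}{\gpfini{L}_b}\big(\inddllite{\gpfini{L}_0}{\gpfini{L}_{b-1}}(\mathcal{E})\big) \subseteq \inddllite{\gpfini{L}_{b-1}}{\gpfini{L}_b}(\mathcal{E}_{b-1}), \]
and Lemma \ref{lemdinducdun}, applied to the $d\lprime^{b-1}$-split Levi $\gpfini{L}_{b-1}$ of $\gpfini{L}_b$, places the right-hand side inside a single $\dun$-series of $\gpfini{L}_b$; taking $b = N+1$ gives the claim. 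I expect the main obstacle to be the geometric step of the previous paragraph: checking that the cyclotomic filtration of $\gpfinialg{T}$ really does realise each $\gpfinialg{L}_{b-1} \subseteq \gpfinialg{L}_b$ as a $d\lprime^{b-1}$-split inclusion, so that the hypothesis of Lemma \ref{lemdinducdun} is met at every stage. This rests on the compatibility of centralizers with the decomposition $\gpfinialg{T} = \prod_a \gpfinialg{T}_a$ (the $\gpfinialg{T}_a$ generate $\gpfinialg{T}$, so centralizing them is centralizing $\gpfinialg{T}$) and on the fact that the polynomial order of a subtorus is computed intrinsically, independently of the ambient group.
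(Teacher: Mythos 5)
Your proof is correct and follows essentially the same route as the paper's: both factor $\inddllite{\gpfini{M}}{\gpfini{G}}$ through a chain of $d\lprime^a$-split Levi inclusions and conclude by applying Lemma \ref{lemdinducdun} at each link together with the transitivity of Deligne--Lusztig induction. The only difference is organizational --- you build the whole chain at once from the cyclotomic decomposition of the maximal $E_{q,\lprime}$-subtorus of $Z^{\circ}(\gpfinialg{M})$, whereas the paper peels off one $\cycl{d\lprime^a}$-factor per step by induction on the semisimple rank, using the hypothesis $\gpfinialg{M}=\cent*{Z^{\circ}(\gpfinialg{M})_{\lprime}^{\fr}}{\gpfinialg{G}}$ to guarantee a proper intermediate Levi (a hypothesis your version never actually needs to invoke).
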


\begin{proof}
    We will prove the result by induction on the semi-simple rank of $\gpfinialg{G}$.

    If $\gpfinialg{M}=\gpfinialg{G}$ nothing has to be done. Now, if $\gpfinialg{M}$ is a proper Levi in $\gpfinialg{G}$, then $Z^{\circ}(\gpfinialg{M})^{\fr}_{\lprime} \not\subseteq Z(\gpfinialg{G})$. Thus there exist some $a\geq 0$ such that $Z^{\circ}(\gpfinialg{M})_{\phi_{d\lprime^a}} \not\subseteq Z(\gpfinialg{G})$, where $Z^{\circ}(\gpfinialg{M})_{\phi_{d\lprime^a}}$ is the maximal $\cycl{d\lprime^a}$-subgroup of $Z^{\circ}(\gpfinialg{M})$. Let us denote by $\gpfinialg{L}:=\cent{Z^{\circ}(\gpfinialg{M})_{\phi_{d\lprime^a}}}{\gpfinialg{G}}$ which is then a proper $d\lprime^a$-split Levi of $\gpfinialg{G}$ such that $\gpfinialg{M} \subseteq \gpfinialg{L}$. By Lemma \ref{lemdinducdun}, we know that $\inddllite{\gpfinialg{L}}{\gpfinialg{G}}$ preserves the unipotent $\dun$-series. By the induction hypothesis, $\inddllite{\gpfinialg{M}}{\gpfinialg{L}}$ preserves the unipotent $\dun$-series. Hence $\inddllite{\gpfinialg{M}}{\gpfinialg{G}} =\inddllite{\gpfinialg{L}}{\gpfinialg{G}} \circ \inddllite{\gpfinialg{M}}{\gpfinialg{L}}$ preserves the unipotent $\dun$-series.
\end{proof}

\begin{Rem} Let $\gpfinialg{M}$ be a $E_{q,\lprime}$-split Levi of $\gpfinialg{G}$. If $\ell$ is good for $\gpfinialg{G}$ and $(Z(\gpfinialg{G})/Z^{\circ}(\gpfinialg{G}))^{\fr}$ is of order prime to $\ell$, then $\gpfinialg{M}=\cent*{(Z^{\circ}(\gpfinialg{M})_{\lprime}^{\fr}}{\gpfinialg{G}}$ by \cite[Prop. 3.2]{CabanesEnguehardBlocks}.
\end{Rem}

Let $\mathcal{E}$ be a subset of $\dll{\gpfini{G}}{1}$. We denote by $\overline{\mathcal{E}}$ the smallest $\dun$-set containing $\mathcal{E}$. Thus Lemma \ref{leminducdun} and \ref{lemindEqldun} can be restated by $\overline{\inddllite{\gpfinialg{M}}{\gpfinialg{G}}(\mathcal{E})}$ is a $\dun$-series if $\gpfinialg{M}$ is a 1-split Levi or a $E_{q,\lprime}$-split Levi (satisfying the conditions of Lemma \ref{lemindEqldun}) and $\mathcal{E}$ is a unipotent $\dun$-series of $\gpfini{M}$.

\begin{Lem}
    \label{lemcommutinducdunseries}
    Let $\gpfinialg{M},\gpfinialg{K},\gpfinialg{L},\gpfinialg{G}$ be groups such that $\gpfinialg{M}$ is a 1-split Levi of $\gpfinialg{K}$, $\gpfinialg{L}$ is a 1-split Levi of $\gpfinialg{G}$, $\gpfinialg{M}$ is a $E_{q,\lprime}$-split Levi of $\gpfinialg{L}$ and $\gpfinialg{K}$ is a $E_{q,\lprime}$-split Levi of $\gpfinialg{G}$. We also assume that $\lprime$ satisfies \eqref{eql} and  that the groups $\gpfinialg{M}$ and $\gpfinialg{K}$ satisfy the condition of Lemma \ref{lemindEqldun}. If $\mathcal{E}$ is a $\dun$-series of $\gpfini{M}$ then $\overline{\inddllite{\gpfinialg{K}}{\gpfinialg{G}} ( \inddllite{\gpfinialg{M}}{\gpfinialg{K}}(\mathcal{E}))} = \overline{\inddllite{\gpfinialg{L}}{\gpfinialg{G}} ( \inddllite{\gpfinialg{M}}{\gpfinialg{L}}(\mathcal{E}))}$.
\end{Lem}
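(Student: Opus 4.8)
The plan is to show that each side of the claimed identity is, on its own, a single unipotent $\dun$-series of $\gpfini{G}$, and then to exhibit one unipotent character lying in both; since the unipotent $\dun$-series partition $\dl{\gpfini{G}}{1}$ (the Remark following the definition of $\dun$-series), a common element forces the two series to coincide.

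First I would analyse the left-hand side. Set $A:=\inddllite{\gpfini{M}}{\gpfini{K}}(\mathcal{E})$, a nonempty set of unipotent characters of $\gpfini{K}$, nonempty because the Deligne--Lusztig induction of a genuine character has nonzero degree, hence at least one constituent. Since $\gpfini{M}$ is a $1$-split Levi of $\gpfini{K}$, Lemma~\ref{leminducdun} shows $A$ is contained in a single $\dun$-series; as a unipotent $\dun$-series has no proper nonempty $\dun$-subset, its $\dun$-closure $\overline{A}$ is exactly that $\dun$-series. Now $\gpfini{K}$ is an $E_{q,\lprime}$-split Levi of $\gpfini{G}$ satisfying the hypothesis of Lemma~\ref{lemindEqldun}, so $\inddllite{\gpfini{K}}{\gpfini{G}}(\overline{A})$ lies in a single $\dun$-series $\mathcal{F}_1$ of $\gpfini{G}$. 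From $A\subseteq\overline{A}$ I get $\inddllite{\gpfini{K}}{\gpfini{G}}(A)\subseteq\mathcal{F}_1$, and this set is nonempty, so passing to $\dun$-closures and using once more that $\mathcal{F}_1$ has no proper nonempty $\dun$-subset yields $\overline{\inddllite{\gpfini{K}}{\gpfini{G}}(\inddllite{\gpfini{M}}{\gpfini{K}}(\mathcal{E}))}=\mathcal{F}_1$. Running the symmetric argument with $\gpfini{M}$ an $E_{q,\lprime}$-split Levi of $\gpfini{L}$ (Lemma~\ref{lemindEqldun}) and $\gpfini{L}$ a $1$-split Levi of $\gpfini{G}$ (Lemma~\ref{leminducdun}) shows the right-hand side equals a single $\dun$-series $\mathcal{F}_2$.

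It then remains to prove $\mathcal{F}_1=\mathcal{F}_2$, for which I would produce a common element. Fix $\sigma\in\mathcal{E}$ and invoke transitivity of Deligne--Lusztig induction (\cite{bmm}, \cite{digneMichel}), which gives
\[ \inddllite{\gpfini{K}}{\gpfini{G}}\big(\inddllite{\gpfini{M}}{\gpfini{K}}(\sigma)\big)=\inddllite{\gpfini{M}}{\gpfini{G}}(\sigma)=\inddllite{\gpfini{L}}{\gpfini{G}}\big(\inddllite{\gpfini{M}}{\gpfini{L}}(\sigma)\big) \]
as virtual characters. Choose any irreducible constituent $\pi$ of the nonzero virtual character $\inddllite{\gpfini{M}}{\gpfini{G}}(\sigma)$. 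Expanding $\inddllite{\gpfini{M}}{\gpfini{K}}(\sigma)=\sum_{\rho}m_{\rho}\rho$, the relation $\langle\pi,\inddllite{\gpfini{M}}{\gpfini{G}}(\sigma)\rangle\neq 0$ forces some $\rho$ with $m_{\rho}\neq 0$, hence $\rho\in\inddllite{\gpfini{M}}{\gpfini{K}}(\mathcal{E})$, to satisfy $\langle\pi,\inddllite{\gpfini{K}}{\gpfini{G}}(\rho)\rangle\neq 0$; thus $\pi\in\inddllite{\gpfini{K}}{\gpfini{G}}(\inddllite{\gpfini{M}}{\gpfini{K}}(\mathcal{E}))\subseteq\mathcal{F}_1$. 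The identical computation through $\gpfini{L}$ gives $\pi\in\mathcal{F}_2$. Hence $\mathcal{F}_1\cap\mathcal{F}_2\neq\emptyset$, and since distinct unipotent $\dun$-series are disjoint, $\mathcal{F}_1=\mathcal{F}_2$, which is the desired equality.

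I expect the main obstacle to be the justification of the displayed transitivity identity, namely that the two-step Deligne--Lusztig inductions through the different intermediate Levis $\gpfini{K}$ and $\gpfini{L}$ agree with the single functor $\inddllite{\gpfini{M}}{\gpfini{G}}$; this is where independence of Lusztig induction from the chosen parabolic enters, and I would take care that the hypotheses in force (condition \eqref{eql} and the conditions of Lemma~\ref{lemindEqldun}) place us in a regime where this holds for unipotent characters, or else phrase everything in the generic-group formalism of \cite{bmm}, in which $\mathcal{R}$ on $\mathbb{Z}\,\Uch$ is transitive by construction. Everything else is a formal manipulation of $\dun$-closures together with the partition property of unipotent $\dun$-series.
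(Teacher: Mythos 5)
Your proof is correct and follows essentially the same route as the paper: both arguments combine Lemmas \ref{leminducdun} and \ref{lemindEqldun} to show each side lies in a single $\dun$-series, then use transitivity of Deligne--Lusztig induction through the two intermediate Levis to exhibit a common element, so the two series coincide by the partition property. Your version is merely more careful about taking $\dun$-closures before the second induction and about extracting an actual irreducible constituent; the paper states the same facts more tersely.
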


\begin{proof}
    Be Lemma \ref{lemindEqldun} and Lemma \ref{leminducdun}, we know that $\inddllite{\gpfinialg{K}}{\gpfinialg{G}} ( \inddllite{\gpfinialg{M}}{\gpfinialg{K}}(\mathcal{E}))$ is included in a $\dun$-series and $\inddllite{\gpfinialg{L}}{\gpfinialg{G}} ( \inddllite{\gpfinialg{M}}{\gpfinialg{L}}(\mathcal{E}))$ is included in a $\dun$-series. Now, since  $\inddllite{\gpfinialg{M}}{\gpfinialg{G}} = \inddllite{\gpfinialg{K}}{\gpfinialg{G}} \circ \inddllite{\gpfinialg{M}}{\gpfinialg{K}} = \inddllite{\gpfinialg{L}}{\gpfinialg{G}} \circ \inddllite{\gpfinialg{M}}{\gpfinialg{L}}$, these two $\dun$-series both contain $\inddllite{\gpfinialg{M}}{\gpfinialg{G}}(\mathcal{E})$, hence they are equal.
\end{proof}

\begin{Rem}
    Note that this lemma does not follow directly from the transitivity of the Deligne-Lusztig induction. Indeed, for a set $\mathcal{E}$, the set $\inddllite{\gpfinialg{L}}{\gpfinialg{G}} ( \inddllite{\gpfinialg{M}}{\gpfinialg{L}}(\mathcal{E}))$ might be larger that $ \inddllite{\gpfinialg{M}}{\gpfinialg{G}}(\mathcal{E})$.
\end{Rem}

\begin{Lem}
    \label{lemdualEql}
    Let $\lprime$ be a good prime. Let $\gpfinialg{L}$ be a $E_{q,\lprime}$-split Levi of $\gpfinialg{G}$ such that $\gpfinialg{L}=\cent*{(Z^{\circ}(\gpfinialg{L})_{\lprime}^{\fr}}{\gpfinialg{G}}$. Let $\gpfinialg*{L}$ be a Levi in $\gpfinialg*{G}$ in duality with $\gpfinialg{L}$. Then $\gpfinialg*{L}$ is a $E_{q,\lprime}$-split Levi of $\gpfinialg*{G}$ such that $\gpfinialg*{L}=\cent*{(Z^{\circ}(\gpfinialg*{L})_{\lprime}^{\fr}}{\gpfinialg*{G}}$.
\end{Lem}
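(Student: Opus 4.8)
The plan is to fix an $\fr$-stable maximal torus $\gpfinialg{T} \subseteq \gpfinialg{L}$ together with the dual torus $\gpfinialg*{T} \subseteq \gpfinialg*{L}$, and to argue entirely through the duality of root data $(X,\Phi,Y,\Phi^{\vee}) \leftrightarrow (Y,\Phi^{\vee},X,\Phi)$, where $X = X^{*}(\gpfinialg{T})$, $Y = X_{*}(\gpfinialg{T})$. Under this duality the Levi $\gpfinialg{L}$, with root system $\Phi_{\gpfinialg{L}} \subseteq \Phi$, corresponds to $\gpfinialg*{L}$ with root system $\Phi_{\gpfinialg{L}}^{\vee} \subseteq \Phi^{\vee}$ (see \cite{digneMichel}). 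I will establish the two assertions separately: that $\gpfinialg*{L}$ is $E_{q,\lprime}$-split, and that it satisfies the self-centralizing condition. Throughout I write $\fr = q\phi$ on $X$, with $\phi$ of finite order, and recall that $\phi$ acts on $Y$ as the inverse-transpose of its action on $X$.

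First I would show $\gpfinialg*{L}$ is $E_{q,\lprime}$-split. Since $\gpfinialg{L}$ is $E_{q,\lprime}$-split we may write $\gpfinialg{L} = \cent{\gpfinialg{S}}{\gpfinialg{G}}$, where $\gpfinialg{S}$ is the maximal $E_{q,\lprime}$-subtorus of $Z^{\circ}(\gpfinialg{L})$, so that $X_{*}(\gpfinialg{S})_{\mathbb{Q}}$ is the sum of the $\cycl{e}$-eigenspaces ($e \in E_{q,\lprime}$) of $\phi$ inside $X_{*}(Z^{\circ}(\gpfinialg{L}))_{\mathbb{Q}} = \Phi_{\gpfinialg{L}}^{\perp} \subseteq Y$. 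The key observation is that the eigenvalues of $\phi$ on $Y$ are the inverses of those on $X$; as these are roots of unity, closed under inversion, and each $\cycl{e}$ has real coefficients, the characteristic polynomial of $\phi$—hence the polynomial order—is unchanged by duality. Therefore the $\cycl{e}$-decomposition of $X_{*}(Z^{\circ}(\gpfinialg*{L}))_{\mathbb{Q}} = (\Phi_{\gpfinialg{L}}^{\vee})^{\perp} \subseteq X$ matches that of $Z^{\circ}(\gpfinialg{L})$ term by term, the maximal $E_{q,\lprime}$-subtorus $\gpfinialg*{S}$ of $Z^{\circ}(\gpfinialg*{L})$ has the same polynomial order as $\gpfinialg{S}$, and $\cent{\gpfinialg*{S}}{\gpfinialg*{G}} = \gpfinialg*{L}$. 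Thus $\gpfinialg*{L}$ is $E_{q,\lprime}$-split.

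It then remains to transport the self-centralizing condition, which is where the real work lies. Set $A := Z^{\circ}(\gpfinialg{L})_{\lprime}^{\fr}$ and $A^{*} := Z^{\circ}(\gpfinialg*{L})_{\lprime}^{\fr}$; note $A \subseteq \gpfinialg{S}$ and $A^{*} \subseteq \gpfinialg*{S}$ since $\lprime \mid \cycl{e}(q)$ only for $e \in E_{q,\lprime}$. Because $\lprime$ is good, $\cent*{A}{\gpfinialg{G}}$ and $\cent*{A^{*}}{\gpfinialg*{G}}$ are the subgroups generated by $\gpfinialg{T}$ (resp. $\gpfinialg*{T}$) and the root subgroups for (co)roots trivial on $A$ (resp. $A^{*}$); the hypothesis reads $\{\alpha \in \Phi : \alpha|_{A} = 1\} = \Phi_{\gpfinialg{L}}$, and the goal is $\{\alpha^{\vee} \in \Phi^{\vee} : \alpha^{\vee}|_{A^{*}} = 1\} = \Phi_{\gpfinialg{L}}^{\vee}$. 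I would reduce each vanishing condition to a congruence: using $\gpfinialg{S}^{\fr} \cong X_{*}(\gpfinialg{S})/(\fr-1)X_{*}(\gpfinialg{S})$ and the perfect pairing, $\alpha|_{A} = 1$ is equivalent to $\lprime^{N}\,\alpha|_{\gpfinialg{S}} \in (\fr-1)X^{*}(\gpfinialg{S})$ for some $N \geq 0$, and symmetrically $\alpha^{\vee}|_{A^{*}} = 1$ is governed by $(\fr^{*}-1)X^{*}(\gpfinialg*{S})$. The adjoint-inverse relation between the $\phi$-actions on $X$ and $Y$ identifies these two families of congruences, yielding $\alpha|_{A} = 1 \iff \alpha^{\vee}|_{A^{*}} = 1$, whence the desired equality of root systems.

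The main obstacle is exactly this last equivalence: duality exchanges roots and coroots, so a priori the $\lprime$-torsion of $\gpfinialg{S}$ detects vanishing of roots while that of $\gpfinialg*{S}$ detects vanishing of coroots, and matching the two requires controlling the $\lprime$-part of the indices relating the weight and root (resp. coweight and coroot) lattices. It is precisely here that the hypothesis that $\lprime$ is good enters: goodness ensures $\lprime$ is not a torsion prime, so it divides none of these indices, and $\alpha$ and $\alpha^{\vee}$ become $\lprime$-adically interchangeable, making the congruences above transport faithfully. I would take particular care with the passage from the finite groups $A, A^{*}$ back to the ambient tori $\gpfinialg{S}, \gpfinialg*{S}$—i.e. that the $\lprime$-torsion is dense enough to detect the relevant (co)roots—using the same goodness hypothesis, as in the argument of \cite[Prop. 3.2]{CabanesEnguehardBlocks}.
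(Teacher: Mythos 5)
Your proposal takes a genuinely different route from the paper's, and its central step has a gap that I do not think can be closed by the tools you invoke. Everything in your second paragraph rests on the equivalence $\alpha|_{A}=1\iff\alpha^{\vee}|_{A^{*}}=1$ (of which you really need the direction $\alpha^{\vee}|_{A^{*}}=1\Rightarrow\alpha|_{A}=1$), and you propose to get it from the claim that goodness of $\lprime$ forces $\lprime$ to be prime to ``the indices relating the weight and root lattices''. Goodness does not control the connection index: type $\An$ has no bad primes at all, yet the index of the root lattice in the weight lattice is $n+1$. Concretely, take $\gpfinialg{G}=\PGL_2$, $\lprime=2$ (a good prime for type $\textbf{A}_1$), $q\equiv 3\pmod 4$, and $\gpfinialg{L}=\gpfinialg{T}$ the diagonal torus, a $1$-split hence $E_{q,2}$-split Levi. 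Then $A=Z^{\circ}(\gpfinialg{T})_{2}^{\fr}$ is generated by $\mathrm{diag}(-1,1)$; the root $\alpha$ generates $X^{*}(\gpfinialg{T})$ and is nontrivial on $A$, so the hypothesis $\gpfinialg{T}=\cent*{A}{\gpfinialg{G}}$ holds. On the dual side $\gpfinialg*{G}=\SL_2$ and $A^{*}=\{\pm I\}$ is central, while $\alpha^{\vee}$ is the root of $\SL_2$, i.e.\ twice a generator of the character lattice, hence trivial on $A^{*}$. So $\alpha^{\vee}|_{A^{*}}=1$ while $\alpha|_{A}\neq 1$: your biconditional, the implication you need, and in fact the conclusion of the lemma all fail here, even though $\lprime$ is good. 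This shows the $\lprime$-adic interchangeability of $\alpha$ and $\alpha^{\vee}$ cannot follow from goodness alone; whatever saves the lemma in the cases where it is used must enter elsewhere (e.g.\ control of $Z(\gpfinialg{G})/Z^{\circ}(\gpfinialg{G})$, as in the remark following Lemma \ref{lemindEqldun}), and your sketch does not identify or use any such input. Your first paragraph (polynomial orders are preserved by duality) is fine, though the final equality $\cent{\gpfinialg*{S}}{\gpfinialg*{G}}=\gpfinialg*{L}$ is a priori only the inclusion $\supseteq$ and really follows from the second assertion rather than preceding it.

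For comparison, the paper avoids roots entirely. It sets $\gpfinialg*{M}:=\cent*{Z^{\circ}(\gpfinialg*{L})_{\lprime}^{\fr}}{\gpfinialg*{G}}$, which contains $\gpfinialg*{L}$ and is a Levi subgroup by \cite{CabanesEngueharduni}, Prop.\ 2.1(ii), since $\lprime$ is good, and for which $Z^{\circ}(\gpfinialg*{M})_{\lprime}^{\fr}=Z^{\circ}(\gpfinialg*{L})_{\lprime}^{\fr}$. Taking a dual Levi $\gpfinialg{M}$ with $\gpfinialg{L}\subseteq\gpfinialg{M}\subseteq\gpfinialg{G}$ and using $\card{Z^{\circ}(\gpfinialg{M})^{\fr}}=\card{Z^{\circ}(\gpfinialg*{M})^{\fr}}$ together with $\card{Z^{\circ}(\gpfinialg{L})^{\fr}}=\card{Z^{\circ}(\gpfinialg*{L})^{\fr}}$ (\cite{carter}, Prop.\ 4.4.5) and the inclusion $Z^{\circ}(\gpfinialg{M})_{\lprime}^{\fr}\subseteq Z^{\circ}(\gpfinialg{L})_{\lprime}^{\fr}$, one gets equality of these $\lprime$-groups, whence $\gpfinialg{M}\subseteq\cent*{Z^{\circ}(\gpfinialg{L})_{\lprime}^{\fr}}{\gpfinialg{G}}=\gpfinialg{L}$ by hypothesis and $\gpfinialg*{M}=\gpfinialg*{L}$. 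That cardinality comparison of dual connected centres is the mechanism replacing your root-by-root congruences; if you want a lattice-theoretic proof you would have to re-derive exactly that identity, not a statement about individual pairs $(\alpha,\alpha^{\vee})$.
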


\begin{proof}
    We adapt the proof of \cite{CabanesEngueharduni} Proposition 1.4. Let $\gpfinialg*{L}$ be a Levi in $\gpfinialg*{G}$ in duality with $\gpfinialg{L}$. Let $\gpfinialg*{M}:=\cent*{(Z^{\circ}(\gpfinialg*{L})_{\lprime}^{\fr}}{\gpfinialg*{G}}$. We have that $\gpfinialg*{L} \subseteq \gpfinialg*{M}$, and since $\lprime$ is good, $\gpfinialg*{M}$ is a Levi subgroup by \cite{CabanesEngueharduni} Proposition 2.1 (ii). We have that $Z^{\circ}(\gpfinialg*{L})_{\lprime}^{\fr}=Z^{\circ}(\gpfinialg*{M})_{\lprime}^{\fr}$.

    Let $\gpfinialg{M}$ be a dual Levi such that $\gpfinialg{L} \subseteq \gpfinialg{M} \subseteq \gpfinialg{G}$. We have that $Z^{\circ}(\gpfinialg{M})_{\lprime}^{\fr} \subseteq Z^{\circ}(\gpfinialg{L})_{\lprime}^{\fr}$. But, by \cite{carter} Proposition 4.4.5, $\card{Z^{\circ}(\gpfinialg{M})^{\fr}}=\card{Z^{\circ}(\gpfinialg*{M})^{\fr}}$ and $\card{Z^{\circ}(\gpfinialg{L})^{\fr}}=\card{Z^{\circ}(\gpfinialg*{L})^{\fr}}$, thus $Z^{\circ}(\gpfinialg{M})_{\lprime}^{\fr} = Z^{\circ}(\gpfinialg{L})_{\lprime}^{\fr}$. So, $\gpfinialg{M} \subseteq \cent*{(Z^{\circ}(\gpfinialg{L})_{\lprime}^{\fr}}{\gpfinialg{G}}=\gpfinialg{L}$ and $\gpfinialg{M}=\gpfinialg{L}$.
\end{proof}

Let $\lprime$ be a good prime for $\gpfinialg{G}$. Let $t \in \gpfini*{G}$ a semi-simple element of order a power of $\lprime$. Then $\cent*{t}{\gpfinialg*{G}}$ is a Levi subgroup, and denote by $\gpfinialg{G}(t)$ a Levi in $\gpfinialg{G}$ dual to $\cent*{t}{\gpfinialg*{G}}$.

Since $t$ is a central element of $(\cent*{t}{\gpfinialg*{G}})^{\fr}$, by \cite{digneMichel} Proposition 13.30, there exist a linear character $\hat{t} \in \Irr(\gpfini{G}(t))$ such that the tensor product with $\hat{t}$ defines a bijection from $\dl{\gpfini{G}(t)}{1}$ to $\dl{\gpfini{G}(t)}{t}$.

Let $\pi \in \dl{\gpfini{G}}{t}$. By the Jordan decomposition in the case of non connected centre (defined in \cite{lusztigdisco}) there exists $\pi_t \in \dl{\gpfini{G}(t)}{1}$ such that
\[\varepsilon_{\gpfini{G}}\varepsilon_{\gpfinialg{G}(t)} \inddllite{\gpfinialg{G}(t)}{\gpfini{G}}(\hat{t}\pi_t)=\sum_{\pi' \in C\cdot \pi} \pi',\]
where $\pi'$ runs over the orbit of $\pi$ under the action of $C:=\cent{t}{\gpfinialg*{G}}^{\fr} / (\cent*{t}{\gpfinialg*{G}})^{\fr}$, and $\varepsilon_{\gpfini{G}}$ and $\varepsilon_{\gpfini{G}(t)}$ are signs defined in \cite{lusztigdisco} Proposition 5.1.

\begin{Rem}
    \label{lemunsplitlevi}
    Let $\gpfinialg{M}$ be a 1-split Levi of $\gpfinialg{G}$ and $t \in \gpfinialg*{M}$. Then there exists $\gpfinialg{M}(t)$ a Levi in $\gpfinialg{M}$ dual to $\cent*{t}{\gpfinialg*{M}}$ which is a 1-split Levi of $\gpfinialg{G}(t)$ (the intersection of a 1-split Levi subgroup with a maximal rank subgroup is a 1-split Levi of the subgroup by \cite[Prop. 2.2]{digneMichel}).
\end{Rem}

\begin{Lem}
    \label{lemcuspsupportjordan}
    Let $\gpfinialg{M}$ be be a 1-split Levi subgroup of $\gpfinialg{G}$. Let $t$ be a semi-simple element of $\gpfini*{M}$, of order a power of $\lprime$, $\sigma \in \dl{\gpfini{M}}{t}$, and $\pi \in \dl{\gpfini{G}}{t}$ such that $\langle \pi, \inddllite{\gpfinialg{M}}{\gpfinialg{G}}(\sigma) \rangle \neq 0$.

    Let $\sigma_t \in \dl{\gpfini{M}(t)}{1}$ corresponding to $\sigma$ by the Jordan decomposition. Then, there exists $\pi_t \in  \dl{\gpfini{G}(t)}{1}$, such that $\pi_t$ corresponds to $\pi$ by the Jordan decomposition and $\langle \pi_t, \inddllite{\gpfinialg{M}(t)}{\gpfinialg{G}(t)}(\sigma_t) \rangle \neq 0$.
\end{Lem}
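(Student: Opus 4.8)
The plan is to factor the Lusztig induction $\inddllite{\gpfini{M}(t)}{\gpfini{G}}$ in two ways and compare. As in the computation of Lemma~\ref{lemunsplitlevi}, $\gpfinialg*{M}(t)=\cent*{t}{\gpfinialg*{M}}=(\gpfinialg*{M}\cap\gpfinialg*{G}(t))^{\circ}$ is a Levi of $\gpfinialg*{G}(t)$, so by duality $\gpfinialg{M}(t)$ is a Levi of $\gpfinialg{G}(t)$, and both
\[\gpfinialg{M}(t)\subseteq\gpfinialg{M}\subseteq\gpfinialg{G} \qquad\text{and}\qquad \gpfinialg{M}(t)\subseteq\gpfinialg{G}(t)\subseteq\gpfinialg{G}\]
are chains of Levi subgroups. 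Transitivity of Lusztig induction along a chain of Levi subgroups then gives
\[\inddllite{\gpfini{G}(t)}{\gpfini{G}}\circ\inddllite{\gpfini{M}(t)}{\gpfini{G}(t)} \;=\; \inddllite{\gpfini{M}(t)}{\gpfini{G}} \;=\; \inddllite{\gpfini{M}}{\gpfini{G}}\circ\inddllite{\gpfini{M}(t)}{\gpfini{M}}.\]
First I would evaluate both factorisations on $\hat{t}\sigma_t$. On the right, the Jordan decomposition relation recalled above, applied in $\gpfini{M}$, gives $\inddllite{\gpfini{M}(t)}{\gpfini{M}}(\hat{t}\sigma_t)=\varepsilon_{\gpfini{M}}\varepsilon_{\gpfini{M}(t)}\sum_{\sigma'\in C_M\cdot\sigma}\sigma'$, where $C_M:=\cent{t}{\gpfinialg*{M}}^{\fr}/(\cent*{t}{\gpfinialg*{M}})^{\fr}$. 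On the left, since the linear characters $\hat{t}$ for $\gpfini{G}(t)$ and $\gpfini{M}(t)$ are compatible with restriction to the Levi $\gpfini{M}(t)$, the projection formula for Lusztig induction yields $\inddllite{\gpfini{M}(t)}{\gpfini{G}(t)}(\hat{t}\sigma_t)=\hat{t}\cdot\inddllite{\gpfini{M}(t)}{\gpfini{G}(t)}(\sigma_t)$.

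Writing $\inddllite{\gpfini{M}(t)}{\gpfini{G}(t)}(\sigma_t)=\sum_{\rho}a_{\rho}\rho$ with $\rho\in\dl{\gpfini{G}(t)}{1}$, and applying the Jordan decomposition relation in $\gpfini{G}$ to each $\hat{t}\rho$, the comparison of the two factorisations becomes
\[\sum_{\rho}a_{\rho}\sum_{\pi'\in C\cdot\pi_{\rho}}\pi' \;=\; \varepsilon\sum_{\sigma'\in C_M\cdot\sigma}\inddllite{\gpfini{M}}{\gpfini{G}}(\sigma'),\]
where $\varepsilon=\varepsilon_{\gpfini{G}}\varepsilon_{\gpfini{G}(t)}\varepsilon_{\gpfini{M}}\varepsilon_{\gpfini{M}(t)}$, $C=\cent{t}{\gpfinialg*{G}}^{\fr}/(\cent*{t}{\gpfinialg*{G}})^{\fr}$, and $\pi_{\rho}$ is the character of $\gpfini{G}$ whose $C$-orbit sum equals $\varepsilon_{\gpfini{G}}\varepsilon_{\gpfini{G}(t)}\inddllite{\gpfini{G}(t)}{\gpfini{G}}(\hat{t}\rho)$. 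A $\pi_t$ with the two required properties exists as soon as the coefficient of $\pi$ on the left is nonzero: such a coefficient forces some $\rho$ with $a_{\rho}\neq0$ (hence $\langle\rho,\inddllite{\gpfini{M}(t)}{\gpfini{G}(t)}(\sigma_t)\rangle\neq0$) and $\pi\in C\cdot\pi_{\rho}$ (hence $\rho$ a Jordan correspondent of $\pi$), and $\pi_t:=\rho$ then works. By the displayed equality this reduces to showing that $\sum_{\sigma'\in C_M\cdot\sigma}\langle\pi,\inddllite{\gpfini{M}}{\gpfini{G}}(\sigma')\rangle\neq0$.

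When $Z(\gpfinialg{G})$ is connected this is immediate: $C$ and $C_M$ are trivial, the sum has the single term $\langle\pi,\inddllite{\gpfini{M}}{\gpfini{G}}(\sigma)\rangle\neq0$, and in fact Jordan decomposition is then an isometric bijection, giving directly $\langle\pi_t,\inddllite{\gpfini{M}(t)}{\gpfini{G}(t)}(\sigma_t)\rangle=\pm\langle\pi,\inddllite{\gpfini{M}}{\gpfini{G}}(\sigma)\rangle\neq0$ for $\pi_t$ the Jordan correspondent of $\pi$. In general the sum over the $C_M$-orbit could a priori cancel, and removing this cancellation is the main obstacle. I would handle it by choosing a regular embedding $\gpfinialg{G}\hookrightarrow\widetilde{\gpfinialg{G}}$ with $Z(\widetilde{\gpfinialg{G}})$ connected, setting $\widetilde{\gpfinialg{M}}=\gpfinialg{M}\,Z(\widetilde{\gpfinialg{G}})$ and lifting $t$ to a semisimple element $\widetilde{t}$ of the dual of $\widetilde{\gpfini{M}}$. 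One first picks $\widetilde{\sigma}\in\dl{\widetilde{\gpfini{M}}}{\widetilde{t}}$ lying over $\sigma$, and then, since $\Res$ commutes with Lusztig induction and $\pi$ is a constituent of $\inddllite{\gpfini{M}}{\gpfini{G}}(\sigma)$, a character $\widetilde{\pi}\in\dl{\widetilde{\gpfini{G}}}{\widetilde{t}}$ lying over $\pi$ with $\langle\widetilde{\pi},\inddllite{\widetilde{\gpfini{M}}}{\widetilde{\gpfini{G}}}(\widetilde{\sigma})\rangle\neq0$. The connected-centre case applied in $\widetilde{\gpfini{G}}$ then produces $\widetilde{\pi}_{\widetilde{t}}$ with $\langle\widetilde{\pi}_{\widetilde{t}},\inddllite{\widetilde{\gpfini{M}}(\widetilde{t})}{\widetilde{\gpfini{G}}(\widetilde{t})}(\widetilde{\sigma}_{\widetilde{t}})\rangle\neq0$.

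Finally I would descend along the induced regular embedding $\gpfinialg{G}(t)\hookrightarrow\widetilde{\gpfinialg{G}}(\widetilde{t})$, whose target again has connected centre. Here unipotent characters restrict to a single orbit of unipotent characters, the non-connected-centre Jordan decomposition of \cite{lusztigdisco} is by construction compatible with this descent, and $\inddllite{\widetilde{\gpfini{M}}(\widetilde{t})}{\widetilde{\gpfini{G}}(\widetilde{t})}$ commutes with $\Res$; so a suitable constituent $\pi_t$ of $\Res\,\widetilde{\pi}_{\widetilde{t}}$ is a Jordan correspondent of $\pi$ and satisfies $\langle\pi_t,\inddllite{\gpfini{M}(t)}{\gpfini{G}(t)}(\sigma_t)\rangle\neq0$. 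The delicate point, where I expect the real work to lie, is matching the $Z(\widetilde{\gpfinialg{G}})$-orbits of constituents on the $\sigma$-side and the $\pi$-side so that the chosen $\sigma_t$ and $\pi_t$ are genuinely linked by $\inddllite{\gpfini{M}(t)}{\gpfini{G}(t)}$.
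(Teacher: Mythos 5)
Your setup is exactly the paper's: you factor $\inddllite{\gpfini{M}(t)}{\gpfini{G}}$ through $\gpfini{G}(t)$ on one side and through $\gpfini{M}$ on the other, apply the Jordan-decomposition orbit formula on each side, and correctly reduce the whole lemma to the non-vanishing of $\sum_{\sigma'\in C_M\cdot\sigma}\langle\pi,\inddllite{\gpfini{M}}{\gpfini{G}}(\sigma')\rangle$. The gap is in how you close this last step. You treat the possible cancellation over the $C_M$-orbit as the main obstacle and launch a regular-embedding detour, but you leave its decisive final step --- matching the orbits of constituents on the $\sigma$-side and the $\pi$-side after descending along $\gpfinialg{G}(t)\hookrightarrow\widetilde{\gpfinialg{G}}(\widetilde{t})$ --- explicitly unresolved (``where I expect the real work to lie''), so the argument is not complete as written.

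The paper closes the reduction with a one-line positivity observation that you in fact already rely on elsewhere in your own argument. In the situation of the lemma, $\gpfini{M}$ is a $1$-split Levi of $\gpfini{G}$ (this is how the lemma is applied in Proposition \ref{proindldunseries}, and it is what the paper's proof asserts), so $\inddllite{\gpfini{M}}{\gpfini{G}}$ is ordinary Harish-Chandra (parabolic) induction and each $\inddllite{\gpfini{M}}{\gpfini{G}}(\sigma')$ is a sum of irreducible characters with \emph{non-negative} coefficients. Hence every term of $\sum_{\sigma'\in C_M\cdot\sigma}\langle\pi,\inddllite{\gpfini{M}}{\gpfini{G}}(\sigma')\rangle$ is $\geq 0$, and the term $\sigma'=\sigma$ is $>0$ by hypothesis, so no cancellation can occur. (The same positivity, applied to $\inddllite{\gpfini{M}(t)}{\gpfini{G}(t)}$ inside $\gpfini{G}(t)$, is also what justifies your claim that a nonzero coefficient of $\pi$ on the left forces some $\rho$ with $a_{\rho}\neq 0$ and $\pi\in C\cdot\pi_{\rho}$; without $a_{\rho}\geq 0$ that inference would fail too.) With this observation the regular-embedding machinery is unnecessary and the proof ends exactly where you reduced it to.
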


\begin{proof}

    Let us write $\inddllite{\gpfinialg{M}(t)}{\gpfinialg{G}(t)}(\hat{t}\sigma_t)$ as a sum of irreducible characters $\inddllite{\gpfinialg{M}(t)}{\gpfinialg{G}(t)}(\hat{t}\sigma_t) = \sum_{i} n_i \pi_i$, with $n_i \in \mathbb{N}$ and $\pi_i$ irreducible (note that since $\gpfinialg{M}(t)$ is a 1-split Levi subgroup of $\gpfinialg{G}(t)$, $\inddllite{\gpfinialg{M}(t)}{\gpfinialg{G}(t)}$ is the usual Harish-Chandra induction, thus all the $n_i$ are positive). Then we have that $ \inddllite{\gpfinialg{G}(t)}{\gpfinialg{G}}(\inddllite{\gpfinialg{M}(t)}{\gpfinialg{G}(t)}(\hat{t}\sigma_t)) = \sum_{i} n_i  \inddllite{\gpfinialg{G}(t)}{\gpfinialg{G}}(\pi_i)$. By the ``Jordan decomposition'', each $\inddllite{\gpfinialg{G}(t)}{\gpfinialg{G}}(\pi_i)$ is, up to a sign independent of $\pi_i$, a sum of irreducible characters of an orbit in $\dl{\gpfini{G}}{t}$ under the action of $\cent{t}{\gpfinialg*{G}}^{\fr} / (\cent*{t}{\gpfinialg*{G}})^{\fr}$. Hence, up to a sign, $ \inddllite{\gpfinialg{G}(t)}{\gpfinialg{G}}(\inddllite{\gpfinialg{M}(t)}{\gpfinialg{G}(t)}(\hat{t}\sigma_t))$ is a sum with positive coefficients of irreducible characters of $\gpfini{G}$.

    Now, we have that $\inddllite{\gpfinialg{G}(t)}{\gpfinialg{G}}(\inddllite{\gpfinialg{M}(t)}{\gpfinialg{G}(t)}(\hat{t}\sigma_t)) = \inddllite{\gpfinialg{M}(t)}{\gpfinialg{G}}(\hat{t}\sigma_t)= \inddllite{\gpfinialg{M}}{\gpfinialg{G}}(\inddllite{\gpfinialg{M}(t)}{\gpfinialg{M}}(\hat{t}\sigma_t))$.

    We have that $\varepsilon_{\gpfini{M}}\varepsilon_{\gpfini{M}(t)} \inddllite{\gpfinialg{M}(t)}{\gpfinialg{M}}(\hat{t}\sigma_t)=\sum_{\sigma' \in C\cdot \sigma} \sigma'$. Thus $\varepsilon_{\gpfini{M}}\varepsilon_{\gpfini{M}(t)}\inddllite{\gpfinialg{M}}{\gpfinialg{G}}(\inddllite{\gpfinialg{M}(t)}{\gpfinialg{M}}(\hat{t}\sigma_t)) = \sum_{\sigma' \in C\cdot \sigma} \inddllite{\gpfinialg{M}}{\gpfinialg{G}}(\sigma')$. Like before, $\inddllite{\gpfinialg{M}}{\gpfinialg{G}}$ is the usual Harish-Chandra induction, so it is a positive sum of characters. By hypothesis, $\langle \pi , \inddllite{\gpfinialg{M}}{\gpfinialg{G}}(\sigma) \rangle \neq 0$, thus $\langle \pi , \inddllite{\gpfinialg{M}(t)}{\gpfinialg{G}}(\hat{t}\sigma_t) \rangle \neq 0$.

    Hence, there exists $i_0$ such that $n_{i_0} \neq 0$ and $\langle \pi, \inddllite{\gpfinialg{G}(t)}{\gpfinialg{G}}(\pi_{i_0})\rangle \neq 0$. Take $\pi_t$, such that $\hat{t}\pi_t=\pi_{i_0}$. This $\pi_t$ satisfies the conditions of the lemma.
\end{proof}

We remind the reader that for $\mathcal{E}$ a subset of $\dll{\gpfini{G}}{1}$, the set $\overline{\mathcal{E}}$ denote the smallest $\dun$-set containing $\mathcal{E}$.

\begin{Pro}
    \label{proindldunseries}
    We assume that $\lprime$ satisfies \eqref{eql}. Let $\gpfinialg{M}$ be a 1-split Levi of $\gpfinialg{G}$ and $\mathcal{E} \subseteq \dl{\gpfini{M}}{1}$ a $\dun$-series. Then $\indParalite{\gpfini{M}}{\gpfini{G}}(\mathcal{E}_{\lprime}) \subseteq \overline{\indParalite{\gpfini{M}}{\gpfini{G}}(\mathcal{E})}_{\lprime}$.
\end{Pro}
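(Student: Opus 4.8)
The plan is to fix $\pi \in \indParalite{\gpfini{M}}{\gpfini{G}}(\mathcal{E}_{\lprime})$, choose $\sigma \in \mathcal{E}_{\lprime}$ with $\langle \pi, \indParalite{\gpfini{M}}{\gpfini{G}}(\sigma)\rangle \neq 0$, and prove that $\pi$ lies in $\mathcal{F}_{\lprime}$, where $\mathcal{F}:=\overline{\indParalite{\gpfini{M}}{\gpfini{G}}(\mathcal{E})}$ is a $\dun$-series by Lemma \ref{leminducdun}. Since $\sigma \in \dll{\gpfini{M}}{1}$, there is a semisimple $\lprime$-element $t \in \gpfini*{M}$ with $\sigma \in \dl{\gpfini{M}}{t}$; viewing $t$ in $\gpfini*{G}$ through the dual Levi embedding and using that Harish-Chandra induction respects rational series, $\pi \in \dl{\gpfini{G}}{t} \subseteq \dll{\gpfini{G}}{1}$. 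Unwinding the definition of the $\lprime$-extension, the essential point is to show that every constituent of $\inddllite{\gpfini{G}(t)}{\gpfini{G}}(\pi_t)$ lies in $\mathcal{F}$, where $\pi_t$ is a Jordan correspondent of $\pi$; that these constituents automatically make up a single $d$-series will let us conclude.

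First I would build the diamond of Levi subgroups. By Lemma \ref{lemcuspsupportjordan} there are Jordan correspondents $\sigma_t \in \dl{\gpfini{M}(t)}{1}$ of $\sigma$ and $\pi_t \in \dl{\gpfini{G}(t)}{1}$ of $\pi$ with $\langle \pi_t, \inddllite{\gpfini{M}(t)}{\gpfini{G}(t)}(\sigma_t)\rangle \neq 0$, and by Lemma \ref{lemunsplitlevi} the group $\gpfini{M}(t)$ is a $1$-split Levi of $\gpfini{G}(t)$. Using Lemma \ref{lemdualEql}, together with the fact that for good $\lprime$ the centralizer $\cent*{t}{\gpfinialg*{G}}$ (resp. $\cent*{t}{\gpfinialg*{M}}$) is an $E_{q,\lprime}$-split Levi equal to the centralizer of the $\lprime$-part of its connected centre, I would check that $\gpfini{G}(t)$ is an $E_{q,\lprime}$-split Levi of $\gpfini{G}$ and $\gpfini{M}(t)$ one of $\gpfini{M}$, each satisfying the hypothesis of Lemma \ref{lemindEqldun}. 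Together with the two $1$-split inclusions $\gpfini{M}(t)\subseteq\gpfini{G}(t)$ and $\gpfini{M}\subseteq\gpfini{G}$, this is exactly the configuration of Lemma \ref{lemcommutinducdunseries}.

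Now let $\mathcal{E}_t$ be the $\dun$-series of $\gpfini{M}(t)$ containing $\sigma_t$. As $\sigma \in \mathcal{E}_{\lprime}$, every constituent of $\inddllite{\gpfini{M}(t)}{\gpfini{M}}(\sigma_t)$ lies in $\mathcal{E}$; these constituents also lie in $\inddllite{\gpfini{M}(t)}{\gpfini{M}}(\mathcal{E}_t)$, which by Lemma \ref{lemindEqldun} is contained in a single $\dun$-series, so $\inddllite{\gpfini{M}(t)}{\gpfini{M}}(\mathcal{E}_t)\subseteq\mathcal{E}$ and hence $\overline{\indParalite{\gpfini{M}}{\gpfini{G}}(\inddllite{\gpfini{M}(t)}{\gpfini{M}}(\mathcal{E}_t))}\subseteq\mathcal{F}$. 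On the other hand, since $\pi_t$ is a constituent of $\inddllite{\gpfini{M}(t)}{\gpfini{G}(t)}(\sigma_t)$ and $\sigma_t\in\mathcal{E}_t$, any constituent $\rho$ of $\inddllite{\gpfini{G}(t)}{\gpfini{G}}(\pi_t)$ lies in the set $\inddllite{\gpfini{G}(t)}{\gpfini{G}}(\inddllite{\gpfini{M}(t)}{\gpfini{G}(t)}(\mathcal{E}_t))$. The main obstacle is that $\inddllite{\gpfini{G}(t)}{\gpfini{G}}$ is genuine Deligne-Lusztig induction, so constituents can cancel and one cannot simply transport individual constituents across the virtual-character identity $\inddllite{\gpfini{G}(t)}{\gpfini{G}}\circ\inddllite{\gpfini{M}(t)}{\gpfini{G}(t)}=\indParalite{\gpfini{M}}{\gpfini{G}}\circ\inddllite{\gpfini{M}(t)}{\gpfini{M}}$. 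This is exactly what Lemma \ref{lemcommutinducdunseries} circumvents by working at the level of generated $\dun$-series: it yields $\overline{\inddllite{\gpfini{G}(t)}{\gpfini{G}}(\inddllite{\gpfini{M}(t)}{\gpfini{G}(t)}(\mathcal{E}_t))}=\overline{\indParalite{\gpfini{M}}{\gpfini{G}}(\inddllite{\gpfini{M}(t)}{\gpfini{M}}(\mathcal{E}_t))}\subseteq\mathcal{F}$, whence $\rho\in\mathcal{F}$. Thus all constituents of $\inddllite{\gpfini{G}(t)}{\gpfini{G}}(\pi_t)$ lie in $\mathcal{F}$.

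To finish, I would invoke Theorem \ref{thedescriplblock} and Theorem \ref{thelblockdcuspi} (this is where the hypothesis \eqref{eql} enters): these constituents all lie in a single $d$-series $\dldcusp{L'}{\lambda'}=\Irr(b)\cap\dl{\gpfini{G}}{1}$, where $b$ is the unipotent $\lprime$-block of $\pi$. Since $\mathcal{F}$ is a union of whole $d$-series and meets $\dldcusp{L'}{\lambda'}$, this entire $d$-series is one of the constituent $d$-series of $\mathcal{F}$; therefore all constituents of $\inddllite{\gpfini{G}(t)}{\gpfini{G}}(\pi_t)$ lie in it, which is precisely the defining condition for $\pi\in\mathcal{F}_{\lprime}=\overline{\indParalite{\gpfini{M}}{\gpfini{G}}(\mathcal{E})}_{\lprime}$. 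Besides the cancellation issue just discussed, the other step demanding care is the structural verification in the second paragraph that $\gpfini{G}(t)$ and $\gpfini{M}(t)$ are $E_{q,\lprime}$-split Levis of the required form, so that Lemmas \ref{lemindEqldun} and \ref{lemcommutinducdunseries} genuinely apply.
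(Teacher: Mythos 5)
Your proof is correct and follows essentially the same route as the paper's: the same reduction via Jordan decomposition (Lemma \ref{lemcuspsupportjordan}), the same diamond of Levi subgroups justified by Lemmas \ref{lemunsplitlevi} and \ref{lemdualEql}, and the same use of Lemmas \ref{lemindEqldun} and \ref{lemcommutinducdunseries} to transfer the $\dun$-series across the two induction paths. The only cosmetic difference is at the end: the paper picks a single unipotent constituent $\pi'$ of $\inddllite{\gpfini{G}(t)}{\gpfini{G}}(\pi_t)$ and transfers membership from $\pi'$ to $\pi$ via the identification of $\lprime$-extensions with $\lprime$-blocks under \eqref{eql}, whereas you verify the defining condition of the $\lprime$-extension directly for all constituents — these are equivalent.
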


\begin{proof}

    Let $\pi \in \indParalite{\gpfini{M}}{\gpfini{G}}(\mathcal{E}_{\lprime})$. By definition, there exists $\sigma \in \mathcal{E}_{\lprime}$ such that $\langle \pi , \indParalite{\gpfini{M}}{\gpfini{G}}(\sigma) \rangle \neq 0$. Let $t \in \gpfini*{M}$ be a semi-simple element of order a power of $\lprime$, such that $\sigma \in \dl{\gpfini{M}}{t}$. We also have, that $\pi \in \dl{\gpfini{G}}{t}$.

    By Lemma \ref{lemcuspsupportjordan}, we can take $\sigma_t \in \dl{\gpfini{M}(t)}{1}$ and $\pi_t \in  \dl{\gpfini{G}(t)}{1}$, such that $\sigma_t$ corresponds to $\sigma$ by the Jordan decomposition, $\pi_t$ corresponds to $\pi$ by the Jordan decomposition and $\langle \pi_t, \inddllite{\gpfinialg{M}(t)}{\gpfinialg{G}(t)}(\sigma_t) \rangle \neq 0$. Let $\sigma'$ and $\pi'$ be two irreducible characters in $\dl{\gpfini{M}}{1}$ and $\dl{\gpfini{G}}{1}$ respectively, such that $\langle \sigma', \inddllite{\gpfinialg{M}(t)}{\gpfinialg{M}}(\sigma_t)\rangle \neq 0$ and $\langle \pi', \inddllite{\gpfinialg{G}(t)}{\gpfinialg{G}}(\pi_t)\rangle \neq 0$.

    By Theorem \ref{thedescriplblock}, $\sigma'$ and $\sigma$ are in the same $\lprime$-block, and $\pi'$ and $\pi$ are also in the same $\lprime$-block. Since, $\sigma'$ and $\sigma$ are in the same $\lprime$-block and $\sigma \in \mathcal{E}_{\lprime}$, we have that $\sigma' \in \mathcal{E}$. In the same way, since $\pi'$ and $\pi$ are in the same $\lprime$-block, to prove that $\pi \in \overline{\indParalite{\gpfini{M}}{\gpfini{G}}(\mathcal{E})}_{\lprime}$ it is enough to prove that $\pi' \in \overline{\indParalite{\gpfini{M}}{\gpfini{G}}(\mathcal{E})}$.

    \bigskip

    Let $\mathcal{E}_t$ be the $\dun$-series of $\gpfini{M}(t)$ containing $\sigma_t$. The Levi $\gpfinialg{G}(t)$ is the dual of $\cent*{t}{\gpfinialg*{G}}$, hence by Lemma \ref{lemdualEql}, it is a $E_{q,\lprime}$-split Levi of $\gpfinialg{G}$ such that $\gpfinialg{G}(t)=\cent*{(Z^{\circ}(\gpfinialg{G}(t))_{\lprime}^{\fr}}{\gpfinialg{G}}$. We have the same result for the Levi $\gpfinialg{M}(t)$ of $\gpfinialg{M}$. Now $\gpfinialg{M}(t)$ is a 1-split Levi of $\gpfinialg{G}(t)$ by Remark \ref{lemunsplitlevi} and $\gpfinialg{M}$ is a 1-split Levi of $\gpfinialg{G}$.

    Let us summarize all the information about the Levis and the representations in a diagram.

    \[
        \xymatrix{
        & {\begin{array}{@{}c@{}}\gpfinialg{G}\\ \pi' \in \inddllite{\gpfinialg{G}(t)}{\gpfinialg{G}}(\pi_t)\end{array}} & \\
        {\begin{array}{@{}c@{}}\gpfinialg{M}\\ \sigma' \in \inddllite{\gpfinialg{M}(t)}{\gpfinialg{M}}(\sigma_t) \subseteq \mathcal{E}\end{array}} \ar[ru]^{1\text{-Levi}} & & {\begin{array}{@{}c@{}}\gpfinialg{G}(t)\\ \pi_t \in \inddllite{\gpfinialg{M}(t)}{\gpfinialg{G}(t)}(\sigma_t)\end{array}} \ar[lu]_{E_{q,\lprime}\text{-Levi}}\\
        & {\begin{array}{@{}c@{}}\gpfinialg{M}(t)\\ \sigma_t \in \mathcal{E}_t\end{array}} \ar[lu]^{E_{q,\lprime}\text{-Levi}} \ar[ru]_{1\text{-Levi}} & }
    \]

    We can apply Lemma \ref{lemcommutinducdunseries} which says that
    \[\overline{\inddllite{\gpfinialg{G}(t)}{\gpfinialg{G}} ( \inddllite{\gpfinialg{M}(t)}{\gpfinialg{G}(t)}(\mathcal{E}_t))} = \overline{\inddllite{\gpfinialg{M}}{\gpfinialg{G}} ( \inddllite{\gpfinialg{M}(t)}{\gpfinialg{M}}(\mathcal{E}_t))}.\]

    Now $\langle \pi_t, \inddllite{\gpfinialg{M}(t)}{\gpfinialg{G}(t)}(\sigma_t) \rangle \neq 0$, so $\pi_t \in \inddllite{\gpfinialg{M}(t)}{\gpfinialg{G}(t)}(\mathcal{E}_t)$, and $\langle \pi', \inddllite{\gpfinialg{G}(t)}{\gpfinialg{G}}(\pi_t)\rangle \neq 0$, so $\pi' \in \overline{\inddllite{\gpfinialg{G}(t)}{\gpfinialg{G}} ( \inddllite{\gpfinialg{M}(t)}{\gpfinialg{G}(t)}(\mathcal{E}_t))}$. Therefore, $\pi' \in \overline{\inddllite{\gpfinialg{M}}{\gpfinialg{G}} ( \inddllite{\gpfinialg{M}(t)}{\gpfinialg{M}}(\mathcal{E}_t))}$ (note that the use of Lemma \ref{lemcommutinducdunseries} is crucial here, as $\pi'$ may not lie in $\inddllite{\gpfinialg{M}(t)}{\gpfinialg{G}}(\mathcal{E}_t)$). Since, $\langle \sigma', \inddllite{\gpfinialg{M}(t)}{\gpfinialg{M}}(\sigma_t)\rangle \neq 0$, $\sigma' \in \inddllite{\gpfinialg{M}(t)}{\gpfinialg{M}}(\mathcal{E}_t)$, and thus $\overline{\inddllite{\gpfinialg{M}(t)}{\gpfinialg{M}}(\mathcal{E}_t)}=\mathcal{E}$. Hence, $\pi' \in \overline{\indParalite{\gpfini{M}}{\gpfini{G}}(\mathcal{E})}$, and we have the result.
\end{proof}

\begin{Pro}
    \label{proresldunseries}
    Let $\gpfinialg{M}$ be a 1-split Levi of $\gpfinialg{G}$ and $\mathcal{E} \subseteq \dl{\gpfini{G}}{1}$ a $\dun$-set. Then if $\lprime$ satisfies \eqref{eql}, we have $\resParalite{\gpfini{M}}{\gpfini{G}}(\mathcal{E}_{\lprime})=\resParalite{\gpfini{M}}{\gpfini{G}}(\mathcal{E})_{\lprime}$.
\end{Pro}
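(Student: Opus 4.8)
The plan is to deduce the statement from Proposition \ref{proindldunseries}, which already packages the compatibility of Harish-Chandra \emph{induction} with the $\lprime$-extension, together with the fact that $\resParalite{\gpfini{M}}{\gpfini{G}}$ of a $\dun$-series is again a $\dun$-set (so the same holds for the $\dun$-set $\mathcal{E}$, and $\resParalite{\gpfini{M}}{\gpfini{G}}(\mathcal{E})_{\lprime}$ makes sense). First I would record a few bookkeeping facts, valid because $\lprime$ satisfies \eqref{eql}: by Theorem \ref{thelblockdcuspi} a unipotent $\lprime$-block $b$ is determined by its unipotent part $\Irr(b)\cap\dl{\gpfini{G}}{1}$, a single $d$-series, which lies in a single unipotent $\dun$-series. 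Hence for a $\dun$-set $A$ the set $A_{\lprime}$ is precisely the union of the unipotent $\lprime$-blocks whose unipotent part is contained in $A$; the map $A\mapsto A_{\lprime}$ is monotone for inclusion of $\dun$-sets; and an $\lprime$-unipotent character $\sigma$ lies in $A_{\lprime}$ if and only if the unipotent $\dun$-series $\mathcal{F}$ of $\gpfini{M}$ containing $\Irr(b_{\sigma})\cap\dl{\gpfini{M}}{1}$ is contained in $A$. Using the compatibility of Harish-Chandra induction and restriction with rational Lusztig series, both $\resParalite{\gpfini{M}}{\gpfini{G}}(\mathcal{E}_{\lprime})$ and $\resParalite{\gpfini{M}}{\gpfini{G}}(\mathcal{E})_{\lprime}$ consist of $\lprime$-unipotent characters, so it suffices to test an arbitrary $\lprime$-unipotent $\sigma$, for which I write $\mathcal{F}$ as above and note $\sigma\in\mathcal{F}_{\lprime}$.

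For the inclusion $\resParalite{\gpfini{M}}{\gpfini{G}}(\mathcal{E})_{\lprime}\subseteq\resParalite{\gpfini{M}}{\gpfini{G}}(\mathcal{E}_{\lprime})$ I would start from $\sigma\in\resParalite{\gpfini{M}}{\gpfini{G}}(\mathcal{E})_{\lprime}$, which by the membership criterion means $\mathcal{F}\subseteq\resParalite{\gpfini{M}}{\gpfini{G}}(\mathcal{E})$. Picking one $\tau\in\mathcal{F}$, Frobenius reciprocity gives $\indParalite{\gpfini{M}}{\gpfini{G}}(\tau)\cap\mathcal{E}\neq\emptyset$; since $\indParalite{\gpfini{M}}{\gpfini{G}}(\mathcal{F})$ lies in a single $\dun$-series by Lemma \ref{leminducdun} and $\mathcal{E}$ is a union of $\dun$-series, this forces $\overline{\indParalite{\gpfini{M}}{\gpfini{G}}(\mathcal{F})}\subseteq\mathcal{E}$. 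As Harish-Chandra induction of a nonzero character is nonzero I may choose any $\pi\in\indParalite{\gpfini{M}}{\gpfini{G}}(\sigma)$; because $\sigma\in\mathcal{F}_{\lprime}$, Proposition \ref{proindldunseries} yields $\pi\in\indParalite{\gpfini{M}}{\gpfini{G}}(\mathcal{F}_{\lprime})\subseteq\overline{\indParalite{\gpfini{M}}{\gpfini{G}}(\mathcal{F})}_{\lprime}$, and by monotonicity of the $\lprime$-extension together with $\overline{\indParalite{\gpfini{M}}{\gpfini{G}}(\mathcal{F})}\subseteq\mathcal{E}$ this gives $\pi\in\mathcal{E}_{\lprime}$. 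Thus $\indParalite{\gpfini{M}}{\gpfini{G}}(\sigma)\cap\mathcal{E}_{\lprime}\neq\emptyset$, i.e. $\sigma\in\resParalite{\gpfini{M}}{\gpfini{G}}(\mathcal{E}_{\lprime})$.

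For the reverse inclusion I would take $\sigma\in\resParalite{\gpfini{M}}{\gpfini{G}}(\mathcal{E}_{\lprime})$ and pick, by Frobenius reciprocity, some $\pi\in\indParalite{\gpfini{M}}{\gpfini{G}}(\sigma)\cap\mathcal{E}_{\lprime}$; in particular $\sigma$ is $\lprime$-unipotent. Again $\sigma\in\mathcal{F}_{\lprime}$, so Proposition \ref{proindldunseries} places $\pi$ in $\overline{\indParalite{\gpfini{M}}{\gpfini{G}}(\mathcal{F})}_{\lprime}$, while $\pi\in\mathcal{E}_{\lprime}$. Comparing the unipotent parts of the $\lprime$-block of $\pi$ shows that the single $\dun$-series $\overline{\indParalite{\gpfini{M}}{\gpfini{G}}(\mathcal{F})}$ meets the union of $\dun$-series $\mathcal{E}$, hence is contained in it; therefore $\indParalite{\gpfini{M}}{\gpfini{G}}(\mathcal{F})\subseteq\mathcal{E}$, and Frobenius reciprocity with Lemma \ref{leminducdun} gives $\mathcal{F}\subseteq\resParalite{\gpfini{M}}{\gpfini{G}}(\mathcal{E})$. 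By the membership criterion this is exactly $\sigma\in\resParalite{\gpfini{M}}{\gpfini{G}}(\mathcal{E})_{\lprime}$.

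The substance of the argument is imported wholesale from Proposition \ref{proindldunseries} (which itself rests on the Jordan-decomposition Lemma \ref{lemcuspsupportjordan} and the commutation of inductions along the Jordan filtration), so I do not expect a genuine new obstacle here. The point requiring the most care is the reduction that lets Proposition \ref{proindldunseries} be applied to the single $\dun$-series $\mathcal{F}$ attached to $\sigma$, and the recurring step ``a single $\dun$-series meeting the union of $\dun$-series $\mathcal{E}$ must be contained in $\mathcal{E}$''. This step hinges on the unipotent $\dun$-series forming a genuine partition of $\dl{\gpfini{G}}{1}$ and on a unipotent $\lprime$-block being determined by the single $\dun$-series carrying its unipotent part; once these are secured, both inclusions follow formally from Frobenius reciprocity, Lemma \ref{leminducdun}, and monotonicity of the $\lprime$-extension.
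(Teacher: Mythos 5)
Your argument is correct and follows essentially the same route as the paper: both directions are reduced, via Frobenius reciprocity, to the single unipotent $\dun$-series $\mathcal{F}$ (the paper's $\mathcal{E}'$) attached to $\sigma$, then Lemma \ref{leminducdun} forces $\overline{\indParalite{\gpfini{M}}{\gpfini{G}}(\mathcal{F})}\subseteq\mathcal{E}$ and Proposition \ref{proindldunseries} transports this containment to the $\lprime$-extensions. The extra scaffolding you add (the block-theoretic membership criterion from Theorem \ref{thelblockdcuspi}) only makes explicit what the paper leaves implicit, so there is no substantive difference.
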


\begin{proof}
    Let $\sigma \in \resParalite{\gpfini{M}}{\gpfini{G}}(\mathcal{E}_{\lprime})$. There exists $\pi \in \mathcal{E}_{\lprime}$ such that $\langle \sigma , \resParalite{\gpfini{M}}{\gpfini{G}}(\pi) \rangle \neq 0$. Now, let $\mathcal{E}'$ be a $\dun$-series such that $\sigma \in \mathcal{E}'_{\lprime}$. By Frobenius reciprocity, $\pi \in \indParalite{\gpfini{M}}{\gpfini{G}}(\mathcal{E}'_{\lprime})$. By Proposition \ref{proindldunseries}, $\indParalite{\gpfini{M}}{\gpfini{G}}(\mathcal{E}'_{\lprime})\subseteq \overline{\indParalite{\gpfini{M}}{\gpfini{G}}(\mathcal{E}')}_{\lprime}$. Now, by Lemma \ref{leminducdun}, $\overline{\indParalite{\gpfini{M}}{\gpfini{G}}(\mathcal{E}')}$ is a $\dun$-series, so $\overline{\indParalite{\gpfini{M}}{\gpfini{G}}(\mathcal{E}')} = \mathcal{E}$. Thus $\mathcal{E}' \subseteq \resParalite{\gpfini{M}}{\gpfini{G}}(\mathcal{E})$ and  $\mathcal{E}'_{\lprime} \subseteq \resParalite{\gpfini{M}}{\gpfini{G}}(\mathcal{E})_{\lprime}$. We have that $\resParalite{\gpfini{M}}{\gpfini{G}}(\mathcal{E}_{\lprime}) \subseteq \resParalite{\gpfini{M}}{\gpfini{G}}(\mathcal{E})_{\lprime}$.

    Let us prove now the other inclusion. Let $\sigma \in \resParalite{\gpfini{M}}{\gpfini{G}}(\mathcal{E})_{\lprime}$. There exists $\mathcal{E}'$ a $\dun$-series  such that $\mathcal{E}' \subseteq \resParalite{\gpfini{M}}{\gpfini{G}}(\mathcal{E})$ and $\sigma \in \mathcal{E}'_{\lprime}$. Now, $\indParalite{\gpfini{M}}{\gpfini{G}}(\mathcal{E}') \subseteq \mathcal{E}$, so $\overline{\indParalite{\gpfini{M}}{\gpfini{G}}(\mathcal{E}') }= \mathcal{E}$. By Proposition \ref{proindldunseries}, $\indParalite{\gpfini{M}}{\gpfini{G}}(\mathcal{E}'_{\lprime}) \subseteq \overline{\indParalite{\gpfini{M}}{\gpfini{G}}(\mathcal{E}')}_{\lprime}=\mathcal{E}_{\lprime}$. Hence, $\mathcal{E}'_{\lprime} \subseteq \resParalite{\gpfini{M}}{\gpfini{G}}(\mathcal{E}_{\lprime})$, and we have the result.
\end{proof}

\section{Blocks over \texorpdfstring{$\Zl$}{Zl}}

\label{seclblocks}

Now that we have introduced and studied the $\dun$-series for finite reductive groups, we can come back to the study of $G$ a reductive group over $\kk$. The purpose of this section is to explain how to find the unipotent $\lprime$-blocks of $G$. To do that, we will combine the results of sections \ref{secBernsblock} and
\ref{secduntheo}. We will sum the 0-consistent systems of idempotents of section \ref{secBernsblock}, following what we have learnt from the $\dun$-theory, so that the idempotents that we obtain have integer coefficients. This process will end up with $\lprime$-blocks in the case of a semisimple and simply-connected group.

\subsection{Unipotent \texorpdfstring{$\lprime$}{l}-blocks}

\label{secunipolblocks}

In section \ref{secidempotenthc}, we explain how to get Bernstein blocks from 0-consistent systems constructed with unrefined depth zero types. In this section, we explain how to group those in order to get unipotent $\lprime$-blocks.

\sautintro
Let $\TGu$ be the subset of $\TG$ of pairs $(\sigma,\pi)$ with $\pi$ unipotent and $\TGl$ the subset of $\TG$ of pairs $(\sigma,\pi)$ with $\pi \in \dll{\quotred{G}{\sigma}}{1}$. We thus have that

\[\repun{\Ql}{G} = \prod_{[\mathfrak{t}] \in \TGu/{\sim}} \rep[\Ql][[\mathfrak{t}]]{G}\]
and
\[\repun{\Zl}{G} \cap \rep[\Ql]{G} = \prod_{[\mathfrak{t} ]\in \TGl/{\sim}} \rep[\Ql][[\mathfrak{t}]]{G}.\]

\begin{Rem}
    \label{remSimUnTriv}
    Let $\gpfini{G}$ be a reductive group over a finite field and $\gpfini{P}$ be a parabolic subgroup of $\gpfini{G}$ with Levi component $\gpfini{L}$. Then if $\gpfini{L}$ admits a unipotent cuspidal representation, then the association class of $\gpfini{P}$ is equal to its conjugation class (see for instance \cite[(8.2.1)]{lusztig}). Hence, the equivalence relation $\sim$ is trivial on $\TGu$. In particular, $\repun{\Ql}{G} = \prod_{\mathfrak{t} \in \TGu} \rep[\Ql][\mathfrak{t}]{G}$.
\end{Rem}

Let $T$ be a subset of $\TGl$ which is $\sim$-stable. We can associate to $T$ a system of idempotents $e_{T}$ by $e_T:=\sum_{[\mathfrak{t}] \in T/{\sim}} e_{[\mathfrak{t}]}$. We say that $T$ is $\lprime$-integral if for all $\sigma \in \bt$, $e_{T,\sigma}=\sum_{[\mathfrak{t}] \in T/{\sim}} e_{[\mathfrak{t}],\sigma}$ is in $\Zl[\quotred{G}{\sigma}]$. Thus, if $T$ is $\lprime$-integral we can form a category $\rep[\Zl][T]{G}$.

\bigskip

If $[\mathfrak{t}] \in \TG/{\sim}$, we denote by $e^{[\mathfrak{t}]}$ the idempotent in the centre of $\rep[\Ql]{G}$ associated to the category $\rep[\Ql][[\mathfrak{t}]]{G}$. We define also $e^T$ by $e^T=\sum_{[\mathfrak{t}]\in T/{\sim}} e^{[\mathfrak{t}]}$.

\begin{Lem}
    \label{lemlintegralidem}
    The idempotent $e^T$ is $\lprime$-integral if and only if $T$ is $\lprime$-integral.
\end{Lem}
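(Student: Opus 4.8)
The plan is to reduce everything to a single local--global identity relating the central idempotent $e^{T}$ to the finite-group idempotents $e_{T}^{\sigma}:=\sum_{[\mathfrak{t}]\in T/{\sim}}e_{[\mathfrak{t}]}^{\sigma}\in\Ql[\quotred{G}{\sigma}]$ attached to the reductive quotients. Writing $e_{\sigma}^{+}$ for the idempotent of $\radpara{G}{\sigma}$, the claim I will establish first is that, as operators on $\rep[\Ql][0]{G}$,
\[ e_{T,\sigma}=e^{T}e_{\sigma}^{+}, \]
equivalently that for every $V$ one has $(e^{T}V)^{\radpara{G}{\sigma}}=e_{T}^{\sigma}\cdot V^{\radpara{G}{\sigma}}$ inside the $\quotred{G}{\sigma}$-module $V^{\radpara{G}{\sigma}}=e_{\sigma}^{+}V$. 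This is exactly the design of the idempotents in Section \ref{secidempotenthc}: by construction $e_{[\mathfrak{t}]}^{\sigma}$ cuts out the union of the Harish-Chandra series $\Irr_{(\quotred{G}{\sigma'},\pi)}(\quotred{G}{\sigma})$ over $[\sigma',\pi]\in[\mathfrak{t}]$ with $\sigma\leq\sigma'$, and for an irreducible $V$ lying in the block $\rep[\Ql][[\mathfrak{t}]]{G}$ the $\quotred{G}{\sigma}$-module $V^{\radpara{G}{\sigma}}$ has all its constituents in precisely this union; hence $e_{[\mathfrak{s}]}^{\sigma}$ acts on $V^{\radpara{G}{\sigma}}$ as the identity if $[\mathfrak{s}]=[\mathfrak{t}]$ and as $0$ otherwise, the disjointness and orthogonality being Lemma \ref{lempropidemptype}. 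Summing over $[\mathfrak{t}]\in T/{\sim}$ and using exactness of $\radpara{G}{\sigma}$-invariants (as $p$ is invertible) gives the identity for all $V$.

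Granting this, the forward implication is immediate. If $e^{T}$ is $\lprime$-integral, i.e. lies in the integral Bernstein centre and preserves $\Zl$-lattices, then for the compactly induced representation $V:=\ind_{\radpara{G}{\sigma}}^{G}\Zl$ the subrepresentation $e^{T}V$ is again integral, so $(e^{T}V)^{\radpara{G}{\sigma}}=e_{T}^{\sigma}\cdot V^{\radpara{G}{\sigma}}$ is a $\Zl$-lattice in its $\Ql$-span. Since $V^{\radpara{G}{\sigma}}$ contains, as a direct $\quotred{G}{\sigma}$-summand, a copy of the regular module $\Zl[\quotred{G}{\sigma}]$ (the functions supported on $\para{G}{\sigma}$), applying $e_{T}^{\sigma}$ to its unit element shows $e_{T}^{\sigma}\in\Zl[\quotred{G}{\sigma}]$. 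As this holds for every $\sigma\in\bt$, $T$ is $\lprime$-integral. Equivalently, one sees directly from the identity that $e_{T,\sigma}=e^{T}e_{\sigma}^{+}$ is a product of two integral multipliers, since $e_{\sigma}^{+}\in\hecke{\para{G}{\sigma}}{\mathbb{Z}[1/p]}$.

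For the converse, suppose $T$ is $\lprime$-integral, so every $e_{T,\sigma}\in\hecke{\para{G}{\sigma}}{\Zl}$. I would first check that the system $e_{T}=(e_{T,x})_{x\in\bts}$ is a $0$-consistent system of idempotents: the $e_{[\mathfrak{t}],\sigma}$ for fixed $\sigma$ are pairwise orthogonal idempotents, being pullbacks of the orthogonal central idempotents $e_{[\mathfrak{t}]}^{\sigma}$, so their sum is idempotent, and the relation $e_{T,\sigma}=e_{\sigma}^{+}e_{T,x}=e_{T,x}e_{\sigma}^{+}$ follows by summing the corresponding $0$-consistency relations for each $e_{[\mathfrak{t}]}$. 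With $\Zl$-coefficients this $0$-consistent system cuts out the Serre subcategory $\rep[\Zl][T]{G}$ and, being complementary over $\Zl$ to the analogous system attached to $\TGl\setminus T$, realizes it as a direct factor of $\rep[\Zl][0]{G}$. The associated central idempotent is integral and reduces over $\Ql$ to the one cutting out $\prod_{[\mathfrak{t}]\in T/{\sim}}\rep[\Ql][[\mathfrak{t}]]{G}$, namely $e^{T}$; hence $e^{T}$ is $\lprime$-integral.

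The main obstacle is the first step: pinning down the identity $e_{T,\sigma}=e^{T}e_{\sigma}^{+}$, that is, that the action of the global central idempotent on $\radpara{G}{\sigma}$-invariants is computed by the finite central idempotent $e_{T}^{\sigma}$. Everything else is formal once this compatibility between the Bruhat--Tits-building (global) and reductive-quotient (local) pictures is in place, and it is precisely the property for which the idempotents $e_{[\mathfrak{t}]}$ were constructed in Section \ref{secidempotenthc}.
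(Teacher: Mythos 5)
Your proposal is correct and follows essentially the same route as the paper: the whole lemma rests on the identity $e^{[\mathfrak{t}]}*e_{x}^{+}=e_{[\mathfrak{t}],x}$, which the paper also proves by decomposing a module with $e_{x}^{+}$-invariants (there, $\mathcal{C}^{\infty}_{c}(G,\Ql)e_{x}^{+}$) into its pieces $Ve_{[\mathfrak{t}'],x}$ lying in the blocks $\rep[\Ql][[\mathfrak{t}']]{G}$ and letting $e^{[\mathfrak{t}]}$ act as $1$ or $0$ accordingly, then evaluating the integral central element on integral compactly supported functions. The direction you treat at greater length ($T$ integral $\Rightarrow$ $e^{T}$ integral) is the one the paper dismisses as clear, and your argument for it is a correct expansion of that step.
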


\begin{proof}
    It is clear that if $T$ is $\lprime$-integral then $e^T$ is $\lprime$-integral. Let us assume that $e^T$ is $\lprime$-integral. Every $\lprime$-integral element in the centre acts on smooth functions on $G$ valued in $\Zl$ with compact support . In particular, for every $x \in \bts$, the function $e^T * e_x^+$ must be $\lprime$-integral. Let us prove that that for $[\mathfrak{t}] \in \TG/{\sim}$ we have $e^{[\mathfrak{t}]} * e_x^+=e_{[\mathfrak{t}],x}$ which will end the proof.

    Consider $V=\mathcal{C}^{\infty}_{c}(G,\Ql) e_x^+$. Since $e_x^+=\sum_{[\mathfrak{t}'] \in \TG/{\sim}} e_{[\mathfrak{t}'],x}$ by Lemma \ref{lempropidemptype}, we have a decomposition $V=\oplus_{[\mathfrak{t}'] \in \TG/{\sim}} V_{[\mathfrak{t}']}$ where $V_{[\mathfrak{t}'}] = Ve_{[\mathfrak{t}'],x}$. Now, $V_{[\mathfrak{t}]}$ is an object in $\rep[\Ql][[\mathfrak{t}]]{G}$ so $e^{[\mathfrak{t}]}$ acts as the identity on it, and if $[\mathfrak{t}'] \neq [\mathfrak{t}]$, $V_{[\mathfrak{t}']}$ is an object in $\rep[\Ql][[\mathfrak{t}']]{G}$ so is cancelled by $e^{[\mathfrak{t}]}$ which finish the proof.
\end{proof}

\begin{Pro}
    \label{prolblockminimal}
    If $G$ is semisimple and simply-connected the partition of $\TGl$ into minimal $\sim$-stable $\lprime$-integral subsets gives us the decomposition of $\repun{\Zl}{G}$ into $\lprime$-blocks.
\end{Pro}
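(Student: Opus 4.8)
The plan is to identify the primitive idempotents of the integral Bernstein centre of $\rep[\Zl][1]{G}$ with the atoms of the Boolean algebra of $\ell$-integral idempotents. First I would check that the $\sim$-stable $\ell$-integral subsets of $\TGl$ form a Boolean algebra. If $T_1,T_2$ are $\sim$-stable and $\ell$-integral then, since the idempotents $e^{[\mathfrak{t}]}$ are pairwise orthogonal, one has $e^{T_1\cap T_2}=e^{T_1}e^{T_2}$, $e^{T_1\cup T_2}=e^{T_1}+e^{T_2}-e^{T_1\cap T_2}$ and $e^{\TGl\setminus T_1}=1-e^{T_1}$; being a product, a sum, or a difference of $\ell$-integral central elements (together with $1=e^{\TGl}$), these are again $\ell$-integral, so by Lemma \ref{lemlintegralidem} the sets $T_1\cap T_2$, $T_1\cup T_2$ and $\TGl\setminus T_1$ are $\ell$-integral. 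Thus the $\sim$-stable $\ell$-integral subsets form a Boolean subalgebra $\mathcal{B}$ of $\mathcal{P}(\TGl/{\sim})$, whose atoms are exactly the minimal nonempty $\sim$-stable $\ell$-integral subsets; in particular they partition $\TGl$.

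Next I would produce the decomposition. For each atom $T$ the system $e_T$ is $\ell$-integral, hence (as in Section \ref{secidempotenthc}) defines a $0$-consistent system of idempotents over $\Zl$ and a Serre subcategory $\rep[\Zl][T]{G}$. Since the atoms partition $\TGl$, the systems $e_T$ are pairwise orthogonal by Lemma \ref{lempropidemptype} and sum to the $0$-consistent system defining $\rep[\Zl][1]{G}$; arguing as in the proof of Proposition \ref{prodecompotypes}, this yields an orthogonal decomposition $\rep[\Zl][1]{G}=\prod_{T}\rep[\Zl][T]{G}$ indexed by the atoms.

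The crux is then to show each factor $\rep[\Zl][T]{G}$ is indecomposable, i.e. an $\ell$-block. I would argue by contradiction: a proper decomposition provides a central idempotent $z\neq 0,1$ of $\rep[\Zl][T]{G}$. Extending scalars to $\Ql$ sends $z$ to an idempotent of the centre of $\rep[\Zl][T]{G}\cap\rep[\Ql]{G}=\prod_{[\mathfrak{t}]\in T/{\sim}}\rep[\Ql][[\mathfrak{t}]]{G}$. As $G$ is semisimple and simply-connected, each $\rep[\Ql][[\mathfrak{t}]]{G}$ is a block by Theorem \ref{theQlblock}, so the idempotents of this centre are exactly the $e^{T'}$ with $T'\subseteq T$ being $\sim$-stable, whence $z\otimes\Ql=e^{T'}$. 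Invoking that a nonzero direct factor of $\rep[\Zl][1]{G}$ has nonzero intersection with $\rep[\Ql][1]{G}$ (so that $z\otimes\Ql\neq 0,1$) gives $\emptyset\neq T'\subsetneq T$; but $z$ is $\ell$-integral as a central idempotent of a $\Zl$-linear category, hence so is $e^{T'}$, so $T'$ is $\ell$-integral by Lemma \ref{lemlintegralidem}, contradicting the minimality of the atom $T$. This shows each $\rep[\Zl][T]{G}$ is an $\ell$-block, and the displayed decomposition is the block decomposition indexed by the minimal $\sim$-stable $\ell$-integral subsets of $\TGl$.

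I expect the main obstacle to be this last step, namely controlling the central idempotents of the integral category $\rep[\Zl][T]{G}$ through the comparison of its Bernstein centre with the $\Ql$-fibre. This comparison rests on Lemma \ref{lemlintegralidem} together with the input that no nonzero direct factor of $\rep[\Zl][1]{G}$ is killed by $-\otimes_{\Zl}\Ql$, equivalently that every $\ell$-block meets $\rep[\Ql][1]{G}$ nontrivially.
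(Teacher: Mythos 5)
Your proof is correct and follows essentially the same route as the paper's: both rest on Theorem \ref{theQlblock} (primitivity of the $e^{[\mathfrak{t}]}$ over $\Ql$) and Lemma \ref{lemlintegralidem} to identify central $\lprime$-integral idempotents with $\sim$-stable $\lprime$-integral subsets, and then conclude by minimality. You simply spell out two points the paper leaves implicit — the Boolean-algebra/atom structure of the $\lprime$-integral subsets, and the fact that a nonzero central idempotent over $\Zl$ is detected after inverting $\lprime$ (which is indeed available independently via Proposition \ref{prointerlprime}, so there is no circularity).
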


\begin{proof}
    Since $G$ is semisimple and simply-connected, Theorem \ref{theQlblock} tells us that the idempotents $e^{[\mathfrak{t}]}$ are primitive idempotents in the centre on $\Ql$. Thus, each $\lprime$-block of $\repun{\Zl}{G}$ is associated to a $\sim$-stable subset $T \subseteq \TGl$ such that $e^T$ is $\lprime$-integral. Lemma \ref{lemlintegralidem} tells us that $T$ is $\lprime$-integral. So the $\lprime$-block decomposition of $\repun{\Zl}{G}$ gives us a partition of $\TGl$ into $\sim$-stable $\lprime$-integral subsets. But if $T$ is $\sim$-stable $\lprime$-integral, we can construct a category from $T$, so these subsets must be minimal.
\end{proof}

\begin{Def}
    Let $\lprime$ be a prime number not dividing $q$. We will say that $\lprime$ satisfies the condition \eqref{eqlpadic} if
    \begin{equation}
        \label{eqlpadic}
        \text{ For all } \sigma \in \bt,\ \lprime \text{ satisfies \eqref{eql} for } \quotred{G}{\sigma}
        \tag{$\ast \ast$}
    \end{equation}
    In other words, $\lprime$ satisfies \eqref{eqlpadic} if $\lprime$ is an odd prime number not dividing $q$, such that $\lprime \geq 5$ if a group of exceptional type ($\tDq$, $\dG$, $\Fqu$, $\Esi$, $\dEsi$, $\Ese$) is involved in a reductive quotient and $\lprime \geq 7$ if $\Eh$ is involved in a reductive quotient.
\end{Def}

Let $\lprime$ be a prime number not dividing $q$, and $d$ be the order of $q$ mod $\lprime$. Let $\mathfrak{t}$ and $\mathfrak{t}'$ be two unrefined unipotent depth zero types.

Let $\omega \in \bt$. We define $\simx{\omega}$, an equivalence relation on $\TGu$ by $\mathfrak{t} \simx{\omega} \mathfrak{t}'$ if and only if $\mathfrak{t}=\mathfrak{t'}$ or there exist $(\sigma,\pi)$ and $(\tau,\pi')$ such that $\mathfrak{t}=[\sigma,\pi]$, $\mathfrak{t}'=[\tau,\pi']$, $\omega \leq \sigma$, $\omega \leq \tau$, and $\Irr_{(\quotred{G}{\sigma},\pi)}(\quotred{G}{\omega}) \cup \Irr_{(\quotred{G}{\tau},\pi')}(\quotred{G}{\omega})$ is contained in a $\dun$-series.

\begin{Rem}
    \label{remsommetsimx}
    \begin{enumerate}
        \item If $x\leq \omega$ and $\mathfrak{t}_1 \simx{\omega} \mathfrak{t}_2$, then $\mathfrak{t}_1 \simx{x} \mathfrak{t}_2$ by Proposition \ref{leminducdun}.
        \item If $\lprime$ does not divides $\card{\quotred{G}{\omega}}$, then the $\dun$-series in $\quotred{G}{\omega}$ are just the 1-series, so $\mathfrak{t} \simx{\omega} \mathfrak{t}'$ if and only if $\mathfrak{t}=\mathfrak{t'}$.
        \item For $\mathfrak{t} \in \TGu$ and $\omega \in \bt$ fixed, the study of the $\dun$-series summarized in Theorem \ref{theresumdun} tells us exactly the set of $\mathfrak{t}'$ such that $\mathfrak{t} \simx{\omega} \mathfrak{t}'$.
    \end{enumerate}
\end{Rem}

\begin{Pro}
    \label{prod1linkedblock}
    Assume that $\lprime$ satisfies \eqref{eqlpadic}. Let $\mathfrak{t}, \mathfrak{t}' \in \TGu$ and $\omega \in \bt$ such that $\mathfrak{t} \simx{\omega} \mathfrak{t}'$. Then $\mathfrak{t}$ and $\mathfrak{t}'$ are contained in the same minimal $\sim$-stable $\lprime$-integral subset of $\TGl$.
\end{Pro}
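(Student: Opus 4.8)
The plan is to show that \emph{every} $\sim$-stable $\lprime$-integral subset $T \subseteq \TGl$ containing $\mathfrak{t}$ also contains $\mathfrak{t}'$; since such subsets are stable under intersection (the product of two $\lprime$-integral idempotents is $\lprime$-integral), and since $\simx{\omega}$ is symmetric, this forces $\mathfrak{t}$ and $\mathfrak{t}'$ into the same minimal one. We may assume $\mathfrak{t} \neq \mathfrak{t}'$, so by definition of $\simx{\omega}$ we fix representatives $(\sigma,\pi)$ and $(\tau,\pi')$ with $\omega \leq \sigma$, $\omega \leq \tau$ and $\Irr_{(\quotred{G}{\sigma},\pi)}(\quotred{G}{\omega}) \cup \Irr_{(\quotred{G}{\tau},\pi')}(\quotred{G}{\omega})$ contained in a single $\dun$-series $\mathcal{E}$ of $\quotred{G}{\omega}$. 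The whole argument then takes place in the reductive quotient at $\omega$, through the idempotent $e_{T,\omega} \in \Zl[\quotred{G}{\omega}]$.

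The key step is to identify the set $S$ of characters cut out by $e_{T,\omega}$. Being a central $\lprime$-integral idempotent, $e_{T,\omega}$ is a sum of primitive central idempotents of $\Zl[\quotred{G}{\omega}]$, i.e. a sum of $\lprime$-block idempotents; moreover $S \subseteq \dll{\quotred{G}{\omega}}{1}$, because each $e_{[\mathfrak{u}],\omega}$ (for $[\mathfrak{u}] \in T/{\sim}$) cuts out Harish-Chandra series issued from cuspidal supports lying in the $\lprime$-unipotent series, and Harish-Chandra induction preserves this series. Hence $S$ is a union of unipotent $\lprime$-blocks. I then intersect with the genuinely unipotent characters: by Theorem \ref{thelblockdcuspi} (this is precisely where \eqref{eqlpadic}, hence \eqref{eql} for $\quotred{G}{\omega}$, is used), the intersection of a unipotent $\lprime$-block with $\dl{\quotred{G}{\omega}}{1}$ is a $d$-series, so $S_1 := S \cap \dl{\quotred{G}{\omega}}{1}$ is a union of $d$-series. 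But $S$ is also a union of Harish-Chandra series, so $S_1$ is a union of $1$-series. Therefore $S_1$ is a $\dun$-set.

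Now I transfer membership from $\mathfrak{t}$ to $\mathfrak{t}'$. Since $[\sigma,\pi] \in [\mathfrak{t}] \subseteq T$ with $\omega \leq \sigma$ and $\pi$ unipotent cuspidal, the series $\Irr_{(\quotred{G}{\sigma},\pi)}(\quotred{G}{\omega})$ consists of unipotent characters and is cut out by $e_{[\mathfrak{t}],\omega}$, hence lies in $S_1$. As $S_1$ is a $\dun$-set meeting the $\dun$-series $\mathcal{E}$, it contains all of $\mathcal{E}$, so $\Irr_{(\quotred{G}{\tau},\pi')}(\quotred{G}{\omega}) \subseteq S_1 \subseteq S$. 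Finally, $S$ is the disjoint union over $[\mathfrak{u}] \in T/{\sim}$ of the Harish-Chandra series cut out by $e_{[\mathfrak{u}],\omega}$, these being pairwise disjoint by orthogonality (Lemma \ref{lempropidemptype}). The individual series $\Irr_{(\quotred{G}{\tau},\pi')}(\quotred{G}{\omega})$ must therefore occur in exactly one of them, say the one attached to $[\mathfrak{u}]$; this yields a representative $[\tau'',\pi''] \in [\mathfrak{u}]$ with $\omega \leq \tau''$ whose cuspidal pair is conjugate to $(\quotred{G}{\tau},\pi')$ in $\quotred{G}{\omega}$, i.e. $[\tau'',\pi''] \sim_\omega \mathfrak{t}'$. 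Since $\sim_\omega$ refines $\sim$ and $T$ is $\sim$-stable, $\mathfrak{t}' \in [\mathfrak{u}] \subseteq T$, as desired.

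The main obstacle is the identification of $S_1$ as a $\dun$-set: it is exactly here that the two partitions of $\Irr(\quotred{G}{\omega})$ — into Harish-Chandra series and into $\lprime$-blocks (equivalently into $d$-series after intersecting with $\dl{\quotred{G}{\omega}}{1}$) — must be reconciled, which is the very reason $\dun$-series were introduced and which forces the hypothesis \eqref{eqlpadic}. The remaining points (that $S \subseteq \dll{\quotred{G}{\omega}}{1}$, and that a single Harish-Chandra series sits inside exactly one block of the orthogonal decomposition) are formal once this block-theoretic dictionary is in place.
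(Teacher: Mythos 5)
Your proof is correct and follows essentially the same route as the paper: you pass to the reductive quotient at $\omega$, observe that $e_{T,\omega}$ is a sum of unipotent $\lprime$-block idempotents so that (via Theorem \ref{thelblockdcuspi}, using \eqref{eqlpadic}) its unipotent part is simultaneously a union of $d$-series and of $1$-series, hence a $\dun$-set, and then use the $\dun$-series containing both Harish-Chandra series to drag $\mathfrak{t}'$ into $T$. The only differences are cosmetic: you spell out why the cut-out set lies in $\dll{\quotred{G}{\omega}}{1}$ and why membership of the Harish-Chandra series in $S$ forces $\mathfrak{t}'\in T$ via the orthogonal decomposition of Lemma \ref{lempropidemptype}, points the paper leaves implicit.
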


\begin{proof}Let $T$ be the minimal $\sim$-stable $\lprime$-integral subset of $\TGl$ containing $\mathfrak{t}$. We want to show that $\mathfrak{t}' \in T$. Since $T$ is $\lprime$-integral, $e_{T,\omega} \in \Zl[\quotred{G}{\omega}]$ and can be written as a sum of primitive central $\lprime$-integral idempotents. Since $\lprime$ satisfies \eqref{eql} for $\quotred{G}{\omega}$, we have a description of them by Theorem \ref{thelblockdcuspi}. In particular, if we denote by $\mathcal{E}$ the subset of $\Irr(\quotred{G}{\omega})$ cut out by $e_{T,\omega}$, we have that $\mathcal{E} \cap \dl{\quotred{G}{\omega}}{1}$ is a $d$-set. By construction of $e_{T,\omega}$, $\mathcal{E} \cap \dl{\quotred{G}{\omega}}{1}$ is also a $1$-set so it is a $\dun$-set. Let  $(\sigma,\pi)$ and $(\tau,\pi')$ such that $\mathfrak{t}=[\sigma,\pi]$, $\mathfrak{t}'=[\tau,\pi']$ and satisfying the conditions of $\mathfrak{t} \simx{\omega} \mathfrak{t}'$. Since, $\mathfrak{t} \in T$, $\Irr_{(\quotred{G}{\sigma},\pi)}(\quotred{G}{\omega}) \subseteq \mathcal{E} \cap \dl{\quotred{G}{\omega}}{1}$. But $\Irr_{(\quotred{G}{\sigma},\pi)}(\quotred{G}{\omega}) \cup \Irr_{(\quotred{G}{\tau},\pi')}(\quotred{G}{\omega})$ is contained in a $\dun$-series so $\Irr_{(\quotred{G}{\tau},\pi')}(\quotred{G}{\omega}) \subseteq \mathcal{E} \cap \dl{\quotred{G}{\omega}}{1}$, and $\mathfrak{t}' \in T$.
\end{proof}

For $\gpfini{G}$ a finite reductive group, we denote by $\dl{\gpfini{G}}{\lprime'}$ the union of the Deligne–Lusztig series $\dl{\gpfini{G}}{s}$ with $s$ of order prime to $\lprime$. Let $\TGlp$ be the subset of $\TG$ of pairs $(\pi,\sigma)$, such that $\sigma \in \dl{\quotred{G}{\sigma}}{\lprime'}$.

\begin{Pro}
    \label{prointerlprime}
    If $T\subseteq \TG$ is $\sim$-stable $\lprime$-integral then $T \cap \TGlp \neq \emptyset$.
\end{Pro}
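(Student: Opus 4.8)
The plan is to reduce the statement to a single block-theoretic fact: every $\lprime$-block of a finite reductive group meets the union of rational Lusztig series attached to semisimple $\lprime'$-elements. Granting this, I would transport such a character to its cuspidal support, which will be the sought-after element of $T \cap \TGlp$. First I would fix any type $\mathfrak{t}_0 \in T$ with a representative $(\sigma_0,\pi_0)$, so that $\pi_0$ is an irreducible cuspidal representation of $\quotred{G}{\sigma_0}$. Since $T$ is $\lprime$-integral, the idempotent $e_{T,\sigma_0}=\sum_{[\mathfrak{t}]\in T/\sim} e_{[\mathfrak{t}],\sigma_0}$ lies in $\Zl[\quotred{G}{\sigma_0}]$; it is central and idempotent, hence a sum of primitive central idempotents of $\Zl[\quotred{G}{\sigma_0}]$. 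In other words, the subset $\mathcal{E}\subseteq \Irr(\quotred{G}{\sigma_0})$ it cuts out is a union of $\lprime$-blocks. By construction $\mathcal{E}=\bigcup \Irr_{(\quotred{G}{\sigma},\pi)}(\quotred{G}{\sigma_0})$, the union over all $[\sigma,\pi]\in[\mathfrak{t}]$ with $\sigma_0\leq\sigma$ and $[\mathfrak{t}]\in T/\sim$; taking $[\mathfrak{t}]=[\mathfrak{t}_0]$ and $(\sigma,\pi)=(\sigma_0,\pi_0)$ and using that $\pi_0$ is cuspidal gives $\pi_0\in\mathcal{E}$. I would then let $b$ be the $\lprime$-block of $\quotred{G}{\sigma_0}$ containing $\pi_0$, so that $\Irr(b)\subseteq\mathcal{E}$.

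The key input is that $\Irr(b)$ contains a character in $\dl{\quotred{G}{\sigma_0}}{\lprime'}$. By \cite{bonnafe_rouquier}, $b$ is contained in a union $\bigsqcup_u \dl{\quotred{G}{\sigma_0}}{su}$ of rational series, where $s$ is a fixed semisimple $\lprime'$-element of the dual and $u$ runs over the commuting $\lprime$-elements. Via the Jordan decomposition of $\lprime$-blocks, this block corresponds to a unipotent block and the bottom series $\dl{\quotred{G}{\sigma_0}}{s}$ corresponds to the unipotent characters; since every unipotent $\lprime$-block contains a unipotent character, one gets $\Irr(b)\cap\dl{\quotred{G}{\sigma_0}}{s}\neq\emptyset$. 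Choosing $\chi$ in this intersection, the fact that $s$ is an $\lprime'$-element yields $\chi\in\dl{\quotred{G}{\sigma_0}}{\lprime'}$.

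To conclude, I would exploit that $\chi\in\Irr(b)\subseteq\mathcal{E}$, so there exist $[\mathfrak{t}]\in T/\sim$ and $[\sigma,\pi]\in[\mathfrak{t}]$ with $\sigma_0\leq\sigma$ such that $(\quotred{G}{\sigma},\pi)$ is the cuspidal support of $\chi$. Because Harish-Chandra induction (which equals the Deligne--Lusztig induction from the split Levi $\quotred{G}{\sigma}$) sends $\dl{\quotred{G}{\sigma}}{s}$ into $\mathbb{Z}\dl{\quotred{G}{\sigma_0}}{s}$ for $s$ conjugate into the dual of $\quotred{G}{\sigma}$, and since $\chi$ occurs in $\indParalite{\quotred{G}{\sigma}}{\quotred{G}{\sigma_0}}(\pi)$ with $\chi\in\dl{\quotred{G}{\sigma_0}}{s}$, I obtain $\pi\in\dl{\quotred{G}{\sigma}}{s}\subseteq\dl{\quotred{G}{\sigma}}{\lprime'}$. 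Thus $[\sigma,\pi]\in\TGlp$, and since $T$ is $\sim$-stable with $[\sigma,\pi]\in[\mathfrak{t}]\subseteq T$, we get $[\sigma,\pi]\in T\cap\TGlp$, proving the claim.

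The hard part will be the input of the second paragraph, namely that every $\lprime$-block meets a rational $\lprime'$-series. The rest is essentially bookkeeping with central idempotents together with the compatibility of cuspidal support with Lusztig series; the genuine content rests on the Broué--Michel/Bonnafé--Rouquier description of $\lprime$-blocks by Jordan decomposition, combined with the (non-trivial) statement that unipotent blocks always contain a unipotent character.
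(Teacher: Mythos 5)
Your proposal is correct and follows essentially the same route as the paper: the $\lprime$-integrality of $e_{T,\sigma}$ forces it to be a sum of block idempotents, one invokes the fact that every $\lprime$-block of a finite reductive group contains a character of $\dl{\quotred{G}{\sigma}}{\lprime'}$, and one then passes to the cuspidal support and uses $\sim$-stability of $T$ to land in $T\cap\TGlp$. The only real difference is that what you isolate as ``the hard part'' is exactly the statement the paper cites directly as \cite[Thm.~9.12]{cabanes_enguehard}; your proposed re-derivation via Bonnaf\'e--Rouquier and the claim that every unipotent $\lprime$-block contains a unipotent character is a heavier (and only sketched) path to the same input, and would itself need care for bad primes, so citing the general theorem is the cleaner choice.
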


\begin{proof}
    Let $\sigma \in \bt$ such that $e_{T,\sigma} \neq 0$. Since $T$ is $\lprime$-integral, $e_{T,\sigma} \in \Zl[\quotred{G}{\sigma}]$. So $e_{T,\sigma}$ is a sum of primitive central idempotents in $ \Zl[\quotred{G}{\sigma}]$. Let $b$ be one of these primitive central idempotents. By \cite[Thm. 9.12]{cabanes_enguehard} there exists $\pi \in \dl{\quotred{G}{\sigma}}{\lprime'}$ such that $b\pi \neq 0$. In particular, $e_{T,\sigma} \pi \neq 0$. There exist a Levi $\gpfini{M}$ of $\quotred{G}{\sigma}$ and a cuspidal representation $\pi'$ such that $\pi \in \Irr_{(M,\pi')}(\quotred{G}{\sigma})$ and $\pi' \in \dl{\gpfini{M}}{\lprime'}$. Thus there exists $\mathfrak{t} \in \TGlp$ such that $e_{[\mathfrak{t}],\sigma} \pi \neq 0$. Moreover, $e_{[\mathfrak{t}],\sigma}$ acts as the identity on $\pi$ so $e_{T,\sigma} e_{[\mathfrak{t}],\sigma} \neq 0$. Now $e_{T,\sigma} = \sum_{[\mathfrak{t}'] \in T/{\sim}} e_{[\mathfrak{t}'],\sigma}$, so $e_{T,\sigma} e_{[\mathfrak{t]},\sigma} = \sum_{[\mathfrak{t}'] \in T/{\sim}} e_{[\mathfrak{t}'],\sigma}  e_{[\mathfrak{t}],\sigma}$. Lemma \ref{lempropidemptype} told us that if $[\mathfrak{t}] \neq [\mathfrak{t}']$ then $e_{[\mathfrak{t}'],\sigma}  e_{[\mathfrak{t]},\sigma}=0$, thus $\mathfrak{t} \in T$.

\end{proof}

Since we are interested in the unipotent blocks, we get the following corollary.

\begin{Cor}
    \label{corinterunipol}

    If $T\subseteq \TGl$ is $\sim$-stable $\lprime$-integral then $T \cap \TGu \neq \emptyset$.
\end{Cor}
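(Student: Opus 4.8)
The plan is to derive the corollary immediately from Proposition \ref{prointerlprime}, once one observes the set-theoretic identity
\[ \TGu = \TGl \cap \TGlp. \]
To justify this identity I would argue as follows. For each $\sigma$ the Deligne-Lusztig series $\dl{\quotred{G}{\sigma}}{s}$, indexed by the semi-simple conjugacy classes $s$ of $\quotred*{G}{\sigma}$, partition $\Irr(\quotred{G}{\sigma})$. A pair $(\sigma,\pi)$ belongs to $\TGl$ precisely when $\pi\in\dl{\quotred{G}{\sigma}}{t}$ for some $t$ of order a power of $\lprime$, and it belongs to $\TGlp$ precisely when $\pi\in\dl{\quotred{G}{\sigma}}{s}$ for some $s$ of order prime to $\lprime$. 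If it lies in both, the disjointness of distinct series forces $t=s$, and the only semi-simple class whose order is at once a power of $\lprime$ and prime to $\lprime$ is the trivial one; hence $t=s=1$ and $\pi$ is unipotent, i.e. $(\sigma,\pi)\in\TGu$. The reverse inclusion is clear, since the trivial class has order $1$, which is both a power of $\lprime$ and prime to $\lprime$.

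With this in hand the proof is purely formal. Since $T\subseteq\TGl\subseteq\TG$ is $\sim$-stable and $\lprime$-integral, Proposition \ref{prointerlprime} yields $T\cap\TGlp\neq\emptyset$. As $T\subseteq\TGl$, every element of $T\cap\TGlp$ automatically lies in $\TGl\cap\TGlp=\TGu$, whence
\[ T\cap\TGu = T\cap\TGl\cap\TGlp = T\cap\TGlp \neq\emptyset. \]
I do not expect any genuine obstacle: the entire content is the recognition of $\TGu$ as $\TGl\cap\TGlp$, after which the statement follows formally from the already-established Proposition \ref{prointerlprime}.
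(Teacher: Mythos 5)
Your proof is correct and is essentially identical to the paper's: the paper also deduces the corollary immediately from Proposition \ref{prointerlprime} via the identity $\TGlp \cap \TGl = \TGu$ (which it asserts without elaboration, whereas you spell out the justification using the partition of $\Irr(\quotred{G}{\sigma})$ into rational Lusztig series).
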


\begin{proof}
    This is an immediate consequence of Proposition \ref{prointerlprime}, since $\TGlp \cap \TGl = \TGu$.
\end{proof}

Expressed in terms of $\lprime$-blocks of $\repun{\Zl}{G}$ this gives:

\begin{Cor}
    Assume that $G$ is semisimple and simply-connected. Let $R$ be an $\lprime$-block of $\repun{\Zl}{G}$. Then $R$ is characterized by the non-empty intersection $R \cap \repun{\Ql}{G}$.
\end{Cor}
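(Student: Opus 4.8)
The plan is to reduce the statement to the combinatorial classification of $\lprime$-blocks established in Proposition \ref{prolblockminimal}. Since $G$ is semisimple and simply-connected, that proposition identifies the $\lprime$-blocks of $\rep[\Zl][1]{G}$ with the minimal $\sim$-stable $\lprime$-integral subsets of $\TGl$, and these subsets form a partition of $\TGl$. So I would begin by letting $T \subseteq \TGl$ be the minimal $\sim$-stable $\lprime$-integral subset attached to $R$.

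Next I would describe the intersection $R \cap \rep[\Ql][1]{G}$ in these terms. Using the decomposition $\rep[\Ql][1]{G} = \prod_{[\mathfrak{t}] \in \TGu/{\sim}} \rep[\Ql][[\mathfrak{t}]]{G}$ together with the fact that $R$ is cut out by the idempotent $e^{T}$, the intersection $R \cap \rep[\Ql][1]{G}$ is exactly the product of the Bernstein blocks $\rep[\Ql][[\mathfrak{t}]]{G}$ for $[\mathfrak{t}] \in (T \cap \TGu)/{\sim}$. Thus the statement splits into two points: first, that $T \cap \TGu \neq \emptyset$ (so the intersection is a non-empty subcategory); and second, that the subset $T \cap \TGu$ determines $T$, hence $R$.

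For the first point I would invoke Corollary \ref{corinterunipol}, which (through Proposition \ref{prointerlprime}) guarantees that every $\sim$-stable $\lprime$-integral subset of $\TGl$ meets $\TGu$; this is the only non-formal ingredient and is where the genuine content of the statement lies. For the second point, if $R'$ is another $\lprime$-block with associated minimal subset $T'$ and $R \neq R'$, then $T$ and $T'$ are distinct members of a partition, so $T \cap T' = \emptyset$ and therefore $(T \cap \TGu) \cap (T' \cap \TGu) = \emptyset$. As both of these are non-empty by the first point, they are distinct, and hence the subcategories $R \cap \rep[\Ql][1]{G}$ and $R' \cap \rep[\Ql][1]{G}$ are distinct and non-empty. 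Combining the two points shows that $R$ is characterized by the non-empty intersection $R \cap \rep[\Ql][1]{G}$. The main (indeed the only real) obstacle is the non-emptiness supplied by Corollary \ref{corinterunipol}; once that is granted, the characterization is a purely formal consequence of the partition structure of Proposition \ref{prolblockminimal}.
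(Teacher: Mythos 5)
Your proof is correct and follows essentially the same route as the paper: reduce to the classification of blocks by minimal $\sim$-stable $\lprime$-integral subsets of $\TGl$ via Proposition \ref{prolblockminimal}, use the partition structure, and invoke Corollary \ref{corinterunipol} for the non-emptiness of $T \cap \TGu$. The extra detail you supply in identifying $R \cap \rep[\Ql][1]{G}$ with $\prod_{[\mathfrak{t}] \in (T\cap\TGu)/{\sim}} \rep[\Ql][[\mathfrak{t}]]{G}$ is a correct elaboration of what the paper leaves implicit.
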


\begin{proof}
    Since $G$ is semisimple and simply-connected, by Proposition \ref{prolblockminimal} $R$ is defined by $T$ a minimal $\sim$-stable $\lprime$-integral subset of $\TGl$. Now, the minimal $\sim$-stable $\lprime$-integral subsets form a partition of $\TGl$, so $T$ is uniquely determined by any of its elements. Corollary \ref{corinterunipol} tells us that $T \cap \TGu \neq \emptyset$, so $T$ is characterized by $T \cap \TGu$.
\end{proof}

\subsection{Decomposition of \texorpdfstring{$\repun{\Zl}{G}$}{Rep[Zl][1](G)} }

In this section, using the $\dun$-theory for the reductive quotient in the Bruhat-Tits building, we will define an equivalence relation on $\TGu$. When $G$ is semisimple and simply-connected, an equivalence class will exactly correspond to $T \cap \TGu$, for $T$ a minimal $\sim$-stable $\lprime$-integral set, and thus will give us a unipotent $\lprime$-block of $G$.

\bigskip

Let $\lprime$ be a prime number which satisfies \eqref{eqlpadic}, and $d$ be the order of $q$ modulo $\lprime$.

\bigskip

We define $\siml$, an equivalence relation on $\TGu$ by $\mathfrak{t} \siml \mathfrak{t}'$ if and only if there exist $\omega_1,\cdots,\omega_r \in \bt$ and $\mathfrak{t}_{1}, \cdots, \mathfrak{t}_{r-1} \in \TGu$ such that $\mathfrak{t} \simx{\omega_1} \mathfrak{t}_1\simx{\omega_2} \mathfrak{t}_2 \cdots \simx{\omega_r} \mathfrak{t}'$. We write $\eql{\mathfrak{t}}$ for the equivalence class of $\mathfrak{t}$.

\begin{Rem}
    By Remark \ref{remsommetsimx} (1), we can take in the definition $\omega_i \in \bts$.
\end{Rem}

Let $\mathfrak{t} \in \TGu$ and $\omega \in \bt$. We define $\mathcal{E}_{\eql{\mathfrak{t}},\omega}$ to be the subset of $\dl{\quotred{G}{\omega}}{1}$ cut out by $\sum _{\mathfrak{u} \in \eql{\mathfrak{t}}} e_{\mathfrak{u},\omega}$.

\begin{Lem}
    \label{lemEtld1set}
    The set $\mathcal{E}_{\eql{\mathfrak{t}},\omega}$ is a $\dun$-set in $\quotred{G}{\omega}$.
\end{Lem}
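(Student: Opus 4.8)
The plan is to prove that $\mathcal{E}_{\eql{\mathfrak{t}},\omega}$ is a union of $\dun$-series of $\quotred{G}{\omega}$; this immediately yields the lemma, since any union of $\dun$-series is in particular a union of $1$-series and a union of $d$-series, i.e.\ a $\dun$-set. First I would unwind the definition. The set $\mathcal{E}_{\eql{\mathfrak{t}},\omega}$ is cut out by $\sum_{\mathfrak{u}\in\eql{\mathfrak{t}}} e_{\mathfrak{u},\omega}$, and by the orthogonality of the idempotents $e_{\mathfrak{u},\omega}$ attached to distinct elements of $\TGu$ (Lemma \ref{lempropidemptype}~(2) with $x=\sigma=\omega$), this is indeed an idempotent whose image is the disjoint union, over $\mathfrak{u}\in\eql{\mathfrak{t}}$, of the Harish-Chandra series $\Irr_{(\quotred{G}{\sigma},\pi)}(\quotred{G}{\omega})$ attached to the representatives $(\sigma,\pi)$ of $\mathfrak{u}$ with $\omega\leq\sigma$. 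Each such series is a unipotent $1$-series, so $\mathcal{E}_{\eql{\mathfrak{t}},\omega}$ is visibly a union of $1$-series, and the real content of the lemma is its saturation under $d$-series.

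The heart of the argument is then to show that whenever a $1$-series $S\subseteq\mathcal{E}_{\eql{\mathfrak{t}},\omega}$ lies in a $\dun$-series $\mathcal{F}$ of $\quotred{G}{\omega}$, the whole of $\mathcal{F}$ is already contained in $\mathcal{E}_{\eql{\mathfrak{t}},\omega}$. Concretely, I would write $S=\Irr_{(\quotred{G}{\sigma_1},\pi_1)}(\quotred{G}{\omega})$ coming from some $\mathfrak{u}_1\in\eql{\mathfrak{t}}$ with $\omega\leq\sigma_1$, and take an arbitrary $1$-series $S'\subseteq\mathcal{F}$ with unipotent cuspidal support $(\gpfini{M},\rho)$, where $\gpfini{M}$ is a $1$-split Levi of $\quotred{G}{\omega}$. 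The key geometric input, borrowed verbatim from the proof of Lemma \ref{lempropidemptype}~(1), is that such a Levi $\gpfini{M}$ is realized as a reductive quotient $\quotred{G}{\tau}$ for some polysimplex $\tau\geq\omega$; applying this to $(\gpfini{M},\rho)$ produces $\tau\geq\omega$ with $\quotred{G}{\tau}=\gpfini{M}$, $S'=\Irr_{(\quotred{G}{\tau},\rho)}(\quotred{G}{\omega})$, and $\mathfrak{u}_2:=[\tau,\rho]\in\TGu$ (using that $\rho$ is unipotent).

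I would then conclude directly from the definition of $\simx{\omega}$. Since $S$ and $S'$ both lie in the single $\dun$-series $\mathcal{F}$, the union $\Irr_{(\quotred{G}{\sigma_1},\pi_1)}(\quotred{G}{\omega})\cup\Irr_{(\quotred{G}{\tau},\rho)}(\quotred{G}{\omega})$ is contained in a $\dun$-series, and $\omega$ is a face of both $\sigma_1$ and $\tau$; hence $\mathfrak{u}_1\simx{\omega}\mathfrak{u}_2$, so $\mathfrak{u}_1\siml\mathfrak{u}_2$ and therefore $\mathfrak{u}_2\in\eql{\mathfrak{t}}$. As $(\tau,\rho)$ is a representative of $\mathfrak{u}_2$ with $\omega\leq\tau$, we get $S'\subseteq\mathcal{E}_{\eql{\mathfrak{t}},\omega}$. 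Since $S'$ was an arbitrary $1$-series of $\mathcal{F}$, this gives $\mathcal{F}\subseteq\mathcal{E}_{\eql{\mathfrak{t}},\omega}$, and writing $\mathcal{E}_{\eql{\mathfrak{t}},\omega}$ as the union of the $\dun$-series attached to its constituent $1$-series exhibits it as a union of $\dun$-series, hence a $\dun$-set.

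The step I expect to require the most care is the realization of the cuspidal support $(\gpfini{M},\rho)$ of an \emph{arbitrary} $1$-series of the ambient $\dun$-series $\mathcal{F}$ as a reductive quotient $\quotred{G}{\tau}$ with $\tau\geq\omega$: one must make sure this holds for every $1$-series of $\mathcal{F}$, not just for the one we started from. This is exactly the Bruhat-Tits fact already invoked in Lemma \ref{lempropidemptype}, so it can be cited rather than reproved; the remaining ingredients (orthogonality of the idempotents and the formal unwinding of $\simx{\omega}$ and $\siml$) are routine.
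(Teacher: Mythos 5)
Your proof is correct and follows essentially the same route as the paper's: show that any $\dun$-series containing one of the constituent $1$-series of $\mathcal{E}_{\eql{\mathfrak{t}},\omega}$ is wholly contained in it, by realizing each $1$-series of that $\dun$-series via a pair $(\tau,\rho)\in\TGu$ with $\omega\leq\tau$ and then invoking the definition of $\simx{\omega}$ and $\siml$. You even make explicit the Bruhat--Tits realization of arbitrary unipotent cuspidal supports as reductive quotients $\quotred{G}{\tau}$, a step the paper's proof uses implicitly (it is the same fact as in Lemma \ref{lempropidemptype}(1)).
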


\begin{proof}
    By definition $\mathcal{E}_{\eql{\mathfrak{t}},\omega}$ is a 1-set.

    Let $(\sigma,\lambda) \in \TGu$ such that $\omega \leq \sigma$ and $\Irr_{(\quotred{G}{\sigma},\lambda)}(\quotred{G}{\omega})  \subseteq \mathcal{E}_{\eql{\mathfrak{t}},\omega}$. By construction of $\mathcal{E}_{\eql{\mathfrak{t}},\omega}$, we have that $(\sigma,\lambda) \in \eql{\mathfrak{t}}$.

    Let $\mathcal{E}_{\sigma,\lambda}$ be the $\dun$-series containing $\Irr_{(\quotred{G}{\sigma},\lambda)}(\quotred{G}{\omega})$. Let us prove that $\mathcal{E}_{\sigma,\lambda} \subseteq  \mathcal{E}_{\eql{\mathfrak{t}},\omega}$. Let $(\sigma',\lambda') \in \TGu$ such that $\omega \leq \sigma'$ and $\Irr_{(\quotred{G}{\sigma'},\lambda')}(\quotred{G}{\omega}) \subseteq  \mathcal{E}_{\sigma,\lambda}$. Then by definition, $(\sigma,\lambda) \simx{\omega} (\sigma',\lambda')$. Thus, $(\sigma,\lambda) \siml (\sigma',\lambda')$ and $(\sigma',\lambda') \in \eql{\mathfrak{t}}$. Therefore $\Irr_{(\quotred{G}{\sigma'},\lambda')}(\quotred{G}{\omega}) \subseteq  \mathcal{E}_{\eql{\mathfrak{t}},\omega}$ and $\mathcal{E}_{\sigma,\lambda} \subseteq  \mathcal{E}_{\eql{\mathfrak{t}},\omega}$.

    Since, this is true for every $(\sigma,\lambda) \in \TGu$ such that $\omega \leq \sigma$ and $\Irr_{(\quotred{G}{\sigma},\lambda)}(\quotred{G}{\omega}) \subseteq \mathcal{E}_{\eql{\mathfrak{t}},\omega}$, we get that $\mathcal{E}_{\eql{\mathfrak{t}},\omega}$ is a $\dun$-set.
\end{proof}

By Lemma \ref{lemEtld1set}, $\mathcal{E}_{\eql{\mathfrak{t}},\omega}$ is a $\dun$-set, so we can form $\mathcal{E}_{\eql{\mathfrak{t}},\omega,\lprime}$, the $\lprime$-extension of $\mathcal{E}_{\eql{\mathfrak{t}},\omega}$ as in section \ref{secdunseries}. Let $e_{\eql{\mathfrak{t}},\omega}$ be the idempotent in $\quotred{G}{\omega}$ that cuts out $\mathcal{E}_{\eql{\mathfrak{t}},\omega,\lprime}$. Since $\lprime$ satisfies \eqref{eql} for $\quotred{G}{\omega}$, Theorem \ref{thelblockdcuspi} tells us that  $e_{\eql{\mathfrak{t}},\omega}$ is $\lprime$-integral. Thus we just have defined $e_{\eql{\mathfrak{t}}} = (e_{\eql{\mathfrak{t}},\omega})_{\omega \in \bt}$ an $\lprime$-integral system of idempotents.

\begin{Pro}
    The $\lprime$-integral system of idempotent $e_{\eql{\mathfrak{t}}}$ is 0-consistent, thus defines $\rep[\Zl][\eql{\mathfrak{t}}]{G}$ a subcategory of $\repun{\Zl}{G}$.
\end{Pro}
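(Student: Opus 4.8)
The plan is to verify the two conditions in the definition of a $0$-consistent system of idempotents. The equivariance condition (1) is immediate: the class $\eql{\mathfrak{t}}$ is $G$-stable and the assignment $\omega \mapsto \mathcal{E}_{\eql{\mathfrak{t}},\omega}$, together with its $\lprime$-extension and the resulting central idempotent $e_{\eql{\mathfrak{t}},\omega}$, is canonical in $\quotred{G}{\omega}$, so conjugation by $g\in G$ carries $e_{\eql{\mathfrak{t}},x}$ to $e_{\eql{\mathfrak{t}},gx}$ exactly as in the proof that $e_{[\mathfrak{t}]}$ is $0$-consistent in Section~\ref{secidempotenthc}. The substance is condition (2): for $x\in\bts$ and $\sigma\in\bt$ with $x\leq\sigma$ one must show $e_{\eql{\mathfrak{t}},\sigma}=e_{\sigma}^{+}e_{\eql{\mathfrak{t}},x}=e_{\eql{\mathfrak{t}},x}e_{\sigma}^{+}$. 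Since $e_{\eql{\mathfrak{t}},x}$ is pulled back from a central idempotent of $\Zl[\quotred{G}{x}]$, it is central in $\hecke{\para{G}{x}}{\Zl}$ and hence automatically commutes with $e_{\sigma}^{+}$, so it suffices to prove $e_{\eql{\mathfrak{t}},x}e_{\sigma}^{+}=e_{\eql{\mathfrak{t}},\sigma}$. I would mimic the final computation used for $e_{[\mathfrak{t}]}$, replacing Lemma~\ref{lempropidemptype} by the two $\lprime$-integral analogues described next.

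The first analogue is a partition statement. The sets $\mathcal{E}_{\eql{\mathfrak{t}},\omega}$, as $\eql{\mathfrak{t}}$ runs over $\TGu/{\siml}$, partition $\dl{\quotred{G}{\omega}}{1}$: each $\mathfrak{u}\in\TGu$ with a representative $(\tau,\lambda)$ satisfying $\omega\leq\tau$ contributes the unipotent Harish-Chandra series $\Irr_{(\quotred{G}{\tau},\lambda)}(\quotred{G}{\omega})$, these series partition $\dl{\quotred{G}{\omega}}{1}$, and grouping them by $\siml$-class yields the $\mathcal{E}_{\eql{\mathfrak{t}},\omega}$. Each is a $\dun$-set by Lemma~\ref{lemEtld1set}, so passing to $\lprime$-extensions and invoking Theorems~\ref{thmdecompodseries} and~\ref{thelblockdcuspi}, the $\mathcal{E}_{\eql{\mathfrak{t}},\omega,\lprime}$ are pairwise disjoint unions of unipotent $\lprime$-blocks partitioning $\dll{\quotred{G}{\omega}}{1}$. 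In idempotent form this reads $\sum_{\eql{\mathfrak{t}}}e_{\eql{\mathfrak{t}},\omega}=e^{\lprime}_{1,\omega}$.

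The second, and main, analogue is the orthogonality $e_{\eql{\mathfrak{t}},x}e_{\eql{\mathfrak{t}'},\sigma}=0$ for $\eql{\mathfrak{t}}\neq\eql{\mathfrak{t}'}$. As in the proof of Lemma~\ref{lempropidemptype}(2), writing $\gpfini{P}_{\sigma}$ for the parabolic of $\quotred{G}{x}$ with Levi $\quotred{G}{\sigma}$ and unipotent radical $\gpfini{U}_{\sigma}$ (so that $e_{\sigma}^{+}$ becomes $e_{\gpfini{U}_{\sigma}}$), this reduces, inside $\Zl[\quotred{G}{x}]$, to showing that $e_{\eql{\mathfrak{t}},x}$ annihilates the module $\Zl[\quotred{G}{x}]e_{\gpfini{U}_{\sigma}}e_{\eql{\mathfrak{t}'},\sigma}$, which is the Harish-Chandra induction from $\quotred{G}{\sigma}$ to $\quotred{G}{x}$ of $\Zl[\quotred{G}{\sigma}]e_{\eql{\mathfrak{t}'},\sigma}$. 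The key finite-group input is the inclusion
\[
\indParalite{\quotred{G}{\sigma}}{\quotred{G}{x}}\bigl(\mathcal{E}_{\eql{\mathfrak{t}'},\sigma,\lprime}\bigr)\subseteq \mathcal{E}_{\eql{\mathfrak{t}'},x,\lprime}.
\]
To obtain it, I would decompose the $\dun$-set $\mathcal{E}_{\eql{\mathfrak{t}'},\sigma}$ into $\dun$-series $\mathcal{E}_i$; by Proposition~\ref{proindldunseries} each $\indParalite{\quotred{G}{\sigma}}{\quotred{G}{x}}(\mathcal{E}_{i,\lprime})$ lies in $\overline{\indParalite{\quotred{G}{\sigma}}{\quotred{G}{x}}(\mathcal{E}_i)}_{\lprime}$, and since Harish-Chandra induction preserves cuspidal support, the unipotent set $\indParalite{\quotred{G}{\sigma}}{\quotred{G}{x}}(\mathcal{E}_i)$ stays inside the Harish-Chandra series attached to points of $\eql{\mathfrak{t}'}$, hence inside the $\dun$-set $\mathcal{E}_{\eql{\mathfrak{t}'},x}$; thus $\overline{\indParalite{\quotred{G}{\sigma}}{\quotred{G}{x}}(\mathcal{E}_i)}\subseteq \mathcal{E}_{\eql{\mathfrak{t}'},x}$ by Lemma~\ref{lemEtld1set}, and taking the (monotone) $\lprime$-extension gives the displayed inclusion. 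Because $\mathcal{E}_{\eql{\mathfrak{t}'},x,\lprime}$ and $\mathcal{E}_{\eql{\mathfrak{t}},x,\lprime}$ are disjoint by the first analogue, while the module above is $\Zl$-torsion-free with all its $\Ql$-constituents in $\mathcal{E}_{\eql{\mathfrak{t}'},x,\lprime}$, the orthogonal central idempotent $e_{\eql{\mathfrak{t}},x}$ acts as $0$ on it.

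Finally, I would assemble condition (2) as for $e_{[\mathfrak{t}]}$, using centrality, the $0$-consistency of $e^{\lprime}_{1}$ (which gives $e^{\lprime}_{1,x}e_{\sigma}^{+}=e^{\lprime}_{1,\sigma}$ and that $e^{\lprime}_{1,\sigma}$ acts as the identity on $e_{\eql{\mathfrak{t}},\sigma}$), and the two analogues:
\[
e_{\eql{\mathfrak{t}},x}e_{\sigma}^{+}=e_{\eql{\mathfrak{t}},x}e^{\lprime}_{1,\sigma}=\sum_{\eql{\mathfrak{t}'}}e_{\eql{\mathfrak{t}},x}e_{\eql{\mathfrak{t}'},\sigma}=e_{\eql{\mathfrak{t}},x}e_{\eql{\mathfrak{t}},\sigma}=e^{\lprime}_{1,x}e_{\eql{\mathfrak{t}},\sigma}=e_{\eql{\mathfrak{t}},\sigma}.
\]
The main obstacle I expect is the orthogonality step: both the reduction of the product of pulled-back idempotents to a genuine Harish-Chandra induction in $\Zl[\quotred{G}{x}]$, where $e_{\sigma}^{+}$ must be identified with averaging over $\gpfini{U}_{\sigma}$, and the passage from $\Ql$-constituents to the integral module require care, the latter resting on torsion-freeness so that $e_{\eql{\mathfrak{t}'},x}$ acts as the identity on the module over $\Zl$.
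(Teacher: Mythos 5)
Your proof is correct, but it is organized differently from the paper's. The paper establishes the consistency identity $e_{\tau}^{+}e_{\eql{\mathfrak{t}},\omega}=e_{\eql{\mathfrak{t}},\tau}$ in one stroke: the product $e_{\tau}^{+}e_{\eql{\mathfrak{t}},\omega}$ is the idempotent cutting out $\resParalite{\quotred{G}{\tau}}{\quotred{G}{\omega}}(\mathcal{E}_{\eql{\mathfrak{t}},\omega,\lprime})$, which by Proposition \ref{proresldunseries} equals $\resParalite{\quotred{G}{\tau}}{\quotred{G}{\omega}}(\mathcal{E}_{\eql{\mathfrak{t}},\omega})_{\lprime}$, and $\resParalite{\quotred{G}{\tau}}{\quotred{G}{\omega}}(\mathcal{E}_{\eql{\mathfrak{t}},\omega})=\mathcal{E}_{\eql{\mathfrak{t}},\tau}$ holds by construction of these sets. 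You instead transport the strategy of Lemma \ref{lempropidemptype} (a partition identity plus an orthogonality statement) to the $\lprime$-integral setting, replacing the restriction-side Proposition \ref{proresldunseries} by the induction-side Proposition \ref{proindldunseries} together with Lemma \ref{leminducdun} and the preservation of cuspidal support under Harish-Chandra induction. The two routes are close relatives: Proposition \ref{proresldunseries} is itself deduced from Proposition \ref{proindldunseries} by Frobenius reciprocity, so you are essentially unwinding the same compatibility of $\lprime$-extensions with the Harish-Chandra functors. Your version is longer, but it has the merit of making explicit the identities $\sum_{\eql{\mathfrak{t}}}e_{\eql{\mathfrak{t}},\sigma}=e^{\lprime}_{1,\sigma}$ and $e_{\eql{\mathfrak{t}},x}e_{\eql{\mathfrak{t}'},\sigma}=0$ for $\eql{\mathfrak{t}}\neq\eql{\mathfrak{t}'}$, which are exactly the two properties the paper quotes immediately afterwards in proving the product decomposition of $\rep[\Zl][1]{G}$; the paper's version is the minimal argument for $0$-consistency alone. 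The steps you flag as delicate (identifying $e_{\sigma}^{+}$ with averaging over $\gpfini{U}_{\sigma}$ inside $\Zl[\quotred{G}{x}]$, and deducing the vanishing of the product from the disjointness of the $\Ql$-constituents) go through exactly as in the proof of Lemma \ref{lempropidemptype}, and the torsion-freeness point is not even needed since the identity can be checked after extending scalars to $\Ql$.
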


\begin{proof}
    Since the $\mathfrak{t} \in \TGu$ are $G$-conjugacy classes, $e_{\eql{\mathfrak{t}}}$ is $G$-equivariant.

    Let $\tau, \omega \in \bt$ such that $\omega \leq \tau$. It remains to prove that $e_{\tau}^{+}e_{\eql{\mathfrak{t}},\omega}=e_{\eql{\mathfrak{t}},\tau}$.

    The idempotent $e_{\eql{\mathfrak{t}},\omega}$ is the idempotent that cuts out $\mathcal{E}_{\eql{\mathfrak{t}},\omega,\lprime}$ and  $e_{\tau}^{+}e_{\eql{\mathfrak{t}},\omega}$ is the idempotents that cuts out $\resParalite{\quotred{G}{\tau}}{\quotred{G}{\omega}}(\mathcal{E}_{\eql{\mathfrak{t}},\omega,\lprime})$.

    By Proposition \ref{proresldunseries},  $\resParalite{\quotred{G}{\tau}}{\quotred{G}{\omega}}(\mathcal{E}_{\eql{\mathfrak{t}},\omega,\lprime})= \resParalite{\quotred{G}{\tau}}{\quotred{G}{\omega}}(\mathcal{E}_{\eql{\mathfrak{t}},\omega})_{\lprime}$. But we know by definition of $\mathcal{E}_{\eql{\mathfrak{t}},\omega}$ that $\resParalite{\quotred{G}{\tau}}{\quotred{G}{\omega}}(\mathcal{E}_{\eql{\mathfrak{t}},\omega})=\mathcal{E}_{\eql{\mathfrak{t}},\tau}$. Hence, $\resParalite{\quotred{G}{\tau}}{\quotred{G}{\omega}}(\mathcal{E}_{\eql{\mathfrak{t}},\omega,\lprime})=\mathcal{E}_{\eql{\mathfrak{t}},\tau,\lprime}$.

\end{proof}

\begin{Rem}
    \label{remdecriptcat}
    By Propositions \ref{proindldunseries} and \ref{proresldunseries}, $\mathcal{E}_{\eql{\mathfrak{t}},\omega,\lprime}$ is a union of Harish-Chandra series. Hence there exists a $\sim$-stable subset $T \subseteq \TGl$ such that $e_{\eql{\mathfrak{t}}}=e_{T}$. Then Theorem \ref{thedescriplblock} gives us a description of $T$ in the following way. Let $(\sigma,\chi) \in \TGl$. Let $t$ be a semi-simple conjugacy class in $\quotred*{G}{\sigma}$ of order a power of $\lprime$, such that $\chi \in \dl{\quotred{G}{\sigma}}{t}$. Let $\quotred{G}{\sigma}(t)$ a Levi in $\quotred{G}{\sigma}$ dual to $\cent*{t}{\quotred*{G}{\sigma}}$, with $\gpfini{P}$ as a parabolic subgroup, and $\chi_t \in \dl{\quotred{G}{\sigma}(t)}{1}$ such that $ \langle \chi, \inddllite{\quotred{G}{\sigma}(t) \subseteq \gpfini{P}}{\quotred{G}{\sigma}}(\hat{t}\chi_t) \rangle \neq 0$. Let $\pi$ be an irreducible component of $\inddllite{\quotred{G}{\sigma}(t) \subseteq \gpfini{P}}{\quotred{G}{\sigma}}(\chi_t)$. Let $(\quotred{G}{\tau},\lambda)$ be the cuspidal support of $\pi$. Then $(\sigma,\chi)$ is in the subset $T$ associated with $\eql{(\tau,\lambda)}$.

\end{Rem}

\begin{The}
    Let $\lprime$ be a prime number which satisfies \eqref{eqlpadic}. Then we have a decomposition
    \[ \repun{\Zl}{G}= \prod_{\eql{\mathfrak{t}} \in \TGu/{\siml}}  \rep[\Zl][\eql{\mathfrak{t}}]{G}.\]
\end{The}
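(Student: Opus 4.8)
The plan is to reproduce over $\Zl$ the argument that gave the complex decomposition in Proposition \ref{prodecompotypes}. Each $e_{\eql{\mathfrak{t}}}$ has just been shown to be $0$-consistent, hence consistent, so each $\rep[\Zl][\eql{\mathfrak{t}}]{G}$ is a Serre subcategory of $\rep[\Zl][1]{G}$. To upgrade this to a product decomposition it suffices, exactly as in the proof of \cite[Prop. 2.3.5]{lanard} and of Proposition \ref{prodecompotypes}, to verify two local identities at the level of the reductive quotients: a completeness identity $\sum_{\eql{\mathfrak{t}} \in \TGu/{\siml}} e_{\eql{\mathfrak{t}},\omega} = e^{\lprime}_{1,\omega}$ for every $\omega \in \bt$, and a vanishing identity $e_{\eql{\mathfrak{t}},x}\, e_{\eql{\mathfrak{t}'},\sigma}=0$ whenever $\eql{\mathfrak{t}}\neq\eql{\mathfrak{t}'}$ and $x\leq\sigma$. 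These are the $\lprime$-integral analogues of the two parts of Lemma \ref{lempropidemptype}.

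For completeness I would first show that, for fixed $\omega$, the sets $\mathcal{E}_{\eql{\mathfrak{t}},\omega}$ partition $\dl{\quotred{G}{\omega}}{1}$. Every unipotent character $\chi$ of $\quotred{G}{\omega}$ has a Harish-Chandra cuspidal support $(\quotred{G}{\sigma},\pi)$ with $\omega\leq\sigma$ and $\pi$ unipotent cuspidal, so $\chi$ lies in $\mathcal{E}_{\eql{\mathfrak{t}},\omega}$ for the class of $\mathfrak{t}=[\sigma,\pi]$; and since $\sim$ is trivial on $\TGu$ and distinct $\siml$-classes are disjoint subsets of $\TGu$, uniqueness of the cuspidal support makes the $\mathcal{E}_{\eql{\mathfrak{t}},\omega}$ pairwise disjoint. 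Each $\mathcal{E}_{\eql{\mathfrak{t}},\omega}$ is a $\dun$-set by Lemma \ref{lemEtld1set}, hence a union of $d$-series, which by Theorem \ref{thmdecompodseries} partition $\dl{\quotred{G}{\omega}}{1}$. Under \eqref{eqlpadic}, that is \eqref{eql} for $\quotred{G}{\omega}$, Theorem \ref{thelblockdcuspi} identifies the $\lprime$-extension of each $d$-series with a full unipotent $\lprime$-block. Hence the $\lprime$-extensions $\mathcal{E}_{\eql{\mathfrak{t}},\omega,\lprime}$ partition $\dll{\quotred{G}{\omega}}{1}$ into unions of $\lprime$-blocks, which forces the two central $\lprime$-integral idempotents $\sum_{\eql{\mathfrak{t}}} e_{\eql{\mathfrak{t}},\omega}$ and $e^{\lprime}_{1,\omega}$, cutting out the same set, to coincide.

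For the vanishing identity I would invoke Remark \ref{remdecriptcat}, which writes each $e_{\eql{\mathfrak{t}}}$ as $e_{T}$ for a $\sim$-stable subset $T=T_{\eql{\mathfrak{t}}}\subseteq\TGl$, so that $e_{\eql{\mathfrak{t}},\omega}=\sum_{[\mathfrak{u}]\in T/\sim}e_{[\mathfrak{u}],\omega}$. The completeness step shows in addition that distinct classes give disjoint $T$'s. Expanding $e_{\eql{\mathfrak{t}},x}\,e_{\eql{\mathfrak{t}'},\sigma}$ as a double sum and applying the second part of Lemma \ref{lempropidemptype} term by term, every factor $e_{[\mathfrak{u}],x}e_{[\mathfrak{u}'],\sigma}$ vanishes because $[\mathfrak{u}]\neq[\mathfrak{u}']$ when $T\cap T'=\emptyset$, giving the desired $e_{\eql{\mathfrak{t}},x}\,e_{\eql{\mathfrak{t}'},\sigma}=0$. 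With both identities established, pairwise orthogonality of the subcategories follows from the completeness identity and the fact that $\rep[\Zl][1]{G}$ is their product follows from the vanishing identity, precisely as in Proposition \ref{prodecompotypes}.

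The main obstacle is the completeness step, and more precisely the passage from the partition of $\dl{\quotred{G}{\omega}}{1}$ by $\dun$-sets to the partition of $\dll{\quotred{G}{\omega}}{1}$ by their $\lprime$-extensions. This is exactly where \eqref{eqlpadic} is indispensable: it guarantees, through Theorem \ref{thelblockdcuspi}, that every $\lprime$-extension of a $d$-series is a genuine unipotent $\lprime$-block and that these blocks stack up without overlap, so that summing the $e_{\eql{\mathfrak{t}},\omega}$ recovers precisely the idempotent $e^{\lprime}_{1,\omega}$ of the ambient $\lprime$-unipotent category. Everything else reduces cleanly to the complex situation of Section \ref{secidempotenthc} via Remark \ref{remdecriptcat}.
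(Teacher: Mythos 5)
Your proposal is correct and follows essentially the same route as the paper: the paper's proof simply records the two local identities (completeness $\sum_{\eql{\mathfrak{t}}} e_{\eql{\mathfrak{t}},\sigma}=e^{\lprime}_{1,\sigma}$ and orthogonality of the $e_{\eql{\mathfrak{t}},\sigma}$ for distinct classes) and then invokes the argument of \cite[Prop. 2.3.5]{lanard}, exactly as you do. You merely supply the justification of those identities (partition of $\dl{\quotred{G}{\omega}}{1}$ by the $\mathcal{E}_{\eql{\mathfrak{t}},\omega}$ plus Theorem \ref{thelblockdcuspi} for the $\lprime$-extensions, and Lemma \ref{lempropidemptype} via Remark \ref{remdecriptcat}) that the paper leaves implicit.
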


\begin{proof}
    Let $e^{\lprime}_{1}=(e^{\lprime}_{1,\sigma})_{\sigma \in \bt}$ be the 0-consistent system of idempotent that cuts out $\repun{\Zl}{G}$ (we have recalled the definition of $e^{\lprime}_{1}$ at the end of section \ref{secsystcohe}). Then the systems of idempotents $e_{\eql{\mathfrak{t}}}$, for $\eql{\mathfrak{t}} \in \TGu/{\siml}$ satisfy the following properties :
    \begin{itemize}
        \item for all $\sigma \in \bt$, $e^{\lprime}_{1,\sigma}=\sum_{\eql{\mathfrak{t}} \in \TGu/{\siml}} e_{\eql{\mathfrak{t}},\sigma}$
        \item if $\eql{\mathfrak{t}}$ and $\eql{\mathfrak{t}'}$ are two elements of $\TGu/{\siml}$ such that $\eql{\mathfrak{t}} \neq \eql{\mathfrak{t}'}$, and if $\sigma \in \bt$, then  $e_{\eql{\mathfrak{t}},\sigma} e_{\eql{\mathfrak{t}'},\sigma} = 0$.
    \end{itemize}

    With these properties, the same proof as in \cite[Prop. 2.3.5]{lanard} shows the desired result.
\end{proof}

\begin{Rem}
    \begin{enumerate}
        \item From the construction of the system of idempotents $e_{\eql{\mathfrak{t}}}$, we see that
              \[\rep[\Zl][\eql{\mathfrak{t}}]{G} \cap \repun{\Ql}{G} = \prod_{\mathfrak{u} \in \eql{\mathfrak{t}}} \rep[\Ql][\mathfrak{u}]{G} \]
        \item We also have a description of $\rep[\Zl][\eql{\mathfrak{t}}]{G} \cap \rep[\Ql]{G}$ by Remark \ref{remdecriptcat}.
    \end{enumerate}

\end{Rem}

\begin{The}
    \label{thelblockss}
    When $G$ is semisimple and simply-connected and $\lprime$ satisfies \eqref{eqlpadic}, the decomposition
    \[ \repun{\Zl}{G}= \prod_{\eql{\mathfrak{t}} \in \TGu/{\siml}}  \rep[\Zl][\eql{\mathfrak{t}}]{G},\]
    is the decomposition of $\repun{\Zl}{G}$ into $\lprime$-blocks.
\end{The}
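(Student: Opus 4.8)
The plan is to show that each factor $\rep[\Zl][\eql{\mathfrak{t}}]{G}$ appearing in the decomposition of the previous theorem is itself an $\lprime$-block; since these factors already constitute a product decomposition of $\rep[\Zl][1]{G}$, that is all that remains. First I would use Remark \ref{remdecriptcat} to write $e_{\eql{\mathfrak{t}}} = e_{T}$ for a $\sim$-stable subset $T \subseteq \TGl$. This $T$ is $\lprime$-integral, because its components $e_{T,\omega} = e_{\eql{\mathfrak{t}},\omega}$ lie in $\Zl[\quotred{G}{\omega}]$ by construction (Theorem \ref{thelblockdcuspi}), and one checks $T \cap \TGu = \eql{\mathfrak{t}}$: a unipotent type $(\sigma,\pi)$ belongs to $T$ exactly when $e_{(\sigma,\pi),\sigma}$ is a constituent of $e_{\eql{\mathfrak{t}},\sigma}$, i.e. when $\{\pi\} \subseteq \mathcal{E}_{\eql{\mathfrak{t}},\sigma}$, which by the very definition of $\mathcal{E}_{\eql{\mathfrak{t}},\sigma}$ means $(\sigma,\pi) \in \eql{\mathfrak{t}}$. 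By Proposition \ref{prolblockminimal} it then suffices to prove that $T$ is a \emph{minimal} $\sim$-stable $\lprime$-integral subset of $\TGl$.

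Next I would pass to the associated central idempotent $e^{T}$. By Lemma \ref{lemlintegralidem}, the $\lprime$-integrality of $T$ makes $e^{T}$ an $\lprime$-integral idempotent of the centre of $\rep[\Zl][1]{G}$, hence a sum of primitive ones. Equivalently, $T = \bigsqcup_{i=1}^{k} T_i$ is a disjoint union of minimal $\sim$-stable $\lprime$-integral subsets (the blocks contained in $\rep[\Zl][\eql{\mathfrak{t}}]{G}$), and the whole statement reduces to showing $k=1$.

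The core step is then a short counting argument combining the two preceding results. By Corollary \ref{corinterunipol}, each $T_i$ meets $\TGu$; choose $\mathfrak{t}_i \in T_i \cap \TGu$. Since $T_i \subseteq T$, all the $\mathfrak{t}_i$ lie in $T \cap \TGu = \eql{\mathfrak{t}}$, so $\mathfrak{t}_i \siml \mathfrak{t}_j$ for all $i,j$. Now Proposition \ref{prod1linkedblock} shows that $\simx{\omega}$-related unipotent types lie in a common minimal $\sim$-stable $\lprime$-integral subset; because these minimal subsets partition $\TGl$, the relation ``lying in a common minimal subset'' is an equivalence relation, and as $\siml$ is by definition the transitive closure of the $\simx{\omega}$, any two $\siml$-equivalent types lie in the same minimal subset. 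Hence all the $\mathfrak{t}_i$ lie in a single $T_{i_0}$; since the $T_i$ are pairwise disjoint this forces $T_i = T_{i_0}$ for every $i$, that is $k=1$. Therefore $T$ is minimal and $\rep[\Zl][\eql{\mathfrak{t}}]{G} = \rep[\Zl][T]{G}$ is an $\lprime$-block.

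I expect the only delicate point to be the bookkeeping of the previous paragraph: identifying $T \cap \TGu$ with $\eql{\mathfrak{t}}$ and confirming that $\siml$ refines the block-partition of $\TGu$. The genuine mathematical content has already been isolated in Proposition \ref{prod1linkedblock} (the ``linking'' direction, forcing $\siml$-equivalent types into one block) and Corollary \ref{corinterunipol} (each block already meets the unipotent locus, so that no block of $\rep[\Zl][\eql{\mathfrak{t}}]{G}$ can be invisible at the $\Ql$-unipotent level); the final theorem is the formal packaging of these into the minimality of $T$.
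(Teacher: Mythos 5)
Your proof is correct and follows essentially the same route as the paper: reduce to minimality of the $\sim$-stable $\lprime$-integral set $T$ with $T\cap \TGu=\eql{\mathfrak{t}}$ via Proposition \ref{prolblockminimal}, use Corollary \ref{corinterunipol} to see that every minimal constituent of $T$ meets $\TGu$, and use Proposition \ref{prod1linkedblock} together with transitivity to force all of $\eql{\mathfrak{t}}$ into a single minimal set. Your decomposition $T=\bigsqcup_i T_i$ merely makes explicit the reduction the paper states more tersely.
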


\begin{proof}
    Let $\mathfrak{t} \in \TGu$, we want to prove that $\rep[\Zl][\eql{\mathfrak{t}}]{G}$ is an $\lprime$-block. Let $T$ be the $\sim$-stable subset of $\TGl$ which defines $\rep[\Zl][\eql{\mathfrak{t}}]{G}$. We need to prove that $T$ is a minimal $\lprime$-integral set by Proposition \ref{prolblockminimal}.

    We know that $T$ is $\lprime$-integral. By Corollary \ref{corinterunipol}, it is enough to prove that $T \cap \TGu$ is contained into a minimal $\lprime$-integral set. By construction, we have that $T \cap \TGu = \{ \mathfrak{u} \in \TGu, \mathfrak{u} \in \eql{\mathfrak{t}}\}$.

    Now, if $\mathfrak{u}$, $\mathfrak{u}'$ are two element of $\TGu$ such that $\mathfrak{u} \simx{\omega} \mathfrak{u}'$, then by Proposition \ref{prod1linkedblock}, $\mathfrak{u}$ and $\mathfrak{u}'$ are contained in the same minimal $\lprime$-integral set. Thus, if $\mathfrak{u} \siml \mathfrak{t}$,  $\mathfrak{u}$ and $\mathfrak{t}$ are contained in the same minimal $\lprime$-integral set and we have the wanted result.
\end{proof}

\subsection{Case \texorpdfstring{$\lprime=2$}{l=2} and groups of types \texorpdfstring{$\Aa$, $\Bb$, $\Cc$, $\Dd$}{A, B, C ,D} }
In this section, we examine a case of a bad prime $\lprime=2$, but when the group is good, that is all the reductive  quotients only involve types among $\Aa$, $\Bb$, $\Cc$ and $\Dd$. We will prove that the unipotent category is a $2$-block.

\begin{The}
    \label{theldeux}
    Let $G$ be a semisimple and simply-connected group such that all the reductive quotients only involve types among $\Aa$, $\Bb$, $\Cc$ and $\Dd$, and $p\neq 2$. Then $\rep[\overline{\mathbb{Z}}_{2}][1]{G}$ is a $2$-block.
\end{The}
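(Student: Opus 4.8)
The plan is to run the same reduction as in the odd case treated in Theorem~\ref{thelblockss}, but to bypass the $\dun$-machinery (which is unavailable at $\lprime=2$) by invoking the fact that a finite classical group in odd characteristic has a single unipotent $2$-block. Since $p\neq 2$, the residue cardinality $q$ is odd, so $2\nmid q$. By Proposition~\ref{prolblockminimal} (whose proof only uses that the $e^{[\mathfrak{t}]}$ are primitive in the centre over $\Ql$, and is therefore valid for every prime not dividing $q$) the $2$-blocks of $\rep[\overline{\mathbb{Z}}_2][1]{G}$ are indexed by the minimal $\sim$-stable $2$-integral subsets of $\TGl$. By Corollary~\ref{corinterunipol} each such subset meets $\TGu$, and as these subsets partition $\TGl$, each is determined by its intersection with $\TGu$. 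Hence it suffices to prove that any two elements of $\TGu$ lie in the same minimal $\sim$-stable $2$-integral subset of $\TGl$.

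The arithmetic input I would use, in place of the computation of $\dun$-series, is: if $\gpfinialg{H}$ is a connected reductive group over $\res$ all of whose simple factors are of type $\Aa$, $\Bb$, $\Cc$ or $\Dd$ and $q$ is odd, then all unipotent characters of $\gpfini{H}$ lie in a single $2$-block. Reducing to simple factors (unipotent characters and $2$-blocks of a direct product are the corresponding products), this is clear for $\gl{n}$ and $\Un_n$, where every partition has trivial $1$-core, so Fong–Srinivasan leaves a unique unipotent block; for types $\Bb$, $\Cc$, $\Dd$ it is the known structure of $2$-blocks of classical groups in odd characteristic (combinatorially, any two symbols of the same rank are joined by $1$-hooks and $1$-cohooks, both of which preserve the $2$-block). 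Granting this, I claim that if $\mathfrak{t}=[\sigma,\pi]$ and $\mathfrak{t}'=[\tau,\pi']$ are elements of $\TGu$ admitting a common face $\omega\in\bt$ (i.e.\ $\omega\leq\sigma$ and $\omega\leq\tau$), then they lie in the same minimal $\sim$-stable $2$-integral subset. Let $T$ be the minimal such subset containing $\mathfrak{t}$ and let $\mathcal{E}\subseteq\Irr(\quotred{G}{\omega})$ be the set cut out by $e_{T,\omega}$. Since $e_{T,\omega}\in\overline{\mathbb{Z}}_2[\quotred{G}{\omega}]$ is a central idempotent, $\mathcal{E}$ is a union of $2$-blocks; as $\mathfrak{t}\in T$ and $\omega\leq\sigma$, the construction of $e_{T,\omega}$ gives $\Irr_{(\quotred{G}{\sigma},\pi)}(\quotred{G}{\omega})\subseteq\mathcal{E}$, and these are unipotent characters of the classical group $\quotred{G}{\omega}$. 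By the fact above their (unique) unipotent $2$-block, hence all of $\dl{\quotred{G}{\omega}}{1}$, is contained in $\mathcal{E}$; in particular $\Irr_{(\quotred{G}{\tau},\pi')}(\quotred{G}{\omega})\subseteq\mathcal{E}$, which forces $\mathfrak{t}'\in T$. This is exactly the argument of Proposition~\ref{prod1linkedblock}, with ``lies in a common $\dun$-series'' replaced by ``lies in a common $2$-block''.

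It then remains to connect all of $\TGu$ by such common-face steps. For a chamber (top-dimensional polysimplex) $C$ the reductive quotient $\quotred{G}{C}$ is a torus, so $\mathfrak{t}_C:=[C,\mathbf{1}]\in\TGu$ with $\mathbf{1}$ the trivial (cuspidal unipotent) character. Any $\mathfrak{t}=[\sigma,\pi]\in\TGu$ has a chamber $C\geq\sigma$, and then $\sigma$ is a common face of $\sigma$ and $C$, so $\mathfrak{t}$ is linked to $\mathfrak{t}_C$ by the claim; and for adjacent chambers $C,C'$ the separating panel is a common face of $C$ and $C'$, so $\mathfrak{t}_C$ and $\mathfrak{t}_{C'}$ are linked. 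Since the building is gallery-connected, all chamber-types are mutually linked, whence every element of $\TGu$ is linked to every other. Thus $\TGu$ lies in one minimal $\sim$-stable $2$-integral subset, and $\rep[\overline{\mathbb{Z}}_2][1]{G}$ is a single $2$-block. I expect the main obstacle to be precisely the arithmetic fact on unipotent $2$-blocks of classical groups: this is where both $\lprime=2$ and the restriction to types $\Aa,\Bb,\Cc,\Dd$ are essential, and where the clean machinery of the odd case has to be replaced by the degenerate $\lprime=2$ block combinatorics.
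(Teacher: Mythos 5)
Your proposal is correct and follows essentially the same route as the paper: reduce via Proposition~\ref{prolblockminimal} (and Corollary~\ref{corinterunipol}) to linking all of $\TGu$, invoke the uniqueness of the unipotent $2$-block of each reductive quotient of classical type (the paper cites \cite[Thm.~21.14]{cabanes_enguehard} for exactly the fact you state), and propagate through the building using the pair $(C,\mathbf{1})$ at a chamber. The paper's write-up is just a compressed version of your common-face linking argument.
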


\begin{proof}
    By Proposition \ref{prolblockminimal}, we want to prove that $\mathcal{T}_{2}^{1}(G)$ is a minimal $\sim$-stable $2$-integral set. Let $T \subseteq \mathcal{T}_{2}^{1}(G)$ be a minimal $\sim$-stable $2$-integral set. Let us prove that $\mathcal{T}_{2}^{1}(G) \subseteq T$.

    Let $\sigma \in \bt$ such that $e_{T,\sigma} \neq 0$. Since $T$ is $2$-integral, $e_{T,\sigma}$ is a sum of $2$-blocks. By \cite[Thm. 21.14]{cabanes_enguehard}, the only unipotent $2$-block of $\quotred{G}{\sigma}$ is the idempotent cuting out $\mathcal{E}_{2}(\quotred{G}{\sigma},1)$. Hence, $e_{T,\sigma}$ is this idempotent. Therefore, we get from the definition of $e_{T,\sigma}$ that for all $\mathfrak{t}=(\omega,\tau) \in \mathcal{T}_{2}^{1}(G)$, such that $\omega \leq \sigma$, we have that $\mathfrak{t} \in T$. In particular $(C,1) \in T$, where $C$ is a chamber. So, for all $\sigma \in \bt$, $e_{T,\sigma}\neq0$ and $\mathcal{T}_{2}^{1}(G) \subseteq T$.
\end{proof}

\section{Some examples}

Section \ref{seclblocks} describes the $\lprime$-blocks for a semisimple and simply-connected group thanks to the equivalence relation $\siml$ on $\TGu$. In this section, we examine some examples and make $\siml$ explicit.

\subsection{\texorpdfstring{$\lprime$}{l} divides \texorpdfstring{$q-1$}{q-1}}

When $\lprime$ divides $q-1$, hence $d=1$, the $\dun$-series are just the 1-series. In this case, $\siml$ is trivial on $\TGu$. Thus Theorem \ref{thelblockss} gives us :

\begin{Pro}
    When $G$ is semisimple and simply-connected, $\lprime$ satisfies \eqref{eqlpadic} and $\lprime$ divides $q-1$, we have a decomposition into $\lprime$-blocks
    \[ \repun{\Zl}{G}= \prod_{\mathfrak{t} \in \TGu}  \rep[\Zl][\mathfrak{t}]{G},\]
    such that $\rep[\Zl][\mathfrak{t}]{G} \cap \rep[\Ql]{G}=\rep[\Ql][\mathfrak{t}]{G}$ is a single Bernstein block.
\end{Pro}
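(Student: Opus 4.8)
The plan is to obtain the statement as an immediate specialization of Theorem \ref{thelblockss}: since $G$ is semisimple and simply-connected and $\lprime$ satisfies \eqref{eqlpadic}, that theorem already furnishes the decomposition $\rep[\Zl][1]{G}=\prod_{\eql{\mathfrak{t}}\in\TGu/\siml}\rep[\Zl][\eql{\mathfrak{t}}]{G}$ into $\lprime$-blocks. The only thing left to verify is that, when $\lprime\mid q-1$, the relation $\siml$ is trivial on $\TGu$, so that its classes are the singletons $\{\mathfrak{t}\}$ and the index set $\TGu/\siml$ becomes $\TGu$. The main (and essentially only) point is the observation that for $d=1$ the $\dun$-series degenerate to Harish-Chandra series.

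First I would note that $\lprime\mid q-1$ forces $d=1$, where $d$ is the order of $q$ modulo $\lprime$. By Theorem \ref{theresumdun} a $\dun$-series is a union of $1$-series and of $d$-series; for $d=1$ the $d$-series are exactly the $1$-series, so a $\dun$-set is just a union of Harish-Chandra series and a $\dun$-series is a single Harish-Chandra series. I would then compare $\simx{\omega}$ with the relation $\sim_{\omega}$ of Section \ref{secidempotenthc}. Given $\mathfrak{t}=[\sigma,\pi]$ and $\mathfrak{t}'=[\tau,\pi']$ in $\TGu$ with $\omega\leq\sigma$ and $\omega\leq\tau$, each of $\Irr_{(\quotred{G}{\sigma},\pi)}(\quotred{G}{\omega})$ and $\Irr_{(\quotred{G}{\tau},\pi')}(\quotred{G}{\omega})$ is a single Harish-Chandra series of $\quotred{G}{\omega}$. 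As two Harish-Chandra series are either equal or disjoint, their union is contained in one $\dun$-series precisely when the two coincide, i.e. when the cuspidal pairs $(\quotred{G}{\sigma},\pi)$ and $(\quotred{G}{\tau},\pi')$ are conjugate in $\quotred{G}{\omega}$. This is exactly the defining condition of $\sim_{\omega}$, so $\simx{\omega}$ and $\sim_{\omega}$ agree on $\TGu$, whence $\siml$ coincides with the relation $\sim$ on $\TGu$.

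I would then invoke the Remark following the definition of $\TGu$: since any parabolic of a finite reductive group whose Levi carries a unipotent cuspidal representation has association class equal to its conjugacy class, $\sim$ is trivial on $\TGu$. Therefore $\siml$ is trivial on $\TGu$ and $\eql{\mathfrak{t}}=\{\mathfrak{t}\}$ for every $\mathfrak{t}\in\TGu$, turning the decomposition of Theorem \ref{thelblockss} into the asserted $\rep[\Zl][1]{G}=\prod_{\mathfrak{t}\in\TGu}\rep[\Zl][\mathfrak{t}]{G}$ into $\lprime$-blocks. For the last assertion, the first item of the Remark after Theorem \ref{thelblockss} gives, for the singleton class $\{\mathfrak{t}\}$, that $\rep[\Zl][\mathfrak{t}]{G}\cap\rep[\Ql][1]{G}=\rep[\Ql][\mathfrak{t}]{G}$; and Theorem \ref{theQlblock}, together with the fact that the $\sim$-class of $\mathfrak{t}\in\TGu$ is reduced to $\{\mathfrak{t}\}$, identifies $\rep[\Ql][\mathfrak{t}]{G}$ with a single Bernstein block. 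No serious obstacle is expected: the whole argument rests on the degeneration of $\dun$-series to Harish-Chandra series at $d=1$, after which the conclusion is formal.
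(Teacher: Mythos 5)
Your proposal is correct and follows the paper's own route: the paper likewise observes that $\lprime\mid q-1$ forces $d=1$, so the $\dun$-series degenerate to Harish-Chandra series, $\siml$ becomes trivial on $\TGu$ (via the triviality of $\sim$ on $\TGu$), and the statement then drops out of Theorem \ref{thelblockss} and its accompanying remark. Your write-up merely makes explicit the identification of $\simx{\omega}$ with $\sim_{\omega}$ that the paper leaves implicit.
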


\subsection{Blocks of \texorpdfstring{$\sl{n}$}{SLn}}

Let us make the $\lprime$-blocks of $\sl{n}$ explicit.

\begin{The}
    Let $\lprime$ be prime not dividing $q$, then $\repun{\Zl}{\sl{n}(\kk)}$ is an $\lprime$-block.
\end{The}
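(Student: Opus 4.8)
The plan is to reduce everything to the observation that $\TGu$ is a single $G$-conjugacy class, after which the statement follows from the general results according to the parity of $\lprime$.

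First I would describe the reductive quotients. As $\sl{n}$ is split over $\kk$, each $\quotred{G}{\sigma}$ is a split connected reductive group over $\res$, and running over the lattice-chain description of the facets of $\bt$ one sees that $\quotred{G}{\sigma}$ has the root datum of $S(\gl{m_1}\times\cdots\times\gl{m_k})$, the determinant-one subgroup, where $(m_1,\dots,m_k)$ is the tuple of periods attached to $\sigma$ and $\sum_j m_j=n$. In particular only components of type $\Aa$ (untwisted) occur in the reductive quotients.

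Next comes the key point: $\quotred{G}{\sigma}$ carries a unipotent cuspidal representation only if $\sigma$ is a chamber. Indeed, by Proposition \ref{produnserieadjoint} the reduction map identifies the unipotent characters of $\quotred{G}{\sigma}$ with those of $\ad{\quotred{G}{\sigma}}=\prod_j\PGL_{m_j}(\res)$ compatibly with Harish-Chandra induction, hence preserving cuspidality; and a group of type $A_{m-1}$ has a unipotent cuspidal character if and only if $m=1$, since every unipotent character of $\gl{m}$ occurs in the principal series and therefore has a maximal torus as Harish-Chandra cuspidal support. Thus a unipotent cuspidal $\pi$ forces all $m_j=1$, i.e. $\quotred{G}{\sigma}$ is a torus, i.e. $\sigma$ is a chamber $C$, and then $\pi$ is the trivial character. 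Because $\sl{n}$ is simply-connected, its Iwahori--Weyl group coincides with the affine Weyl group, which acts transitively on the chambers of an apartment, so $G$ acts transitively on the chambers of $\bt$. Consequently $\TGu=\{(C,\mathbf 1)\}$ is a single class.

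Finally I would conclude by cases on $\lprime$. If $\lprime$ is odd, then \eqref{eqlpadic} holds for $G$, because only type $\Aa$ is involved in the reductive quotients (for which \eqref{eql} reads $\lprime\geq 3$); Theorem \ref{thelblockss} then exhibits the decomposition of $\rep[\Zl][1]{\sl{n}(\kk)}$ into $\lprime$-blocks indexed by $\TGu/\siml$. Since $\TGu$ is a single class, so is $\TGu/\siml$, whence $\rep[\Zl][1]{\sl{n}(\kk)}$ is a single $\lprime$-block. If $\lprime=2$, then $\lprime\neq p$ forces $p\neq 2$, and all reductive quotients are of type $\Aa$, so Theorem \ref{theldeux} applies verbatim and gives that $\rep[\overline{\mathbb{Z}}_{2}][1]{\sl{n}(\kk)}$ is a $2$-block. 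The main obstacle is the middle paragraph: correctly identifying the reductive quotients of $\sl{n}$, proving that only chambers support a unipotent cuspidal type, and using simple-connectedness to obtain $G$-transitivity on chambers; the rest is a direct appeal to Theorems \ref{thelblockss} and \ref{theldeux}.
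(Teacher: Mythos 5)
Your proof is correct and takes essentially the same route as the paper: the paper's own argument simply invokes Theorem \ref{thelblockss} for odd $\lprime$ after noting that $\TGu$ consists of the single class $(C,1)$ with $C$ a chamber, and Theorem \ref{theldeux} for $\lprime=2$. Your middle paragraph merely supplies the (correct) justification, left implicit in the paper, that type $\Aa$ reductive quotients admit no unipotent cuspidal representations except in the torus case and that $G$ acts transitively on chambers.
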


\begin{proof}
    If $\lprime \neq 2$, then we can apply Theorem \ref{thelblockss}. In this case, $\TGu$ is composed of only one element, the conjugacy class of $(C,1)$ where $C$ is a chamber. Hence $\repun{\Zl}{\sl{n}(\kk)}$ is an $\lprime$-block.

    If $\lprime=2$, we can apply Theorem \ref{theldeux} and $\rep[\overline{\mathbb{Z}}_{2}][1]{\sl{n}(\kk)}$ is a $2$-block.
\end{proof}

\subsection{Blocks of  \texorpdfstring{$\sp{2n}$}{Sp2n}}

In this section, we have a look at $G=\sp{2n}$. We assume in all this section that $\lprime$ does not divide $q$.

\bigskip

If $\lprime=2$, Theorem \ref{theldeux} gives us the result. So we can assume that $\lprime \neq 2$. Theorem \ref{thelblockss} tells us that to know the $\lprime$-blocks of $\sp{2n}$ we need to understand $\TGu/{\siml}$. Let us start by describing $\TGu$. The group $\sp{2m}(\res)$ has a unipotent cuspidal representation if and only if $m=s(s+1)$ for some integer $s$, and this representation is unique up to isomorphism. If $\sigma \in \bt$, then $\quotred{G}{\sigma} \simeq \gpfini{H} \times \sp{2i}(\res) \times \sp{2j}(\res)$, where $\gpfini{H}$ is a product of $\gl{m}(\res)$, and $i+j \leq n$. Hence, we have a bijection between $\TGu$ and the set $\TGs:=\{(s,s')\in \mathbb{N}^2, s(s+1)+s'(s'+1) \leq n\}$. For $(s,s') \in \TGs$ we will write $\mathfrak{t}(s,s')=(\sigma(s,s'),\pi(s,s'))$ for the corresponding element of $\TGu$.

\bigskip

Let $d$ be the order of $q$ modulo $\lprime$. The first case is when $d$ is odd. Then Proposition \ref{prodimpaire} tells us that for $\sigma \in \bt$, the unipotent $\dun$-series in $\quotred{G}{\sigma}$ are the unipotent 1-series. Hence, $\siml$ is just the trivial equivalence relation on $\TGu$. Thus we get the decomposition of $\repun{\Zl}{\sp{2n}(\kk)}$ into $\lprime$-blocks
\[\repun{\Zl}{\sp{2n}(\kk)}= \prod_{\mathfrak{t}\in \TGu} \rep[\Zl][\eql{\mathfrak{t}}]{\sp{2n}(\kk)}.\]

\bigskip

Now, we assume that $d$ is even. We want to make the equivalence relation $\siml$ on $\TGu$ explicit.

\bigskip

Let us start by finding the $\mathfrak{t} \in \TGu$ such that $\eql{\mathfrak{t}}=\{\mathfrak{t}\}$. Let $\Scusp$ be the subset of $\TGs$ of couples $(s,s')$ such that $\eql{\mathfrak{t}(s,s')}=\{\mathfrak{t}(s,s')\}$. There are $n+1$ non-conjugate vertices in $\bts$ that we denote $x_0, \cdots, x_n$, such that $\quotred{G}{x_i} \simeq \sp{2i}(\res) \times \sp{2(n-i)}(\res)$. Let $(s,s')\in \TGs$. We may assume that all the $x_i$ and $\sigma(s,s')$ are in a same chamber. Then $x_i \leq \sigma(s,s')$ if and only if $s(s+1) \leq i$ and $s'(s'+1) \leq n-i$. Hence
\[\{ x \in \bts, x \leq \sigma(s,s')\}=\{x_i, s(s+1) \leq i \leq n - s'(s'+1)\}.\]

We denote by $\Sigma_s$ the symbol corresponding to the unipotent cuspidal representation of $\sp{2s(s+1)}$. That is
\[ \Sigma_s = \begin{pmatrix}
        0 & 1 & \cdots & 2s \\
          &   &
    \end{pmatrix}.\]

\begin{Lem}
    \label{LemScuspcalcul}
    We have
    \[\Scusp=\{(s,s') \in \TGs, \left\{
        \begin{array}{ll}
            s(s+1)+s'(s'-1) > n-d/2 \\
            s'(s'+1)+s(s-1) > n-d/2
        \end{array}
        \right\}\}.\]
\end{Lem}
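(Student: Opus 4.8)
The plan is to turn the condition $\eql{\mathfrak{t}(s,s')}=\{\mathfrak{t}(s,s')\}$ into a statement about the $d$-cuspidality of Harish-Chandra series in the two symplectic factors of the reductive quotients at vertices, and then to observe that the resulting family of inequalities (one per vertex in the admissible range) is controlled entirely by its two extreme vertices. First I would reduce $\siml$ to vertices: by Remark \ref{remsommetsimx}(1) any relation $\mathfrak{t}_1 \simx{\omega} \mathfrak{t}_2$ holding at a polysimplex $\omega$ already holds at every vertex $x \leq \omega$, so $\siml$ is generated by the relations $\simx{x}$ with $x \in \bts$. Consequently $\eql{\mathfrak{t}(s,s')}=\{\mathfrak{t}(s,s')\}$ holds if and only if $\mathfrak{t}(s,s')$ is $\simx{x_i}$-related to no other element of $\TGu$, for each of the vertices $x_0,\dots,x_n$ with $x_i \leq \sigma(s,s')$, that is for $s(s+1)\leq i \leq n-s'(s'+1)$.

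Next I would describe these relations explicitly. At such a vertex $\quotred{G}{x_i}\simeq \sp{2i}(\res)\times\sp{2(n-i)}(\res)$, and the Harish-Chandra series attached to $\mathfrak{t}(s,s')$ is the product of the unipotent $1$-series of $\sp{2i}(\res)$ with cuspidal support $\sp{2s(s+1)}(\res)$, of defect $\defect(s)=2s+1$, with the analogous $1$-series of $\sp{2(n-i)}(\res)$ of defect $2s'+1$. Since a $\dun$-series of a product is a product of $\dun$-series, and since the unipotent Harish-Chandra series of $\quotred{G}{x_i}$ are in bijection with the elements of $\TGu$ admitting $x_i$ as a face, the pair $\mathfrak{t}(s,s')$ is $\simx{x_i}$-related to some other element exactly when one of the two factor $1$-series fails to be a $\dun$-series on its own. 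As $d$ is even, Theorem \ref{theresumdun}(3b) says this happens precisely when that $1$-series is not composed solely of $d$-cuspidal characters. Hence $(s,s')\in\Scusp$ if and only if, for every admissible $i$, both the defect-$(2s+1)$ $1$-series of $\sp{2i}(\res)$ and the defect-$(2s'+1)$ $1$-series of $\sp{2(n-i)}(\res)$ are $d$-cuspidal.

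I would then make $d$-cuspidality explicit through Lemma \ref{lemmaxdefectk}. For $k=2s+1$ one computes $(k^2-4k+3)/4 = s(s-1)$, so the maximal entry occurring in a symbol of rank $i$ and defect $2s+1$ equals $i-s(s-1)$; by the argument in the proof of Proposition \ref{prod1series} the associated $1$-series is $d$-cuspidal if and only if $d/2 > i-s(s-1)$, i.e. $i < s(s-1)+d/2$, and symmetrically the second factor is $d$-cuspidal if and only if $n-i < s'(s'-1)+d/2$. Demanding the first condition for all $i$ in $[\,s(s+1),\,n-s'(s'+1)\,]$ is tightest at the largest admissible $i=n-s'(s'+1)$, which yields $s(s-1)+s'(s'+1) > n-d/2$; demanding the second for all such $i$ is tightest at the smallest $i=s(s+1)$, which yields $s(s+1)+s'(s'-1)>n-d/2$. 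These are exactly the two inequalities defining $\Scusp$.

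The main obstacle is the second step: rigorously equating "$\mathfrak{t}(s,s')$ forms a singleton $\siml$-class" with "both symplectic factor series are $d$-cuspidal at every admissible vertex." This needs the bijection between the unipotent Harish-Chandra series of $\quotred{G}{x_i}$ and the elements of $\TGu$ with $x_i$ as a face, so that an enlargement of the ambient $\dun$-series really produces a genuine distinct partner $\mathfrak{t}(a,a')\in\TGu$. It also requires care at small-rank vertices (notably $i=1$ with $d=2$), where a $1$-series can be a $\dun$-series of its factor without being $d$-cuspidal; there, however, no competing $1$-series exists in that factor, so no genuine new partner arises and the characterization is unaffected, which is what one must check to rule out spurious discrepancies at the boundary.
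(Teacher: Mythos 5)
Your proof follows essentially the same route as the paper's: reduce to the vertices $x_i$ with $s(s+1)\le i\le n-s'(s'+1)$, translate the singleton condition into $d$-cuspidality of the two factor $1$-series via Lemma \ref{lemmaxdefectk} and the identity $(k^2-4k+3)/4=s(s-1)$ for $k=2s+1$, and observe that the resulting inequalities are tightest at the two extreme values of $i$. The only divergence is that you explicitly flag the $i=1$, $d=2$ boundary case (where the unique unipotent $1$-series of $\sp{2}(\res)$ is a $\dun$-series without being $d$-cuspidal), which the paper silently identifies with the generic situation; your assertion that this does not affect the characterization is correct for $n\ge 2$ but does require the verification you only allude to, and in the degenerate case $n=1$, $d=2$ the displayed formula and the definition of $\Scusp$ genuinely disagree, so your proof is, if anything, slightly more careful than the paper's on this point.
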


\begin{proof}
    By definition of $\siml$, we have that $\Scusp$ is the subset of $\TGs$ of couples $(s,s')$ such that for all $x_i \leq \sigma(s,s')$, either

    \begin{align*}
                & \left\{
        \begin{array}{ll}
            \lprime \nmid \card{\sp{2i}(\res)} \\
            \lprime \nmid \card{\sp{2(n-i)}(\res)}
        \end{array}
        \right. &
        or      &
        \left\{
        \begin{array}{ll}
            \lprime \nmid \card{\sp{2i}(\res)}    \\
            \lprime \mid \card{\sp{2(n-i)}(\res)} \\
            \defect(\Sigma_{s'}) > \kgdc{\sp{2(n-i)}(\res)}{d}
        \end{array}
        \right.           \\
        or      &
        \left\{
        \begin{array}{ll}
            \lprime \mid \card{\sp{2i}(\res)}           \\
            \defect(\Sigma_s) > \kgdc{\sp{2i}(\res)}{d} \\
            \lprime \nmid \card{\sp{2(n-i)}(\res)}
        \end{array}
        \right. &
        or      &
        \left\{
        \begin{array}{ll}
            \lprime \mid \card{\sp{2i}(\res)}           \\
            \defect(\Sigma_s) > \kgdc{\sp{2i}(\res)}{d} \\
            \lprime \mid \card{\sp{2(n-i)}(\res)}       \\
            \defect(\Sigma_{s'}) > \kgdc{\sp{2(n-i)}(\res)}{d}
        \end{array}
        \right.
    \end{align*}

    We know that $\card{\sp{2i}(\res)}=q^{n^2}\prod_{j=1}^{i}(q^{2j}-1)$ and $d$ is the order of $q$ modulo $\lprime$ (with $d$ even), hence $\lprime \mid \card{\sp{2i}(\res)}$ if and only if $d \leq 2i$. In the same way, $\lprime \mid \card{\sp{2(n-i)}(\res)}$ if and only if $d \leq 2(n-i)$.

    By definition, $\kgdc{\sp{2i}(\res)}{d}=\max\{k\geq0, k \text{ odd},(k^2-4k+3)/4\leq i-d/2\}$. So $\defect(\Sigma_s) > \kgdc{\sp{2i}(\res)}{d}$, if and only if $2s+1 > \kgdc{\sp{2i}(\res)}{d}$ if and only if $((2s+1)^2-4(2s+1)+3)/4> i-d/2$. But $((2s+1)^2-4(2s+1)+3)/4 =s(s-1)$. Hence $\defect(\Sigma_s) > \kgdc{\sp{2i}(\res)}{d}$ if and only if $s(s-1) > i-d/2$ and $\defect(\Sigma_{s'}) > \kgdc{\sp{2(n-i)}(\res)}{d}$ if and only if $s'(s'-1) > n-i-d/2$.

    So, $\Scusp$ is the set of $(s,s') \in \TGs$ such that for all $i \in \{s(s+1) ,\cdots, n - s'(s'+1)
        \}$ either
    \begin{align*}
                & \left\{
        \begin{array}{ll}
            d > 2i \\
            d > 2(n-i)
        \end{array}
        \right. &
        or      & \left\{
        \begin{array}{ll}
            d > 2i        \\
            d \leq 2(n-i) \\
            s'(s'-1) > n-i-d/2
        \end{array}
        \right.           \\
        or      & \left\{
        \begin{array}{ll}
            d \leq 2i      \\
            s(s-1) > i-d/2 \\
            d > 2(n-i)
        \end{array}
        \right. &
        or      & \left\{
        \begin{array}{ll}
            d \leq 2i      \\
            s(s-1) > i-d/2 \\
            d \leq 2(n-i)  \\
            s'(s'-1) > n-i-d/2
        \end{array}
        \right. &
    \end{align*}
    To make things clearer, let us rewrite these conditions on conditions on $i$
    \begin{align*}
                & \left\{
        \begin{array}{ll}
            i<d/2 \\
            i > n-d/2
        \end{array}
        \right. &
        or      & \left\{
        \begin{array}{ll}
            i<d/2        \\
            i \leq n-d/2 \\
            i > n-d/2-s'(s'-1)
        \end{array}
        \right.           \\
        or      & \left\{
        \begin{array}{ll}
            i \geq d/2     \\
            i < s(s-1)+d/2 \\
            i > n-d/2
        \end{array}
        \right. &
        or      & \left\{
        \begin{array}{ll}
            i \geq d/2     \\
            i < s(s-1)+d/2 \\
            i \leq n-d/2   \\
            i > n-d/2-s'(s'-1)
        \end{array}
        \right.
    \end{align*}

    Now, since $s'(s'-1)$ is positive,the conditions
    \[
        \left\{
        \begin{array}{ll}
            i<d/2 \\
            i > n-d/2
        \end{array}
        \right.
        \text{ or }
        \left\{
        \begin{array}{ll}
            i<d/2        \\
            i \leq n-d/2 \\
            i > n-d/2-s'(s'-1)
        \end{array}
        \right.\]
    are equivalent to $
        \left\{
        \begin{array}{ll}
            i<d/2 \\
            i > n-d/2-s'(s'-1)
        \end{array}
        \right.$. We also have that the conditions
    \[\left\{
        \begin{array}{ll}
            i \geq d/2     \\
            i < s(s-1)+d/2 \\
            i > n-d/2
        \end{array}
        \right.
        \text{ or }
        \left\{
        \begin{array}{ll}
            i \geq d/2     \\
            i < s(s-1)+d/2 \\
            i \leq n-d/2   \\
            i > n-d/2-s'(s'-1)
        \end{array}
        \right.\]
    are equivalent to $
        \left\{
        \begin{array}{ll}
            i \geq d/2     \\
            i < s(s-1)+d/2 \\
            i > n-d/2-s'(s'-1)
        \end{array}
        \right.
    $.

    But now, since $s(s-1)$ is positive, the conditions
    \[\left\{
        \begin{array}{ll}
            i<d/2 \\
            i > n-d/2-s'(s'-1)
        \end{array}
        \right.
        \text{ or }
        \left\{
        \begin{array}{ll}
            i \geq d/2     \\
            i < s(s-1)+d/2 \\
            i > n-d/2-s'(s'-1)
        \end{array}
        \right.
    \]
    are equivalent to $
        \left\{
        \begin{array}{ll}
            i < s(s-1)+d/2 \\
            i > n-d/2-s'(s'-1)
        \end{array}
        \right.
    $.

    Finally, we have that $\Scusp$ is the set of $(s,s') \in \TGs$ such that for all $i \in \{s(s+1) ,\cdots, n - s'(s'+1)\}$, $i < s(s-1)+d/2$ and $i > n-d/2-s'(s'-1)$, that is, it is the set of $(s,s')$ such that $n - s'(s'+1) < s(s-1)+d/2$ and $s(s+1) > n-d/2-s'(s'-1)$.
\end{proof}

We now want to prove that $\eql{\mathfrak{t}(0,0)}=\{\mathfrak{t}(s,s'), (s,s') \notin \Scusp\}$.

\begin{Pro}
    \label{prolblock00}
    Let $(s,s') \in \TGs \setminus \Scusp$. Then $\mathfrak{t}(s,s') \siml \mathfrak{t}(0,0)$.
\end{Pro}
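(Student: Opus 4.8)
\emph{Proof plan.} The plan is to argue by induction on $s+s'$, reducing any $(s,s')\in\TGs\setminus\Scusp$ to $(0,0)$ through a chain of elementary links $\simx{x_i}$, each decreasing $s$ or $s'$ by one while staying inside $\TGs\setminus\Scusp$. The base case $(0,0)$ is immediate, so I fix $(s,s')\neq(0,0)$ in $\TGs\setminus\Scusp$ and produce one reduction step.

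The engine is the following elementary link. At the vertex $x_i$ with $s(s+1)\le i\le n-s'(s'+1)$, so that $x_i\le\sigma(s,s')$, the reductive quotient is $\sp{2i}(\res)\times\sp{2(n-i)}(\res)$, and the Harish-Chandra series of $\pi(s,s')$ in $\quotred{G}{x_i}$ is the product of the $1$-series of defect $2s+1$ in $\sp{2i}(\res)$ with the $1$-series of defect $2s'+1$ in $\sp{2(n-i)}(\res)$. I will take $i=s(s+1)$, a common face of $\sigma(s,s')$ and $\sigma(s,s'-1)$, so that the first factor is exactly $\sp{2s(s+1)}(\res)$ and its series is the single cuspidal $\{\pi(s)\}$, which is shared by $\pi(s,s')$ and $\pi(s,s'-1)$. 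In the second factor, assuming condition (A): $s(s+1)+s'(s'-1)\le n-d/2$, the identity $((2s'+1)^2-4(2s'+1)+3)/4=s'(s'-1)$ used in the proof of Lemma~\ref{LemScuspcalcul} gives $2s'+1\le\kgdc{\sp{2(n-i)}(\res)}{d}$; this also forces $d\le 2(n-i)$, so $\lprime\mid\card{\sp{2(n-i)}(\res)}$ and the $\dun$-theory of Proposition~\ref{prod1series} genuinely applies. Hence the $1$-series of defects $2s'+1$ and $2s'-1$ both lie in the single $\dun$-series of $\sp{2(n-i)}(\res)$ consisting of all $1$-series of defect at most $\kgdc{\sp{2(n-i)}(\res)}{d}$. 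Therefore $\Irr_{(\quotred{G}{\sigma(s,s')},\pi(s,s'))}(\quotred{G}{x_i})\cup\Irr_{(\quotred{G}{\sigma(s,s'-1)},\pi(s,s'-1))}(\quotred{G}{x_i})$ lies in one $\dun$-series, i.e. $\mathfrak{t}(s,s')\simx{x_i}\mathfrak{t}(s,s'-1)$ whenever (A) holds and $s'\ge 1$. Interchanging the two factors and taking $i=n-s'(s'+1)$ yields the symmetric link $\mathfrak{t}(s,s')\simx{x_i}\mathfrak{t}(s-1,s')$ whenever condition (B): $s'(s'+1)+s(s-1)\le n-d/2$ holds and $s\ge1$.

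Now $(s,s')\notin\Scusp$ means exactly that (A) or (B) holds. By the $s\leftrightarrow s'$ symmetry of the entire setup I may assume (A). If $s'\ge1$ I use the first link to pass to $(s,s'-1)$, and a one-line monotonicity check ($s(s-1)\le s(s+1)$) shows that (A) forces the second $\Scusp$-inequality to fail at $(s,s'-1)$, so $(s,s'-1)\in\TGs\setminus\Scusp$. If $s'=0$, then (A) reads $s(s+1)\le n-d/2$, whence $s(s-1)\le n-d/2$, i.e. (B) holds as well; since $(s,s')\neq(0,0)$ we have $s\ge1$, and the symmetric link (using $s'(s'-1)\le s'(s'+1)$) reduces to $(s-1,0)\in\TGs\setminus\Scusp$. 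In either case the new point has strictly smaller value of $s+s'$, so by the induction hypothesis it is $\siml$-equivalent to $\mathfrak{t}(0,0)$, and hence so is $\mathfrak{t}(s,s')$.

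The main obstacle is not any single inequality but the bookkeeping that makes the induction close: one must check that each elementary link lands again in $\TGs\setminus\Scusp$ (so the inductive hypothesis is available) and must handle the boundary cases $s'=0$ and $s=0$, where the naive reduction is unavailable and one switches from (A) to (B). These verifications are the short monotonicity computations above, all resting on the dictionary, already established in Lemma~\ref{LemScuspcalcul}, between the defect/$\kgdc{\cdot}{d}$ comparisons and the arithmetic conditions (A) and (B).
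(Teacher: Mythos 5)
Your proof is correct. The elementary link you construct is sound: at $x_{s(s+1)}$ the first factor of the reductive quotient carries exactly the cuspidal $\pi(s)$, and in the second factor the identity $((2s'+1)^2-4(2s'+1)+3)/4=s'(s'-1)$ turns the condition $\defect(s')\leq \kgdc{\sp{2(n-s(s+1))}(\res)}{d}$ into condition (A), so Proposition \ref{prod1series} gives $\mathfrak{t}(s,s')\simx{x_{s(s+1)}}\mathfrak{t}(s,s'-1)$; the monotonicity checks keeping the induction inside $\TGs\setminus\Scusp$ are also right.

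The route differs from the paper's in how the chain of links is organized, though both rest on the same two ingredients (Proposition \ref{prod1series} and the defect/$\kgd$ dictionary). You descend one unit at a time, $(s,s')\to(s,s'-1)\to\cdots$, which forces an induction and the bookkeeping that every intermediate point stays outside $\Scusp$. The paper instead exploits that the large $\dun$-series at a vertex contains \emph{all} $1$-series of defect at most $\kgd$ — in particular the principal series, of defect $1$ — so from the single index $i$ witnessing $(s,s')\notin\Scusp$ it jumps directly to $\mathfrak{t}(s,0)$ in one link, and then uses the vertex $x_n$ (reductive quotient $\sp{2n}(\res)$) to link $\mathfrak{t}(s,0)$ to $\mathfrak{t}(0,0)$, the needed inequality $s(s-1)\leq n-d/2$ falling out of $s(s-1)\leq s(s+1)\leq i\leq n-d/2$. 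The paper's chain has length two and needs no induction; your version trades that brevity for a systematic descent, at the cost of the boundary-case analysis at $s'=0$. Both are valid.
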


\begin{proof}

    By definition, since $(s,s') \notin \Scusp$, there exists $i$ such that
    \[
        \left\{
        \begin{array}{ll}
            \lprime \mid \card{\sp{2i}(\res)} \\
            \defect(\Sigma_s) \leq \kgdc{\sp{2i}(\res)}{d}
        \end{array}
        \right.
        \text{ or }
        \left\{
        \begin{array}{ll}
            \lprime \mid \card{\sp{2(n-i)}(\res)} \\
            \defect(\Sigma_{s'}) \leq \kgdc{\sp{2(n-i)}(\res)}{d}
        \end{array}
        \right.
        .\]
    Let us assume for example that $
        \left\{
        \begin{array}{ll}
            \lprime \mid \card{\sp{2(n-i)}(\res)} \\
            \defect(\Sigma_{s'}) \leq \kgdc{\sp{2(n-i)}(\res)}{d}
        \end{array}
        \right.
    $ (the other case is similar). Since $\defect(\Sigma_{s'}) \leq \kgdc{\sp{2(n-i)}(\res)}{d}$ Proposition \ref{prod1series} tells us that $\mathfrak{t}(s,s') \simx{x_i} \mathfrak{t}(s,0)$.

    Let us have a look at $x_n$. First, since $s(s+1) \leq i \leq n$ then $x_n \leq \sigma(s,0)$. Now since $\lprime \mid \card{\sp{2(n-i)}(\res)}$, $i \leq n-d/2$ (like in the proof of Lemma \ref{LemScuspcalcul}). Hence, $d/2 \leq n$ and $s(s-1) \leq s(s+1) \leq i \leq n-d/2$. This can be rewritten (like in the proof of Lemma \ref{LemScuspcalcul}) as $\lprime \mid \card{\sp{2n}(\res)}$ and $\defect(\Sigma_s) \leq \kgdc{\sp{2n}(\res)}{d}$. Again, by Proposition \ref{prod1series}, $\mathfrak{t}(s,0) \simx{x_n} \mathfrak{t}(0,0)$.

    Finally, $\mathfrak{t}(s,s')\simx{x_i} \mathfrak{t}(s,0) \simx{x_n} \mathfrak{t}(0,0) $, so $\mathfrak{t}(s,s') \siml \mathfrak{t}(0,0)$.
\end{proof}

Bringing together everything that has been done so far, we get by Theorems \ref{thelblockss} and \ref{theldeux}.

\begin{The}
    Let $\lprime$ be prime not dividing $q$. Then we have the following decomposition of $\repun{\Zl}{\sp{2n}(\kk)}$ into $\lprime$-blocks :
    \begin{enumerate}
        \item If $\lprime=2$: $\rep[\overline{\mathbb{Z}}_{2}][1]{\sp{2n}(\kk)}$ is a $2$-block.
        \item If $\lprime \neq 2$. Let $d$ the order of $q$ modulo $\lprime$.
              \begin{enumerate}
                  \item if $d$ is odd,
                        \[\repun{\Zl}{\sp{2n}(\kk)}= \prod_{\mathfrak{t}\in \TGu} \rep[\Zl][\eql{\mathfrak{t}}]{\sp{2n}(\kk)}.\]
                  \item if $d$ is even,
                        \[\repun{\Zl}{\sp{2n}(\kk)}=\rep[\Zl][\eql{\mathfrak{t}(0,0)}]{\sp{2n}(\kk)} \times \prod_{(s,s')\in \Scusp} \rep[\Zl][\eql{\mathfrak{t}(s,s')}]{\sp{2n}(\kk)}.\]

              \end{enumerate}
    \end{enumerate}

\end{The}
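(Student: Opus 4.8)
The plan is to obtain this statement as an immediate consequence of the two block theorems already proved, namely Theorem \ref{thelblockss} for odd $\lprime$ and Theorem \ref{theldeux} for $\lprime=2$, once their hypotheses are checked and the equivalence relation $\siml$ on $\TGu$ is made explicit. First I would record the group-theoretic inputs: $\sp{2n}$ is semisimple and simply-connected, every reductive quotient $\quotred{G}{\sigma}$ is a product of general linear groups with two symplectic factors so that only types $\Aa$ and $\Cc$ occur, and $\lprime\nmid q$ forces $q$ odd, hence $p\neq 2$. In the case $\lprime=2$ these are precisely the hypotheses of Theorem \ref{theldeux}, which then gives that $\rep[\overline{\mathbb{Z}}_{2}][1]{\sp{2n}(\kk)}$ is a single $2$-block.

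For $\lprime\neq 2$ I would note that types $\Aa$ and $\Cc$ are good for every odd prime, so $\lprime$ satisfies \eqref{eql} for each $\quotred{G}{\sigma}$, i.e. condition \eqref{eqlpadic} holds. Theorem \ref{thelblockss} then says that
\[ \rep[\Zl][1]{\sp{2n}(\kk)}= \prod_{\eql{\mathfrak{t}} \in \TGu/{\siml}}  \rep[\Zl][\eql{\mathfrak{t}}]{\sp{2n}(\kk)} \]
is the decomposition into $\lprime$-blocks, so everything reduces to describing $\TGu/{\siml}$ through the bijection $(s,s')\mapsto\mathfrak{t}(s,s')$ between $\TGs$ and $\TGu$.

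When $d$ is odd, Proposition \ref{prodimpaire} shows that the unipotent $\dun$-series of each $\quotred{G}{\omega}$ (a product of groups of types $\Aa$ and $\Cc$) are the unipotent $1$-series; hence $\simx{\omega}$ reduces to the Harish-Chandra relation $\sim_\omega$ and $\siml$ is trivial on $\TGu$, giving the blocks indexed by $\TGu$. When $d$ is even I would combine the two computations already carried out: Lemma \ref{LemScuspcalcul} identifies $\Scusp$ as exactly the set of $(s,s')$ for which $\eql{\mathfrak{t}(s,s')}=\{\mathfrak{t}(s,s')\}$ is a singleton class, while Proposition \ref{prolblock00} shows that every $(s,s')\in\TGs\setminus\Scusp$ satisfies $\mathfrak{t}(s,s')\siml\mathfrak{t}(0,0)$, so that $\eql{\mathfrak{t}(0,0)}=\{\mathfrak{t}(s,s'):(s,s')\notin\Scusp\}$. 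Thus $\TGu/{\siml}$ consists of the singletons indexed by $\Scusp$ together with the single distinguished class $\eql{\mathfrak{t}(0,0)}$, and feeding this partition into Theorem \ref{thelblockss} produces the claimed even-$d$ factorisation.

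Since the substantive arguments are all contained in the preceding lemmas, the only points requiring care in this final assembly are the verification of the hypotheses of the two cited theorems for $\sp{2n}$ and the bookkeeping of the $d$-even partition. The one genuinely delicate spot is the degenerate range in which $\cycl{d}$ divides the order of no reductive quotient (for $\sp{2n}$ this means $d>2n$): there $\siml$ is trivial and $\Scusp=\TGs$ contains $(0,0)$, so the displayed factor $\eql{\mathfrak{t}(0,0)}$ must be read as counted once, the product over $\Scusp$ ranging over the remaining singletons; in the interesting range $d\le 2n$ one has $(0,0)\notin\Scusp$ and no such reinterpretation is needed. This is the main, and essentially only, obstacle.
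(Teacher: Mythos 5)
Your proposal is correct and follows essentially the same route as the paper: Theorem \ref{theldeux} for $\lprime=2$, Theorem \ref{thelblockss} combined with Proposition \ref{prodimpaire} for $d$ odd, and Lemma \ref{LemScuspcalcul} together with Proposition \ref{prolblock00} for $d$ even. Your side remark about the degenerate range $d>2n$, where $(0,0)\in\Scusp$ and the displayed product must be read without double-counting $\eql{\mathfrak{t}(0,0)}$, is a fair bookkeeping observation that the paper passes over silently, but it does not affect the substance of the argument.
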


\begin{Rem*}
    In the case $d$ odd, or $d$ even and $(s,s')\in \Scusp$, we see that the intersection of an $\lprime$-block with $\repun{\Ql}{G}$ is a Bernstein block.
\end{Rem*}

If $\lprime > n $, in the case $d$ even and $(s,s')\in \Scusp$, we can say a bit more.

\begin{Lem}
    If $\lprime > n $, $d$ is even and $(s,s')\in \Scusp$, then $\rep[\Zl][\eql{\mathfrak{t}(s,s')}]{\sp{2n}(\kk)} \cap \rep[\Ql]{G}$ is a Bernstein block.
\end{Lem}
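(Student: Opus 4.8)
The plan is to identify $\rep[\Zl][\eql{\mathfrak{t}(s,s')}]{G}\cap\rep[\Ql]{G}$ with a single Bernstein block by showing that, under the hypotheses, it contains no genuinely non-unipotent representation. By Remark \ref{remdecriptcat} there is a $\sim$-stable subset $T\subseteq\TGl$ with $e_{\eql{\mathfrak{t}(s,s')}}=e_T$, so that $\rep[\Zl][\eql{\mathfrak{t}(s,s')}]{G}\cap\rep[\Ql]{G}=\prod_{[\mathfrak{u}]\in T/\sim}\rep[\Ql][[\mathfrak{u}]]{G}$. Since each $\rep[\Ql][[\mathfrak{u}]]{G}$ is a single Bernstein block (Theorem \ref{theQlblock}, as $\sp{2n}(\kk)$ is semisimple and simply-connected), it suffices to prove that $T$ consists of a single $\sim$-class; I will prove the stronger statement $T\subseteq\TGu$, whence $T=\eql{\mathfrak{t}(s,s')}=\{\mathfrak{t}(s,s')\}$ because $(s,s')\in\Scusp$, and the intersection is the single Bernstein block $\rep[\Ql][\mathfrak{t}(s,s')]{G}$.

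So I would take $(\sigma,\chi)\in T$, write $\chi\in\dl{\quotred{G}{\sigma}}{t}$ for a semisimple $\lprime$-element $t\in\quotred*{G}{\sigma}$, and aim to show $t=1$. Remark \ref{remdecriptcat} attaches to $(\sigma,\chi)$ a unipotent $\chi_t\in\dl{\quotred{G}{\sigma}(t)}{1}$ on the Jordan–Levi $\quotred{G}{\sigma}(t)$ dual to $\cent*{t}{\quotred*{G}{\sigma}}$, together with a unipotent component $\pi$ of $\inddllite{\quotred{G}{\sigma}(t)}{\quotred{G}{\sigma}}(\chi_t)$ whose Harish–Chandra cuspidal support is $\mathfrak{t}(s,s')$. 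Writing $\quotred{G}{\sigma}\simeq\gpfini{H}\times\sp{2i}(\res)\times\sp{2j}(\res)$ with $\gpfini{H}$ a product of general linear groups, the restriction of $\pi$ to the two symplectic factors has symbols of defects $2s+1$ and $2s'+1$. The crucial observation is that $(s,s')\in\Scusp$ forces these symplectic components to be $d$-cuspidal: by Lemma \ref{LemScuspcalcul} (via Proposition \ref{prod1series}), $(s,s')\in\Scusp$ means exactly that at every relevant reductive quotient the defect of the $(s,s')$-series exceeds $\kgdc{\sp{2i}(\res)}{d}$, i.e. the corresponding $1$-series is itself a $\dun$-series composed of $d$-cuspidal characters; were some component not $d$-cuspidal, $\mathfrak{t}(s,s')$ would be $\simx{\omega}$-linked to another type, contradicting $\eql{\mathfrak{t}(s,s')}=\{\mathfrak{t}(s,s')\}$. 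Consequently the $d$-cuspidal support $(\gpfinialg{L},\lambda)$ of $\pi$, unique up to conjugacy by Theorem \ref{thmdecompodseries}, keeps both symplectic factors intact, so $\gpfinialg{L}$ has symplectic rank equal to the full symplectic rank $i+j$ of $\quotred{G}{\sigma}$.

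I would then run the contradiction on ranks. Because $\lprime>n$ and every reductive quotient has rank $\le n<\lprime$, the element $t$ has order $1$ or $\lprime$: an element of order $\lprime^2$ would require a Frobenius orbit of eigenvalues of length $d\lprime$, with $d$ the order of $q$ modulo $\lprime$, hence of length $>2n$, exceeding the ambient dimension. If $t\neq1$, then, $d$ being even and $\lprime\neq2$, the nontrivial eigenvalues of $t$ are $\neq\pm1$ and occur in inversion-stable Frobenius orbits of length $d$; each such orbit lowers the symplectic rank of $\quotred{G}{\sigma}(t)$ by $d/2$, so that rank is at most $(i+j)-d/2$. On the other hand $\quotred{G}{\sigma}(t)$ is a $d$-split Levi (this is where $t$ having order $\lprime$ is used), so by transitivity of Deligne–Lusztig induction the $d$-cuspidal support of $\pi$ coincides with that of $\chi_t$ and hence is conjugate into $\quotred{G}{\sigma}(t)$; its symplectic rank $i+j$ is therefore at most $(i+j)-d/2$, a contradiction. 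Thus $t=1$, giving $T\subseteq\TGu$ and the conclusion.

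The main obstacle is the synthesis carried out in the last two steps. First one must make rigorous that $\Scusp$ guarantees $d$-cuspidality of the symplectic components of $\pi$ at \emph{every} $\sigma$ actually supporting a type of $T$, and not merely at $\sigma(s,s')$; this is the place where the explicit shape of $\Scusp$ in Lemma \ref{LemScuspcalcul} must be used in conjunction with the building-theoretic linkage. Second, one needs the clean transitivity statement that the $d$-cuspidal support of $\pi$ descends into the Jordan–Levi $\quotred{G}{\sigma}(t)$, together with the quantitative symplectic-rank drop of at least $d/2$ for a nontrivial $\lprime$-element. It is precisely in controlling the order of $t$ and in securing this rank drop that the hypothesis $\lprime>n$ is essential, whereas the weaker conclusion about $\rep[\Ql][1]{G}$ needs only $(s,s')\in\Scusp$.
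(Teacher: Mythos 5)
Your overall strategy is the same as the paper's: reduce to showing that the non-unipotent part of the block is empty, i.e.\ that the $\lprime$-extension of the unipotent set adds nothing, by combining the $d$-cuspidality coming from $(s,s')\in\Scusp$ with the fact that $\lprime>n$ forces the Jordan Levi $\quotred{G}{\sigma}(t)$ to be a $d$-split Levi (the paper gets this from $E_{q,\lprime}=\{d\}$ for $\lprime$ large in the sense of \cite[Def. 5.1]{bmm} and \cite[Prop. 5.2]{bmm}; your order-$\lprime$/eigenvalue-orbit argument is an equivalent route). However, there is a genuine gap in your final rank argument. You claim that if $t\neq 1$ then every nontrivial eigenvalue orbit ``lowers the symplectic rank of $\quotred{G}{\sigma}(t)$ by $d/2$''. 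This fails when $t$ is supported on the $\GL$-factors of $\quotred{G}{\sigma}$: writing $\quotred{G}{\sigma}\simeq\gpfini{H}\times\sp{2a}(\res)\times\sp{2b}(\res)$ with $\gpfini{H}$ a product of general linear groups, an element $t=(t_H,1,1)$ with $t_H\neq1$ has connected centralizer $\cent*{t_H}{\gpfini*{H}}\times\SO_{2a+1}\times\SO_{2b+1}$, whose symplectic rank is still $a+b$; your contradiction evaporates, and such pairs $(\sigma,\chi)$ would a priori contribute non-unipotent types to $T$. Since the cuspidal pairs $(\sigma,\chi)\in T$ live on arbitrary faces $\sigma$ of $\sigma(s,s')$, not just on vertices, this case must be excluded.

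It can be excluded, but that requires an observation you do not make: summing the two inequalities defining $\Scusp$ gives $s^2+s'^2>n-d/2$, hence $n-s(s+1)-s'(s'+1)<d/2-(s+s')<d$, so the total $\GL$-rank of $\quotred{G}{\sigma(s,s')}$ (and of every face of it) is $<d$; since $\lprime$ divides $\card{\gl{m}(\res)}$ only when $m\geq d$, any semisimple $\lprime$-element of $\quotred*{G}{\sigma}$ is automatically trivial on the $\GL$-part, and your rank argument then applies to the symplectic part. (By contrast, the issue you do flag --- $d$-cuspidality of the symplectic components at every face rather than only at $\sigma(s,s')$ --- is unproblematic, since $\kgdc{\sp{2m}(\res)}{d}$ is increasing in $m$.) The paper sidesteps the whole problem by arguing at vertices only, where $\quotred{G}{x}\simeq\sp{2i}(\res)\times\sp{2(n-i)}(\res)$ has no $\GL$-factors and the dual $\SO_{2i+1}\times\SO_{2(n-i)+1}$ has trivial centre: it shows $\mathcal{E}_{\mathfrak{t}(s,s'),x,\lprime}=\mathcal{E}_{\mathfrak{t}(s,s'),x}$ at each vertex, which already forces the system of idempotents to be integral and purely unipotent, hence $T\subseteq\TGu$; this vertex-level formulation is the cleaner way to organize your argument.
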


\begin{proof}
    First of all we have that $\eql{\mathfrak{t}(s,s')}=\{\mathfrak{t}(s,s')\}$. Let $x \in \bts$ such that $x \leq \sigma(s,s')$. From the definition of $\Scusp$ and Proposition \ref{prod1series} we get that $\mathcal{E}_{\mathfrak{t}(s,s'),x}$ is composed uniquely of $d$-cuspidal representations. We use Theorem \ref{thedescriplblock} to describe $\mathcal{E}_{\mathfrak{t}(s,s'),x,\lprime}$. Let $t$ be a semi-simple conjugacy class in $\quotred*{G}{x}$ of order a power of $\lprime$. Let $\quotred{G}{x}(t)$ a Levi in $\quotred{G}{x}$ dual to $\cent*{t}{\quotred*{G}{x}}$. The Levi $\quotred{G}{x}(t)$ is then a $E_{q,\lprime}$-split Levi of $\quotred{G}{x}$. But, if $\lprime > n$, then $\lprime$ is large for $\quotred{G}{x}$ in the sense of \cite[Def. 5.1]{bmm} and therefore $E_{q,\lprime}=\{d\}$ by \cite[Prop. 5.2]{bmm}. Thus $\quotred{G}{x}(t)$ is a $d$-split Levi. Hence, if an irreducible constituent of $\inddllite{\gpfini{G}(t) \subseteq \gpfini{P}}{\gpfini{G}}(\chi_t)$, for a unipotent character $\chi_t$, is in $\mathcal{E}_{\mathfrak{t}(s,s'),x}$, then $\quotred{G}{x}(t)=\quotred{G}{x}$. Moreover, $\sp{2i}(\res)$, doesn't have any non trivial character, so Theorem \ref{thedescriplblock} tells us that $\mathcal{E}_{\mathfrak{t}(s,s'),x,\lprime}=\mathcal{E}_{\mathfrak{t}(s,s'),x}$. The system of idempotent $e_{\mathfrak{t}(s,s')}$ is therefore integral, and the proof is done.
\end{proof}

\section{Stable \texorpdfstring{$\lprime$}{l}-blocks for classical groups}

In this section, we want to find the stable depth zero $\lprime$-blocks for classical unramified groups.

\bigskip

When $G$ is a classical unramified group, we have the local Langlands correspondance (\cite{HarrisTaylor} \cite{henniart} \cite{arthur} \cite{mok} \cite{KMSW}).  The block decomposition is not compatible with the local Langlands correspondence, two irreducible representations may have the same Langlands parameter but not be in the same block. However, we can look for the ``stable'' blocks, which are the smallest direct factors subcategories stable by the local Langlands correspondence. These categories correspond to the primitive idempotents in the stable Bernstein centre, as defined in \cite{haines}. In \cite{lanard2}, there is a decomposition of the depth zero category
\[ \rep[\Ql][0]{\G} = \prod_{(\phi, \sigma) \in \Lpbm{\iner^{\Ql}}} \rep[\Ql][(\phi,\sigma)]{\G}\]
indexed by the set $\Lpbm{{\iner^{\Ql}}}$ as defined in \cite[Def. 4.4.2]{lanard2}. This decomposition satisfies the following theorem.

\begin{The}[{\cite[Thm. 4.7.5]{lanard2}}]
    \label{thelblocsstableQl}
    Let $G$ be an unramified classical group, $\ld=\Ql$ and $p \neq 2$. Then the decomposition
    \[ \rep[\Ql][0]{G} = \prod_{(\phi, \sigma) \in \Lpbm{\iner}} \rep[\Ql][(\phi,\sigma)]{G}.\]
    is the decomposition of $\rep[\Ql][0]{\G}$ into stable blocks.
\end{The}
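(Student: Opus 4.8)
The plan is to verify the two defining properties of a stable-block decomposition for the family of categories $\rep[\Ql][(\phi,\sigma)]{G}$ produced in \cite{lanard2}: first, that each factor is stable under the local Langlands correspondence, and second, that each is minimal with this property, i.e.\ corresponds to a primitive idempotent in the stable Bernstein centre in the sense of \cite{haines}. I would begin by recalling that the index $(\phi,\sigma) \in \Lpbm{\iner}$ encodes a $\widehat{G}$-conjugacy class of a restriction to inertia $\phi \colon \iner \to {}^{L}G$ together with the discrete datum $\sigma$ of \cite[Def. 4.4.2]{lanard2}, and that the whole construction is carried out so that the factor attached to $(\phi,\sigma)$ collects exactly the Bernstein blocks whose depth-zero types carry the prescribed inertial parameter. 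The two properties to be checked are then the coarseness (LLC-stability) and the fineness (minimality) of this grouping.

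For stability under the correspondence, the key observation is that a depth-zero representation has an L-parameter that is trivial on the wild inertia $\inersauv$, so its restriction to $\iner$ factors through the tame quotient and is determined by a semisimple conjugacy class in $\widehat{G}$; under the Langlands correspondence for classical groups (\cite{arthur}, \cite{mok}, \cite{KMSW}) this class is precisely the one read off from the Deligne-Lusztig parameter of the unrefined type. I would use the compatibility of the \cite{lanard2} decomposition with the correspondence established there to conclude that two irreducible objects sharing an L-parameter have the same inertial restriction and hence lie in the same factor; thus every factor is a union of full L-packets and in particular stable under the correspondence.

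For minimality, I would translate the problem, following \cite{haines}, into showing that the idempotent cutting out $\rep[\Ql][(\phi,\sigma)]{G}$ is primitive in the stable Bernstein centre. A stable-block idempotent corresponds to a connected component of the space of Langlands parameters modulo $\widehat{G}$-conjugacy, and I would argue that the locus of parameters extending the fixed inertial class $\phi$ and carrying the datum $\sigma$ is connected: such parameters differ only by an unramified twist, so they form a single orbit under the connected torus of unramified characters of the relevant centralizer, which is irreducible. Hence no nontrivial idempotent of the stable Bernstein centre can split the factor, giving primitivity and therefore minimality.

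The main obstacle is the precise matching in the second paragraph, namely that the semisimple parameter attached to a depth-zero unrefined type by Deligne-Lusztig theory coincides with the inertial restriction of its Langlands parameter, together with the correct bookkeeping of the Frobenius/discreteness datum $\sigma$; this is where the explicit endoscopic description of L-packets for unramified classical groups and the hypothesis $p \neq 2$ are genuinely used, and it is the heart of the compatibility results of \cite{lanard2} that the argument rests on.
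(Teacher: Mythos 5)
The first thing to note is that the paper does not prove this statement at all: it is imported verbatim from \cite[Thm.~4.7.5]{lanard2} (the theorem is stated with that citation and no proof follows), so there is no in-paper argument to compare yours against; what follows measures your sketch against what such a proof must contain.

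Your division of labour is the right one --- stability (each factor $\rep[\Ql][(\phi,\sigma)]{G}$ is a union of $L$-packets) plus minimality (primitivity of the corresponding idempotent in the stable Bernstein centre of \cite{haines}) --- and your minimality heuristic, connectedness of the locus of parameters with fixed pair $(\phi,\sigma)$ because it is a single orbit under unramified twisting by a connected group, is the expected mechanism. Two caveats there: the twisting group is the relevant connected centralizer, not in general a torus (connectedness of the orbit is what matters, so this is reparable), and you must also check that distinct $\sigma$ lying over the same inertial $\phi$ --- which exist precisely because the centralizer of $\phi(\iner)$ in the dual group can be disconnected --- land in \emph{distinct} components; this is what the datum $\sigma$ and the bijection $\Gamma:\Lpbm{\iner^{\Ql}} \tosim \ss{\gpfini*{G}}$ of \cite[Prop.~4.4.6]{lanard2} are tracking, and your sketch asserts rather than verifies it.

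The genuine gap is in the stability half: you justify it by appealing to ``the compatibility of the \cite{lanard2} decomposition with the correspondence established there,'' but that compatibility \emph{is} the substance of the theorem being proved, so as written the argument is circular. A non-circular proof must establish exactly the matching you flag as the main obstacle: that for an unramified classical group with $p\neq 2$, the semisimple class in $\gpfini*{G}$ attached by Deligne--Lusztig theory to the unrefined depth-zero types cut out by the system of idempotents of a given factor coincides with the class encoding the tame inertial restriction of the Langlands parameters of \emph{all} irreducible objects of that factor, uniformly across each $L$-packet. This requires the explicit endoscopic description of depth-zero packets for classical groups on top of \cite{arthur}, \cite{mok}, \cite{KMSW}, and it is where the hypothesis $p\neq 2$ genuinely enters. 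Identifying the obstacle is not the same as overcoming it, so your proposal is an accurate plan of the proof in \cite{lanard2} but not a proof.
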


Over $\Zl$, an analogous decomposition is defined in \cite{lanard2} :
\[ \rep[\Zl][0]{\G} = \prod_{(\phi, \sigma) \in \Lpbm{\iner^{\Zl}}} \rep[\Ql][(\phi,\sigma)]{\G}.\]
We would like to prove that for unramified classical groups, this is the decomposition of the depth zero category into stable $\lprime$-blocks, that is that these categories correspond to primitive integral idempotents in the stable Bernstein centre.

\bigskip

Let $(\phi, \sigma) \in \Lpbm{\iner^{\Ql}}$. The category $\rep[\Ql][(\phi,\sigma)]{G}$ is obtained by a consistent system of idempotents $e_{T_{(\phi,\sigma)}}$ associated to $T_{(\phi,\sigma)} \subseteq \TG$. These subsets $T_{(\phi,\sigma)}$ form a partition of $\TG$. A subset $T \subseteq \TG$ is said to be stable, if $T$ is a union of $ T_{(\phi,\sigma)}$ for $(\phi, \sigma) \in \Lpbm{\iner^{\Ql}}$.

\begin{Lem}
    \label{lemminstableset}
    If $G$ is an unramified classical group and $p \neq 2$, the stable $\lprime$-blocks correspond to the minimal $\lprime$-integral stable subsets of $\TG$.
\end{Lem}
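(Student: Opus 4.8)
The plan is to reduce the statement to the combinatorics of stable subsets of $\TG$ by transporting the idempotent structure of the stable Bernstein centre through the systems $e_{T_{(\phi,\sigma)}}$. First I would invoke Theorem \ref{thelblocsstableQl}: over $\Ql$ the stable blocks are exactly the categories $\rep[\Ql][(\phi,\sigma)]{G}$, so the associated central idempotents $e^{(\phi,\sigma)}$ are precisely the primitive idempotents of the stable Bernstein centre. Since these are orthogonal and sum to the identity, every idempotent of the stable Bernstein centre is a sum $\sum_{(\phi,\sigma) \in S} e^{(\phi,\sigma)}$, that is, it is of the form $e^T$ for the stable subset $T = \bigcup_{(\phi,\sigma) \in S} T_{(\phi,\sigma)}$. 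This sets up a bijection between idempotents of the stable Bernstein centre and stable subsets of $\TG$, under which products correspond to intersections: $e^{T_1} e^{T_2} = e^{T_1 \cap T_2}$.

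Next I would characterise the integral idempotents. A stable $\lprime$-block is by definition cut out by a primitive integral idempotent in the stable Bernstein centre, and any such idempotent is in particular an idempotent of the stable Bernstein centre over $\Ql$; hence it equals $e^T$ for some stable subset $T$, and it is $\lprime$-integral. Each $T_{(\phi,\sigma)}$ is a union of $\sim$-classes, so $T$ is $\sim$-stable and Lemma \ref{lemlintegralidem} applies (its proof only uses the identity $e^{[\mathfrak{t}]} \ast e_x^+ = e_{[\mathfrak{t}],x}$ and is valid for any $\sim$-stable subset of $\TG$): the idempotent $e^T$ is $\lprime$-integral if and only if $T$ is $\lprime$-integral. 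Thus the integral idempotents of the stable Bernstein centre correspond exactly to the $\lprime$-integral stable subsets of $\TG$.

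Finally I would match primitivity with minimality. If $T_1, T_2$ are $\lprime$-integral stable subsets then $e^{T_1} e^{T_2} = e^{T_1 \cap T_2}$ is again an integral idempotent and $T_1 \cap T_2$ is stable, so $\lprime$-integral stable subsets are closed under intersection; as $\TG$ is finite, each point lies in a unique minimal $\lprime$-integral stable subset and these partition $\TG$. An idempotent $e^T$ is primitive among the integral idempotents precisely when $T$ admits no proper non-empty $\lprime$-integral stable subset (otherwise $e^T$ splits as $e^{T'} + e^{T \setminus T'}$, both integral), i.e. precisely when $T$ is minimal, which yields the asserted correspondence. The main obstacle I anticipate is the passage from integrality of the idempotent to the combinatorial $\lprime$-integrality of $T$: one must check that Lemma \ref{lemlintegralidem} applies in the present, possibly non-simply-connected, classical setting, and that every integral idempotent of the stable centre genuinely arises from a stable subset rather than a finer or coarser partition — both points resting on Theorem \ref{thelblocsstableQl} together with the fact that the proof of Lemma \ref{lemlintegralidem} never uses simple-connectedness.
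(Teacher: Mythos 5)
Your proposal is correct and follows essentially the same route as the paper: it combines Theorem \ref{thelblocsstableQl} (to identify idempotents of the stable Bernstein centre with stable subsets $T$ of $\TG$) with Lemma \ref{lemlintegralidem} (to translate integrality of $e^T$ into $\lprime$-integrality of $T$), and then matches primitivity with minimality. The paper's proof is terser — it leaves the primitivity/minimality step and the applicability of Lemma \ref{lemlintegralidem} outside the simply-connected case implicit — so your extra care on those two points is a useful elaboration rather than a different argument.
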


\begin{proof}
    By Theorem \ref{thelblocsstableQl}, the primitive idempotents in the stable Bernstein centre correspond to the $T_{(\phi,\sigma)}$, hence every idempotent in the stable Bernstein centre is associated with $T$ a stable subset of $\TG$. Lemma \ref{lemlintegralidem} tells us that if the idempotent is integral then so is $T$.
\end{proof}

Let $(\phi, \sigma) \in \Lpbm{\iner^{\Ql}}$. Then \cite[Prop. 4.4.6]{lanard2} defines a bijection
\[\Gamma:\Lpbm{\iner^{\Ql}} \overset{\sim}{\longrightarrow} \ss{\gpfini*{G}}\]
where $\gpfini*{G}$ is the dual of $G$ over $\res$ and $\ss{\gpfini*{G}}$ is the set of semi-simple rational conjugacy classes in $\gpfini*{G}$.

\begin{Lem}
    \label{leminterstalbe}
    Let $(\phi, \sigma) \in \Lpbm{\iner^{\Ql}}$. Then either $T_{(\phi,\sigma)} \subseteq \TGlp$ (if $\Gamma(\phi,\sigma)$ is of order prime to $\lprime$) or $T_{(\phi,\sigma)} \cap \TGlp = \emptyset$.
\end{Lem}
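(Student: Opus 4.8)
The plan is to prove the dichotomy by showing that membership in $\TGlp$ is constant along each set $T_{(\phi,\sigma)}$, and then to match the two alternatives with the order of $\Gamma(\phi,\sigma)$. To do this I would first recall from \cite[Def. 4.4.2]{lanard2} the explicit description of the subset $T_{(\phi,\sigma)} \subseteq \TG$ cut out by the system of idempotents $e_{T_{(\phi,\sigma)}}$, together with the bijection $\Gamma$ of \cite[Prop. 4.4.6]{lanard2}. By construction, a pair $(\tau,\pi) \in \TG$ lies in $T_{(\phi,\sigma)}$ exactly when $\pi$ belongs to a Deligne--Lusztig series $\dl{\quotred{G}{\tau}}{t}$ whose semisimple parameter $t \in \quotred*{G}{\tau}$ is the one attached to $s := \Gamma(\phi,\sigma) \in \ss{\gpfini*{G}}$ by the matching of semisimple classes across the reductive quotients of the building.

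The crucial point is then that this matching preserves the order of the semisimple class. Indeed, if $\omega \leq \tau$ then $\quotred{G}{\tau}$ is a Levi subgroup of $\quotred{G}{\omega}$, so dually $\quotred*{G}{\tau}$ is a Levi subgroup of $\quotred*{G}{\omega}$; the compatibility of Deligne--Lusztig series along this face identifies a parameter $t \in \quotred*{G}{\tau}$ with a $\quotred*{G}{\omega}$-conjugate of itself sitting inside the Levi, and such an inclusion does not change the order of $t$. Propagating this along chains in $\bt$ shows that all the semisimple parameters $t$ occurring for pairs $(\tau,\pi) \in T_{(\phi,\sigma)}$ have the same order; unwinding the definition of $\Gamma$, this common order is the order of $s = \Gamma(\phi,\sigma)$.

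Finally I would conclude. By definition $\pi \in \dl{\quotred{G}{\tau}}{\lprime'}$ if and only if its semisimple parameter $t$ has order prime to $\lprime$, and by the previous step this holds for one pair in $T_{(\phi,\sigma)}$ if and only if it holds for all of them, if and only if $\Gamma(\phi,\sigma)$ has order prime to $\lprime$. Hence $T_{(\phi,\sigma)} \subseteq \TGlp$ when $\Gamma(\phi,\sigma)$ has order prime to $\lprime$, and $T_{(\phi,\sigma)} \cap \TGlp = \emptyset$ otherwise.

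The main obstacle is the second step: it requires tracing the order of the semisimple element through the construction of $\Gamma$ in \cite{lanard2} and verifying that the passage between the local duals $\quotred*{G}{\tau}$ and the global dual $\gpfini*{G}$ is order-preserving (equivalently, that $\Gamma$ is compatible with the $\lprime$-part of the order of semisimple classes). Everything else is a formal consequence of the compatibility of Deligne--Lusztig series with Harish--Chandra restriction to Levi subgroups.
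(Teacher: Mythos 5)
Your proposal is correct and follows essentially the same route as the paper: both arguments reduce the dichotomy to the fact that the semisimple conjugacy classes attached to $(\phi,\sigma)$ at the various reductive quotients of the building all have the same order behaviour with respect to $\lprime$, governed by $\Gamma(\phi,\sigma)$. The paper simply cites \cite{lanard2}, Section 4.3, for this compatibility of the system of conjugacy classes, whereas you reconstruct the reason (order-preservation under the Levi inclusions $\quotred*{G}{\tau}\subseteq\quotred*{G}{\omega}$ for $\omega\leq\tau$); the step you flag as the main obstacle is exactly the point the paper delegates to that reference.
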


\begin{proof}
    To $(\phi, \sigma) \in \Lpbm{\iner^{\Ql}}$ is attached a system of conjugacy classes on the Bruhat-Tits building. By \cite{lanard2} section 4.3, if $\Gamma(\phi,\sigma)$ is of order prime to $\lprime$, all of these conjugacy classes are of order prime to $\lprime$, and if $\Gamma(\phi,\sigma)$ is not of order prime to $\lprime$, then none of them are. Thus we get the result.
\end{proof}

\begin{Cor}
    \label{corinterminblock}
    If $T$ is an $\lprime$-integral stable set such that $T \cap \TGlp$ is a minimal stable set then $T$ is a minimal stable $\lprime$-integral set.
\end{Cor}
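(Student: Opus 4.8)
The plan is to prove minimality directly: I would fix a non-empty stable $\lprime$-integral subset $T' \subseteq T$ and show that necessarily $T' = T$. Since by hypothesis $T \cap \TGlp$ is a (non-empty) minimal stable set, and since $T$ itself is stable $\lprime$-integral, the whole argument reduces to transporting information back and forth between $\TG$ and its prime-to-$\lprime$ locus $\TGlp$ via Proposition \ref{prointerlprime} and Lemma \ref{leminterstalbe}.

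First I would record the purely formal facts about the stability property. A stable set is by definition a union of the pieces $T_{(\phi,\sigma)}$, and these partition $\TG$; moreover each $T_{(\phi,\sigma)}$ is $\sim$-stable, since the associated system $e_{T_{(\phi,\sigma)}}$ is built from the idempotents $e_{[\mathfrak{t}]}$. Consequently, for $T' \subseteq T$ both stable, the complement $T \setminus T'$ is again a union of some of the $T_{(\phi,\sigma)}$, hence stable and in particular $\sim$-stable; the same holds for $T'$. This is what allows Proposition \ref{prointerlprime} to be applied to $T'$ and to $T \setminus T'$. I would also note that, by Lemma \ref{leminterstalbe}, each $T_{(\phi,\sigma)}$ is either contained in $\TGlp$ or disjoint from it, so the intersection of any stable set with $\TGlp$ is again stable.

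Next I would intersect with $\TGlp$. As $T'$ is $\sim$-stable and $\lprime$-integral, Proposition \ref{prointerlprime} yields $T' \cap \TGlp \neq \emptyset$; by the previous paragraph $T' \cap \TGlp$ is a stable set, and it is contained in the minimal stable set $T \cap \TGlp$. Minimality then forces the equality $T' \cap \TGlp = T \cap \TGlp$. Finally I would rule out $T \setminus T' \neq \emptyset$. The complement is stable, and it is $\lprime$-integral: from the additivity $e^{T} = e^{T'} + e^{T \setminus T'}$ of the central idempotents over $\sim$-classes, together with the integrality of $e^{T}$ and of $e^{T'}$, the idempotent $e^{T \setminus T'}$ is integral, whence $T \setminus T'$ is $\lprime$-integral by Lemma \ref{lemlintegralidem}. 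Were $T \setminus T'$ non-empty, Proposition \ref{prointerlprime} would give $(T \setminus T') \cap \TGlp \neq \emptyset$; but $(T \setminus T') \cap \TGlp = (T \cap \TGlp) \setminus (T' \cap \TGlp) = \emptyset$ by the equality just established. This contradiction gives $T \setminus T' = \emptyset$, i.e. $T' = T$, so $T$ is a minimal stable $\lprime$-integral set.

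The step I expect to be the crux is the integrality of the complement $T \setminus T'$: it is the place where one must pass from a set-theoretic difference to a statement about central idempotents, using the additivity $e^{T} = e^{T'} + e^{T \setminus T'}$ and then Lemma \ref{lemlintegralidem} to go back from idempotents to sets. Once this is in hand, the emptiness of $(T \setminus T') \cap \TGlp$ collides directly with Proposition \ref{prointerlprime}, and everything else is bookkeeping with the partition into the pieces $T_{(\phi,\sigma)}$ and the stability of intersections with $\TGlp$.
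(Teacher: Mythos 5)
Your proof is correct and follows essentially the same route as the paper: the paper's (very terse) proof consists precisely of applying Proposition \ref{prointerlprime} and Lemma \ref{leminterstalbe} to an arbitrary non-empty stable $\lprime$-integral subset and invoking the minimality of $T \cap \TGlp$, leaving the complement argument implicit. You have simply written out in full the bookkeeping (stability and $\lprime$-integrality of $T\setminus T'$) that the paper omits.
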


\begin{proof}
    If $T$ is an $\lprime$-integral stable set, then by Proposition \ref{prointerlprime} $T \cap \TGlp \neq \emptyset$ and by Lemma \ref{leminterstalbe} $T \cap \TGlp$ is a stable set. Hence $T \cap \TGlp$ is an non-empty stable set, and we get the result.
\end{proof}

\begin{The}
    Let $G$ be an unramified classical group and $p \neq 2$. Then the decomposition
    \[ \rep[\Zl][0]{G} = \prod_{(\phi, \sigma) \in \Lpbm{\iner^{\Zl}}} \rep[\Zl][(\phi,\sigma)]{G}.\]
    is the decomposition of $\rep[\Zl][0]{\G}$ into stable $\lprime$-blocks.
\end{The}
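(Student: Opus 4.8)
The plan is to identify the subsets of $\TG$ underlying the given $\Zl$-decomposition with the minimal $\lprime$-integral stable subsets of $\TG$, and then to conclude by Lemma \ref{lemminstableset}. Write $T_{(\phi,\sigma)}^{\Zl} \subseteq \TG$ for the subset attached to $(\phi,\sigma) \in \Lpbm{\iner^{\Zl}}$ by the construction of \cite{lanard2}. By that construction these subsets partition $\TG$, each $T_{(\phi,\sigma)}^{\Zl}$ is stable (it is a union of the rational pieces $T_{(\phi',\sigma')}$ with $(\phi',\sigma')\in\Lpbm{\iner^{\Ql}}$), and it is $\lprime$-integral since the associated category is defined over $\Zl$; by Lemma \ref{lemlintegralidem} the integrality of the idempotent is equivalent to the integrality of the subset. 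Hence it remains only to prove that each $T_{(\phi,\sigma)}^{\Zl}$ is minimal among stable $\lprime$-integral subsets.

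First I would recall, through the bijection $\Gamma\colon \Lpbm{\iner^{\Ql}} \overset{\sim}{\longrightarrow} \ss{\gpfini*{G}}$ and the description of the systems of conjugacy classes in \cite[\S 4.3, Def.~4.4.2]{lanard2}, how the integral parameters group the rational ones. An element $(\phi,\sigma) \in \Lpbm{\iner^{\Zl}}$ corresponds to a semisimple rational class of order prime to $\lprime$, and the piece $T_{(\phi,\sigma)}^{\Zl}$ is precisely the union of the rational pieces $T_{(\phi',\sigma')}$ for which $\Gamma(\phi',\sigma')$ has $\lprime'$-part equal to $\Gamma(\phi,\sigma)$. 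The arithmetic point to isolate is that among all semisimple classes sharing a fixed $\lprime'$-part $s$, the class $s$ itself is the unique one of order prime to $\lprime$.

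Next I would compute the intersection with $\TGlp$. By Lemma \ref{leminterstalbe}, for each rational parameter with $T_{(\phi',\sigma')} \subseteq T_{(\phi,\sigma)}^{\Zl}$ one has $T_{(\phi',\sigma')} \subseteq \TGlp$ exactly when $\Gamma(\phi',\sigma')$ is of order prime to $\lprime$, and $T_{(\phi',\sigma')} \cap \TGlp = \emptyset$ otherwise. Combined with the uniqueness noted above, this shows that
\[ T_{(\phi,\sigma)}^{\Zl} \cap \TGlp = T_{(\phi_0,\sigma_0)}, \]
the single rational piece attached to the $\lprime$-regular class $\Gamma(\phi,\sigma)$, which is a minimal stable subset of $\TG$. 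Proposition \ref{prointerlprime} guarantees this intersection is non-empty, so Corollary \ref{corinterminblock} applies and shows that $T_{(\phi,\sigma)}^{\Zl}$ is a minimal stable $\lprime$-integral subset. By Lemma \ref{lemminstableset} each $T_{(\phi,\sigma)}^{\Zl}$ is therefore a stable $\lprime$-block. Since these pieces partition $\TG$ while the stable $\lprime$-blocks are pairwise orthogonal and also partition $\TG$, the two partitions must coincide, which yields the asserted stable $\lprime$-block decomposition.

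The step I expect to be the main obstacle is the second one: pinning down from the constructions of \cite{lanard2} the exact combinatorial relationship between $\Lpbm{\iner^{\Zl}}$ and $\Lpbm{\iner^{\Ql}}$ — namely that the integral parameters group the rational ones precisely according to the $\lprime'$-part of the associated semisimple class, and that each such group contains exactly one $\lprime$-regular class. Once this is established, the remaining assertions are formal consequences of Lemmas \ref{lemminstableset}, \ref{leminterstalbe} and Corollary \ref{corinterminblock}.
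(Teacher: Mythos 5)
Your proposal is correct and follows essentially the same route as the paper's proof: you reduce to minimality via Lemma \ref{lemminstableset}, use the description of the fibres of $\Lpbm{\iner^{\Ql}} \to \Lpbm{\iner^{\Zl}}$ to identify $T \cap \TGlp$ with the single rational piece attached to the unique $\lprime$-regular class in each fibre, and conclude with Lemma \ref{leminterstalbe} and Corollary \ref{corinterminblock}, exactly as the paper does.
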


\begin{proof}
    Let $(\phi, \sigma) \in \Lpbm{\iner^{\Zl}}$. By construction, the category $\rep[\Zl][(\phi,\sigma)]{G}$ is associated with an $\lprime$-integral subset $T$ of $\TG$. By \cite[Prop. 4.5.1]{lanard2}, $T=\cup_{(\phi',\sigma')} T_{(\phi',\sigma')}$, where the union is taken over the $(\phi',\sigma')$ that are sent to $(\phi,\sigma)$ by the natural map $\Lpbm{\iner^{\Ql}} \to \Lpbm{\iner^{\Zl}}$, described in \cite{lanard2} section 4.5 (obtained by restriction from $\iner^{\Ql}$ to $\iner^{\Zl}$). In particular, the set $T$ is stable. So by Lemma \ref{lemminstableset}, it remains to prove that $T$ is minimal among the stable $\lprime$-integral sets.

    By \cite{lanard2} section 4.5, the inverse image of $(\phi,\sigma)$ by the map $\Lpbm{\iner^{\Ql}} \to \Lpbm{\iner^{\Zl}}$ is all the $(\phi',\sigma')$ such that the $\lprime$-regular part of $\Gamma(\phi',\sigma')$ is given by $\Gamma(\phi,\sigma)$.

    Hence exactly one $(\phi'_0,\sigma'_0)$ is such that $\Gamma(\phi'_0,\sigma'_0)$ is of order prime to $\lprime$. Hence by Lemma \ref{leminterstalbe}, $T \cap \TGlp=T_{(\phi'_0,\sigma'_0)}$. Since $T$ is an $\lprime$-integral stable set such that $T \cap \TGlp$ is a minimal stable set, Corollary \ref{corinterminblock} tells us that $T$ is a minimal stable $\lprime$-integral set, and that completes the proof.

\end{proof}

\bibliographystyle{amsalpha}
\bibliography{biblio}
\end{document}